\documentclass[aos, preprint]{imsart}

\RequirePackage[utf8]{inputenc}
\RequirePackage{amsthm,amsmath,amsfonts,amssymb,mathtools}
\RequirePackage[authoryear]{natbib}
\RequirePackage{enumitem}
\RequirePackage{algorithm}
\RequirePackage{algpseudocode}
\RequirePackage{booktabs,xcolor}
\RequirePackage[normalem]{ulem}
\RequirePackage[colorlinks]{hyperref}
\usepackage[noabbrev,capitalize]{cleveref}
\usepackage{longtable}
\usepackage{graphicx}
\usepackage{tikz}
\usetikzlibrary{arrows.meta}

\hypersetup{
  colorlinks=true,
  citecolor=blue,
  linkcolor=blue,
  urlcolor=blue,
  filecolor=blue,
}

\startlocaldefs
\numberwithin{equation}{section}
\newif\if@nosecdot@once
\let\ims@setaftersec@dot\setaftersec@dot
\def\setaftersec@dot#1{%
  \if@nosecdot@once
    \def\aftersec@dot{}\global\@nosecdot@oncefalse
  \else
    \ims@setaftersec@dot{#1}%
  \fi}
\AtBeginDocument{%
  \let\ims@paragraph\paragraph
  \def\paragraph{\global\@nosecdot@oncetrue\ims@paragraph}}

\definecolor{darkred}{RGB}{139,0,0}
\definecolor{darkgreen}{RGB}{0,100,0}
\colorlet{RED}{red}

\DeclareRobustCommand{\strikeddarkred}[1]{%
  \ifmmode
    \text{\color{darkred}\sout{\ensuremath{\displaystyle #1}}}%
  \else
    {\color{darkred}\sout{#1}}%
  \fi
}

\theoremstyle{plain}
\newtheorem{lemma}{Lemma}
\newtheorem{proposition}{Proposition}
\newtheorem{corollary}{Corollary}
\newtheorem{theorem}{Theorem}
\theoremstyle{remark}
\newtheorem{definition}{Definition}
\newtheorem{assumption}{Assumption}
\newtheorem{remark}{Remark}

\crefname{assumption}{Assumption}{Assumptions}
\Crefname{assumption}{Assumption}{Assumptions}
\crefname{algorithm}{Algorithm}{Algorithms}
\Crefname{algorithm}{Algorithm}{Algorithms}

\g@addto@macro\appendix{%
  \counterwithin{theorem}{section}%
  \counterwithin{lemma}{section}%
  \counterwithin{proposition}{section}%
  \counterwithin{corollary}{section}%
  \counterwithin{definition}{section}%
  \counterwithin{assumption}{section}%
  \counterwithin{remark}{section}%
  \counterwithin{figure}{section}%
  \counterwithin{table}{section}%
  \counterwithin{algorithm}{section}%
}

\newcommand{\cX}{\mathcal{X}}
\newcommand{\bR}{\mathbb{R}}
\newcommand{\bOne}{\mathbf{1}}
\newcommand{\Ham}{d_{\mathrm{H}}}
\newcommand{\tok}{\mathrm{tok}}
\newcommand{\Id}{\mathbf{I}}
\newcommand{\defn}{\coloneqq}
\newcommand{\pa}{\operatorname{pa}}

\newcommand{\cV}{\mathcal{V}}
\newcommand{\sD}{{\mathsf{D}}}

\newcommand{\x}{\mathbf{x}}
\newcommand{\y}{\mathbf{y}}
\newcommand{\z}{\mathbf{z}}

\newcommand{\Fd}{\binom{[\sD]}{d}}
\newcommand{\sm}{{\mathsf{m}}}
\newcommand{\sH}{{\mathsf{H}}}
\newcommand{\MRF}{\mathrm{MRF}}

\newcommand{\N}{\mathbb{N}}
\newcommand{\sL}{{\mathsf{L}}}
\newcommand{\sW}{{\mathsf{W}}}
\newcommand{\sS}{{\mathsf{S}}}
\newcommand{\sB}{{\mathsf{B}}}
\newcommand{\sN}{{\mathsf{N}}}
\newcommand{\Sws}{{\mathsf{S}_{\mathrm{ws}}}}
\newcommand{\btheta}{\boldsymbol{\theta}}
\newcommand{\bTheta}{\boldsymbol{\Theta}}
\newcommand{\bv}{\mathbf{v}}
\newcommand{\W}{\mathbf{W}}
\newcommand{\bb}{\mathbf{b}}

\newcommand{\Lw}{L_{\omega}}

\newcommand{\bE}{\mathbb{E}}
\newcommand{\cL}{\mathcal{L}}
\newcommand{\bC}{\mathbf{C}}
\DeclareMathOperator*{\argmin}{arg\,min}
\newcommand{\sA}{{\mathsf{A}}}
\newcommand{\R}{\mathbb{R}}
\newcommand{\e}{\mathbf{e}}
\newcommand{\onehot}{\mathbf{o}}
\newcommand{\se}{{\mathrm{se}}}
\newcommand{\sR}{{\mathsf{R}}}
\newcommand{\w}{\mathbf{w}}

\newcommand{\com}{c}
\endlocaldefs

\begin{document}

\begin{frontmatter}
\title{Sample complexity bounds for categorical Markov random fields via Discrete Diffusions}
\runtitle{Structured sampling via discrete diffusions}

\begin{aug}
\author[A]{\fnms{Shivam}~\snm{Kumar}\ead[label=e1]{shivam.kumar@chicagobooth.edu}}
\and 
\author[A]{\fnms{Nabarun}~\snm{Deb}\ead[label=e2]{nabarun.deb@chicagobooth.edu}}

\address[A]{Booth School of Business, University of Chicago\printead[presep={,\ }]{e1,e2}}

\end{aug}

\begin{abstract}
Many applications in statistics, economics, and physics require sampling from high-dimensional categorical distributions with local dependence structures. Examples include finite memory language models, Ising and Potts systems in statistical physics and protein folding, etc. In modern machine learning, discrete diffusions have emerged as a flexible approach for sampling such data, with strong empirical performance. Motivated by this, we develop learning methods with end-to-end sample complexity bounds for discrete diffusion with uniform noising under local dependence, which we model through low order Markov random fields (MRFs). Our main technical insight is a new \emph{pinning decomposition} of the discrete score. It shows that unlike in continuous diffusions, the score decomposes into components where the dependence on time separates multiplicatively from the dependence on the target. Building on this decomposition, we propose a \emph{weight-sharing neural score learner} and combine it with $\tau$-leaping to obtain an end-to-end sampling procedure. Rather than treating score-learning error as a black-box input, as is common in existing sampling analyses, we study the score learning error from finite data and derive optimal sampling guarantees with explicit dependence on the vocabulary size, the interaction order of the MRF, and the sample size. Moreover, our strategy trains a single score network across uniform noise levels while leaving the sampling discretization to be chosen at inference-time. This allows the same trained model to trade accuracy for computational cost as inference-time budgets vary. Numerical experiments on Potts, Ising, and tree-structured models show that weight-sharing score networks outperform fully connected ones for sampling long sequences.
\end{abstract}

\begin{keyword}[class=MSC]
\kwd[Primary ]{62G07,68T07}
\kwd[; secondary ]{62H22,60J27}
\end{keyword}

\begin{keyword}
	\kwd{Discrete diffusion models}
	\kwd{Markov random fields}
	\kwd{Sample complexity}
	\kwd{Score estimation}
	\kwd{Uniform noising}
	\kwd{Weight-sharing neural networks}
\end{keyword}
\end{frontmatter}

%\tableofcontents

\section{Introduction}

Diffusion models were introduced by \citet{sohldickstein2015} and became the dominant approach to generative modeling of continuous data after the work of \citet{ho2020ddpm} and \citet{song2021sde}. Their extension to discrete data began with multinomial diffusion \citep{hoogeboom2021multinomial} and with the structured transition matrices of D3PM \citep{austin2021d3pm}, both formulated in discrete time. \citet{campbell2022} later moved the framework to continuous time, where the forward and reverse processes are Markov chains, and \citet{lou2024discrete} showed that a model trained with the score entropy loss can match the perplexity of an autoregressive language model of the same size. Since then, discrete diffusion based language models have been trained with billions of parameters \citep{nie2025llada,ye2025dream}, and commercial systems built on them report large speedups over autoregressive decoding \citep{khanna2025mercury}. The same tools are also used to generate proteins and graphs \citep{campbell2024multiflow,gat2024dfm}, and we refer to \citet{yu2025survey} for a recent survey.

Motivated by the empirical success of discrete diffusions, in this paper we study their statistical complexity when sampling from a discrete law with a large support. Throughout this paper, we assume access to observations
\[
\x_0^{(1)},\ldots,\x_0^{(n)}
\overset{\mathrm{i.i.d.}}{\sim}q^\star,
\]
where $q^\star$ is an \emph{unknown discrete distribution} taking values in $[S]^{\sD}$. In other words, each of the $\sD$ coordinates ranges over a vocabulary of size $S$ which is potentially large. Our goal is to sample from $q^\star$ using discrete diffusions with a uniform noising kernel and provide end-to-end statistical convergence guarantees for our proposed approach. At a high level, a discrete diffusion model corrupts a sample from $q^\star$ by running a continuous-time Markov chain forward until its law is close to a simple reference distribution, and then generates new samples by reversing this process. If $q_t$ denotes the noised marginal at forward time $t$, the reverse dynamics are known except for the ratios
\begin{align}\label{eq:fsc}
s_t^\star(\y,\x)=\frac{q_t(\y)}{q_t(\x)}
\end{align}
between states that differ in one coordinate. This is the concrete score of \citet{meng2022concrete} and \citet{sun2023scorebased}, and it plays the role of $\nabla\log q_t$ in continuous diffusion. In practice, the score is learned with a neural network using score-entropy objectives \citep{lou2024discrete}, and the reverse chain is simulated approximately by $\tau$-leaping, a Poisson jump scheme originating in stochastic chemical kinetics \citep{gillespie2001,anderson2007} and adapted to discrete diffusion by \citet{campbell2022}.

Most of the discrete-diffusion theory instead begins after score learning, by assuming an accuracy guarantee for the score and studying how this error propagates through the sampler \citep{ren2024stochastic,liang2025samplers,dmitriev2026sharp}. Finite-sample guarantees for learning the discrete score itself are much more recent. \citet{srikanth2025sample} study neural score estimation on a fixed sampling schedule, \citet{cho2026minimax} show that the unstructured rate $S^{\sD}/n$ is minimax optimal under bounded neighboring ratios, and \citet{wakasugi2025statesize} obtain the same state-space-dependent rate for ReLU score networks, together with state-size-independent guarantees under an additional spectral smoothness assumption. By contrast, for continuous diffusion models there is already a developed route from score approximation, through finite-sample estimation, to distributional recovery \citep{block2020,suzuki023diffusion,chen2023score}. These results suggest that escaping the factor $S^{\sD}$ in the discrete problem requires a structural assumption on the target distribution.

A natural source of such structure is local dependence which is often exhibited in many popular real world models. Examples include finite-memory language models \citep{shannon1948}, Ising and Potts systems \citep{ising1925,potts1952}, and pairwise Potts models for protein sequences \citep{weigt2009,morcos2011}, etc. all of which are specified through interactions among small subsets of coordinates. We formalize this through a Markov random field of order $d$ where the probability mass function factors into potentials involving at most $d$ coordinates \citep{hammersley1971,besag1974,wainwright2008graphical}. Such a model is described by at most
\[
\binom{\sD}{d}S^d
\]
potential values, rather than $S^{\sD}$ probabilities. This makes it plausible that when $d\ll D$, the statistical complexity of discrete diffusion should be governed by the interaction order $d$ rather than by the ambient dimension $D$. There is, however, an important obstruction. The reverse process depends not on $q^\star$ itself, but on the noised marginals $q_t$, and noising generally destroys the original local factorization. Thus low-order structure of the data distribution does not by itself imply a local or low-dimensional representation of the noised score in \eqref{eq:fsc}. The relevant question is instead what trace of the structure in $q^\star$ survives after noising.

Our main answer is a new \emph{pinning decomposition}. Under uniform noising, the score can be written as a ratio of functions $g_t$, and each $g_t$ admits a representation of the schematic form
\begin{align}\label{eq:pin}
g_t(\x)
=
\sum_{\sm\subseteq[\sD]}
\underbrace{w_t(\sm)}_{\text{time}}
\underbrace{A_\sm(\x_\sm)}_{\text{target}},
\end{align}
where the weights $w_t(\sm)$ are known functions of the noise level and $A_\sm$ is a partial marginal of $q^\star$. The decomposition reveals an important structure in the forward chain. It shows that for learning the numerator and the denominator of the score, it is possible to separate the dependence on time from the dependence on the unknown target $q^\star$. Therefore the same target-specific quantities can be reused across all noise levels. To the best of our knowledge, the score in continuous diffusion models does not allow such exact representation featuring separation of time and target.

We also prove that for an order-$d$ Markov random field, the partial marginals $A_\sm(x_{\sm})$ in the pinning decomposition are sums of products of the original low-order potential tables. They can be evaluated exactly by eliminating the unpinned coordinates one at a time. A naive network implementation can nevertheless be extremely large because the same potential entries occur repeatedly across many intermediate sums. The key is that these entries need not be stored repeatedly. Sharing each table entry wherever it reappears naturally motivates the use of a weight-sharing neural architecture whose effective number of target-dependent parameters is comparable to the number of parameters describing the Markov random field itself. This is the mechanism by which our proposed score learner avoids the ambient state-space complexity $S^{\sD}$.

A second gap in the literature concerns the interface between training and sampling. In practice, one score network is trained across a continuum of noise levels, while the number and location of reverse-time steps (for sampling) can be chosen later at inference time \citep{lou2024discrete,sahoo2024mdlm,nie2025llada}. Much of the existing theory instead assumes score accuracy at the points of a prescribed sampling grid \citep{liang2025samplers,liang2025absorbing,cho2026minimax}, and existing finite-sample results that supply such guarantees likewise fit or analyze the score on a fixed schedule \citep{srikanth2025sample,cho2026minimax}. We take a different approach in this paper and control the risk of a single time-dependent score network integrated over the whole training interval. The sampling grid can then be chosen after training, allowing the same fitted score to be used at different computational budgets.

Together, these ideas lead to an end-to-end statistical rate of order
\[
\widetilde O_{\mathbb P}\!\left(\frac{S^d}{n}\right)
\]
for fixed interaction order, up to logarithmic factors and polynomial dependence on the remaining ambient parameters. Thus the interaction order $d$, rather than the sequence length $\sD$, determines the exponent of the vocabulary size. We now state our technical contributions more precisely.

\subsection{Contributions}\label{sec:contrib}

In this paper, we develop a discrete diffusion method with end-to-end statistical guarantees when $q^\star$ is a Markov random field of order $d$ with a bounded flip ratio (\Cref{ass:flip}), which is the discrete counterpart of a Lipschitz score. Our main insight is a pinning decomposition of the score (see \cref{prop:pinning}), which identifies the structure that the score retains after noising and explains why parameter sharing is the natural way to learn it. Our contributions are as follows.

\begin{enumerate}[leftmargin=1em,itemsep=3pt]
\item \emph{A pinning decomposition that separates time and target.}  Under uniform noising, the score is the ratio $s_t^\star(\x\oplus_i c,\x)=g_t(\x\oplus_i c)/g_t(\x)$, where $g_t(\x)=\bE_{\x_0\sim q^\star}\bigl[\alpha_t^{\Ham(\x,\x_0)}\bigr]$ and $\alpha_t\in[0,1)$ is an explicit function of $t$. We show that
\begin{align}\label{eq:gtapp}
  g_t(\x)
  =
  \sum_{\sm\subseteq[\sD]}\mu_t^{|\sm|}\alpha_t^{\sD-|\sm|}A_\sm(\x_\sm)
  =
  \sum_{k=0}^{\sD}
  \underbrace{\binom{\sD}{k}\mu_t^{k}\alpha_t^{\sD-k}}_{\text{time}}
  \;
  \underbrace{\bar A_k(\x)}_{\text{target}},
\end{align}
where $\mu_t=1-\alpha_t$, $A_\sm$ is the marginal of $q^\star$ on the coordinates in $\sm$, and $\bar A_k$ is the average of the $A_\sm$ over the sets $\sm$ of size $k$ (see \Cref{prop:pinning}). In words, each coordinate of the noised state is pinned to its clean value with probability $\mu_t$, and $g_t$ averages the clean marginals over the pinned sets. As a result, the decomposition separates the two sources of variation in the score exactly. Time enters only through the Binomial$(\sD,\mu_t)$ weights, which are known and do not depend on $q^\star$, while the data distribution enters only through its partial marginals, which do not depend on $t$. The same target quantities therefore serve every noise level, and a single network can learn the score over the whole interval $(0,T]$. To the best of our knowledge, the score of a continuous diffusion admits no exact separation of this kind. Moreover, the partial marginals of a Markov random field are sums of products of its small tables, so the decomposition also shows where the structure of $q^\star$ enters the score.

\item \emph{A combinatorial architecture, and why parameter sharing works.}  In continuous diffusions and flow-based models, the score is a gradient, and approximation results for score networks typically exploit the smoothness of the target and the Euclidean geometry of its support \citep{suzuki023diffusion,chen2023score}. Categorical data have neither derivatives nor a Euclidean geometry, so the architecture has to be built combinatorially from the structure of $q^\star$, and the pinning decomposition shows how to do this. Partial marginals on at most $d$ coordinates are tables with at most $S^d$ entries and can be stored directly. For larger pinned sets, $A_\sm$ is obtained by summing $q^\star$ over the free coordinates one at a time, and each such sum becomes a layer of the network. At first sight these layers look expensive. For instance, take $\sD=6$, $d=2$, and $\sm=\{1,2,3\}$ in \eqref{eq:gtapp}. To evaluate $A_{\sm}$, a naive sum over $(x^4,x^5,x^6)$ would require storing at least $S^3$ weights/parameters. However by exploiting the properties of Markov random fields, we show that these $S^3$ weights can be represented as the product of entries from two tables, each with $S^2$ parameters, i.e., $2S^2$ stored numbers can be reused to fill in all $S^3$ positions. We extend this observation to general $D,d$ via backward induction. This parameter reuse observation immediately motivates the use of a weight-sharing ReLU network where every table entry is stored once. We show that this network approximates the score uniformly over $t\in(0,T]$ and over all states, with an effective number of parameters of order $\binom{\sD}{d}S^d$ up to logarithmic factors (\Cref{thm:score,thm:bias}). As a methodological remark, this structure throws light on the practical success of convolutional and recurrent layers \citep{lecun1998gradient,elman1990finding,hochreiter1997lstm}, which reuse a small set of parameters at many positions or time steps. This is the same pattern that the pinning decomposition (\Cref{prop:pinning}) exposes in the score of locally dependent categorical data.

\item \emph{Estimation over the whole time interval.}  We prove an oracle inequality for empirical minimizers over weight-sharing classes under general conditions on the training loss (\Cref{thm:score_est}). These conditions require that the expected excess loss equals the integrated score risk, together with boundedness, Lipschitz continuity, and a second-moment bound in terms of that risk, and we verify them for the denoising and implicit score entropy losses of \citet{lou2024discrete} (\Cref{prop:dse_ise_estimation}). The complexity term depends on the effective number of parameters, and the second-moment bound yields a fast rate through a peeling argument. Because the training times are drawn at random from $(0,T]$, the controlled quantity is the time integral of the score entropy risk, and for either loss the learned score satisfies $\cL(\widehat s)=\widetilde O_{\mathbb P}(S^d/n)$ (\Cref{cor:score_est_rate}).

\item \emph{Sampling on any grid.}  Existing analyses of $\tau$-leaping assume that the score is accurate at the points of a prescribed sampling grid \citep{ren2024stochastic,dmitriev2026sharp}. Since our estimation guarantee controls the time integral of the risk, the grid can instead be chosen after training. For a time-dependent variant of $\tau$-leaping \citep{gillespie2001,campbell2022}, we bound the Kullback--Leibler divergence $\mathrm{KL}(q^\star\|p_T)$ between the target and the generated law by three terms,
\[
 \underbrace{\text{Score-estimation error}}_{\text{integrated training risk}}
 \;+\;
 \underbrace{\text{Initialization error}}_{\text{finite noising horizon}}
 \;+\;
 \underbrace{\text{Discretization error}}_{\text{sampling grid}}.
\]
The first term is controlled by our estimation result, the second by the terminal time $T$ of the forward process, and the third by the mesh of the grid (\Cref{thm:efficient_sampling}). No early stopping is needed, since the neighboring-ratio bound of \Cref{ass:flip} propagates to every $t>0$ (\Cref{lem:score_bounds}), and the proof relies on a martingale property of the true score along the reverse process. As a result, the same trained network can be run on different grids, trading accuracy for computational cost as inference budgets vary.

\item \emph{An end-to-end rate that is minimax optimal.}  Combining the estimation and sampling bounds with $T=\log(n\sD\log S)$ and $\Delta\le n^{-1}$ gives
\[
 \mathrm{KL}(q^\star\|p_T)
 \;\lesssim_{\log}\;
 \frac{S^d+x}{n},
 \qquad x\ge1,
\]
with probability at least $1-e^{-x}$ (\Cref{thm:efficient_sampling}). Here $\Delta$ is the largest sampling step, and $\lesssim_{\log}$ hides logarithmic factors and a multiplicative factor that is polynomial in $\sD$ and $R$ for fixed $d$, so that no constant depends exponentially on $\sD$. In other words, the interaction order $d$ takes the place of the dimension $\sD$ in the exponent of $S$, and the unstructured rate $S^{\sD}/n$ improves to $S^d/n$. In particular, for fixed $S$ the unstructured rate no longer vanishes once $\sD$ is of order $\log n$, whereas our bound still does. This rate is minimax optimal in $n$ and $S$ up to logarithmic factors. To see this, consider a Markov random field in which one potential depends on $x^1,\dots,x^d$ and all other coordinates are free of these. Sampling from such a distribution is at least as hard as sampling from its marginal on $[S]^d$, and for this marginal the lower bound of order $S^d/n$, up to logarithmic factors, follows from \citet{cho2026minimax}.

\item \emph{Numerical experiments.}  We test the method on Potts chains, frustrated Ising ladders, and categorical trees, all with $d=2$ and with $\sD$ up to $32$ (\Cref{sec:numerics}). At $\sD=32$, weight-sharing score networks, including a convolutional one, use fewer parameters than a fully connected network and achieve lower score risk and lower edge-pair total variation. A single trained network also serves every sampling grid we tried. At small $\sD$, the fully connected network catches up as $n$ grows, while at $\sD=32$ the gap persists, in line with the theory.

\end{enumerate}

\subsection{Literature review}\label{sec:Litrev}

Most theoretical work on discrete diffusion studies sampling given an accurate score. For uniform noising, this includes exact simulation by uniformization \citep{chen2024uniformization}, discrete-time schemes \citep{zhang2025discretetime}, stochastic-integral analyses of $\tau$-leaping \citep{ren2024stochastic}, sharper guarantees for first-order samplers \citep{liang2025samplers,dmitriev2026sharp,conforti2025dynamics,pham2025dmpm}, higher-order methods \citep{ren2025fast}, Gibbs corrections \citep{liang2026gadd}, and vocabulary-size-independent bounds \citep{kan2026vocab}. Masking diffusion has developed a parallel theory \citep{liang2025absorbing,liang2026masked,li2025convergence}, together with work on optimal schedules and data-adaptive sampling \citep{chen2025schedules,dmitriev2026adaptive,wainwright2026igc,wainwright2026ugc}. The empirical and algorithmic behavior of uniform, masking, and related corruption mechanisms has also been compared in \citet{shi2024simplified,vonrutte2025scaling,ou2025radd}. These results largely take the score-estimation error as an input while our focus is on deriving such an error bound from finite data under structural assumptions on the target.

For continuous diffusion models, finite-sample score learning and its conversion into distributional guarantees are well studied \citep{block2020,suzuki023diffusion,chen2023score}, with more recent work relating generative modeling to low-dimensional geometry and transport structure \citep{kumar2025likelihood,liang2024denoising,deb2025noregret}. As mentioned earlier, the corresponding theory for  discrete diffusions is more recent. The works of \citep{srikanth2025sample,cho2026minimax,wakasugi2025statesize} all feature a convergence rate of $S^{\sD}/n$ for unstructured $q^\star$. Our assumption is instead combinatorial where low-order Markov random field interactions in $q^\star$ lead to a faster rate governed by $S^d$, where $d\ll D$.

The aforementioned structure is classical in graphical models. Markov random fields factor distributions into local potentials which have been leveraged in \citep{hammersley1971,besag1974,wainwright2008graphical}.  Their local structure can make graphical model learning tractable \citep{bresler2015,vuffray2016,klivans2017}, and can yield neural density-estimation rates governed by local rather than ambient dimension \citep{vandermeulen2024dimension}. These results exploit the factorization of $q^\star$ itself, whereas discrete diffusion requires learning the score of the noised law, where the same factorization property is generally lost. A closely related continuous state space result is \citet{mei2023denoising}, where graphical structure and smoothness enable efficient score approximation. In our discrete setting, due to the absence of smoothness, we rely on our pinning decomposition from \eqref{eq:pin} and the observation that the partial marginals of $q^\star$ can be expressed using a smaller number of shared parameters than $S^\sD$. 

\subsection{Organization}\label{sec:organize}
In \Cref{sec:problem}, we set up the forward chain, uniform noising, the reverse chain with its concrete score, and the weight-sharing network class. In \Cref{sec:method}, we describe score learning with the denoising and implicit score entropy losses, together with the $\tau$-leaping sampler. In \Cref{sec:theory}, we introduce Markov random fields and the flip-ratio condition, and we present the pinning decomposition together with the approximation, estimation, sampling, and end-to-end results. In \Cref{sec:numerics}, we report numerical experiments. Appendix~\ref{app:numerical} contains further details of the numerical experiments, and the remaining appendices contain the proofs of the main results together with auxiliary lemmas.
\subsection{Notation}\label{sec:nota}
We write $[S]=\{1,\dots,S\}$ and $\cX=[S]^{\sD}$.  A state is $\x=(x^1,\dots,x^{\sD})$, and for $\sm\subseteq[\sD]$ we write $\x_\sm=(x^i)_{i\in\sm}$.  We write $\x\oplus_i c$ for the state obtained by shifting coordinate $i$ by $c$ modulo $S$ (\Cref{sec:reverse}), $\Ham$ for the Hamming distance, $\mathbf{1}\{\cdot\}$ for indicators, $\mathrm{KL}(p\|q)$ for the Kullback--Leibler divergence, and $a\vee b=\max(a,b)$.  The sign $\lesssim$ hides constants, $O(\cdot)$ and $o(\cdot)$ are the Bachmann--Landau symbols, $\widetilde O(\cdot)$ also hides logarithmic factors, and $O_{\mathbb P}(\cdot)$ and $\widetilde O_{\mathbb P}(\cdot)$ are their stochastic counterparts.
\section{Problem formulation}\label{sec:problem}

Let $q^\star$ be a unknown distribution on the discrete product space $\cX\defn[S]^\sD$, where $[S]\defn\{1,\dots,S\}$. Our goal is to construct a sampler for $q^\star$ from observations $\x_0^{(1)},\ldots,\x_0^{(n)} \overset{\mathrm{i.i.d.}}{\sim}q^\star$. We use continuous-time discrete diffusion \citep{campbell2022}, in which a forward Markov process progressively corrupts the data and a sampler is obtained by learning its time reversal.

\subsection{Forward CTMC on a discrete product space}
\label{subsec:forward}

We model the forward noising process by a continuous-time Markov chain $(\x_t)_{t\ge0}$ on $\cX$, with $\x_0\sim q^\star$. Its dynamics are specified by a time-homogeneous rate matrix $Q\in\bR^{|\cX|\times|\cX|}$. For $\x\neq\y$, the entry $Q(\x,\y)\ge0$ gives the transition rate from $\x$ to $\y$, while $Q(\x,\x)=-\sum_{\y\neq\x}Q(\x,\y)$ ensures that total probability is conserved.

Write $q_t(\y)\defn\Pr(\x_t=\y)$. Regarded as a column vector, $q_t$ satisfies the \emph{Kolmogorov forward equation} \citep{feller1940integro, anderson2012continuous}
\begin{equation}\label{eq:kolmogorov}
  \frac{\mathrm d q_t}{\mathrm dt}
  =
  Q^\top q_t,
  \qquad
  q_0=q^\star.
\end{equation}
We choose the forward dynamics to act separately on each coordinate, with a common token-level rate matrix $Q^{\tok}\in\bR^{S\times S}$. Each jump changes one coordinate, at a rate depending only on its current and proposed values. Writing
$\Ham(\x,\y)\defn|\{i\in[\sD]:x^i\neq y^i\}|$ for the Hamming distance, we set
\begin{equation}\label{eq:Q_cases}
  Q(\x,\y)
  =
  \begin{cases}
    Q^{\tok}(x^i,y^i),
      & \text{if $\x$ and $\y$ differ only in coordinate $i$},
      \\[3pt]
    \displaystyle\sum_{i=1}^{\sD}Q^{\tok}(x^i,x^i),
      & \text{if $\x=\y$},
      \\[3pt]
    0,
      & \text{if $\Ham(\x,\y)\ge2$}.
  \end{cases}
\end{equation}
Conditional on $\x_0$, the coordinates evolve independently; the coordinates under $q^\star$ need not be independent. Since $Q$ is time-homogeneous,
\eqref{eq:kolmogorov} has the solution
$q_t=e^{tQ^\top}q^\star$.
The transition probability from a fixed initial state
$\x_0$ to $\y$ at time $t$ is given by the
\emph{conditional forward kernel}
\begin{equation}\label{eq:cond_kernel}
  q_{t|0}(\y\mid\x_0)
  \defn
  \bigl[e^{tQ}\bigr]_{\x_0,\y}.
\end{equation}
Averaging over the initial state gives
$q_t(\y)=\sum_{\x_0\in\cX}
q_{t|0}(\y\mid\x_0)q^\star(\x_0)$.

The form of $Q$ in \eqref{eq:Q_cases} allows us to evaluate
conditional transition probabilities without forming the
full transition matrix $e^{tQ}$.
In the product basis $e_{x^1}\otimes\cdots\otimes e_{x^\sD}$,
where $e_a$ is the $a$-th standard basis vector of $\bR^S$,
the rate matrix has the Kronecker-sum representation
\[
  Q
  =
  \bigoplus_{i=1}^{\sD}Q^{\tok}
  \defn
  \sum_{i=1}^{\sD}
  \underbrace{\Id_S\otimes\cdots\otimes\Id_S}_{i-1}
  \otimes Q^{\tok}\otimes
  \underbrace{\Id_S\otimes\cdots\otimes\Id_S}_{\sD-i}.
\]
Indeed, the $i$-th summand has entries
\[
  \bigl[
    \Id_S^{\otimes(i-1)}
    \otimes Q^{\tok}
    \otimes\Id_S^{\otimes(\sD-i)}
  \bigr]_{\x,\y}
  =
  Q^{\tok}(x^i,y^i)
  \prod_{j\neq i}\mathbf{1}\{x^j=y^j\}.
\]
The identity factors require all coordinates other than $i$
to agree, giving exactly the contribution of coordinate $i$
to \eqref{eq:Q_cases}.

The summands act on different tensor factors and therefore
commute, giving
$e^{tQ}=\bigotimes_{i=1}^{\sD}e^{tQ^{\tok}}$.
Define the token-level transition kernel by
$q_{t|0}^{\tok}(b\mid a)\defn[e^{tQ^{\tok}}]_{a,b}$.
Taking the $(\x_0,\y)$ entry of this tensor product and using
\eqref{eq:cond_kernel} yields
\begin{equation}\label{eq:qt0_factor}
  \boxed{
    q_{t|0}(\y\mid\x_0)
    =
    \prod_{i=1}^{\sD}
    q_{t|0}^{\tok}(y^i\mid x_0^i)
  }.
\end{equation}
A conditional transition probability can therefore be evaluated from the $S\times S$ matrix $e^{tQ^{\tok}}$ by multiplying $\sD$ entries.

\paragraph{Uniform noising.}
Throughout the paper, we use uniform noising for the forward process. Each coordinate changes from $a$ to $b$ at rate $1/S$ for every pair of distinct values $a,b\in[S]$. The corresponding token-level rate matrix is
\[
  Q^{\tok}
  =
  \frac1S\bOne\bOne^\top-\Id_S,
\]
where $\bOne\in\bR^S$ is the vector of ones.
Since
$(S^{-1}\bOne\bOne^\top)^2=S^{-1}\bOne\bOne^\top$,
the matrix $-Q^{\tok}$ is idempotent. A direct computation (Lemma~\ref{lem:idempotent} in \Cref{app:prelim}) gives
\[
  e^{tQ^{\tok}}
  =
  e^{-t}\Id_S
  +
  \frac{1-e^{-t}}{S}\bOne\bOne^\top.
\]
The token-level transition probabilities are
\begin{equation}\nonumber
  q_{t|0}^{\tok}(b\mid a)
  =
  e^{-t}\mathbf{1}\{a=b\}
  +\frac{1-e^{-t}}{S}.
\end{equation}
In \eqref{eq:qt0_factor}, the factor for coordinate $i$
equals $[1+(S-1)e^{-t}]/S$ when $y^i=x_0^i$ and
$(1-e^{-t})/S$ otherwise.
Thus, $q_{t|0}(\y\mid\x_0)$ depends on $\x_0$ and $\y$
only through their Hamming distance.
Writing
$\alpha_t\defn(1-e^{-t})/[1+(S-1)e^{-t}]$,
we obtain, for $t>0$,
\begin{equation}\label{eq:qt0_hamming}
  \boxed{
    q_{t|0}(\y\mid\x_0)
    =
    \left(
      \frac{1+(S-1)e^{-t}}{S}
    \right)^{\!\sD}
    \alpha_t^{\,\Ham(\y,\x_0)}
  }.
\end{equation}
As $t\to\infty$, $\alpha_t\to1$, so the conditional kernel
converges to $S^{-\sD}$ for every $\x_0,\y\in\cX$.
Consequently, the marginal law $q_t$ converges to the
uniform distribution on $\cX$.

\subsection{The reverse process and the score}\label{sec:reverse}

The time-reversed process $(\overleftarrow{\x}_t)_{t\in[0,T]}$ has marginal law $\overleftarrow{q}_t=q_{T-t}$, starting from $q_T$ at time zero and reaching $q^\star$ at time $T$. For $0\le t<T$, its marginal law satisfies the Kolmogorov equation \citep{feller1940integro, anderson2012continuous}
\[
  \frac{\mathrm{d}\overleftarrow{q}_t}{\mathrm{d}t}
  =
  \overleftarrow{Q}_t^{\,\top}\overleftarrow{q}_t,
  \qquad
  \overleftarrow{q}_0=q_T,
\]
with off-diagonal transition rates
\begin{equation}\label{eq:reversal}
  \overleftarrow{Q}_t(\x,\y)
  =
  Q(\y,\x)
  \frac{q_{T-t}(\y)}{q_{T-t}(\x)},
  \qquad \x\neq\y,
\end{equation}
and diagonal entries chosen so that each row sums to zero. By \eqref{eq:qt0_hamming}, $q_{T-t}(\x)>0$ for $t<T$, so these ratios are well defined. The forward rates are known, whereas the probability ratios depend on the data distribution $q^\star$.

\paragraph{The concrete score.}
We define the \emph{concrete score} for $t\ge0$ by
\[
  s_t^\star(\y,\x)
  \defn
  \frac{q_t(\y)}{q_t(\x)},
  \qquad
  \x,\y\in[S]^\sD,\quad q_t(\x)>0.
\]
By \eqref{eq:Q_cases} and \eqref{eq:reversal}, the reverse process changes only one coordinate at a time. To index these transitions, write $a\oplus c\defn((a+c-1)\bmod S)+1$ for $a\in[S]$ and $c\in\mathbb{Z}$, and define
\[
  \x\oplus_i c
  \defn
  (x^1,\dots,x^{i-1},x^i\oplus c,x^{i+1},\dots,x^\sD).
\]
Each of the $\sD(S-1)$ states differing from $\x$ in exactly one coordinate has this form for a unique pair $(i,c)\in[\sD]\times[S-1]$. Under uniform noising, $Q(\x\oplus_i c,\x)=1/S$, so
\[
  \overleftarrow{Q}_t(\x,\x\oplus_i c)
  =
  \frac1S s_{T-t}^\star(\x\oplus_i c,\x),
  \qquad
  i\in[\sD],\quad c\in[S-1].
\]
For $t>0$, averaging \eqref{eq:qt0_hamming}
over $\x_0\sim q^\star$ expresses the marginal law as
\[
  q_t(\x)
  =
  \left(
    \frac{1+(S-1)e^{-t}}{S}
  \right)^{\!\sD}
  \bE_{\x_0\sim q^\star}
  \bigl[
    \alpha_t^{\,\Ham(\x,\x_0)}
  \bigr].
\]
The factor outside the expectation does not depend on $\x$
and cancels in the score ratio:
\begin{equation}\label{eq:score_uniform}
  \boxed{
    s_t^\star(\x\oplus_i c,\x)
    =
    \frac{
      \bE_{\x_0\sim q^\star}
      \bigl[
        \alpha_t^{\,\Ham(\x\oplus_i c,\x_0)}
      \bigr]
    }{
      \bE_{\x_0\sim q^\star}
      \bigl[
        \alpha_t^{\,\Ham(\x,\x_0)}
      \bigr]
    }
  ,
  \qquad i\in[\sD],\quad c\in[S-1].
  }
\end{equation}

\subsection{Neural score model}\label{subsec:network_class}

The score~\eqref{eq:score_uniform} depends on the unknown data distribution $q^\star$ and cannot be evaluated directly. We approximate it with a weight-sharing neural network trained on samples from $q^\star$. First we recall the definition of a standard sparse ReLU network before introducing its weight-sharing variant. 

\begin{definition}[Sparse ReLU network]\label{def:sparsenet}
A ReLU network with $\sL\in\N$ hidden layers, input
dimension $n_0$, hidden-layer widths $n_1,\ldots,n_\sL$,
and output dimension $n_{\sL+1}$ is the map
\[
  \sN_\rho(\x\mid\btheta)
  \defn
  \mathbf{A}_{\sL+1}\circ\rho\circ\mathbf{A}_\sL
  \circ\cdots\circ\rho\circ\mathbf{A}_1(\x),
\]
where
$\mathbf{A}_\ell(\x)=\W_\ell\x+\bb_\ell$,
$\W_\ell\in\R^{n_\ell\times n_{\ell-1}}$,
$\bb_\ell\in\R^{n_\ell}$, and $\rho$ applies
$\max(\cdot,0)$ componentwise.
The vector $\btheta$ collects all weights and biases.
We call $\sL$ the depth and
$\max_{0\le\ell\le\sL+1}n_\ell$ the width.
The sparsity and magnitude are, respectively,
\[
  \begin{aligned}
    |\btheta|_0
\defn
    \sum_{\ell=1}^{\sL+1}
    \left(
      |\mathrm{vec}(\W_\ell)|_0+|\bb_\ell|_0
    \right),
    % \\
    \qquad
    |\btheta|_\infty
    \defn
    \max_{1\le\ell\le\sL+1}
    \max\left(
      |\mathrm{vec}(\W_\ell)|_\infty,
      |\bb_\ell|_\infty
    \right).
  \end{aligned}
\]
We write $\bTheta_{n_0,n_{\sL+1}}(\sL,\sW,\sS,\sB)$ for the class of parameter vectors with depth $\sL$, width at most $\sW$, sparsity at most $\sS$, and magnitude at most $\sB$. For a fully \emph{connected neural network}, we will omit the sparsity parameter $\sS$.
\end{definition}

\begin{definition}[Weight-sharing neural network class]
\label{def:wsnn}
  Fix a fully connected neural network architecture $(\sL,\sW,\sB)$. Let $\Sws$ be the effective sparsity and let 
  \[
    \omega:
    [-\sB,\sB]^{\Sws}
    \longrightarrow
    \bigl\{
      \btheta:
      |\btheta|_\infty\le\sB\;
    \bigr\},
  \]
  be a Lipschitz map with Lipschitz constant
  \[
    \Lw
    \defn
    \sup_{\bv\neq\bv'}
    \frac{
      \|\omega(\bv)-\omega(\bv')\|_\infty
    }{
      \|\bv-\bv'\|_\infty
    },
    \qquad
    \bv,\bv'\in[-\sB,\sB]^{\Sws}.
  \]
  The \emph{weight-sharing neural network class} is
  \[
    \bTheta^{\mathrm{ws}}
    (\sL,\sW,\sB,\Sws,\omega)
    \defn
    \bigl\{
      \sN_\rho(\cdot\mid\omega(\bv)):
      \bv\in[-\sB,\sB]^{\Sws}
    \bigr\}.
  \]
The vector $\bv$ contains the $\Sws$ trainable coordinates, and $\omega(\bv)$ is the resulting network parameter vector.  
\end{definition}

% Requires \usepackage{tikz} and \usetikzlibrary{arrows.meta}.
% Insert outside remark/theorem, minipage, table, or another figure.
\begin{figure}[!htbp]
% \begin{figure}[h]
\centering
\begin{tikzpicture}[
  x=1cm,y=1cm,>=Stealth,
  neuron/.style={circle,draw=black!65,fill=white,minimum size=7mm,inner sep=1pt},
  weightA/.style={->,thick,draw=blue!65!black},
  weightB/.style={->,thick,dashed,draw=red!65!black},
  labelbox/.style={fill=white,inner sep=1.5pt,font=\small},
  font=\small
]
% (a) A width-two filter applied at three spatial locations.
\node[anchor=west,font=\bfseries] at (-.5,3.65)
  {(a) Convolution: sharing across positions};
\foreach \j/\xx in {1/0,2/2,3/4,4/6}
  \node[neuron] (x\j) at (\xx,1) {$x_{\j}$};
\foreach \j/\xx in {1/1,2/3,3/5}
  \node[neuron] (y\j) at (\xx,2.8) {$y_{\j}$};
\foreach \j/\k in {1/2,2/3,3/4}{
  \draw[weightA] (x\j) -- node[labelbox,pos=.48] {$a$} (y\j);
  \draw[weightB] (x\k) -- node[labelbox,pos=.48] {$b$} (y\j);
}
\node[anchor=west,align=left,text width=5.2cm] at (7.1,2.45)
  {$y_j=\rho(ax_j+bx_{j+1})$\\[6pt]
   $\bv=(a,b)$\\[3pt]
   $\omega(\bv)=(a,b,a,b,a,b)$\\[6pt]
   Two free coefficients; six uses.};
\draw[black!20] (-.5,.35) -- (12.4,.35);
% (b) The same scalar recurrent cell used at three time steps.
\node[anchor=west,font=\bfseries] at (-.5,-.3)
  {(b) Recurrence: sharing across time steps};
\node[neuron] (h0) at (0,-1.6) {$h_0$};
\foreach \t/\xx in {1/2,2/4,3/6}{
  \node[neuron] (h\t) at (\xx,-1.6) {$h_{\t}$};
  \node[neuron] (z\t) at (\xx,-3.2) {$x_{\t}$};
  \draw[weightA] (z\t) -- node[labelbox] {$u$} (h\t);
}
\foreach \j/\k in {0/1,1/2,2/3}
  \draw[weightB] (h\j) -- node[labelbox,above] {$w$} (h\k);
\node[anchor=west,align=left,text width=5.2cm] at (7.1,-2.15)
  {$h_t=\rho(ux_t+wh_{t-1})$\\[6pt]
   $\bv=(u,w)$\\[3pt]
   $\omega(\bv)=(u,w,u,w,u,w)$\\[6pt]
   Two free coefficients; six uses.};
\end{tikzpicture}
\caption{Weight sharing across spatial positions in a CNN and time steps
in an unrolled RNN. Matching edge labels denote shared coefficients,
replicated by $\omega$. Here $\rho$ is ReLU; biases and zero entries
are omitted.}
\label{fig:wsnn_sharing}
\end{figure}

\begin{remark}[Examples of weight-sharing]
Several standard architectures admit this representation. Placing the coordinates of $\bv$ at a prescribed set of $\Sws$ parameter positions and setting the remaining entries to zero gives a fixed-support subclass of the sparse networks considered by \citet{schmidt2020nonparametric}, with $\sS=\Sws$. For a convolutional layer, $\bv$ stores a filter, and $\omega$ places the same coefficients at each spatial location \citep{lecun1998gradient}. For example, the layer
\[
 y_j=\rho\!\left(\sum_{\ell=1}^{r}a_\ell x_{j+\ell-1}\right),
 \qquad j=1,\ldots,m,
\]
has $\Sws=r$ free coefficients and $\sS=mr$ weight positions. Increasing the number of locations at which the filter is applied does not increase the number of free parameters. 

Recurrent networks similarly reuse parameters across time \citep{elman1990finding}. For the ReLU recurrence 
\[
 h_t=\rho(Ux_t+Wh_{t-1}+b),\qquad t=1,\ldots,m,
\]
$\bv$ contains the entries of $(U,W,b)$, and $\omega$ copies this parameter block into each of the $m$ recurrent cells in the unrolled computation.  Figure~\ref{fig:wsnn_sharing} illustrates convolutional and recurrent sharing. The parameter maps in these examples copy coordinates, possibly with a sign, and insert fixed entries. They satisfy  $\Lw=1$.
\end{remark}

\smallskip\noindent
For the score model, the network takes $(\x,t)$ as input and has $\sD(S-1)$ outputs, one for each distinct single-coordinate neighbour of $\x$. Writing $s^{\btheta}(\x,t)=\sN_\rho((\x,t)\mid\btheta)$, we denote its $(i,c)$-th component by
\[
  s_t^{\btheta}(\x\oplus_i c,\x),
  \qquad i\in[\sD],\quad c\in[S-1].
\]
We write $s$ for a time-dependent score function and $s_t$ for the score at time $t$. For training and statistical analysis, we restrict the estimated scores to a fixed positive range. For $\sR\ge1$, define
\begin{small}
\begin{equation}\label{eq:ws-restricted}
  \bTheta^{\mathrm{ws}}_{\sR}
  (\sL,\sW,\sB,\Sws,\omega)
  \defn
  \bigl\{
    \sN_\rho(\cdot\mid\btheta)
    \in
    \bTheta^{\mathrm{ws}}(\sL,\sW,\sB,\Sws,\omega)
    :
    s_t^{\btheta}(\x\oplus_i c,\x)
    \in[\sR^{-1},\sR]
    \text{ for all }(\x,t,i,c)
  \bigr\}.
\end{equation}
\end{small}
The next section defines the training objective over this class.

\section{Method}\label{sec:method}
We first estimate the concrete score $s_t^\star$ from observations drawn from $q^\star$ and denote the estimate by $\widehat s_t$. We then substitute $\widehat s$ for $s^\star$ in the reverse rates and generate samples by simulating the resulting chain with $\tau$-leaping.

\subsection{Score estimation}\label{subsec:optimization}

We estimate the score within the range-restricted network class $\bTheta_{\sR}^{\mathrm{ws}}$ defined in~\eqref{eq:ws-restricted}. Under uniform noising, the reverse rates depend only on scores associated with single-coordinate transitions,  of which has forward rate $1/S$. We measure score estimation error using the \emph{score entropy loss} of \citet{lou2024discrete}, built from $K(u)\defn u-\log u-1$ for $u>0$. \citet{lou2024discrete} motivate this choice by noting that the squared-error loss used in earlier work \citep{meng2022concrete} insufficiently penalizes zero or negative score estimates and can lead to unstable training.

\smallskip
\noindent
For $s^{\btheta}\in\bTheta_{\sR}^{\mathrm{ws}}$
and $0<t\le T$, define
\begin{equation}\label{eq:master_obj}
\begin{aligned}
  \cL_t(s^{\btheta})
  &\defn
  \frac1S
  \bE_{\x\sim q_t}
  \left[
    \sum_{i=1}^{\sD}
    \sum_{\substack{c\in[S]\\c\neq S}}
    s_t^\star(\x\oplus_i c,\x)
    K\left(
      \frac{s_t^{\btheta}(\x\oplus_i c,\x)}
           {s_t^\star(\x\oplus_i c,\x)}
    \right)
  \right],
  % \\
  \quad
  \cL(s^{\btheta})
  % &
  \defn
  \int_0^T \cL_t(s^{\btheta})\,\mathrm dt.
\end{aligned}
\end{equation}
The shift $c=S$ is excluded because it leaves $\x$ unchanged.  Since $K(u)\ge0$, with equality only at $u=1$, $\cL_t(s^{\btheta})\ge0$ is nonnegative and vanishes exactly when $s_t^{\btheta}=s_t^\star$ for every transition.

To obtain a training objective, we rewrite the population loss
using the known forward transition probabilities. Expanding each
summand, with the state arguments suppressed, gives
\begin{equation}\label{eq: K expand}
     s_t^\star K\left(\frac{s_t^{\btheta}}{s_t^\star}\right)
  =
  s_t^{\btheta}
  -
  s_t^\star\log s_t^{\btheta}
  +
  s_t^\star\log s_t^\star
  -
  s_t^\star. 
\end{equation}
The last two terms in~\eqref{eq: K expand} do not depend on $\btheta$. To rewrite the remaining terms, define the
\emph{conditional score}
\[
  s_{t|0}^\star(\y,\x\mid\x_0)
  \defn
  \frac{q_{t|0}(\y\mid\x_0)}
       {q_{t|0}(\x\mid\x_0)},
  \qquad 0<t\le T.
\]
This ratio can be evaluated for each observed initial state from~\eqref{eq:qt0_hamming}, without knowing $q^\star$. Averaging over $\x_0\sim q^\star$ and $\x\sim q_{t|0}(\cdot\mid\x_0)$ preserves the expectation of the first term. A direct computation (Lemma~\ref{lem:cond_score} in Appendix~\ref{app:prelim}) shows that replacing $s_t^\star$ by $s_{t|0}^\star$ under this joint expectation also preserves the expectation of the second term. Hence
\begin{equation}\label{eq:obj_division}
\begin{aligned}
  &\cL_t(s^{\btheta})
   \\={}&
  \frac1S
  \bE_{\x_0\sim q^\star}
  \bE_{\x\sim q_{t|0}(\cdot\mid\x_0)}
  \Biggl[
    \sum_{i=1}^{\sD}
    \sum_{\substack{c\in[S]\\c\neq S}}
    \Bigl\{
      s_t^{\btheta}(\x\oplus_i c,\x)
      -
      s_{t|0}^\star(\x\oplus_i c,\x\mid\x_0)
      \log s_t^{\btheta}(\x\oplus_i c,\x)
    \Bigr\}
  \Biggr]
  +\Xi_t
  \\
  ={}&
  \cL_{\mathrm{cond},t}(s^{\btheta})+\Xi_t,
\end{aligned}
\end{equation}
where $\Xi_t$ collects the last two terms of the expansion, averaged and summed as in~\eqref{eq:master_obj}.
Since $\Xi_t$ does not depend on $\btheta$, the conditional loss
\[
  \cL_{\mathrm{cond}}(s^{\btheta})
  \defn
  \int_0^T
  \cL_{\mathrm{cond},t}(s^{\btheta})\,\mathrm dt
\]
has the same minimizers as $\cL$.

\paragraph{Empirical score learning.}
We estimate $\cL_{\mathrm{cond}}$ by averaging over the observations and randomly sampled times. We observe $\x_0^{(1)},\ldots,\x_0^{(n)} \overset{\mathrm{i.i.d.}}{\sim}q^\star$ and, independently, draw $\xi_1,\ldots,\xi_n \overset{\mathrm{i.i.d.}}{\sim}\operatorname{Unif}(0,T]$. Let $\ell_t(s;\x_0)$ be an unbiased training loss for $\cL_{\mathrm{cond},t}(s)$, that is,

\begin{equation}\label{eq:loss_mean_identity}
  \bE_{\x_0\sim q^\star}
  \bigl[\ell_t(s;\x_0)\bigr]
  =
  \cL_{\mathrm{cond},t}(s)
  =
  \cL_t(s)-\Xi_t.
\end{equation}
We estimate the score by
\begin{equation}\label{eq:emp_score_loss}
\begin{aligned}
  \widehat\cL_n^\ell(s^{\btheta})
  &\defn
  \frac{T}{n}
  \sum_{k=1}^n
  \ell_{\xi_k}(s^{\btheta};\x_0^{(k)}),
  % \\
  \qquad
  \widehat\btheta
  % &
  \in
  \argmin_{\btheta:\,s^{\btheta}\in\bTheta_{\sR}^{\mathrm{ws}}}
  \widehat\cL_n^\ell(s^{\btheta}),
  \qquad
  \widehat s\defn s^{\widehat\btheta}.
\end{aligned}
\end{equation}
The factor $T$ ensures that $\bE[\widehat\cL_n^\ell(s)]=\cL_{\mathrm{cond}}(s)$. We consider two representations of $\ell_t$: denoising score entropy (DSE) and implicit score entropy (ISE) \citep{lou2024discrete,chen2024uniformization}.

For a fixed observation $\x_0$ and $0<t\le T$, the DSE
loss is the conditional expression in~\eqref{eq:obj_division}:
\[
\begin{aligned}
  \ell_t^{\mathrm{DSE}}(s;\x_0)
  \defn{}&
  \frac1S
  \bE_{\x\sim q_{t|0}(\cdot\mid\x_0)}
  \Biggl[
    \sum_{i=1}^{\sD}
    \sum_{\substack{c\in[S]\\c\neq S}}
    \Bigl\{
      s_t(\x\oplus_i c,\x)
      -
      s_{t|0}^\star(\x\oplus_i c,\x\mid\x_0)
      \log s_t(\x\oplus_i c,\x)
    \Bigr\}
  \Biggr].
\end{aligned}
\]
Under uniform noising, the factors for unchanged coordinates
cancel in the conditional-score ratio.
Using~\eqref{eq:qt0_hamming}, we obtain
\begin{equation}\label{eq:cond_score_unif}
  s_{t|0}^{\star}(\x\oplus_i c,\x\mid\x_0)
  =
  \alpha_t^{
    \mathbf{1}\{x^i\oplus c\neq x_0^i\}
    -
    \mathbf{1}\{x^i\neq x_0^i\}
  }
  \in\{\alpha_t^{-1},1,\alpha_t\}.
\end{equation}
The ratio equals $\alpha_t^{-1}$ when the transition
restores the initial symbol $x_0^i$, $\alpha_t$ when
it changes that symbol to another value, and $1$ otherwise.
This specifies the conditional score needed to evaluate
the DSE integrand.

The ISE representation removes the conditional score
from the logarithmic term.
For $c\in[S]$ with $c\neq S$, write
$\x\ominus_i c\defn\x\oplus_i(S-c)$.
For each $i$ and $c$, expanding the logarithmic expectation
gives
\[
\begin{aligned}
&\bE_{\x\sim q_{t|0}(\cdot\mid\x_0)}
\left[
  s_{t|0}^\star(\x\oplus_i c,\x\mid\x_0)
  \log s_t(\x\oplus_i c,\x)
\right]
=
\bE_{\y\sim q_{t|0}(\cdot\mid\x_0)}
\left[
  \log s_t(\y,\y\ominus_i c)
\right].
\end{aligned}
\]
The last display uses the change of variables $\y=\x\oplus_i c$, whose inverse is $\x=\y\ominus_i c$. Applying this identity to the logarithmic terms defines the ISE loss:
\[
\begin{aligned}
  \ell_t^{\mathrm{ISE}}(s;\x_0)
  \defn{}&
  \frac1S
  \bE_{\x\sim q_{t|0}(\cdot\mid\x_0)}
  \Biggl[
    \sum_{i=1}^{\sD}
    \sum_{\substack{c\in[S]\\c\neq S}}
    \Bigl\{
      -
      \log s_t(\x,\x\ominus_i c)
    \Bigr\}
  \Biggr].
\end{aligned}
\]
Equation~\eqref{eq:obj_division} establishes \eqref{eq:loss_mean_identity} for DSE, and the preceding change of variables establishes it for ISE. Either representation can therefore be used in the empirical objective~\eqref{eq:emp_score_loss}.

The computational distinction arises when we estimate the conditional expectations by Monte Carlo. We can approximate either expectation by averaging its integrand over independent noisy states drawn from the conditional forward kernel in~\eqref{eq:qt0_hamming}. For each sampled state, evaluating the full DSE summand requires one network evaluation, which returns all single-coordinate scores at that state. The full ISE summand also requires network evaluations at neighbouring states for its logarithmic terms \citep{lou2024discrete,chen2024uniformization}. Our theoretical analysis in the next section uses the exact conditional expectations.

\subsection{$\tau$-leaping sampling}\label{subsec:tau_leaping}
We use $\tau$-leaping to simulate the estimated reverse process \citep{campbell2022}. The sampler (\Cref{alg:tau_leaping}) advances along a prescribed time grid and allows several jumps within each step. Let
\[
  0=t_0<t_1<\cdots<t_N=T
\]
be a grid on the reverse-time interval $[0,T]$, and denote the sampler's state at $t_k$ by $\z_k\in[S]^{\sD}$. We initialize the sampler from the uniform stationary distribution of the forward process:
\[
  \z_0\sim\operatorname{Unif}([S]^{\sD}).
\]
\begin{algorithm}[htpb!]
  \caption{$\tau$-leaping for uniform noising}
  \label{alg:tau_leaping}
  \begin{algorithmic}[1]
    \Require A grid $0=t_0<t_1<\cdots<t_N=T$ and the estimated
    score $\widehat s$
    \State Draw
    \[
      \z_0
      \sim
      \operatorname{Unif}([S]^{\sD}).
    \]
    \For{$k=0,\ldots,N-1$}
      \ForAll{$(i,c)\in[\sD]\times[S-1]$}
        \State Independently draw
        \[
          P_{k,i,c}
          \sim
          \operatorname{Poisson}\left(
            \frac1S
            \int_{t_k}^{t_{k+1}}
            \widehat s_{T-t}
            (\z_k\oplus_i c,\z_k)\,\mathrm dt
          \right).
        \]
      \EndFor
      \For{$i=1,\ldots,\sD$}
        \State Set
        \begin{equation}\label{eq:evolve}
            z_{k+1}^i \gets z_k^i \oplus \sum_{c\in[S-1]} cP_{k,i,c}.
        \end{equation}
      \EndFor
    \EndFor
    \State \Return $\z_N$.
  \end{algorithmic}
\end{algorithm}
On each interval $[t_k,t_{k+1})$, we fix the state entering the score at $\z_k$ and retain the time dependence of $\widehat s_{T-t}$. For $k=0,\ldots,N-1$, conditional on $\z_k$, define the coordinate rate matrices $Q_{k,t}^i\in\bR^{S\times S}$ by
\begin{equation}\label{eq:tau_rate}
  Q_{k,t}^i(a,a\oplus c)
  =
  \frac1S
  \widehat s_{T-t}(\z_k\oplus_i c,\z_k),
  \qquad
  a\in[S],\quad c\in[S-1],\quad i\in[\sD],
\end{equation}
for $t\in[t_k,t_{k+1})$, with diagonal entries chosen so that every row sums to zero. Conditional on $\z_k$, coordinate $i$ starts from $z_k^i$, and the coordinates evolve independently under these rate matrices. Their values at $t_{k+1}$ form $\z_{k+1}$. For each coordinate, the rate of a shift depends on time and $\z_k$. %, but not on the current coordinate value $a$ as in \eqref{eq:tau_rate}. 
% Since additive shifts commute, their counts determine the endpoint without requiring their occurrence times or order. 
Applying \Cref{lem:poisson_representation} (see \Cref{app:poisson}) with
\[
  h=t_{k+1}-t_k,
  \qquad
  \lambda_c(r)
  =
  \frac1S
  \widehat s_{T-t_k-r}(\z_k\oplus_i c,\z_k),
  \qquad 0\le r<h,
\]
shows that the law of $z_{k+1}$ given $z_k$ (in \eqref{eq:evolve}) is the same as that of a Markov process evolving according to \eqref{eq:kolmogorov}, initialized at $z_k$ with rate given by the matrix in \eqref{eq:tau_rate}. At each step, the $\sD(S-1)$ counts $P_{k,i,c}$ are independent conditional on $\z_k$
and can be drawn in parallel. \Cref{alg:tau_leaping} applies their combined shifts. We denote the law of the final output by
\begin{equation}\label{eq:finalaw}
  p_T\defn\operatorname{Law}(\z_N).
\end{equation}

\section{Theoretical results}\label{sec:theory}

We now give guarantees for score approximation, score estimation, and sampling.  We begin by imposing a natural structure on the data distribution $q^\star$. As mentioned in the Introduction, many popular models for discrete distributions exhibit spatial, temporal, network based, and other forms of local dependence structures. This observation leads us to Markov random fields as defined below.

\begin{definition}[Markov random field of order $d$]
\label{def:mrf}
  Let $1\le d\le\sD$, and define
  \[
    \Fd\defn\{\sm\subseteq[\sD]:|\sm|=d\}.
  \]
  For $\x\in[S]^\sD$ and $\sm\in\Fd$, write   $\x_\sm=(x^i)_{i\in\sm}\in[S]^d$.   We say that $q^\star$ belongs to $\MRF_d$ if there exist   potentials $\psi_\sm:[S]^d\to[0,1]$, $\sm\in\Fd$, such that
  \begin{equation}\label{eq:qMRF}
    q^\star(\x)
    =
    \prod_{\sm\in\Fd}\psi_\sm(\x_\sm),
    \qquad
    \sum_{\x\in[S]^\sD}q^\star(\x)=1.
  \end{equation}
  It is easy to check that the representation uses at most  $\binom{\sD}{d}S^d\le\sD^dS^d$ potential values. For fixed $d$, this number grows polynomially in $\sD$ and $S$. This is in contrast with an unstructured $q^\star$ which would require $S^\sD-1$ values to be fully specified. By definition $\MRF_1$ is the class of product distributions and the classes are nested:
\[
  \MRF_1\subseteq\MRF_2\subseteq\cdots\subseteq\MRF_\sD.
\]
This is because one can obtain an order-$(d+1)$ representation for any $d<\sD$,
by assigning each $d$-element set to a $(d+1)$-element super set.
\end{definition}
\noindent\paragraph{Examples.}
We present three standard classes of discrete distributions that fall naturally in the aforementioned class of Markov random fields.

\smallskip
\noindent\emph{1. $k$-gram models.}
A $k$-gram model on $[S]^D$ has the factorization
\[
q^\star(x)
=
q^\star_{1:k-1}(x_1,\ldots,x_{k-1})
\prod_{j=k}^D
p_j\!\left(
    x_j
    \,\middle|\,
    x_{j-k+1},\ldots,x_{j-1}
\right), 
\]
with potentials $q^\star_{1:k-1}$, $\{p_j\}_{j=k}^D$
Each factor involves at most $k$ coordinates, so
$q^\star\in\MRF_k$. This includes finite-memory language models (see \cite{shannon1948,liu2024infini,brants2007large}).

\smallskip
\noindent\emph{2. Potts models.}
Let $G=([D],E)$ be a graph. A ferromagnetic Potts model (see \cite{potts1952,costeniuc2005complete,malekian2026ldp}) can be written as
\[
q^\star(x)
=
\frac{1}{Z}
\exp\left\{
    -\beta \sum_{\{i,j\}\in E}
    \mathbf{1}\{x_i\neq x_j\}
\right\},
\qquad
\beta\geq 0,
\]
where $Z$ is the partition function. Each interaction involves only the
pair $(x_i,x_j)$, and hence $q^\star\in\MRF_2$.

\smallskip
\noindent\emph{3. Bayesian networks.}
Let $G$ be a directed acyclic graph on $[D]$, and let
$\pa(i)$ denote the parents of node $i$. A Bayesian network (see \cite{liu2018nonparametric,koller2009graphical})  factorizes as
\[
q^\star(x)
=
\prod_{i=1}^D
p_i\!\left(x_i\,\middle|\,x_{\pa(i)}\right).
\]
If $r:=\max_{i\in[D]}|\pa(i)|$, then every factor depends on at most $r+1$ coordinates. Therefore,
in the sense of Definition~\ref{def:mrf}, we have $q^\star\in\MRF_{r+1}$.

\paragraph{Bounded neighboring ratios.}
We also control how much the forward chain probabilities  change when one coordinate changes via the following assumption.
\begin{assumption}[Bounded flip ratio]\label{ass:flip}
  The distribution $q^\star$ has full support, and there is   a constant $R\ge1$ such that
  \begin{equation}\label{eq:flip_ratio}
    R^{-1}
    \le
    \frac{q^\star(\x\oplus_i c)}{q^\star(\x)}
    \le
    R
  \end{equation}
  for every $\x\in[S]^\sD$, $i\in[\sD]$, and   $c\in[S-1]$.
\end{assumption}

\begin{remark}[Interpretation of the ratio bound]\label{rem:flip}
Under full support, \eqref{eq:flip_ratio} is equivalent to
\[
  \left|
    \log q^\star(\x\oplus_i c)-\log q^\star(\x)
  \right|
  \le \log R.
\]
Changing the coordinates on which $\x$ and $\y$ differ, one at a time, yields
\[
  \left|\log q^\star(\y)-\log q^\star(\x)\right|
  \le (\log R)\Ham(\x,\y).
\]
Thus $\log q^\star$ is Lipschitz with respect to Hamming distance. Equivalently,
  \(
    R^{-\Ham(\x,\y)}
    \le
    {q^\star(\y)}/{q^\star(\x)}
    \le
    R^{\Ham(\x,\y)}
  \)
The continuous analogue is a bound on the magnitude of the score. For a positive, continuously differentiable density $p^\star$ on $\bR^m$, the bound $\|\nabla\log p^\star(\x)\|_2\le L$ for every $\x$ implies
\[
  \left|\log p^\star(\y)-\log p^\star(\x)\right|
  \le L\|\y-\x\|_2,
\]
by integrating the gradient along the line segment from $\x$ to $\y$. Assumption~\ref{ass:flip} imposes the corresponding bound with Hamming distance in place of Euclidean distance and $\log R$ in place of $L$. Continuous diffusion analyses also study Lipschitz regularity of the score, which controls how the score varies with the state \citep{mena2025rectified, stephanovitch2026lipschitz}.
\end{remark}

\subsection{Score approximation}\label{subsec:score_approx}

The estimation results of the next subsection require the network
class to contain a good approximation of the true score.  We now show
that the MRF structure of $q^\star$ yields such an approximation. Recall that $\alpha_t=\frac{1-e^{-t}}{1+(S-1)e^{-t}}$. Define
\begin{equation}\label{eq:g_def}
  g_t(\x)
  :=
  \bE_{\x_0\sim q^\star}
  \left[
    \alpha_t^{\Ham(\x,\x_0)}
  \right]
  =
  \sum_{\x_0\in[S]^\sD}
  \alpha_t^{\Ham(\x,\x_0)}
  q^\star(\x_0).
\end{equation}
By~\eqref{eq:score_uniform},
\begin{equation}\label{eq:score_ratio_g}
  s_t^\star(\x\oplus_i c,\x)
  =
  \frac{g_t(\x\oplus_i c)}{g_t(\x)}.
\end{equation}
Thus it is enough to approximate $g_t$ and its values at neighboring states.  Under Assumption~\ref{ass:flip}, the true score takes values in $[R^{-1},R]$ for every $t>0$ (\Cref{lem:score_bounds} in Appendix~\ref{app:prelim}), which is the range used in the range-restricted network class.

\paragraph{Pinning decomposition.}
We next express $g_t$ through the marginals of $q^\star$.  Set
$\mu_t:=1-\alpha_t$.  For $\sm\subseteq[\sD]$, let
\begin{align}\label{eq:marg-def}
  A_\sm(\x_\sm)
  :=
  \Pr_{\x_0\sim q^\star}
  \bigl((\x_0)_\sm=\x_\sm\bigr)
  =
  \sum_{\substack{\x_0\in[S]^\sD\\
                  (x_0)^i=x^i,\;i\in\sm}}
  q^\star(\x_0)
\end{align}
be the marginal of $q^\star$ on the coordinates in $\sm$, with $A_\emptyset\equiv1$.  The following decomposition holds for every distribution $q^\star$, and the Markov random field structure enters only through the marginals $A_\sm$.

\begin{proposition}[Pinning decomposition]\label{prop:pinning}
  For every $t>0$ and $\x\in[S]^\sD$,
  \begin{equation}\label{eq:pinning}
    g_t(\x)
    =
    \sum_{\sm\subseteq[\sD]}
    \mu_t^{|\sm|}
    \alpha_t^{\sD-|\sm|}
    A_\sm(\x_\sm).
  \end{equation}
  Equivalently, $g_t(\x)=\bE_\sm\bigl[A_\sm(\x_\sm)\bigr]$, where $\sm$ contains each coordinate independently with probability $\mu_t$.  Grouping the sets $\sm$ by their size,
  \begin{equation}\label{eq:pinning_levels}
    g_t(\x)
    =
    \sum_{k=0}^{\sD}
    \binom{\sD}{k}\mu_t^{k}\alpha_t^{\sD-k}\,
    \bar A_k(\x),
    \qquad
    \bar A_k(\x)
    :=
    \binom{\sD}{k}^{-1}
    \sum_{|\sm|=k}A_\sm(\x_\sm).
  \end{equation}
\end{proposition}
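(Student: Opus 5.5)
The plan is to expand the Hamming-distance weight coordinatewise and then interchange sums. Since $\Ham(\x,\x_0)=\sum_{i=1}^{\sD}\mathbf{1}\{x^i\neq x_0^i\}$, we have
\[
  \alpha_t^{\Ham(\x,\x_0)}=\prod_{i=1}^{\sD}\alpha_t^{\mathbf{1}\{x^i\neq x_0^i\}}.
\]
The key elementary identity is that, for each coordinate,
\[
  \alpha_t^{\mathbf{1}\{x^i\neq x_0^i\}}=\alpha_t+\mu_t\,\mathbf{1}\{x^i=x_0^i\}.
\]
It holds because $\mu_t=1-\alpha_t$: if $x^i=x_0^i$ both sides equal $1$, and otherwise both equal $\alpha_t$.

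Substituting this into the product and expanding over the choice, for each $i$, of the term $\alpha_t$ or the term $\mu_t\mathbf{1}\{x^i=x_0^i\}$ gives
\[
  \alpha_t^{\Ham(\x,\x_0)}=\sum_{\sm\subseteq[\sD]}\mu_t^{|\sm|}\alpha_t^{\sD-|\sm|}\prod_{i\in\sm}\mathbf{1}\{x^i=x_0^i\}.
\]
Here $\sm$ is the set of coordinates where the indicator term was selected.

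Next I will multiply by $q^\star(\x_0)$ and sum over $\x_0\in[S]^\sD$, exchanging the two finite sums. For each fixed $\sm$, the inner sum $\sum_{\x_0}q^\star(\x_0)\prod_{i\in\sm}\mathbf{1}\{x^i=x_0^i\}$ is exactly $A_\sm(\x_\sm)$ as defined in \eqref{eq:marg-def}. For $\sm=\emptyset$ it equals $1=A_\emptyset$. This yields \eqref{eq:pinning}.

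The probabilistic restatement is immediate. The weights $\mu_t^{|\sm|}\alpha_t^{\sD-|\sm|}$ are the probabilities that a random subset including each coordinate independently with probability $\mu_t$ equals $\sm$; these weights are nonnegative and sum to $(\mu_t+\alpha_t)^{\sD}=1$, with $\alpha_t\in[0,1)$ for $t>0$. Finally, \eqref{eq:pinning_levels} follows by grouping the sets $\sm$ by $k=|\sm|$. Every set of size $k$ carries the common weight $\mu_t^k\alpha_t^{\sD-k}$, so $\sum_{|\sm|=k}A_\sm=\binom{\sD}{k}\bar A_k$.

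There is no real obstacle. The only point needing care is the coordinatewise identity above together with the bookkeeping of the product expansion, which is the distributive law over $\sD$ binomial factors.
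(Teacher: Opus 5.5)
Your proof is correct and follows the paper's argument essentially verbatim. You use the coordinatewise identity $\alpha_t^{\mathbf{1}\{x^i\neq x_0^i\}}=\alpha_t+\mu_t\mathbf{1}\{x^i=x_0^i\}$, expand the product over subsets $\sm$, take expectations under $q^\star$ to obtain $A_\sm(\x_\sm)$, and group by $|\sm|$; your check that the weights sum to $(\mu_t+\alpha_t)^{\sD}=1$ simply makes explicit the probabilistic interpretation that the paper states.
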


\begin{proof}
  For each coordinate, $\alpha_t^{\mathbf{1}\{x_0^i\neq x^i\}}=\alpha_t+\mu_t\mathbf{1}\{x_0^i=x^i\}$.  Taking the product over $i\in[\sD]$ and expanding gives
  \(
    \alpha_t^{\Ham(\x,\x_0)}
    =
    \sum_{\sm\subseteq[\sD]}\mu_t^{|\sm|}\alpha_t^{\sD-|\sm|}\prod_{i\in\sm}\mathbf{1}\{x_0^i=x^i\}.
  \)
  Finally we take expectations over $\x_0\sim q^\star$ and use $\bE\prod_{i\in\sm}\mathbf{1}\{x_0^i=x^i\}=A_\sm(\x_\sm)$ to get~\eqref{eq:pinning}.  The weights $\mu_t^{|\sm|}\alpha_t^{\sD-|\sm|}$ are the probabilities of the independent pinning scheme, which gives the expectation form, following which we collect the sets of size $k$ which yields~\eqref{eq:pinning_levels}.
\end{proof}

\Cref{prop:pinning} separates the two sources of variation in the score.  In~\eqref{eq:pinning_levels}, time enters only through the Binomial$(\sD,\mu_t)$ weights, which are explicit and do not depend on $q^\star$, while the data distribution enters only through the averages $\bar A_k$ of its partial marginals, which do not depend on $t$.  As $t\to0$, the weights concentrate on $k=\sD$ and $g_t(\x)$ approaches $q^\star(\x)$, while as $t\to\infty$ they concentrate on $k=0$ and $g_t(\x)$ approaches one.  In between, the noise level only changes how much weight each level $k$ receives.  As a result, the same target quantities serve every noise level, and a single network with time-independent weights can approximate the score uniformly over $(0,T]$.  This is what allows us to train one network with random times and to choose the sampling grid after training (\Cref{subsec:efficient_sampling}).

The score of a continuous diffusion behaves differently.  For the Ornstein--Uhlenbeck forward process, $q_t$ is the convolution of a rescaled copy of $q^\star$ with a Gaussian whose variance depends on $t$, so that $\nabla\log q_t$ depends on $t$ and on $q^\star$ jointly through this convolution.  Analyses of continuous diffusion and flow-based models handle this coupling through the smoothness of the target and the Euclidean geometry of its support \citep{liang2024denoising,deb2025noregret,kumar2026flow}.  To the best of our knowledge, no exact separation into explicit time weights and time-free functionals of the data distribution is available in the continuous setting.  Categorical data have neither derivatives nor a Euclidean geometry, and \Cref{prop:pinning} replaces these tools with an exact combinatorial description of the score in terms of the marginals $A_\sm$.

By \cref{prop:pinning} approximating $g_t$ reduces to evaluating the partial marginals $A_\sm$.  When $q^\star$ is a Markov random field of order $d$, these marginals are sums of products of certain smaller tables which naturally leads to weight-sharing networks. A more detailed description of this construction is provided in \Cref{subsec:proof_technique}.

In our first result, we prove an uniform approximation on the bound for the discrete score.

\begin{theorem}[Score approximation]\label{thm:score}
  Suppose that $q^\star\in\MRF_d$, that $1\le d\le D$, and that Assumption~\ref{ass:flip} holds
  with constant $R\ge1$.  Let $S\ge2$ and $T>0$. For every $0<\varepsilon\le1$, denote
  \(
    \ell_\varepsilon
    \defn
    \log\!\left(
      {e\sD(SR)^{3\sD}}/{\varepsilon}
    \right).
  \)
  There exists a weight-sharing ReLU network
  \[
    s^{\mathrm{NN}}
    \in
    \bTheta_R^{\mathrm{ws}}
    (\sL,\sW,\sB,\Sws,\omega)
  \]
  with input $(\x,t)\in[S]^\sD\times(0,T]$ such that
  \begin{equation}\label{eq:score-err}
    \sup_{t\in(0,T]}
    \max_{\substack{\x\in[S]^\sD\\
                    i\in[\sD],\;c\in[S-1]}}
    \left|
      s_t^\star(\x\oplus_i c,\x)
      -
      s_t^{\mathrm{NN}}(\x\oplus_i c,\x)
    \right|
    \le
    \varepsilon.
  \end{equation}
  The network parameters satisfy
  \[
  \begin{aligned}
    \Sws
    \le
    C_d
    \left[
      \sD^dS^d
      +
      \sD^{3d+2}\ell_\varepsilon
      +
      \ell_\varepsilon^4
    \right]&,
    \;\;
    \sL
    \le
    C_d\sD^{2d+1}\ell_\varepsilon^2,
    \;\;
    \sW
    \le
    C_d\sD^{2d+2}S
    \left[
      (S+1)^{\sD-d-1}
      +
      \ell_\varepsilon^3
    \right],
    \\
    \sB
    &\le
    \exp\!\left(
      C_d\sD^d\ell_\varepsilon^2
    \right),
    \qquad
    \Lw\le\sD^{d},
  \end{aligned}
  \]
  where $C_d>0$ depends only on $d$.
\end{theorem}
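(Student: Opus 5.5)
The plan is to build the network in three stages that mirror \Cref{prop:pinning}. These are: (i) a weight-sharing subnetwork that outputs the partial marginals $A_\sm(\x_\sm)$ for all pinned sets $\sm$; (ii) a time subnetwork that produces the Binomial weights $\mu_t^{k}\alpha_t^{\sD-k}$; and (iii) a product/division block that forms $g_t(\x\oplus_i c)$ and $g_t(\x)$ and then their ratio, clipped to $[R^{-1},R]$.

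\textbf{Stage (i): the marginals.} Write $A_\sm(\x_\sm)=\sum_{\x_{\sm^c}}\prod_{\sm'\in\Fd}\psi_{\sm'}(\x_{\sm'})$ and eliminate the free coordinates one at a time, as in variable elimination. Every intermediate factor is a sum of products of the original tables $\psi_{\sm'}$. The trainable vector $\bv$ stores each table entry $\psi_{\sm'}(a)$ exactly once, which gives $\binom{\sD}{d}S^d\le\sD^dS^d$ coordinates. The map $\omega$ then copies these entries into every position where they reappear across the $2^\sD$ sets $\sm$. A copy is only needed once per table in any given product, so $\Lw\le\sD^d$. Table lookup $\x_{\sm'}\mapsto\psi_{\sm'}(\x_{\sm'})$ is done with exact ReLU indicator gadgets $\mathbf{1}\{x^j=a\}$ on integer inputs. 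The sum over a free coordinate, which has $S$ branches, accounts for the width factor $S(S+1)^{\sD-d-1}$: it enumerates assignments of the free coordinates beyond the first $d$ pinned ones, with one extra ``summed'' symbol.

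\textbf{Multiplication.} Products are handled with the Yarotsky/Schmidt-Hieber approximate multiplication network. Its depth and parameter count are $O(\ell_\varepsilon^2)$ and $O(\ell_\varepsilon)$, with fixed weights independent of $q^\star$. These fixed weights contribute the $\sD^{3d+2}\ell_\varepsilon$ and $\ell_\varepsilon^4$ terms to $\Sws$, since they are shared across all multiplication gadgets.

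\textbf{Stage (ii): the time weights.} On $(0,T]$, $\alpha_t$ is a smooth function of $e^{-t}$. I approximate $t\mapsto\alpha_t$ with ReLU nets, to accuracy $\varepsilon'$, using a polynomial-in-$e^{-t}$ or piecewise-linear construction. The powers $\mu_t^k\alpha_t^{\sD-k}$ then come from $\sD$ repeated multiplication gadgets. Since $\mu_t,\alpha_t\in[0,1]$, errors grow at most linearly in $\sD$.

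\textbf{Stage (iii): assembling the score.} Sum the resulting approximation of $g_t$ over $k$. The exponentially large magnitude bound $\sB\le\exp(C_d\sD^d\ell_\varepsilon^2)$ comes from rescaling, because the $\psi$'s and the marginals can be as small as $(SR)^{-\sD}$ in relative terms. For the ratio, I use the lower bound $g_t(\x)\ge\min_{\x_0}q^\star(\x_0)\ge(SR)^{-\sD}S^{0}\cdot$const. This follows from \Cref{ass:flip} via \Cref{rem:flip}, since $q^\star(\y)\ge R^{-\sD}q^\star(\x)$ and hence $q^\star\ge (SR)^{-\sD}$. Combined with \Cref{lem:score_bounds}, which places $s_t^\star$ in $[R^{-1},R]$, this lets me approximate $u\mapsto 1/u$ on $[(SR)^{-\sD},1]$. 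The reciprocal is computed by Newton/series or a piecewise-linear net at accuracy depth $O(\ell_\varepsilon^2)$, which is why $\ell_\varepsilon$ contains $(SR)^{3\sD}$: relative error $\varepsilon$ in the ratio needs absolute error roughly $\varepsilon (SR)^{-2\sD}$ in numerator and denominator. Finally, a clipping layer $\min(\max(\cdot,R^{-1}),R)$ enforces membership in $\bTheta^{\mathrm{ws}}_R$ without increasing the error, because the target already lies in that range.

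\textbf{Main obstacle.} I expect the hardest step to be Stage (i): showing that the elimination schedule over all $2^\sD$ pinned sets can share parameters so that $\Sws$ stays at $O(\sD^dS^d)$ target parameters. It must also keep the multiplication errors, compounded across $O(\sD^{2d+1})$ sequential layers, within $\varepsilon(SR)^{-2\sD}$. I would first try a backward induction on the number of free coordinates. The inductive claim would be that each intermediate message is a product of at most $\binom{\sD}{d}$ shared table entries summed over a free block, with the error bounded by (number of multiplications)$\times\varepsilon'$.
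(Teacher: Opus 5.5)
Your architecture matches the paper's. It has the pinning decomposition, elimination of free coordinates with table entries stored once in $\bv$ and replicated by $\omega$, shared ReLU product gadgets, a time subnetwork, a reciprocal, and a final clip to $[R^{-1},R]$. Your lower bound $g_t(\x)\ge q^\star(\x)\ge(SR)^{-\sD}$, taken from the $\x_0=\x$ term of \eqref{eq:g_def}, is slightly more direct than the paper's route through $g_t\ge q_t$ and the mass bounds.

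The trouble is that the step you leave as the ``main obstacle'' is where the proof lives, and the invariant you propose for it is wrong. The paper splits each one-coordinate sum into two factors.
\begin{itemize}
\item An input-free factor $\alpha_p$: the product of the full tables $\psi_U$ with $U\subseteq\sm^c$, $|U|=d$. It becomes the summation weight matrix $\mathbf C_p$, whose entries $\omega$ forms as products of at most $\binom{p-1}{d-1}\le\sD^d$ stored coordinates, each clipped by $\mathsf P_1$. This clipped-product map is what gives $\Lw\le\sD^d$. A pure copy map is $1$-Lipschitz but leaves those products to the network. Unclipped products inside $\omega$ would not be $\sD^d$-Lipschitz on $[-\sB,\sB]^{\Sws}$.
\item An input-dependent factor $\beta_p$: built from tables straddling $\sm$ and $\sm^c$, looked up from $\x_\sm$ and multiplied by product gadgets.
\end{itemize}
The error is not controlled by (number of multiplications)$\times\varepsilon'$. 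With $r=\sD-|\sm|$, the partial sums $h_p$ range up to $S^{r-p}$. So the binary gadgets must accept that input range, kept in check by projections. Each sum over $b\in[S]$ then multiplies the inherited error by $S$: $e_{p-1}\le Se_p+S^{r-p+1}\varepsilon^\beta_p+S\varepsilon^{\mathrm{mult}}_p$. This blow-up is harmless, since it costs only $O(\sD\log S)$ extra bits. It is exactly the extra $S^{\sD}$ in $(SR)^{3\sD}$; your accounting (absolute error $\varepsilon(SR)^{-2\sD}$) only explains $(SR)^{2\sD}$.

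Three further specifics would miss the stated bounds as written.
\begin{itemize}
\item Width. Store $A_\sm$ for $1\le|\sm|\le d$ as lookup tables, at most $d\sD^dS^d$ coordinates, instead of eliminating. Otherwise, for $d\ge2$, a set with $|\sm|\le d-1$ needs $S^{\sD-|\sm|}$ parallel positions. That exceeds $C_d\sD^{2d+2}S(S+1)^{\sD-d-1}$ once $S\gg\sD^{2d+2}$. The exponent $\sD-d-1$ is exactly the maximal number of free coordinates left after this choice, via $\sum_{r\le\sD-d-1}\binom{\sD}{r}S^r\le\sD^{d+1}(S+1)^{\sD-d-1}$. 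The remaining factor $\sD S$ in the width comes from the $(S-1)\sD+1$ parallel copies of the $g_t$-network on $\x$ and its shifts. These copies must share one parameter vector, or $\Sws$ picks up that factor too.
\item Time subnetwork. A piecewise-linear interpolant of $\alpha_t$ needs a number of knots polynomial in $1/\varepsilon$, and exponential in $\sD$ under your accuracy requirement. But $\Sws$ may depend on $\varepsilon$ only through $\ell_\varepsilon$. You need log-cost ReLU approximations of $e^{-t}$ and $1/x$, valid for all $t>0$ so that nothing depends on $T$, before forming $\mu_t^k\alpha_t^{\sD-k}$ as you propose.
\item Magnitude. $\sB$ does not come from rescaling, since every stored quantity lies in $[0,1]$. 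It comes from the $e^{-t}$ network and from product gadgets with up to $\binom{\sD}{d}$ inputs and range up to $S^{\sD}$.
\end{itemize}
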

\noindent
The proof of \cref{thm:score} also shows that when $D\le 2d+1$, a sparse ReLU network (see \cref{def:sparsenet}) with sparsity $\le C_d\left[\sD^dS^d+\sD^{3d+2}\ell_\varepsilon+\ell_\varepsilon^4
    \right]$ also achieves the bound in \eqref{eq:score-err}. The term $\sD^dS^d$ in \cref{thm:score} accounts for the stored MRF and marginal
tables and their evaluation parameters.  The remaining terms
in the effective sparsity account for the shared networks
used to approximate products, time-dependent coefficients,
and the reciprocal as detailed in \cref{subsec:proof_technique}. 

To pass from the uniform error to the population risk, note that, for
every $i\in[\sD]$ and $c\in[S-1]$, we have
\[
\begin{aligned}
  \bE_{\x\sim q_t}
  \left[
    s_t^\star(\x\oplus_i c,\x)
  \right]
  &=
  \sum_{\x\in[S]^\sD}
  q_t(\x)
  \frac{q_t(\x\oplus_i c)}{q_t(\x)}
  =
  \sum_{\x\in[S]^\sD}
  q_t(\x\oplus_i c)
  =
  1,
\end{aligned}
\]
where the last equality follows because
$\x\mapsto\x\oplus_i c$ is a bijection.  Therefore
\begin{equation}\label{eq:score-mass}
  \bE_{\x\sim q_t}
  \left[
    \frac1S
    \sum_{i=1}^{\sD}
    \sum_{c\in[S-1]}
    s_t^\star(\x\oplus_i c,\x)
  \right]
  =
  \frac{(S-1)\sD}{S}
  \le
  \sD.
\end{equation}
The range bound in Lemma~\ref{lem:score_bounds}, together with \eqref{eq:score-mass}, gives the corresponding approximation error in the population loss.

\begin{theorem}[Approximation error in population risk]
\label{thm:bias}
  Suppose that $q^\star\in\MRF_d$, $1\le d\le D$, and  Assumption~\ref{ass:flip} holds  with constant $R\ge1$.  Let $S\ge2$ and $T>0$.   For every $0<\varepsilon\le1$, define
  \(
    \ell_{\varepsilon,T}
    \defn
    \log\!\left(
      {e\sD(SR)^{3\sD}(1\vee T)}/{\varepsilon}
    \right).
  \)
  There exists a weight-sharing ReLU network
  \[
    s^{\mathrm{NN}}
    \in
    \bTheta_R^{\mathrm{ws}}
    (\sL,\sW,\sB,\Sws,\omega)
  \]
  with input $(\x,t)\in[S]^\sD\times(0,T]$ and outputs
  $s_t^{\mathrm{NN}}(\x\oplus_i c,\x)$,
  indexed by $i\in[\sD]$ and $c\in[S-1]$, satisfying
  \begin{small}
  \[
    \cL_t(s^{\mathrm{NN}})
    \le
    \frac{\varepsilon}{1\vee T}
    \quad\text{for every }t\in(0,T],
    \qquad
    \cL(s^{\mathrm{NN}})
    =
    \int_0^T
    \cL_t(s^{\mathrm{NN}})\,\mathrm dt
    \le
    \varepsilon.
  \]
  The network parameters satisfy
  \[
  \begin{aligned}
    \Sws
    \le
    C_d
    \left[
      \sD^dS^d
      +
      \sD^{3d+2}\ell_{\varepsilon,T}
      +
      \ell_{\varepsilon,T}^4
    \right]&,
    \;\;
    \sL
    \le
    C_d\sD^{2d+1}\ell_{\varepsilon,T}^2,
    \;\;
    \sW
    \le
    C_d\sD^{2d+2}S
    \left[
      (S+1)^{\sD-d-1}
      +
      \ell_{\varepsilon,T}^3
    \right],
    \\
    \sB
    &\le
    \exp\!\left(
      C_d\sD^d\ell_{\varepsilon,T}^2
    \right),
    \;\;
    \Lw\le\sD^{d},
  \end{aligned}
  \]
  \end{small}
  where $C_d>0$ depends only on $d$.
\end{theorem}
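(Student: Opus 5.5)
We will obtain Theorem~\ref{thm:bias} as a corollary of Theorem~\ref{thm:score}, applied with a smaller accuracy $\varepsilon'$. The idea is to convert the uniform score error into a bound on the score entropy integrand using the range constraint $[R^{-1},R]$ shared by both scores.

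\textbf{Step 1: pointwise control of the integrand.} Fix $t$, $\x$, $i$, $c$ and write $s^\star=s_t^\star(\x\oplus_i c,\x)$, $s=s_t^{\mathrm{NN}}(\x\oplus_i c,\x)$. By Lemma~\ref{lem:score_bounds}, $s^\star\in[R^{-1},R]$, and $s\in[R^{-1},R]$ because $s^{\mathrm{NN}}\in\bTheta^{\mathrm{ws}}_R$. Put $u=s/s^\star$. Since $K(1)=K'(1)=0$ and $K''(v)=v^{-2}$, Taylor's theorem gives
\[
K(u)\le \tfrac12 (u-1)^2\max_{v\in[u\wedge1,u\vee1]}v^{-2}.
\]
Here $u\ge R^{-2}$, so $K(u)\le \tfrac12 R^4(u-1)^2$. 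Therefore
\[
s^\star K(u)\le \frac{R^4 (s-s^\star)^2}{2s^\star}\le \frac{R^5}{2}(s-s^\star)^2 .
\]

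\textbf{Step 2: summing and averaging.} Summing over the $\sD(S-1)$ pairs $(i,c)$, dividing by $S$, and averaging over $\x\sim q_t$ gives
\[
\cL_t(s^{\mathrm{NN}})\le \tfrac12 \sD R^5\varepsilon'^2 .
\]
Alternatively, one can keep the factor $s^\star$ and use \eqref{eq:score-mass}, which gives $\cL_t\le \tfrac12 R^4\sD\,\varepsilon'^2$ as a linear-in-mass bound. Either version suffices.

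\textbf{Step 3: choosing the accuracy.} Choose
\[
\varepsilon'=\min\Bigl\{1,\bigl(2\varepsilon/(\sD R^5(1\vee T))\bigr)^{1/2}\Bigr\}.
\]
Then $\cL_t\le\varepsilon/(1\vee T)$ for every $t\in(0,T]$. Integrating over $(0,T]$ gives $\cL(s^{\mathrm{NN}})\le T\varepsilon/(1\vee T)\le\varepsilon$.

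\textbf{Step 4: parameter bookkeeping.} We need $\ell_{\varepsilon'}\lesssim \ell_{\varepsilon,T}$. We have
\[
\log(1/\varepsilon')\le \tfrac12\log\bigl(\sD R^5(1\vee T)/\varepsilon\bigr)+O(1),
\]
and $\sD R^5\le e\sD(SR)^{3\sD}$ because $3\sD\ge5$ fails only when $\sD=1$. In that case $R^5$ is absorbed after enlarging $C_d$, or more simply by bounding $\log R^5\le 5\log(SR)^{3\sD}$. Hence $\ell_{\varepsilon'}\le C\,\ell_{\varepsilon,T}$ for an absolute constant $C$. Substituting this into the bounds of Theorem~\ref{thm:score} and enlarging $C_d$ yields the stated bounds on $\Sws,\sL,\sW,\sB$. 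The bound $\Lw\le\sD^d$ is unchanged.

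The only delicate point is Step 1: the lower bound on $u$ is what keeps the $-\log u$ term of $K$ under control. This is exactly why the range restriction in $\bTheta^{\mathrm{ws}}_R$ and the bound in Lemma~\ref{lem:score_bounds} are needed. The remainder is routine.
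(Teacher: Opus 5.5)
Your proposal is correct and follows essentially the paper's argument: apply Theorem~\ref{thm:score} at an accuracy of order $\sqrt{\varepsilon/(\sD\,\mathrm{poly}(R)\,(1\vee T))}$, bound $K$ quadratically on $[R^{-2},R^{2}]$, sum and integrate, and absorb the extra logarithm into $\ell_{\varepsilon,T}$. The only differences are minor: the paper uses $K(v)\le 8\log(2R)(v-1)^2$ together with \eqref{eq:score-mass}, where you use a Taylor bound with $1/s^\star\le R$, which changes only the polynomial factor in $R$; and, with your Step~1 bound, your optional ``alternative'' in Step~2 actually gives $\tfrac12 R^{6}\sD\varepsilon'^2$ (because $(u-1)^2\le R^2\varepsilon'^2$), not $\tfrac12 R^{4}\sD\varepsilon'^2$, which is harmless since your main argument uses the correct $R^5$ bound.
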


\subsection{Proof technique}\label{subsec:proof_technique}
We now describe the construction behind \Cref{thm:score,thm:bias}.  The complete proofs are given in Appendix~\ref{app:score_approximation}.

\paragraph{Stored and computed marginals.}
The decomposition~\eqref{eq:pinning} reduces the approximation of
$g_t$ to the partial marginals $A_\sm$.  When $|\sm|\le d$, the
marginal is a table with at most $S^d$ entries.  Under the standing condition $\sD\ge2d+2$, storing all such tables requires at most
\[
  \sum_{j=0}^d\binom{\sD}{j}S^j
  \le
  1+d \binom{\sD}{d} S^d
\]
values, which is of the same order as the number of entries in the
MRF potentials.

For $|\sm|>d$, storing the full table $A_\sm$ would require
$S^{|\sm|}$ entries.  We instead compute its value at the given input
$\x_\sm$.  Substituting the factorization~\eqref{eq:qMRF}, the
potentials involving only coordinates in $\sm$ can be evaluated
immediately.  The remaining coordinates are summed out:

\begin{equation}\label{eq:Am_exp}
  A_\sm(\x_\sm)
  =
  \prod_{\substack{\sm'\in\Fd\\ \sm'\subseteq\sm}}
  \psi_{\sm'}(\x_{\sm'})
  \times
  \sum_{\z\in[S]^{\sm^c}}
  \prod_{\substack{\sm'\in\Fd\\ \sm'\not\subseteq\sm}}
  \psi_{\sm'}
  \bigl(
    \x_{\sm'\cap\sm},
    \z_{\sm'\cap\sm^\com}
  \bigr).
\end{equation}
When at most $d$ coordinates remain to be summed, this expression can
be evaluated directly.  Otherwise, we write the sum as a sequence of
one-coordinate sums.

\paragraph{Eliminating one coordinate at a time.}
Let
\(
  \sm^\com=\{j_1<\cdots<j_r\}.
\)
Each remaining potential depends on one or more of these free
coordinates.  Assign the potential to the last such coordinate in the
ordering above. Let
\[
  \Phi_p
  \bigl(
    b,z^{j_1},\ldots,z^{j_{p-1}};\x_\sm
  \bigr)
  =
  \prod_{\substack{\sm'\in\Fd\\
                   \sm'\not\subseteq\sm\\
                   \max(\sm'\cap\sm^\com)=j_p}}
  \psi_{\sm'}
  \bigl(
    \x_{\sm'\cap\sm},
    \z_{\sm'\cap\sm^\com}
  \bigr)
  \Bigg|_{z^{j_p}=b}.
\]
Thus $\Phi_p$ is the product of the potentials assigned to $j_p$,
after setting $z^{j_p}=b$.  Every remaining potential appears in
exactly one $\Phi_p$. Starting from $h_r = 1$, define, for $p=r,\ldots,1$,
\begin{equation}\label{eq:elim_rec_main}
\begin{aligned}
  &h_{p-1}
  \bigl(
    z^{j_1},\ldots,z^{j_{p-1}};\x_\sm
  \bigr)
  :=
  \sum_{b\in[S]}
  \Phi_p
  \bigl(
    b,z^{j_1},\ldots,z^{j_{p-1}};\x_\sm
  \bigr)
  h_p
  \bigl(
    z^{j_1},\ldots,z^{j_{p-1}},b;\x_\sm
  \bigr).
\end{aligned}
\end{equation}
This is simply the original sum written as nested sums.  At $p=r$, there are no coordinates after $j_r$ to sum over, so we set $h_r\equiv1$.  Suppose that $h_p$ is the sum over $j_{p+1},\ldots,j_r$ of the potentials assigned to those coordinates. Multiplying by $\Phi_p$ inserts exactly the potentials assigned to $j_p$, and summing over $b=z^{j_p}$ eliminates that coordinate. A downward induction therefore shows that $h_{p-1}$ is the sum over the coordinates $j_p,\ldots,j_r$, together with the potentials assigned to these coordinates.  At $p=1$, all coordinates in $\sm^\com$ have been summed out, and hence
\[
  h_0(\,;\x_\sm)
  =
  \sum_{\z\in[S]^{\sm^\com}}
  \prod_{\substack{\sm'\in\Fd\\
                   \sm'\not\subseteq\sm}}
  \psi_{\sm'}
  \bigl(
    \x_{\sm'\cap\sm},
    \z_{\sm'\cap\sm^\com}
  \bigr),
\]
which is sum on the right display of \eqref{eq:Am_exp}.
The appendix writes the factors in $\Phi_p$ more explicitly and gives
the formal induction; see Proposition~\ref{prop:elim_correct} in
Appendix~\ref{app:score_approximation}.

\paragraph{Repeated use of the tables.}
At step $p$, the quantity $\Phi_p$ is evaluated at the $S^p$
assignments of
\[
  (z^{j_1},\ldots,z^{j_{p-1}},b)\in[S]^p.
\]
A table $\psi_{\sm'}$ appearing in $\Phi_p$ depends on at most $d$
of these variables.  If it depends on $\ell\le d$ of them, then only
$S^\ell\le S^d$ different entries of the table are used, and each
entry is repeated over the remaining $p-\ell$ variables.  It therefore
appears $S^{p-\ell}$ times.  Thus, for $p>d$, the recursion may use
$S^p\gg S^d$ positions even though each table contains at most $S^d$
different entries.

For example, let $\sD=6$, $d=2$, and
$\sm=\{1,2,3\}$, so that
$\sm^\com=\{4,5,6\}$.  At the step that eliminates coordinate
$6$,
\begin{equation}\label{eq:Phi_example}
\begin{aligned}
  \Phi_3(b,z^4,z^5;\x_\sm)
  ={}&
  \psi_{\{1,6\}}(x^1,b)
  \psi_{\{2,6\}}(x^2,b)
  \psi_{\{3,6\}}(x^3,b)
  \times
  \psi_{\{4,6\}}(z^4,b)
  \psi_{\{5,6\}}(z^5,b).
\end{aligned}
\end{equation}
The right-hand side is evaluated at $S^3$ triples $(z^4,z^5,b)$.
For fixed $\x_\sm$, each of the first three tables contributes only
$S$ values, each repeated $S^2$ times.  Each of the last two tables
contributes $S^2$ values, each repeated $S$ times.  Hence the $S^3$
positions in~\eqref{eq:Phi_example} are filled by repeated entries of
tables containing at most $S^2$ values.  \Cref{fig:bottleneck} illustrates this reuse.

\begin{figure}[!htbp]
\centering
\begin{minipage}[b]{0.29\linewidth}
\centering
\begin{tikzpicture}[x=1cm,y=1cm,
    pv/.style={circle,fill=black!70,text=white,minimum size=7mm,inner sep=0pt,font=\small},
    fv/.style={circle,draw=black!65,fill=white,minimum size=7mm,inner sep=0pt,font=\small}]
  \coordinate (p1) at (0,2.4); \coordinate (p2) at (0,1.2); \coordinate (p3) at (0,0);
  \coordinate (f4) at (2.6,2.4); \coordinate (f5) at (2.6,1.2); \coordinate (f6) at (2.6,0);
  \draw[black!35,dashed,thick] (p1)--(p2) (p2)--(p3);
  \draw[black!35,dashed,thick] (p1) to[bend right=40] (p3);
  \foreach \i in {1,2,3} \foreach \j in {4,5} \draw[darkgreen!35] (p\i)--(f\j);
  \foreach \i in {1,2,3} \draw[darkgreen,thick] (p\i)--(f6);
  \draw[black!35,thick] (f4)--(f5);
  \draw[red!65!black,very thick] (f5)--(f6);
  \draw[blue!65!black,very thick] (f4) to[bend left=40] (f6);
  \node[pv] at (p1) {$x^1$}; \node[pv] at (p2) {$x^2$}; \node[pv] at (p3) {$x^3$};
  \node[fv] at (f4) {$z^4$}; \node[fv] at (f5) {$z^5$};
  \node[fv,draw=orange!80!black,line width=1.6pt] at (f6) {$b$};
  \node[font=\footnotesize] at (0,-0.65) {pinned};
  \node[font=\footnotesize] at (2.6,-0.65) {summed out};
\end{tikzpicture}\\[4pt]
{\small (a)}
\end{minipage}\hfill
\begin{minipage}[b]{0.69\linewidth}
\centering
\resizebox{\linewidth}{!}{%
\begin{tikzpicture}[x=1cm,y=1cm]
  \def\cw{1.25} \def\ch{0.32}
  \foreach \rw in {3,4,5} \fill[blue!15] (0,-\rw*\ch-\ch/2) rectangle (\cw,-\rw*\ch+\ch/2);
  \foreach \rw in {2,5,8} \fill[red!15] (\cw,-\rw*\ch-\ch/2) rectangle (2*\cw,-\rw*\ch+\ch/2);
  \foreach \zf in {1,2,3} {
    \foreach \zs in {1,2,3} {
      \pgfmathtruncatemacro{\rw}{(\zf-1)*3+\zs-1}
      \node[font=\scriptsize,anchor=east] at (-0.05,-\rw*\ch) {$(\zf,\zs)$};
      \foreach \bv in {1,2,3} {
        \draw[black!40] ({(\bv-1)*\cw},{-\rw*\ch-\ch/2}) rectangle ({\bv*\cw},{-\rw*\ch+\ch/2});
        \node[font=\scriptsize] at ({(\bv-0.5)*\cw},-\rw*\ch)
          {$\textcolor{blue!65!black}{a_{\zf\bv}}\,\textcolor{red!65!black}{e_{\zs\bv}}$};
      }
    }
  }
  \foreach \bv in {1,2,3} \node[font=\scriptsize] at ({(\bv-0.5)*\cw},0.4) {$b=\bv$};
  \node[font=\scriptsize,rotate=90] at (-1.0,-1.28) {$(z^4,z^5)$};
  \node[font=\small] at (1.5*\cw,-3.2) {$S^3=27$ positions};
  \def\tx{5.2} \def\tw{0.6} \def\th{0.36}
  \foreach \off/\lt/\col/\ttl in {0/a/blue!65!black/{$a_{z^4b}=\psi_{\{4,6\}}(z^4,b)$}, -1.85/e/red!65!black/{$e_{z^5b}=\psi_{\{5,6\}}(z^5,b)$}} {
    \node[font=\scriptsize,text=\col,anchor=south west] at (\tx,\off+0.2) {\ttl};
    \foreach \i in {1,2,3} \foreach \j in {1,2,3} {
      \draw[black!40] ({\tx+(\j-1)*\tw},{\off-(\i-1)*\th-\th/2}) rectangle ({\tx+\j*\tw},{\off-(\i-1)*\th+\th/2});
    }
  }
  \fill[blue!15] (\tx,-\th-\th/2) rectangle (\tx+\tw,-\th+\th/2);
  \fill[red!15] (\tx+\tw,-1.85-2*\th-\th/2) rectangle (\tx+2*\tw,-1.85-2*\th+\th/2);
  \foreach \off/\lt/\col in {0/a/blue!65!black, -1.85/e/red!65!black} {
    \foreach \i in {1,2,3} \foreach \j in {1,2,3} {
      \draw[black!40] ({\tx+(\j-1)*\tw},{\off-(\i-1)*\th-\th/2}) rectangle ({\tx+\j*\tw},{\off-(\i-1)*\th+\th/2});
      \node[font=\scriptsize,text=\col] at ({\tx+(\j-0.5)*\tw},{\off-(\i-1)*\th}) {$\lt_{\i\j}$};
    }
  }
  \node[font=\small] at (\tx+1.5*\tw,-3.2) {$2S^2=18$ stored};
\end{tikzpicture}}\\[4pt]
{\small (b)}
\end{minipage}
\caption{The bottleneck in the example~\eqref{eq:Phi_example}, with $\sD=6$, $d=2$, $\sm=\{1,2,3\}$, and $S=3$.  (a) Summing out the sixth coordinate $b=z^6$ involves the three tables linking it to the pinned coordinates (green), which carry the input $\x_\sm$, and the tables $\psi_{\{4,6\}}$ (blue) and $\psi_{\{5,6\}}$ (red), which do not depend on the input.  Dashed edges join pinned coordinates and are evaluated directly, and $\psi_{\{4,5\}}$ (grey) enters only at the next step.  (b) The step touches the $S^3=27$ positions $(z^4,z^5,b)$, and the weight at each position is the product $a_{z^4b}e_{z^5b}$ of an entry of $\psi_{\{4,6\}}$ and an entry of $\psi_{\{5,6\}}$.  The shaded entry $a_{21}$ fills three positions, and so does $e_{32}$, so the $2S^2=18$ stored numbers fill all $27$ positions.}
\label{fig:bottleneck}
\end{figure}

\paragraph{From the recursion to the score network.}
The network follows the recursion~\eqref{eq:elim_rec_main} --- products
in $\Phi_p$ are approximated by multiplication networks, and the sum
over $b\in[S]$ is a linear layer.  The final value $h_0$ gives the sum
in~\eqref{eq:Am_exp}, and hence $A_\sm(\x_\sm)$.  Combining these
marginals in~\eqref{eq:pinning} gives $g_t(\x)$.  The same construction
is applied to $\x$ and $\x\oplus_i c$, after which a reciprocal and
multiplication network approximates
\[
  \frac{g_t(\x\oplus_i c)}{g_t(\x)}
  =
  s_t^\star(\x\oplus_i c,\x).
\]
Weight sharing represents the repetition illustrated in
\eqref{eq:Phi_example} where the entries of each $S^2$ table are stored
once and reused across the $S^3$ positions in which they occur.  More
generally, it reuses at most $S^d$ table entries across the $S^p$
positions of step $p$ with $1\le p\le D$.

\subsection{Score estimation}\label{subsec:score_estimation}

The previous subsection constructs a score with small population risk in the network class. We now bound the risk of the empirical minimizer of~\eqref{eq:emp_score_loss} in terms of the best risk in this class and an error arising from finite sample approximation. Throughout this subsection, we take $\sR=R$ (i.e., the truncation range of the weight-sharing network is equal to the bound on the score from \cref{ass:flip}).

Let $\x_0\sim q^\star$ and $\xi\sim\operatorname{Unif}(0,T]$ be independent. For $s^{\btheta}\in\bTheta_R^{\mathrm{ws}}$, define
\[
  \Delta\ell_{\btheta}(\x_0,\xi)
  \defn
  T\left\{
    \ell_\xi(s^{\btheta};\x_0)
    -\ell_\xi(s^\star;\x_0)
  \right\}.
\]
The reference term $\ell_\xi(s^\star;\x_0)$ does not depend on the candidate score, so subtracting it leaves the empirical minimizer unchanged. We impose the following conditions on these loss differences.

\begin{assumption}[Regularity of the training loss]
\label{ass:training_loss}
  There are constants $B_\ell,V_\ell,L_\ell>0$ such that,
  for every $s^{\btheta},s^{\btheta'}\in\bTheta_R^{\mathrm{ws}}$,
  \[
    \bE\bigl[\Delta\ell_{\btheta}(\x_0,\xi)\bigr]
    =\cL(s^{\btheta}),
    \qquad
        \bE\bigl[\Delta\ell_{\btheta}(\x_0,\xi)^2\bigr]
    \le V_\ell\,\cL(s^{\btheta}),
    \qquad
    \bigl|\Delta\ell_{\btheta}(\x_0,\xi)\bigr|
    \le B_\ell,
  \]
  where the absolute bound holds for every
  $\x_0\in[S]^\sD$ and $\xi\in(0,T]$.
  For every such pair $(\x_0,\xi)$, we also have
  \[
  \begin{aligned}
    &\left|
      \Delta\ell_{\btheta}(\x_0,\xi)
      -\Delta\ell_{\btheta'}(\x_0,\xi)
    \right|
    \le
    L_\ell \!
    \sup_{\substack{t\in(0,T],\;\x\in[S]^\sD\\
                    i\in[\sD],\;c\in[S],\;c\neq S}}
    \left|
      s_t^{\btheta}(\x\oplus_i c,\x)
      -s_t^{\btheta'}(\x\oplus_i c,\x)
    \right|.
  \end{aligned}
  \]
\end{assumption}

The mean identity follows from~\eqref{eq:loss_mean_identity}
and $\cL(s^\star)=0$. The uniform and second-moment bounds
control fluctuations of the empirical loss; the second-moment
bound becomes tighter for scores with smaller population risk.
The Lipschitz bound transfers uniform approximations of the
score functions to their loss differences. Below we verify these conditions for the DSE and ISE losses
introduced in Section~\ref{subsec:optimization}.
For DSE, averaging over the noisy state controls the
conditional-score terms even as $t$ approaches zero.
Together with the bounds on the true and estimated scores,
this allows bounds uniform over the training times.
\begin{proposition}[DSE and ISE]\label{prop:dse_ise_estimation}
  Suppose Assumption~\ref{ass:flip} holds. The DSE and ISE
  losses defined in Section~\ref{subsec:optimization}, using
  exact conditional expectations, satisfy
  Assumption~\ref{ass:training_loss} with
  \[
    B_\ell=3T\sD R,
    \qquad
    V_\ell=8T\sD R^4,
    \qquad
    L_\ell=2T\sD R.
  \]
\end{proposition}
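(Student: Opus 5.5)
We verify the four conditions of Assumption~\ref{ass:training_loss} separately for each loss, working pointwise in $(\x_0,\xi)$. Throughout we use two facts. First, by Lemma~\ref{lem:score_bounds} and the definition of $\bTheta_R^{\mathrm{ws}}$, both $s^\star_t$ and $s^{\btheta}_t$ take values in $[R^{-1},R]$, so $|\log s|\le\log R\le R$. Second, there are $\sD(S-1)$ summands, each carrying the prefactor $1/S$, so the total weight is $\sD(S-1)/S\le\sD$.

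\textbf{Mean identity.} By~\eqref{eq:loss_mean_identity}, $\bE_{\x_0}[\ell_t(s;\x_0)]=\cL_t(s)-\Xi_t$. Subtracting the same identity for $s^\star$ and using $\cL_t(s^\star)=0$ gives $\bE_{\x_0}[\ell_t(s^{\btheta})-\ell_t(s^\star)]=\cL_t(s^{\btheta})$. Averaging over $\xi\sim\operatorname{Unif}(0,T]$ and multiplying by $T$ then yields $\bE[\Delta\ell_{\btheta}]=\cL(s^{\btheta})$.

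\textbf{Uniform and Lipschitz bounds.} For ISE, the integrand of $\Delta\ell$ is $\log s^\star_t(\x,\x\ominus_i c)-\log s^{\btheta}_t(\x,\x\ominus_i c)$, whose absolute value is at most $2\log R$. The map $u\mapsto\log u$ is $R$-Lipschitz on $[R^{-1},R]$, so the Lipschitz bound is $T\sD R$. For DSE, the integrand is
\[
(s^{\btheta}-s^\star)-s^\star_{t|0}\bigl(\log s^{\btheta}-\log s^\star\bigr).
\]
The difficulty is that $s^\star_{t|0}$ can equal $\alpha_t^{-1}$, which blows up as $t\to0$. We avoid this by never bounding it pointwise. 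Instead, for fixed $\x_0$ we use the identity $\bE_{\x\sim q_{t|0}}[s^\star_{t|0}(\x\oplus_i c,\x\mid\x_0)]=1$. This follows from the same bijection argument as~\eqref{eq:score-mass}, applied to the conditional kernel. Hence, for any bounded $f$,
\[
\bigl|\bE_{\x\sim q_{t|0}(\cdot\mid\x_0)}[s^\star_{t|0}f]\bigr|\le\sup|f|.
\]
Applying this gives $|\Delta\ell|\le T\sD(R+2\log R)\le 3T\sD R$. Replacing $s^\star$ by $s^{\btheta'}$ in the same computation gives the Lipschitz constant $T\sD(1+R)\le 2T\sD R$.

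\textbf{Second moment.} This is the main step. By Jensen's inequality (Cauchy--Schwarz over the summands, with total weight at most $\sD$),
\[
\bE[\Delta\ell^2]\le T^2\sD\,\bE_\xi\,\bE_{\x_0}\bE_{\x}\Bigl[\tfrac1S\sum_{i,c}(\text{integrand})^2\Bigr].
\]
We must compare the squared integrand with the score entropy $s^\star K(s^{\btheta}/s^\star)$.

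For ISE, the integrand depends on $\x_0$ only through the law of $\x$. The inner expectation of the squared log-difference can therefore be moved back to $\x\sim q_t$, after the change of variables to $\x\oplus_i c$, which brings in a factor $s^\star_t\ge R^{-1}$. On the compact range $u=s^{\btheta}/s^\star\in[R^{-2},R^2]$ we use the elementary inequality
\[
(\log u)^2\le C R^2 K(u),
\]
which holds because $K(u)\ge(u-1)^2/(2\max(u,1))$ and $|\log u|\le R^2|u-1|$ there.

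For DSE, we first decompose the integrand into its conditional mean given $\x$ plus a fluctuation. The conditional mean is $(s^{\btheta}-s^\star)-s^\star(\log s^{\btheta}-\log s^\star)=s^\star(u-1-\log u)=s^\star K(u)$, which is itself bounded by $3R$. The fluctuation is $(s^\star-s^\star_{t|0})(\log s^{\btheta}-\log s^\star)$. Its square involves $\bE[(s^\star_{t|0})^2\mid\x]$, which may be large. However, the change of variables gives
\[
\bE_{\x_0,\x}\bigl[(s^\star_{t|0})^2(\log\cdot)^2\bigr]=\bE_{\x_0,\y}\bigl[s^\star_{t|0}(\y\ominus_i c,\y)\,(\log\cdot)^2\bigr],
\]
which brings in only the reverse conditional ratio. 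Its expectation given $\y$ is the reverse true score, at most $R$. Combining these bounds with the $(\log u)^2\lesssim R^2K(u)$ inequality, and accounting for a factor $R$ from dividing by $s^\star\ge R^{-1}$, gives a total of $R^4$ and a constant of at most $8$. Multiplying by $T$ from the time integration yields $V_\ell=8T\sD R^4$.

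The main obstacle is controlling this DSE second-moment fluctuation term uniformly as $t\to0$. The key point is to move the squared conditional score onto the reverse transition so that only first moments of $s^\star_{t|0}$ appear.
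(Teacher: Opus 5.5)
\paragraph{Gap in the DSE second-moment step.}
Your mean, uniform and Lipschitz steps are fine. The DSE second-moment step, however, has a genuine gap.

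Your first move is Jensen with uniform weights over $\x\sim q_{t|0}(\cdot\mid\x_0)$ and the summands, and this squares the conditional score. Write $a=s^{\btheta}_t(\x\oplus_i c,\x)$, $b=s^\star_t(\x\oplus_i c,\x)$ and $r=s^\star_{t|0}(\x\oplus_i c,\x\mid\x_0)$. Then the resulting bound contains
\[
\bE_{\x\sim q_t}\!\left[\bE[r^2\mid\x]\,\log^2(a/b)\right].
\]
By the flip-ratio bound, $r=\alpha_t^{-1}$ on an event of posterior probability of order $\alpha_t$. Hence $\bE[r^2\mid\x]\gtrsim\alpha_t^{-1}\asymp S/t$ as $t\to0$. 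So this term is not bounded by a constant times $\cL_t$. Its time integral diverges whenever $s^{\btheta}_t$ stays away from $s^\star_t$ near $t=0$, and no later rearrangement can rescue an upper bound that is already this large.

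The change of variables you invoke also fails. With $\y=\x\oplus_i c$,
\[
q_{t|0}(\x\mid\x_0)\,r^2=q_{t|0}(\y\mid\x_0)\,s^\star_{t|0}(\y,\y\ominus_i c\mid\x_0),
\]
so the leftover weight is $s^\star_{t|0}(\y,\y\ominus_i c\mid\x_0)$, not $s^\star_{t|0}(\y\ominus_i c,\y\mid\x_0)$. Its numerator sits at the conditioning point, so its conditional mean given $\y$ is not a true score. In fact it equals $\alpha_t^{-1}$ on the event $y^i=x_0^i$, which is the typical event for small $t$.

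\paragraph{How to repair it.}
The missing idea is never to square $r$. Split $d_{\btheta,t}(\x_0)$ into $\bE_\x[\frac1S\sum(a-b)]$ and $\bE_\x[\frac1S\sum r\log(a/b)]$, and use $(u+v)^2\le 2u^2+2v^2$. Apply Cauchy--Schwarz to the second piece with respect to the $r$-weighted measure. Its total mass is at most $\sD$ because $\bE_{\x\sim q_{t|0}(\cdot\mid\x_0)}[r]=1$. This yields
\[
d^2\le 2\sD\,\bE_\x\!\left[\tfrac1S\textstyle\sum\{(a-b)^2+r\log^2(a/b)\}\right],
\]
with $r$ appearing only linearly. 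Since $\log^2(a/b)$ depends on $\x$ alone, Lemma~\ref{lem:cond_score} replaces $r$ by $b$ after averaging over $\x_0$. The bounds $(a-b)^2\le 2RbK(a/b)$ and $b\log^2(a/b)\le 2R^4bK(a/b)$ then give $V_\ell=8T\sD R^4$. This is the paper's argument.

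Alternatively, at fixed $\x_0$ the same change of variables that defines ISE shows $\ell_t^{\mathrm{DSE}}(s;\x_0)=\ell_t^{\mathrm{ISE}}(s;\x_0)$. So $\Delta\ell_{\btheta}$ is the same random variable for both losses. Your ISE argument (square inside $q_{t|0}$, pass to $q_t$, change variables to obtain $b\log^2(a/b)$) is sound and already covers DSE.

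For that route, the ISE integrand must include the linear term $s_t(\x\oplus_i c,\x)$. The displayed ISE formula drops it, but without it the mean identity you assert fails. That term contributes $(a-b)^2\le 2RbK(a/b)$.

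Finally, your stated ingredients for $(\log u)^2\le CR^2K(u)$ only yield $(\log u)^2\le 2R^4K(u)$ after splitting at $u=1$. That weaker bound is still enough for the claimed constant.
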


\paragraph{Entropy of the search class.} To control the empirical loss uniformly over the search class, we approximate the class by a finite collection of score networks in the supremum norm. For $n\ge2$, Lemma~\ref{lem:covering} bounds the logarithm of the number of networks needed at accuracy of order $1/n$ by a constant multiple of
\begin{equation}\label{eq:est_entropy}
  \Lambda_n
  \defn
  3\Sws\sL
  \log\!\bigl(
    8n(S\vee T)(\sB\vee1)(\Lw\vee1)\sL\sW
  \bigr).
\end{equation}
Note that the factor $\Sws$ counts the trainable coordinates in the sharing map.

\begin{theorem}[Oracle inequality]\label{thm:score_est}
  Suppose Assumption~\ref{ass:training_loss} holds and $n\ge2$. Let $\widehat s$ minimize the empirical
  loss~\eqref{eq:emp_score_loss} over
  $\bTheta_R^{\mathrm{ws}}$.
  Then, for every $x\ge1$, with probability at least
  $1-e^{-x}$ over the training observations and sampled times,
  \[
    \cL(\widehat s)
    \le
    2\inf_{s\in\bTheta_R^{\mathrm{ws}}}\cL(s)
    +
    C\frac{(B_\ell+V_\ell)(\Lambda_n+x)}{n},
  \]
  where $C>0$ is an absolute constant.
\end{theorem}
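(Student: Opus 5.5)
The plan is the standard fast-rate argument for empirical risk minimization with a Bernstein condition, combined with a sup-norm cover of the weight-sharing class. We write $P f \defn \bE[f(\x_0,\xi)]$ and $P_n f \defn n^{-1}\sum_{k=1}^n f(\x_0^{(k)},\xi_k)$. With this notation $\widehat\cL_n^\ell(s^{\btheta})-\widehat\cL_n^\ell(s^\star)=P_n\Delta\ell_{\btheta}$ (the reference term is $\btheta$-free), and by Assumption~\ref{ass:training_loss}, $P\Delta\ell_{\btheta}=\cL(s^{\btheta})$. Fix any $\bar s=s^{\bar\btheta}\in\bTheta_R^{\mathrm{ws}}$. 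Since $\widehat s$ minimizes the empirical loss, $P_n\Delta\ell_{\widehat\btheta}\le P_n\Delta\ell_{\bar\btheta}$. Hence
\[
\cL(\widehat s)\le \cL(\bar s)+\bigl(P-P_n\bigr)\Delta\ell_{\widehat\btheta}+\bigl(P_n-P\bigr)\Delta\ell_{\bar\btheta}.
\]
The proof bounds the two empirical-process terms in turn.

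\emph{Fixed comparator.} Bernstein's inequality applies to the single function $\Delta\ell_{\bar\btheta}$, using the bound $B_\ell$ and variance at most $V_\ell\cL(\bar s)$. With probability $1-e^{-x}/2$,
\[
(P_n-P)\Delta\ell_{\bar\btheta}\le\sqrt{2V_\ell\cL(\bar s)x/n}+B_\ell x/n\le \cL(\bar s)+C(V_\ell+B_\ell)x/n,
\]
where the last step is AM--GM. This is where the factor $2$ in front of the infimum comes from.

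\emph{Uniform term via cover and peeling.} We use Lemma~\ref{lem:covering} to take a $\delta$-net, with $\delta\asymp 1/(nL_\ell)$ up to the logarithmic factors built into $\Lambda_n$, of $\bTheta_R^{\mathrm{ws}}$ in the sup norm over $(t,\x,i,c)$. Its log-cardinality is $\lesssim\Lambda_n$. This is the point where $\Sws$, $\Lw$ and $\sB$ enter, since the cover is taken over $\bv\in[-\sB,\sB]^{\Sws}$ and then pushed forward through the $\Lw$-Lipschitz map $\omega$ and the network's Lipschitz dependence on parameters. By the Lipschitz part of Assumption~\ref{ass:training_loss}, replacing $\widehat\btheta$ by its net point $\btheta_j$ changes both $P\Delta\ell$ and $P_n\Delta\ell$ by at most $L_\ell\delta\lesssim 1/n$. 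It is harmless here that the net points must lie in the range-restricted class: we may choose net points inside the class at the cost of a factor $2$ in $\delta$. For each net element we again apply Bernstein, and take a union bound with $x$ replaced by $x+\log N$. This gives, simultaneously over the net,
\[
(P-P_n)\Delta\ell_{\btheta_j}\le\sqrt{2V_\ell\cL(s^{\btheta_j})(x+\Lambda_n)/n}+B_\ell(x+\Lambda_n)/n\le\tfrac14\cL(s^{\btheta_j})+C(V_\ell+B_\ell)(x+\Lambda_n)/n.
\]
Because the variance proxy is proportional to the risk itself, this self-normalized union bound already performs the role of peeling; no chaining is needed, since logarithmic losses are absorbed in $\Lambda_n$. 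Transferring back to $\widehat\btheta$ gives $\cL(s^{\btheta_j})\le\cL(\widehat s)+1/n$.

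\emph{Combining.} Substituting both bounds yields $\cL(\widehat s)\le 2\cL(\bar s)+\tfrac14\cL(\widehat s)+C(B_\ell+V_\ell)(\Lambda_n+x)/n$. Rearranging absorbs the $\tfrac14$ term; the rescaled factor $\tfrac{8}{3}$ in front of $\cL(\bar s)$ can be brought back to $2$ by using a smaller AM--GM constant in the comparator step. Finally we take the infimum over $\bar s$, or apply the argument to a near-minimizer.

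The main obstacle is the covering step. The whole $\Delta\ell_{\btheta}$ must be controlled in sup norm over $t\in(0,T]$, which requires the network output to be Lipschitz in $\bv$ with constant roughly $(\Lw\sB\sW)^{\sL}$, together with care about the range restriction $[R^{-1},R]$. The Lipschitz constant $L_\ell$ only enters logarithmically, through $\delta$, and this is why $\Lambda_n$ carries $\sL$ as a multiplicative factor and $\log(\sB\Lw\sW n)$ inside.
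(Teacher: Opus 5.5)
Your proof is essentially the paper's. Both build a sup-norm cover from Lemma~\ref{lem:covering}, move the centres into $\bTheta_R^{\mathrm{ws}}$, transfer to the net through the Lipschitz part of Assumption~\ref{ass:training_loss}, and apply a union-bounded Bernstein inequality whose variance proxy $V_\ell\cL$ is absorbed by AM--GM. The one structural difference is the comparator: the paper bounds it through its own net point inside the same union event rather than by a separate Bernstein bound. That makes the event comparator-free, so the infimum can be taken directly, and it yields the constant $5/3\le 2$ without retuning.

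One calibration should be fixed. As the paper does under $B_\ell+V_\ell\ge L_\ell$, take $\delta=\tfrac12\wedge (B_\ell+V_\ell)/(nL_\ell)$, so that the discretization error is $\lesssim (B_\ell+V_\ell)/n$ and $\log(1/\delta)\le\log n$. With your $\delta\asymp 1/(nL_\ell)$, the additive $1/n$ term and the $\log L_\ell$ it puts into the entropy are not covered by $C(B_\ell+V_\ell)(\Lambda_n+x)/n$, because $\Lambda_n$ contains no $L_\ell$.
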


The infimum measures the approximation error of the search
class, while the second term bounds the estimation error.
Under Assumption~\ref{ass:flip},
Proposition~\ref{prop:dse_ise_estimation} verifies the
required loss conditions for DSE and ISE.
Combining these loss bounds with the network bounds in
Theorem~\ref{thm:bias} yields the following rate.

\begin{corollary}[Score-estimation rate]
\label{cor:score_est_rate}
  Suppose $q^\star\in\MRF_d$, $1\le d\le D$ and  Assumption~\ref{ass:flip} holds with constant $R\ge1$.
  Let $n\ge2$, and define
  \[
    \ell_n
    \defn
    \log\!\left(e\sD(SR)^{3\sD}(1\vee T)n\right).
  \]
  Choose $\bTheta_R^{\mathrm{ws}}$ as in
  Theorem~\ref{thm:bias} with $\varepsilon=1/n$, and let
  $\widehat s$ minimize the empirical
  loss~\eqref{eq:emp_score_loss} over this class,
  using either DSE or ISE.
  Then, for every $x\ge1$, with probability at least $1-e^{-x}$,
  \begin{equation}\label{eq:score_est_rate}
    \cL(\widehat s)
    \le
    C_d\frac{T\sD R^4}{n}
    \left[
      \sD^{3d+1}\ell_n^4
      \left\{
        \sD^dS^d
        +
        \sD^{3d+2}\ell_n
        +
        \ell_n^4
      \right\}
      +
      x
    \right],
  \end{equation}
  where $C_d>0$ depends only on $d$.
\end{corollary}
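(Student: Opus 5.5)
The corollary follows by plugging the parameter bounds of \Cref{thm:bias} (with $\varepsilon=1/n$) and the loss constants of \Cref{prop:dse_ise_estimation} into the oracle inequality of \Cref{thm:score_est}, and then simplifying the logarithms.

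\emph{Step 1: the oracle inequality applies.} Under \Cref{ass:flip}, \Cref{prop:dse_ise_estimation} verifies \Cref{ass:training_loss} for both DSE and ISE, with $B_\ell=3T\sD R$ and $V_\ell=8T\sD R^4$. Hence $B_\ell+V_\ell\le 11T\sD R^4$, using $R\ge1$. \Cref{thm:score_est} then gives, with probability at least $1-e^{-x}$,
\[
\cL(\widehat s)\le 2\inf_{s}\cL(s)+C\,\frac{T\sD R^4(\Lambda_n+x)}{n}.
\]

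\emph{Step 2: the approximation term.} With $\varepsilon=1/n$, \Cref{thm:bias} places $s^{\mathrm{NN}}$ in the chosen class $\bTheta_R^{\mathrm{ws}}$ with $\cL(s^{\mathrm{NN}})\le 1/n$. Therefore
\[
2\inf_s\cL(s)\le \frac{2}{n}\le \frac{2T\sD R^4}{n}\cdot\frac{1}{T}.
\]
If $T<1$, this $1/T$ factor is a problem. I would avoid it by noting that $\ell_{\varepsilon,T}$ includes the factor $1\vee T$; alternatively, the constant $2/n$ is dominated by the bracketed $\sD^{3d+1}\ell_n^4\,\sD^{3d+2}\ell_n$ term times $T\sD R^4$ as long as $T$ is bounded below. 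The cleanest fix is to apply \Cref{thm:bias} with $\varepsilon=T/n$ when $T\le1$, i.e.\ with $\varepsilon=(T\wedge1)/n$. Since $\log(1/T)$ could then appear in $\ell_n$, this needs care. I would first check whether $T$ is implicitly assumed at least $1$: the later choice $T=\log(n\sD\log S)\ge1$ suggests it is. I would then state that the case $T\ge1$ is the relevant one and absorb $2/n\le 2T\sD R^4/n$.

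\emph{Step 3: bounding $\Lambda_n$.} This is the main bookkeeping step. Note that $\ell_{1/n,T}=\ell_n$. From \Cref{thm:bias} we have $\Sws\le C_d[\sD^dS^d+\sD^{3d+2}\ell_n+\ell_n^4]$ and $\sL\le C_d\sD^{2d+1}\ell_n^2$. For the logarithm in $\Lambda_n$, bound each factor separately:
\begin{itemize}
\item $\log \sB\le C_d\sD^d\ell_n^2$;
\item $\log\Lw\le d\log\sD$;
\item $\log\sL\lesssim_d \log\sD+\log\ell_n$;
\item $\log\sW\lesssim_d \log\sD+\sD\log(S+1)+\log\ell_n$, and $\sD\log(S+1)\le \ell_n$;
\item $\log(8n(S\vee T))\le C\ell_n$, since $\ell_n\ge\log n+\log S+\log(1\vee T)$.
\end{itemize}
Altogether the logarithm is at most $C_d\sD^d\ell_n^2$. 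This gives
\[
\Lambda_n\le C_d\,\sD^{2d+1}\ell_n^2\cdot\sD^{d}\ell_n^2\cdot[\sD^dS^d+\sD^{3d+2}\ell_n+\ell_n^4]=C_d\,\sD^{3d+1}\ell_n^4\{\cdots\},
\]
which matches the bracket in \eqref{eq:score_est_rate}. Adding $x$ and collecting constants into $C_d$ completes the proof.
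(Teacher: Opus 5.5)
Your route is the paper's: verify \Cref{ass:training_loss} via \Cref{prop:dse_ise_estimation}, bound the approximation term by \Cref{thm:bias} with $\varepsilon=1/n$, and bound $\Lambda_n$ from the network sizes. Your Step~1 is fine. The paper also records $B_\ell+V_\ell\ge L_\ell$, which the proof of \Cref{thm:score_est} uses, and it holds trivially here. Your Step~3 bookkeeping matches the paper: the logarithm in $\Lambda_n$ is at most $C_d\sD^d\ell_n^2$, so $\Lambda_n\le C_d\sD^{3d+1}\ell_n^4\{\cdots\}$.

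The gap is in Step~2. The corollary holds for every $T>0$, but you close the argument only for $T\ge1$. You justify this by the later choice $T=\log(n\sD\log S)$, which is not a hypothesis of the corollary.

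For small $T$, the bound $\inf_s\cL(s)\le 1/n$ cannot give \eqref{eq:score_est_rate}. The right-hand side of \eqref{eq:score_est_rate} tends to $0$ linearly as $T\to0$, since $\ell_n$ depends on $T$ only through $1\vee T$. The term $2/n$ does not tend to $0$.

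Your alternative of rerunning \Cref{thm:bias} with $\varepsilon=(T\wedge1)/n$ is not admissible either. The corollary fixes the search class as the one built with $\varepsilon=1/n$. That change would also put a $\log(1/T)$ into the size bounds, which $\ell_n$ does not control.

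The missing observation is already in the statement of \Cref{thm:bias}. It gives the per-time bound $\cL_t(s^{\mathrm{NN}})\le\varepsilon/(1\vee T)$ for every $t\in(0,T]$. Integrating over $(0,T]$ gives $\cL(s^{\mathrm{NN}})\le T\varepsilon/(1\vee T)=(1\wedge T)/n\le T\sD R^4/n$. This is exactly the bound the paper uses, $\inf\cL\le(1\wedge T)/n$.

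With it, the approximation term $2\inf\cL$ is absorbed into $C_d\,T\sD R^4/n$ times the bracket, which is at least one. This works for all $T>0$ with no case distinction, and the rest of your argument goes through unchanged.
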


For fixed $d$, $\sD$, $R$, and $T$, this bound implies
\[
  \cL(\widehat s)
  =
  \widetilde O_{\mathbb P}\!\left(\frac{S^d}{n}\right),
\]
where $\widetilde O_{\mathbb P}$ suppresses logarithmic
factors in $n$ and $S$. The interaction order $d$ determines
the power of the alphabet size $S$ in the rate.
The full bound also retains the dependence on the ambient
dimension $\sD$, which is only polynomial for fixed $d$.

\subsection{Sampling guarantees}\label{subsec:efficient_sampling}

We now combine the score-estimation bound with Algorithm~\ref{alg:tau_leaping} to obtain a guarantee for the generated distribution.

\begin{theorem}[End-to-end sampling rate]
\label{thm:efficient_sampling}
  Suppose that $q^\star\in\MRF_d$, $1\le d\le\sD$, and
  Assumption~\ref{ass:flip} holds with constant $R\ge1$.
  Let $S\ge2$, $n\ge2$, and $T>0$, and let $\widehat s$ be
  the DSE or ISE estimator of Corollary~\ref{cor:score_est_rate}.
  Write
  \[
    \ell_n
    \defn
    \log\!\left(e\sD(SR)^{3\sD}(1\vee T)n\right).
  \]
  For a grid $0=t_0<\cdots<t_N=T$, set
  $\Delta=\max_{0\le k<N}(t_{k+1}-t_k)$, and let $p_T$ be
  the output law in~\eqref{eq:finalaw}, conditional on the fitted
  score and the chosen grid.

  For every $x\ge1$, with probability at least $1-e^{-x}$ over
  the training observations and training times, the bound
  \begin{equation}\label{eq:end_to_end_sampling}
  \begin{aligned}
    \mathrm{KL}(q^\star\,\|\,p_T)
    \le{}&
    C_d\frac{T\sD R^4}{n}
    \left[
      \sD^{3d+1}\ell_n^4
      \left\{
        \sD^dS^d+\sD^{3d+2}\ell_n+\ell_n^4
      \right\}
      +x
    \right]
    \\
    &+e^{-T}\sD\log S
    +\Delta\left(\frac32T\sD^2R+\sD\log R\right)
  \end{aligned}
  \end{equation}
  holds simultaneously for every such grid, where $C_d$ depends
  only on $d$.   In particular, for fixed $d$, $\sD$, and $R$, choosing
  \(
    T=\log(n\sD\log S),
    \Delta\le n^{-1},
  \)
  gives, on the same event,
  \[
    \mathrm{KL}(q^\star\,\|\,p_T)
    =\widetilde O\!\left(\frac{S^d+x}{n}\right),
  \]
  where $\widetilde O$ suppresses logarithmic factors in $n$ and $S$.
\end{theorem}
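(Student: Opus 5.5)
Our plan is to decompose $\mathrm{KL}(q^\star\|p_T)$ by the data-processing inequality on path space. Let $\overleftarrow{\mathbb P}$ be the law of the true reverse process on $[0,T]$ started from $q_T$. Let $\mathbb Q$ be the law of the piecewise-frozen process of \Cref{alg:tau_leaping}, started from $\operatorname{Unif}([S]^\sD)$, with rates \eqref{eq:tau_rate}. By \Cref{lem:poisson_representation} the output of \Cref{alg:tau_leaping} has law equal to the time-$T$ marginal of $\mathbb Q$, and the time-$T$ marginal of $\overleftarrow{\mathbb P}$ is $q^\star$. Hence $\mathrm{KL}(q^\star\|p_T)\le \mathrm{KL}(\overleftarrow{\mathbb P}\|\mathbb Q)$. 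The chain rule splits the right-hand side into $\mathrm{KL}(q_T\|\operatorname{Unif})$ plus the Girsanov-type term for jump processes:
\[
\bE\int_0^T \frac1S\sum_{i,c} s^\star_{T-t}(\x_t\oplus_i c,\x_t)\,K\!\Bigl(\frac{\widehat s_{T-t}(\x_{t_k}\oplus_i c,\x_{t_k})}{s^\star_{T-t}(\x_t\oplus_i c,\x_t)}\Bigr)\,\mathrm dt .
\]
Here $\x_t\sim q_{T-t}$ and $t_k$ is the grid point preceding $t$. This is the standard path-KL formula $\sum \lambda^\star K(\hat\lambda/\lambda^\star)$ for CTMCs. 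We will bound the initialization term and the path term separately.

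\textbf{Initialization.} By \eqref{eq:qt0_factor}, $q_T$ is $q^\star$ pushed through $\sD$ independent token channels. Convexity of KL gives $\mathrm{KL}(q_T\|\operatorname{Unif})\le\sD\,\mathrm{KL}(q^{\tok}_{T|0}(\cdot|a)\|\operatorname{Unif}_S)$. The token kernel equals $e^{-T}\delta_a+(1-e^{-T})\operatorname{Unif}_S$. Convexity once more bounds its KL by $e^{-T}\log S$, which yields the term $e^{-T}\sD\log S$.

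\textbf{Path term.} Using \eqref{eq: K expand}, we write $\lambda^\star K(\hat\lambda/\lambda^\star)=\hat\lambda-\lambda^\star\log\hat\lambda+\lambda^\star\log\lambda^\star-\lambda^\star$. We then insert $\widehat s_{T-t}(\x_t\oplus_ic,\x_t)$ as an intermediate quantity. This produces the integrated risk $\int_0^T\cL_{T-t}(\widehat s)\,\mathrm dt=\cL(\widehat s)$ plus a discretization remainder
\[
\bE\tfrac1S\sum_{i,c}\bigl[\widehat s(\x_{t_k})-\widehat s(\x_t)\bigr]-\bE\tfrac1S\sum_{i,c} s^\star(\x_t)\bigl[\log\widehat s(\x_{t_k})-\log\widehat s(\x_t)\bigr].
\]
We control the remainder through the true reverse dynamics. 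Conditional on $\x_{t_k}$, the process changes state on $[t_k,t]$ with probability at most (total rate)$\times\Delta\le\sD R\Delta$, since each reverse rate is $s^\star/S\le R/S$ by \Cref{lem:score_bounds}. On that event, the first bracket is at most $\sD R$ and the log bracket is at most $2\log R$. Summing over the grid then yields terms of the form $\Delta\cdot T\sD^2R$ and $\Delta\sD\log R$.

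To obtain the sharper constants stated, we would apply the martingale property of the true score along the reverse process. The process $s^\star_{T-t}(\x_t\oplus_ic,\x_t)$ is, up to compensation, a martingale: $q_{T-t}(\y)/q_{T-t}(\x_t)$ is the density ratio evolving under the reverse Kolmogorov equation. This makes $\bE\widehat s(\x_{t_k})$ terms cancel to first order, leaving only the log term and a quadratic-in-jump term. We expect this bookkeeping, i.e. matching the constants $\frac32T\sD^2R+\sD\log R$ without an early-stopping cutoff, to be the main obstacle. If the martingale route stalls, the crude bound above still gives the same order.

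\textbf{Uniformity and specialization.} The estimation step uses \Cref{cor:score_est_rate}: on its event of probability $\ge1-e^{-x}$, $\cL(\widehat s)$ is bounded by \eqref{eq:score_est_rate}. The remaining terms are deterministic given $\widehat s$. The integrated risk does not refer to the grid, so the bound holds simultaneously for all grids on this single event. Finally, taking $T=\log(n\sD\log S)$ makes $e^{-T}\sD\log S=1/n$. With $\Delta\le 1/n$, the discretization term is $O(T/n)$, which is logarithmic over $n$. This gives $\widetilde O((S^d+x)/n)$.
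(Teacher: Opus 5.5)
Your outer structure is sound and differs only cosmetically from the paper's. The path-space Girsanov identity produces the same integrand as the paper's grid-wise chain rule plus \Cref{lem:conditional_kl_derivative}. Your convexity-plus-tensorization argument for $\mathrm{KL}(q_T\|\operatorname{Unif})\le e^{-T}\sD\log S$ is a valid, shorter substitute for \Cref{lem:uniform_initialization}. Inserting $\widehat s_{T-t}(\x_t\oplus_i c,\x_t)$ does give $\int_0^T\cL_{T-t}(\widehat s)\,\mathrm dt=\cL(\widehat s)$ exactly.

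The gap is the discretization remainder, which is where the explicit constants in \eqref{eq:end_to_end_sampling} come from. Your crude bound does not give what you claim. The jump probability is controlled by the \emph{reverse} rate, at most $\sD R$. The first bracket is bounded by $\sD R$. The log bracket, once weighted by $s^\star\le R$ and summed over $(i,c)$, is bounded by $2\sD R\log R$. Integrating over each step, with $h_k=t_{k+1}-t_k$, gives $\tfrac12\sD^2R^2(1+2\log R)\sum_k h_k^2\le\tfrac12 T\Delta\sD^2R^2(1+2\log R)$. This is of order $R^2\log R$ rather than $R$, and a per-step $O(h_k^2)$ bound cannot produce a $T$-free term like $\Delta\sD\log R$. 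So the fallback proves the $\widetilde O((S^d+x)/n)$ statement for fixed $R$, but not the displayed inequality. Your martingale sketch also targets the wrong terms: $\widehat s$ has no martingale structure, so nothing makes the $\bE\,\widehat s(\x_{t_k})$ terms cancel.

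The missing step goes as follows.
\begin{itemize}
\item Given $\x_{t_k}$, the quantity $\log\widehat s_{T-t}(\x_{t_k}\oplus_i c,\x_{t_k})$ is fixed. So \Cref{lem:score_martingale_sampling} (an exact martingale, no compensator) lets you replace $s^\star_{T-t}(\x_t\oplus_i c,\x_t)$ by $s^\star_{T-t_k}(\x_{t_k}\oplus_i c,\x_{t_k})$ in that cross term.
\item The change of variables $\sum_{\x}q_u(\x)s^\star_u(\x\oplus_i c,\x)f(\x\oplus_i c,\x)=\sum_{\y}q_u(\y)f(\y,\y\ominus_i c)$ then turns your whole remainder into $\bE_{q_{T-t_k}}[G]-\bE_{q_{T-t}}[G]$. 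Here $G(\x)=\frac1S\sum_{i,c}[\widehat s_{T-t}(\x\oplus_i c,\x)-\log\widehat s_{T-t}(\x,\x\ominus_i c)]$ is a fixed function with oscillation at most $\sD(R-R^{-1}+2\log R)$.
\item The time variation of $\bE_{q_u}[G]$ is controlled through the \emph{forward} Kolmogorov equation, whose total jump rate $\sD(S-1)/S$ does not involve $R$. This is the argument of \Cref{lem:score_risk_time}, and it yields $\tfrac32 T\sD^2R\Delta$.
\end{itemize}

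Completed this way, your centering is slightly sharper than the paper's. The paper centers at $s^\star_{T-t_k}(\x_{t_k}\oplus_i c,\x_{t_k})$ and discards a piece $\varphi(T-t_k)-\varphi(T-t)\le 0$, with $\varphi$ as in \eqref{eq:phi_def}. That piece exactly cancels the entropy term the paper later bounds by $\Delta\sD\log R$. Your route gives $\cL(\widehat s)+e^{-T}\sD\log S+\tfrac32T\sD^2R\Delta$ directly, which implies \eqref{eq:end_to_end_sampling}.
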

The first term in \eqref{eq:end_to_end_sampling} is the statistical error term which bounds the integrated score risk $\cL(\widehat s)$ of the network learned from $n$ observations. The initialization term $e^{-T}\sD\log S$ accounts for starting the sampler from the uniform distribution instead of the unknown $q_T$. Training controls the integrated score error over the whole time interval, while the sampler evaluates the score using the state $\z_k$ recorded at the beginning of each step, as in~\eqref{eq:tau_rate} and~\eqref{eq:evolve}. The term $3T\sD^2R\Delta/2$ accounts for this difference. The remaining $\sD\log R\,\Delta$ term bounds the discretization error in the entropy decrease along the true reverse process. This decrease represents a gain in information relative to the uniform distribution.  We prove Theorem~\ref{thm:efficient_sampling} using this decomposition, established in Proposition~\ref{prop:sampling_error} (Appendix~\ref{app:efficient_sampling}).

The theorem establishes a sampling guarantee from $n$ observations, with dependence $S^d$ on the vocabulary size up to logarithmic factors and polynomial dependence on $\sD$ for fixed $d$, $R$, and $T$. The $\tau$-leaping guarantees of \citet{ren2024stochastic,dmitriev2026sharp} assume bounds on the score error at the points of a prescribed sampling grid. Here we control the integrated score risk of the estimated score from training data and use it to bound the error of the generated distribution. The same estimated score satisfies the sampling guarantee for every grid chosen after training and before generating samples. The grid need not match the sampled training times and can be refined without retraining the network.

\section{Numerical Experiments}\label{sec:numerics}

We examine the score estimation and sampling performance of weight-sharing score networks on categorical distributions with local dependence. We vary the number of training samples and the training and sampling grids to study their effects on score estimation and sample generation.

% \subsection{Experimental design}\label{subsec:num_design}
\paragraph{Models and methods.} We use Potts chains, frustrated Ising ladders, and tree-structured categorical Markov random fields, each at two dimensions. These are pairwise graphical models \citep{wainwright2008graphical}, so $d=2$ throughout. This allows us to compare performance at smaller and larger dimensions while keeping the interaction order fixed. Table~\ref{tab:num_models} lists the dimensions and vocabulary sizes. Appendix~\ref{app:num_targets} specifies the target distributions.

\begin{table}[htpb!]
  \centering
  \caption{Target distributions used in the experiments.
  Here $\sD$ is the dimension, $S$ is the vocabulary size,
  and $d$ is the interaction order.}
  \label{tab:num_models}
  \begin{tabular}{lccc@{\qquad}ccc}
    \toprule
    & \multicolumn{3}{c}{Smaller dimension}
    & \multicolumn{3}{c}{Larger dimension}\\
    \cmidrule(lr){2-4}\cmidrule(lr){5-7}
    Model & $\sD$ & $S$ & $d$ & $\sD$ & $S$ & $d$\\
    \midrule
    Potts chain       & 8 & 3 & 2 & 32 & 3 & 2\\
    Ising ladder      & 8 & 2 & 2 & 32 & 2 & 2\\
    Categorical tree  & 7 & 3 & 2 & 32 & 3 & 2\\
    \bottomrule
  \end{tabular}
\end{table}

We compare three weight-sharing score networks, LSE-WSNN, Conv-WSNN, and Butterfly-WSNN, with a fully connected score network denoted MLP. LSE-WSNN uses log-sum-exp operations and shares local factor coefficients across states with the same local configuration. Conv-WSNN shares convolutional filters across positions \citep{lecun1998gradient}. Butterfly-WSNN uses butterfly connectivity \citep{dao2019learning} and applies the same transformation to each pair of coordinate features within a layer. MLP has three hidden layers of width 128. All four methods estimate the same score ratios and are trained by DSE. Architecture details and parameter counts are provided in Appendix~\ref{app:num_arch}.

\paragraph{Training and evaluation.} For each target, we train the four methods on $n\in\{2048,8192\}$ observations and set the terminal time to $T=6$. We vary the number of training-time nodes $K\in\{64,128,256\}$. Each estimated score is then used for sampling by time-dependent $\tau$-leaping (\Cref{alg:tau_leaping}) on quadratically spaced grids with $N\in\{32,64,128,256\}$ reverse-time intervals. Varying $K$ requires separate training, whereas varying $N$ uses the same estimated score without retraining. The training and sampling procedures are specified in Appendix~\ref{app:num_protocol}. We repeat each experiment ten times. Within each repetition, the methods use the same training observations, noising draws, and held-out evaluations. We report means and standard errors across repetitions.

Score accuracy is measured by the normalized integrated risk $\cL(\widehat s)/(T\sD)$, estimated on held-out observations using the exact target score. Sampling accuracy is measured by squared maximum mean discrepancy (MMD) with a Hamming kernel \citep{gretton2012kernel}, mean edge-pair total variation, and the Wasserstein distance $W_1$ between energy distributions \citep{peyre2019computational}. These comparisons use 512 generated observations and 8192 reference observations, except for MMD, which uses 512 observations from each distribution. The unbiased estimator of squared MMD can take negative values. Evaluation details are provided in Appendix~\ref{app:num_metrics}. The sampling metrics are defined in~\eqref{eq:num_edge_tv}, \eqref{eq:num_energy_w1}, and~\eqref{eq:num_mmd}.

\subsection{Score and distributional accuracy}
\label{subsec:num_accuracy}
Tables~\ref{tab:num_high} and~\ref{tab:num_low} report results for $n=8192$ and $K=N=256$. Additional metrics and comparisons across sample sizes, training grids, and sampling grids are provided in Appendix~\ref{app:num_grids}.

\begin{table}[htpb!]
\centering\footnotesize\setlength{\tabcolsep}{4pt}
\caption{Larger examples ($\sD=32$) at $n=8192$ and $K=N=256$. Entries are mean $\pm$ SE \\ over ten repetitions. Bold marks the smallest mean for each target and metric.}\label{tab:num_high}
\begin{tabular}{lrrrr}\toprule
Method & Risk $\times10^3 \downarrow$ & Edge TV $\downarrow$ & MMD$^2\times10^3\downarrow$ & $W_1(H)\downarrow$\\\midrule
\csname @@input\endcsname numerical_tables/main_high.tex \bottomrule
\end{tabular}
\end{table}

\begin{table}[htpb!]
\centering\footnotesize\setlength{\tabcolsep}{4pt}
\caption{Smaller examples at $n=8192$ and $K=N=256$. Entries are mean $\pm$ SE \\ over ten repetitions. Bold marks the smallest mean for each target and metric.}\label{tab:num_low}
\begin{tabular}{lrrrr}\toprule
Method & Risk $\times10^3 \downarrow$ & Edge TV $\downarrow$ & MMD$^2\times10^3 \downarrow$ & $W_1(H) \downarrow$\\\midrule
\csname @@input\endcsname numerical_tables/main_low.tex \bottomrule
\end{tabular}
\end{table}

Table~\ref{tab:num_high} shows that, at $\sD=32$, all three weight-sharing networks achieve lower mean score risk and edge-pair TV than MLP across the three targets. Conv achieves the lowest mean values for all four metrics on the Potts chain, while LSE does so on the Ising ladder and categorical tree. The comparison is less uniform for squared MMD. Butterfly has similar mean squared MMD to MLP on Potts and Ising, while Conv has higher mean squared MMD than MLP on Ising.

Table~\ref{tab:num_low} reports the smaller examples. Conv again achieves the lowest mean values for all four metrics on the Potts chain, and LSE does so on the categorical tree. On the Ising ladder, MLP has the lowest mean values for all three sampling metrics, while LSE has the lowest score risk. Across these examples, the advantage of weight sharing is more pronounced at $\sD=32$, while MLP remains competitive at smaller dimensions.

\subsection{Training settings and sampling resolution}\label{subsec:num_grids}

\paragraph{Training grids.} Table~\ref{tab:num_training_grids} compares score risk across training grids for $\sD=32$ and $n=8192$. The number of optimization updates and the number of sampled times per update are held fixed in this comparison. The ordering of the four methods is unchanged across $K\in\{64,128,256\}$, and most risk values vary little. Conv on the Potts chain shows the largest change, with mean risk increasing from $4.836\times10^{-3}$ at $K=64$ to $6.714\times10^{-3}$ at $K=256$. Thus, increasing $K$ does not consistently improve score estimation.

\begin{table}[htpb!]
\centering\small
\setlength{\tabcolsep}{5pt}
\renewcommand{\arraystretch}{1.12}
% \caption{Estimated normalized score risk $\cL(\widehat s)/(T\sD)$,
% multiplied by $10^3$, for $\sD=32$ and $n=8192$. Entries are mean $\pm$ SE
% across ten repetitions. Bold denotes the smallest mean in each
% row.}
\caption{Score risk ($\times10^3$) across training grids
for $\sD=32$ and $n=8192$.
Entries are mean $\pm$ SE \\ over ten repetitions.
Bold marks the smallest mean in each row.}
\label{tab:num_training_grids}
\begin{tabular}{rrrrr}
\toprule
$K$ & LSE & Conv & Butterfly & MLP \\
\midrule
\multicolumn{5}{l}{\textit{Potts chain}} \\[2pt]
64 & $10.756\,\pm\,0.137$ & $\mathbf{4.836\,\pm\,0.153}$ & $53.005\,\pm\,0.892$ & $56.128\,\pm\,0.352$ \\
128 & $10.708\,\pm\,0.117$ & $\mathbf{5.225\,\pm\,0.165}$ & $53.107\,\pm\,0.820$ & $56.231\,\pm\,0.421$ \\
256 & $10.786\,\pm\,0.113$ & $\mathbf{6.714\,\pm\,0.913}$ & $52.858\,\pm\,0.915$ & $56.175\,\pm\,0.423$ \\
\addlinespace[6pt]
\multicolumn{5}{l}{\textit{Ising ladder}} \\[2pt]
64 & $\mathbf{1.388\,\pm\,0.022}$ & $9.958\,\pm\,0.553$ & $13.325\,\pm\,0.561$ & $19.597\,\pm\,0.446$ \\
128 & $\mathbf{1.358\,\pm\,0.031}$ & $9.821\,\pm\,0.429$ & $13.405\,\pm\,0.734$ & $19.936\,\pm\,0.361$ \\
256 & $\mathbf{1.379\,\pm\,0.016}$ & $10.073\,\pm\,0.643$ & $13.238\,\pm\,0.519$ & $19.469\,\pm\,0.332$ \\
\addlinespace[6pt]
\multicolumn{5}{l}{\textit{Categorical tree}} \\[2pt]
64 & $\mathbf{1.384\,\pm\,0.024}$ & $15.856\,\pm\,0.245$ & $15.288\,\pm\,0.170$ & $17.407\,\pm\,0.132$ \\
128 & $\mathbf{1.386\,\pm\,0.030}$ & $15.915\,\pm\,0.273$ & $15.220\,\pm\,0.157$ & $17.411\,\pm\,0.121$ \\
256 & $\mathbf{1.356\,\pm\,0.033}$ & $15.882\,\pm\,0.238$ & $15.260\,\pm\,0.152$ & $17.354\,\pm\,0.113$ \\
\addlinespace[6pt]
\bottomrule
\end{tabular}
\end{table}

\paragraph{Sampling grids.}  Figure~\ref{fig:num_sampling} compares sampling grids using the same estimated scores, with $\sD=32$, $n=8192$, and $K=256$. For each method, the curves show $\overline E(N)-\overline E(32)$, where $\overline E(N)$ is the mean error across ten repetitions. The top row shows squared MMD multiplied by $10^3$, and the bottom row shows mean edge-pair TV. Negative values indicate improvement over $N=32$.

Increasing $N$ from 32 to 256 reduces mean edge-pair TV for all four networks on all three targets. Mean squared MMD decreases in nine of these twelve comparisons, with Conv, Butterfly, and MLP on the Potts chain as the exceptions. The MMD changes are less consistent across intermediate grids, and several error bars include zero. These results show that sampling-grid refinement can improve generation without retraining, although the improvement depends on the evaluation metric. The change from $N=32$ to $N=256$ requires eight times as many reverse-time intervals and score evaluations.

\begin{figure}[htpb!]
\centering\includegraphics[width=0.8\linewidth]{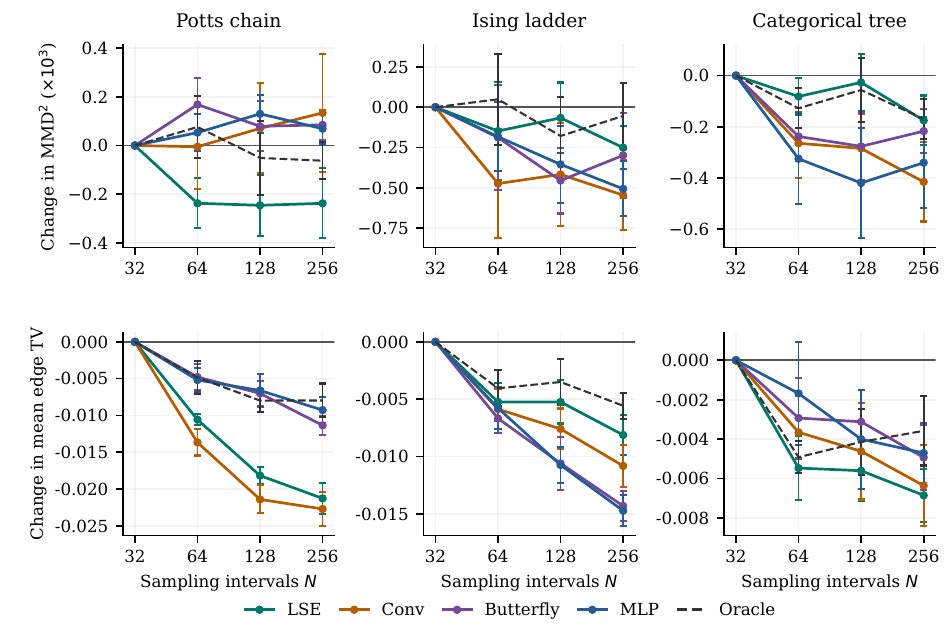}
% \caption{Effect of sampling-grid refinement for $\sD=32$, with $n=8192$ and $K=256$. Each curve shows $\overline E(N)-\overline E(32)$, where $E$ is squared Hamming MMD (top, multiplied by $10^3$) or mean edge TV (bottom). Negative values indicate improvement relative to $N=32$; the vertical scales differ across panels. Error bars are SE of the within-repetition differences across ten repetitions. Dashed curves use the exact target score in the same $\tau$-leaping sampler. The horizontal axis is logarithmic.}
\caption{Changes from $N=32$ in squared MMD
(top, $\times10^3$) and mean edge-pair TV (bottom),
with $\sD=32$, $n=8192$, and $K=256$.
Negative values indicate improvement.
Error bars show SE of paired differences over ten repetitions.
Dashed curves use the exact target score.
Vertical scales differ across panels.}
\label{fig:num_sampling}
\end{figure}

\paragraph{Summary.}
Overall, the experiments show a clearer advantage of weight sharing in the larger examples, while MLP remains competitive at smaller dimensions. They also show that the same estimated score can be used on different sampling grids, with finer grids improving edge-pair accuracy without additional training.

\bibliographystyle{imsart-nameyear}
\bibliography{references}

\newpage
\appendix

\begin{center}
{\large\bfseries APPENDIX}
\end{center}
\addcontentsline{toc}{section}{Appendix}
\vspace{0.5em}

\appendix

The appendices are organized as follows.
Appendix~\ref{app:numerical} gives the targets, architectures, protocol, metrics, and additional results of the numerical experiments.
Appendix~\ref{app:prelim} collects standard lemmas on the forward kernel and the true score.
Appendix~\ref{app:poisson} gives the Poisson representation of single-coordinate jump chains used in the sampling analysis.
Appendix~\ref{app:efficient_sampling} proves the sampling bound of \Cref{thm:efficient_sampling}.
Appendix~\ref{sec:proof_estimation} proves the oracle inequality and the score-estimation rate.
Appendix~\ref{app:nn_auxiliary} collects auxiliary ReLU network constructions and covering bounds.
Appendix~\ref{app:score_approximation} proves the score-approximation results, \Cref{thm:score,thm:bias}.

\section{Details of the numerical experiments}\label{app:numerical}

\subsection{Target distributions}\label{app:num_targets}
The six target distributions have the form
\begin{equation}\label{eq:num_target}
 q^\star(\x)
 =
 Z^{-1}\exp\left\{
 \sum_{i=1}^{\sD}h_i(x^i)
 +
 \sum_{\{i,j\}\in E}J_{ij}(x^i,x^j)
 \right\}.
\end{equation}
All interactions involve at most two coordinates.
The implementation uses labels $0,\ldots,S-1$,
which are relabeled as $1,\ldots,S$ here.

\begin{table}[htbp]\centering\small
\caption{Dimensions, graph sizes, and numbers $p$ of target-feature coefficients. All examples have interaction order $d=2$.}\label{tab:num_targets}
\begin{tabular}{llrrrr}\toprule
Target & Regime & $\sD$ & $S$ & $|E|$ & $p$\\\midrule
Potts chain & Smaller &8&3&7&7\\
Ising ladder & Smaller &8&2&10&18\\
Categorical tree & Smaller &7&3&6&38\\
Potts chain & Larger &32&3&31&31\\
Ising ladder & Larger &32&2&46&78\\
Categorical tree & Larger &32&3&31&188\\\bottomrule
\end{tabular}\end{table}

\paragraph{Potts chains.}
The graph has edges $\{i,i+1\}$, $1\le i<\sD$, and
\[
 q^\star(\x)\propto
 \exp\left\{\sum_{i=1}^{\sD-1}J_i\mathbf1\{x^i=x^{i+1}\}\right\},
 \qquad J_i\in[1.8,2.4].
\]
The couplings are drawn once from the uniform distribution on this interval. There are no node fields. This gives local agreement with invariance under a common permutation of the categories. The larger realization preserves the seed-101 construction used in the original experiment, including the intervening random-number draw after each coupling; the saved coefficient arrays specify the realization exactly.

\paragraph{Frustrated Ising ladders.}
Let $\sigma_i=2x^i-3\in\{-1,1\}$. The graph is a two-row ladder with $\sD/2$ columns, and
\[
 q^\star(\x)\propto
 \exp\left\{\sum_{\{i,j\}\in E}J_{ij}\sigma_i\sigma_j\right\}.
\]
The magnitudes are drawn uniformly from $[0.6,1.2]$. Alternating horizontal edges in the top row have negative couplings; the other edges have positive couplings. Thus two of the three elementary plaquettes are frustrated in the smaller ladder, and eight of fifteen in the larger ladder. A plaquette is frustrated when the product of its coupling signs is negative. Target node fields are zero, but the fitted factor family allows them. Frustration changes the preferred configurations without changing the pairwise Markov property.

\paragraph{Categorical trees.}
Let $C\in\bR^{S\times(S-1)}$ have orthonormal columns orthogonal to the constant vector. It is obtained by a QR factorization of the first $S-1$ columns of $I_S-S^{-1}\mathbf1\mathbf1^\top$. The log density, up to normalization, is
\[
 \sum_i\sum_{a=1}^{S-1}u_{ia}C_{x^i,a}
 +\sum_{\{i,j\}\in E}\sum_{a,b=1}^{S-1}v_{ijab}C_{x^i,a}C_{x^j,b}.
\]
Edge coefficients are drawn independently from $N(0,0.85^2)$; node coefficients are drawn from the same distribution and multiplied by $0.3$. This represents general pairwise categorical interactions, rather than only category agreement. The smaller tree has edges $(1,2),(1,3),(2,4),(2,5),(3,6),(6,7)$. For the larger tree, each additional vertex attaches to one preceding vertex under a fixed random construction. Target coefficients are held fixed across repetitions. Samples and noised scores are evaluated exactly by tree messages, or by grouping ladder columns into a chain. The larger targets therefore do not require an MCMC reference sample.

\paragraph{Relation to the model assumptions.}
Every target is strictly positive. For the log density $F(\x)=\sum_i h_i(x^i)+\sum_{\{i,j\}}J_{ij}(x^i,x^j)$, the constant
\[
 R=\exp\left\{\max_{\x,i,c}
 |F(\x\oplus_i c)-F(\x)|\right\}<\infty
\]
satisfies the bounded neighboring-ratio condition. However, pairwise Gibbs factorization alone does not imply the paper's stronger convention that the already normalized mass function is a product of factors each bounded by one.

For completeness, the saved Ising targets exhibit this distinction. Write a hypothetical binary pair factor as
\[
 \log\psi_{ij}(\sigma_i,\sigma_j)
 =a_{ij}+b_{ij}\sigma_i+c_{ij}\sigma_j+d_{ij}\sigma_i\sigma_j.
\]
Equality of the product to the Ising mass function forces $d_{ij}=J_{ij}$ on edges, $d_{ij}=0$ on nonedges, and $\sum_{i<j}a_{ij}=-\log Z$, by uniqueness of the binary multilinear expansion. Averaging the two configurations with $\sigma_i\sigma_j=\operatorname{sign}(d_{ij})$ shows that
$\max\log\psi_{ij}\ge a_{ij}+|d_{ij}|$. Thus $\psi_{ij}\le1$ for every pair would require
\[
 \log Z\ge\sum_{\{i,j\}\in E}|J_{ij}|.
\]
The saved smaller and larger ladders instead have, respectively,
\[
 (\log Z,\sum|J_{ij}|)=(9.3966,10.2703),
 \qquad (37.5626,41.8942).
\]
They are pairwise MRFs in the usual Gibbs sense, but do not satisfy that additional normalized-factor restriction. The stated value $d=2$ describes their interaction order; it is not a claim that all assumptions of the current theory hold for these targets.

\subsection{Score architectures and parameter sharing}\label{app:num_arch}
Write $F_{\bv}(\x)=A(\x)^\top\bv$ for the target's feature family, with $p$ coefficients as in Table~\ref{tab:num_targets}. All fitted coefficients are learned from the clean training observations. In particular, knowing the candidate graph does not reveal its realized coefficients.

\paragraph{LSE-WSNN.}
Let $P_t(a,b)=e^{-t}\mathbf1\{a=b\}+(1-e^{-t})/S$. With trainable $\bv_0,\bv_1,\bv_2\in\bR^p$ and $a_\ell=\log(1+e^{r_\ell})>0$, define
\begin{align*}
 \lambda_0(\x)&=A(\x)^\top\bv_0,\\
 \lambda_\ell(\x,t)&=
 \log\sum_{\z\in[S]^{\sD}}
 \left\{\prod_{i=1}^{\sD}P_{a_\ell t}(x^i,z^i)\right\}
 e^{\lambda_{\ell-1}(\z,t)}+A(\x)^\top\bv_\ell,
 \quad \ell=1,2,\\
 \widehat s_t(\x\oplus_i c,\x)
 &=\exp\{\lambda_2(\x\oplus_i c,t)-\lambda_2(\x,t)\}.
\end{align*}
A factor coefficient is reused across all global states containing its local assignment. Different graph edges have different coefficients. Each $a_\ell$ is shared over coordinates, and the same parameters are used at every time. There are $3p+2$ trainable parameters. Small examples use full-state contractions; larger examples evaluate the same recursion by exact elimination on hidden states $(z_{0i},z_{1i})\in[S]^2$, with ladder sites grouped by columns. These are two evaluators of the same model, not different fitted architectures.

The family contains the exact noised target score: setting $\bv_0$ to the target coefficients, $\bv_1=\bv_2=0$, and $a_1+a_2=1$ gives the uniform semigroup applied to the unnormalized target mass. This is a representability statement, not a training initialization. The actual initialization is $\bv_0=\bv_1=\bv_2=0$ and $a_1=a_2\simeq1/2$. Exact log-sum-exp operations are used throughout. A finite ReLU approximation of these operations and its complexity bounds are not implemented in this experiment.

\paragraph{Common neural features.}
Conv, Butterfly, and MLP receive the one-hot state and twelve time features: $t/T$ and eleven normalized ReLU hinges with knots geometrically spaced from $0.001T$ to $0.9T$. Conv and Butterfly also receive eight fixed position features, consisting of $r_i,r_i^2$ and $\sin(\pi\omega r_i),\cos(\pi\omega r_i)$ for $\omega\in\{1,2,4\}$ and equally spaced $r_i\in[-1,1]$. Raw outputs are mapped to positive scores by saturating to $[-8,8]$ and linearly interpolating the exponential at 129 equally spaced knots. This fixed continuous piecewise-affine map has a finite ReLU realization and range $[e^{-8},e^8]$.

\paragraph{Conv-WSNN and Butterfly-WSNN.}
Both have a shared affine stem with 32 channels and a shared affine head with $S-1$ outputs per site. Conv has residual width-three convolutions with dilations $1,2,4,\ldots$, stopped when their sum reaches $\sD-1$. Each block applies ReLU after division of the residual sum by $\sqrt2$. There are three blocks in the smaller examples and five in the larger examples. Filters are shared across positions, with separate parameters at different layers and zero padding at boundaries.

Butterfly pads the sequence to the next power of two, applies a fixed permutation, and uses one pair-mixing layer for each binary index bit. Each layer applies the same residual affine map on $\bR^{64}$ to every pair of 32-channel site vectors, followed by division by $\sqrt2$ and ReLU. Layers have distinct parameters. The permutation is inverted and padded outputs are discarded. This routing allows global communication without supplying the MRF graph.

For both models, unrolling the computation repeats the same learned coefficients in multiple affine positions. The corresponding coefficient-placement map is Lipschitz with constant at most one in the maximum norm used in the WSNN definition. This concerns parameter placement, not a uniform input-Lipschitz bound for the fitted network. No parameter-box projection or spectral-norm constraint is enforced during training. LSE also has no enforced common output range matching the neural positive map. The empirical models are therefore not asserted to implement every restriction of the theoretical search class.

\paragraph{MLP.}
The comparator concatenates the $\sD S$ one-hot entries with the twelve time features and uses three fully connected ReLU layers of width 128. Its $\sD(S-1)$ outputs use the same positive map. No coefficients are tied across site positions. This is the model labeled \texttt{MLP-score} in the archived results.

\begin{table}[htbp]\centering\small\setlength{\tabcolsep}{5pt}
\caption{Trainable parameter counts. Parameter budgets are not matched.}\label{tab:num_parameters}
\begin{tabular}{lrrrr}\toprule
Example & LSE & Conv & Butterfly & MLP\\\midrule
\csname @@input\endcsname numerical_tables/parameters.tex \bottomrule
\end{tabular}\end{table}

\subsection{Training and sampling protocol}\label{app:num_protocol}
For uniform noising, let $a_{t,i,c}(\x\mid\x_0)$ denote the known conditional score ratio. The implemented objective is the normalized, centered DSE
\begin{equation}\label{eq:num_dse}
 L_{\mathrm{DSE}}(\widehat s)
 =\frac1{T\sD}\int_0^T\bE_{\x_0}\bE_{\x\mid\x_0}
 \frac1S\sum_{i,c}
 \left[\widehat s_t(\x\oplus_i c,\x)-1
 -a_{t,i,c}(\x\mid\x_0)\log\widehat s_t(\x\oplus_i c,\x)\right]dt.
\end{equation}
Subtracting one per directed move makes the loss of the constant score one equal to zero. The loss can therefore be negative. It is not a negative log-likelihood; subtracting its population value at the true score yields $\cL(\widehat s)/(T\sD)$.

Set $u_j=(j+1/2)/K$, $\tau_j=Tu_j^2$, and $w_j=2u_j$, for $0\le j<K$. These weights follow from $dt/T=2u\,du$. At each update, 512 clean observations are drawn with replacement from the training set. Four training-grid indices are sampled independently and uniformly; each is assigned to a block of 128 observations. Every observation is then corrupted independently at its assigned time. The update averages
\[
 \frac{w_j}{S\sD}\sum_{i,c}
 \left[\widehat s_{\tau_j}-1-a_{\tau_j,i,c}\log\widehat s_{\tau_j}\right].
\]
Its expectation, conditional on the training data, is the midpoint-grid approximation to~\eqref{eq:num_dse}. It is not exactly the continuous-time loss, nor does it retain the exact conditional expectation in the theoretical estimator. Increasing $K$ refines the available quadrature nodes without increasing the four time draws per update.

The neural hidden layers use Kaiming-normal initialization, with zero biases and a zero output head, so that the initial score is one. Each model receives 200 DSE warm-up updates at $K=128$. Each subsequent branch starts from its own model's warm weights and resets Adam before 800 additional updates. The learning rate is $10^{-3}$, Adam moments are $(0.9,0.999)$, the stabilizer is $10^{-8}$, and gradient norms are clipped at 5. There is no weight decay, learning-rate schedule, early stopping, or checkpoint selection. The final iterate is evaluated. Equal warm-up budgets do not imply equal warm-up accuracy.

All samplers start from the uniform law. In the paper's reverse-time notation, the grid is
\[
 t_j=T\{1-(1-j/N)^2\},\qquad 0\le j\le N.
\]
In both dimensional regimes, the state is frozen during each interval and the time-dependent intensities are approximated by three-point Gauss--Legendre quadrature. Independent Poisson draws then give the modular updates. This requires $3N$ batched score evaluations and $N\sD(S-1)$ Poisson draws per generated observation.

For the supplementary coordinate-splitting evaluation at smaller dimension, scores are evaluated at the midpoint of each interval for every state. For each coordinate and fixed setting of the remaining coordinates, an $S\times S$ reverse generator is formed. Its matrix exponential is applied to the probability vector, successively over coordinates. This computes the full law of a coordinate-splitting approximation. It is neither exact simulation of the continuous-time reverse chain nor the modular Poisson sampler used in the main comparison. Full-law computations use double precision; learned forward computations mainly use single precision. Small probability denominators use a $10^{-30}$ numerical safeguard.

For each target and repetition, smaller training sets are prefixes of the largest set. Architecture-specific initialization seeds are distinct, while private draw generators give the methods the same minibatch indices, training-time indices, and forward corruptions. There are 180 dataset tasks, 720 warm-up fits, and 2160 final fits. Every recorded fit reached its assigned update count. Sessions 1, 3, and 4 used vectorized T4 execution; Session 2 used sequential A100 execution. Vectorization keeps separate parameters, Adam moments, losses, and gradient clipping for each fit. The available end-to-end runtime logs are incomplete, so we do not report a hardware speedup or a complete runtime ranking.

The low-dimensional $\tau$-leaping evaluation reuses the final fitted networks and the original held-out score results. It covers $n\in\{2048,8192\}$, all three training grids, all four sampling grids, and all ten repetitions. The original low-dimensional experiments additionally include $n\in\{128,512\}$; their score evaluations and coordinate-splitting results are retained as supplementary results.

\subsection{Evaluation and uncertainty}\label{app:num_metrics}
Each repetition has 256 independent clean held-out observations and 16 fixed midpoint nodes in the transformed time coordinate. There is one corrupted observation per clean observation and node. The same evaluation bank is used across methods and grids. The known target score is used only for evaluation. The score-risk estimator approximates $\cL(\widehat s)/(T\sD)$ by averaging
\[
 \frac{w_j}{S\sD}\sum_{i,c}
 \left[\widehat s_{\tau_j}-s_{\tau_j}^\star
 -s_{\tau_j}^\star\log(\widehat s_{\tau_j}/s_{\tau_j}^\star)\right].
\]
Score MSE averages the weighted squared score difference over $\sD(S-1)$ outputs; log-score RMSE is the square root of the corresponding weighted log-score MSE. Centered DSE is averaged over time for each clean observation before calculating its held-out mean and observation-level SD. These observation-level quantities include corruption noise and differ from variability across the ten fitted repetitions.

For a generated law $p$ and target $q^\star$, one-site and edge-pair TV are
\[
 \operatorname{TV}_i=\frac12\sum_a|p_i(a)-q_i^\star(a)|,
 \qquad
 \operatorname{TV}_{ij}=\frac12\sum_{a,b}|p_{ij}(a,b)-q_{ij}^\star(a,b)|.
\]
The mean edge-pair total variation is
\begin{equation}\label{eq:num_edge_tv}
 \operatorname{TV}_{\mathrm{edge}}(p,q^\star)
 :=
 \frac{1}{|E|}
 \sum_{\{i,j\}\in E}\operatorname{TV}_{ij}.
\end{equation}
We retain means and maxima over sites or true edges. For $\tau$-leaping at both dimensions, these quantities
are estimated from empirical marginals using 512 generated
and 8192 reference observations. The same sample counts are used for edge agreement and the observable Wasserstein distances.

% Energy Wasserstein distance is the one-dimensional $W_1$ between the laws of $H(\x)=-\sum_i h_i(x^i)-\sum_{\{i,j\}}J_{ij}(x^i,x^j)$.

We compare the distributions of the energy
\[
 H(\x)
 =
 -\sum_i h_i(x^i)
 -\sum_{\{i,j\}\in E}J_{ij}(x^i,x^j)
\]
using the one-dimensional Wasserstein distance \citep{peyre2019computational},
\begin{equation}\label{eq:num_energy_w1}
 W_1(H)
 :=
 \int_{\mathbb R}
 \left|F_{H,p}(u)-F_{H,q^\star}(u)\right|\,\mathrm du.
\end{equation}
Here $F_{H,p}$ and $F_{H,q^\star}$ are the distribution functions of $H(\x)$ under $p$ and $q^\star$, respectively.

The order parameter is
\[
 m(\x)=\frac{S\max_a\sum_i\mathbf1\{x^i=a\}/\sD-1}{S-1}.
\]
Its distribution is also compared by $W_1$. At $S=2$ this is absolute magnetization; signed magnetization is recorded separately. Wasserstein distances between energy distributions are
compared within each target, since the energy scale
changes across targets.

The categorical kernel is
\[
 k(\x,\y)=\frac14\sum_{\gamma\in\{0.5,1,2,4\}}
 \exp\{-\gamma\Ham(\x,\y)/\sD\}.
\]
For two independent samples $X_1,\ldots,X_m\sim p$ and
$Y_1,\ldots,Y_r\sim q^\star$, the unbiased estimator
of squared MMD \citep{gretton2012kernel} is
\begin{equation}\label{eq:num_mmd}
 \widehat{\operatorname{MMD}}_{\mathrm u}^{\,2}
 :=
 \frac{\sum_{a\ne b}k(X_a,X_b)}{m(m-1)}
 +\frac{\sum_{a\ne b}k(Y_a,Y_b)}{r(r-1)}
 -\frac{2\sum_{a,b}k(X_a,Y_b)}{mr}.
\end{equation}
 Both samples are capped at 512. Negative estimates are retained. This sample-based calculation is used at both dimensions, including when full laws are available. Thus low-dimensional MMD remains noisy even when joint TV is evaluated without empirical histograms.

For the supplementary coordinate-splitting evaluation of the smaller examples, joint TV, forward KL, Jensen--Shannon divergence and squared Hellinger distance are
\begin{align*}
 \operatorname{TV}(q^\star,p)&=\tfrac12\sum_{\x}|q^\star(\x)-p(\x)|,\qquad
 \operatorname{KL}(q^\star\|p)=\sum_{\x}q^\star(\x)\log\frac{q^\star(\x)}{p(\x)},\\
 \operatorname{JS}(q^\star,p)&=\tfrac12\operatorname{KL}(q^\star\|m)
 +\tfrac12\operatorname{KL}(p\|m),\qquad m=(q^\star+p)/2,\\
 H^2(q^\star,p)&=\tfrac12\sum_{\x}(\sqrt{q^\star(\x)}-\sqrt{p(\x)})^2.
\end{align*}
Natural logarithms are used. Here $p$ is the full law of the numerical splitting sampler. These are discrepancies for the coordinate-splitting law; they are not joint discrepancies for the $\tau$-leaping sampler. No empirical high-dimensional joint KL is substituted for these quantities.

For ten repetition values $z_r$, tables show $\bar z\pm s/\sqrt{10}$, where $s$ is the sample SD. Paired differences are formed within a repetition before calculating their means and pointwise 95\% confidence intervals based on the
$t$ distribution with nine degrees of freedom. These intervals describe the fixed target realizations and are not adjusted for multiple comparisons. We do not treat different sampling grids as independent fitted repetitions. 
% The complete saved metrics are supplied in the accompanying data file; likelihood bounds and classifier two-sample scores were not evaluated.

\subsection{Additional comparisons and grid effects}\label{app:num_grids}

Table~\ref{tab:num_additional} reports additional metrics
at $n=8192$ and $K=N=256$.
Table~\ref{tab:num_paired} reports paired differences
from MLP, and Table~\ref{tab:num_oracle} reports results
using the exact target score.
Sample-based metrics use $\tau$-leaping.
Joint TV, forward KL, Jensen--Shannon divergence, and
squared Hellinger distance are computed from the
coordinate-splitting law for the smaller examples.

\begingroup\scriptsize\setlength{\tabcolsep}{3pt}
\begin{longtable}{lrrrr}
% \caption{Additional metrics at $n=8192$, $K=256$ and, for sample metrics, $N=256$. Entries are mean $\pm$ SE; bold denotes the smallest mean for accuracy metrics. Observation-level DSE SD is descriptive and is not ranked.}\label{tab:num_additional}\\
\caption{Additional metrics at $n=8192$, $K=256$ and, for sample metrics, $N=256$. Sample-based quantities use $\tau$-leaping; the four joint discrepancies use the low-dimensional coordinate-splitting law. Entries are mean $\pm$ SE; bold denotes the smallest mean for accuracy metrics. Observation-level DSE SD is descriptive and is not ranked.} \label{tab:num_additional}\\
\toprule Metric & LSE & Conv & Butterfly & MLP\\\midrule\endfirsthead
\multicolumn{5}{l}{\textit{Table \thetable\ continued}}\\\toprule Metric & LSE & Conv & Butterfly & MLP\\\midrule\endhead
\midrule\multicolumn{5}{r}{\textit{Continued on next page}}\\\endfoot\bottomrule\endlastfoot
\csname @@input\endcsname numerical_tables/additional.tex 
\end{longtable}\endgroup

\begingroup\scriptsize\setlength{\tabcolsep}{5pt}
\begin{longtable}{lrr}
\caption{Paired differences from MLP for $\tau$-leaping at $n=8192$, $K=N=256$: mean difference [pointwise 95\% interval]. Negative values favor the shared model. MMD-squared differences are multiplied by $10^3$.}\label{tab:num_paired}\\
\toprule Method & Edge TV difference & MMD$^2$ difference $\times10^3$\\\midrule\endfirsthead
\multicolumn{3}{l}{\textit{Table \thetable\ continued}}\\\toprule Method & Edge TV difference & MMD$^2$ difference $\times10^3$\\\midrule\endhead
\midrule\multicolumn{3}{r}{\textit{Continued on next page}}\\\endfoot\bottomrule\endlastfoot
\csname @@input\endcsname numerical_tables/paired.tex 
\end{longtable}\endgroup

\begin{table}[htbp]\centering\footnotesize\setlength{\tabcolsep}{4pt}
\caption{Oracle-score evaluation at $N=256$, using the $n=8192$ records. Joint TV is evaluated on the full coordinate-splitting law and is available only for the smaller examples. All other columns use $\tau$-leaping, with 512 generated observations, 8192 reference observations for edge TV and energy $W_1$, and 512 reference observations for MMD. Sample-based entries are mean $\pm$ SE across ten repetitions; deterministic joint TV is shown without SE.}\label{tab:num_oracle}
\begin{tabular}{lrrrr}\toprule
Example & Joint TV & Edge TV & MMD$^2\times10^3$ & $W_1(H)$\\\midrule
\csname @@input\endcsname numerical_tables/oracle.tex \bottomrule
\end{tabular}\end{table}

For the sample-based metrics, the oracle uses the exact target score with the same $\tau$-leaping scheme and sampling grid. The metrics capture different discrepancies. For the categorical tree with $\sD=32$, LSE has near-zero estimated squared MMD, while its Wasserstein distance between energy distributions is $1.305$, compared with $0.197$ for the oracle.

Figure~\ref{fig:num_riskn} reports score risk over every original training sample size. Figure~\ref{fig:num_mmdn} reports $\tau$-leaping MMD for $n\in\{2048,8192\}$ at both dimensions. Figure~\ref{fig:num_training} gives MMD at every training grid; Figure~\ref{fig:num_samplinglow} complements Figure~\ref{fig:num_sampling} with the smaller examples under the same $\tau$-leaping scheme. The MMD curves are retained even when their uncertainty prevents a clear ordering. Since update counts are fixed, neither $n$ nor $K$ alone determines optimization accuracy. The original low-dimensional MMD curves, including $n\in\{128,512\}$, are retained separately in Figure~\ref{fig:num_splitting_n}.

\begin{figure}[!htbp]
\centering
\includegraphics[width=\linewidth]{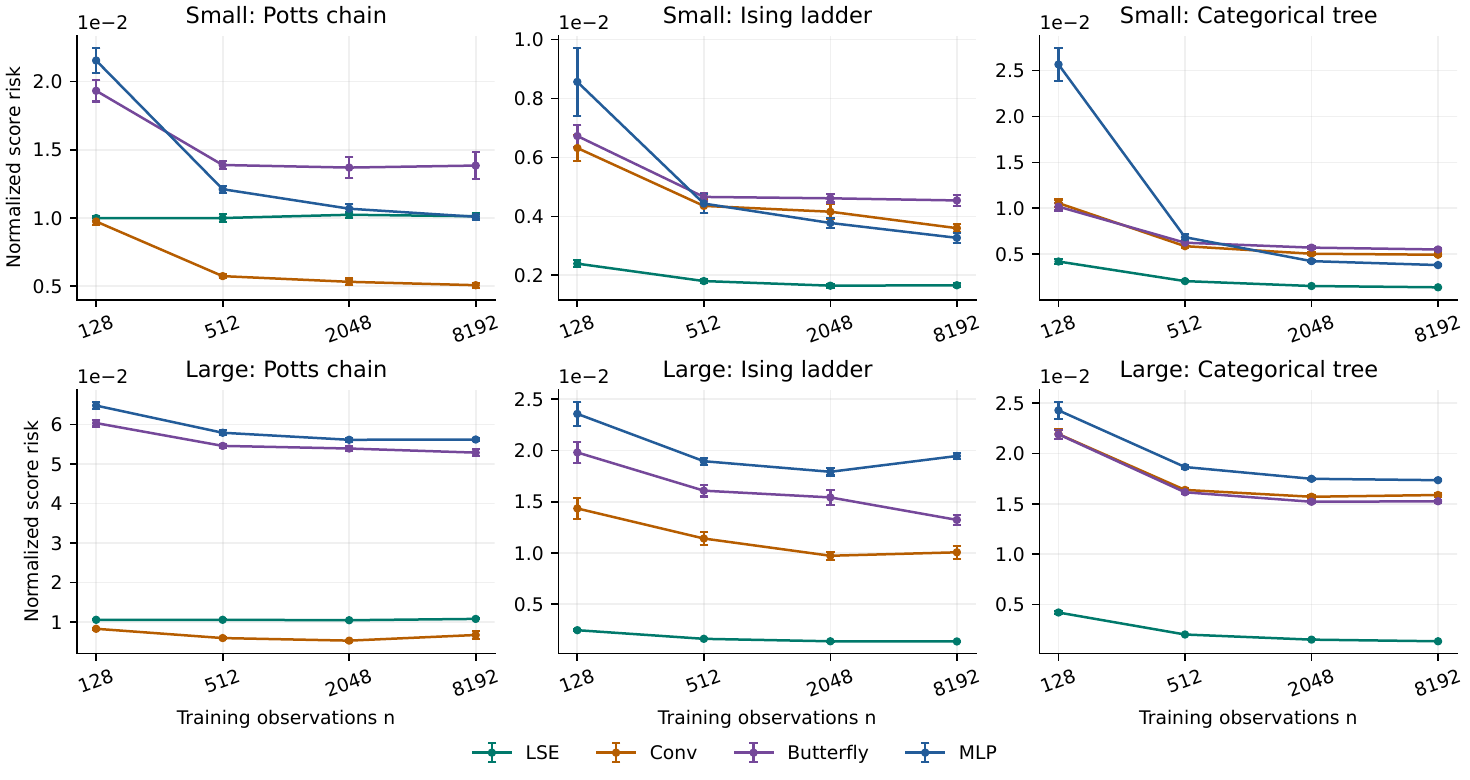}
\caption{Normalized score risk versus sample size, with $K=256$. Error bars are SE across ten repetitions. Panels have separate vertical scales.}
\label{fig:num_riskn}
\end{figure}

\begin{figure}[!htbp]
\centering\includegraphics[width=\linewidth]{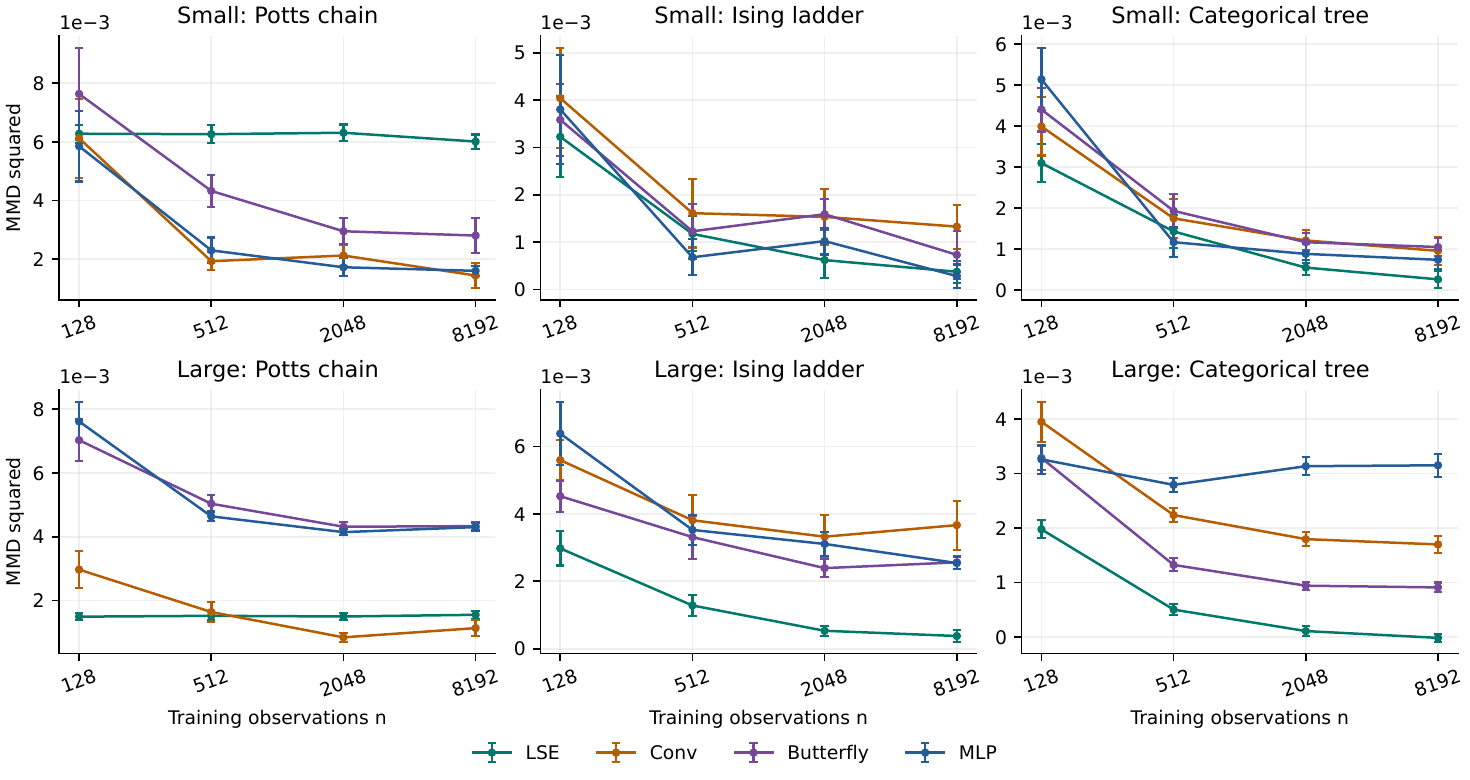}
\caption{Unbiased squared Hamming MMD versus sample size, with $K=N=256$. Both dimensions use $\tau$-leaping at $n\in\{2048,8192\}$. Error bars are SE across ten repetitions; negative estimates are retained.}
\label{fig:num_mmdn}
\end{figure}

\begin{figure}[!htbp]
\centering\includegraphics[width=\linewidth]{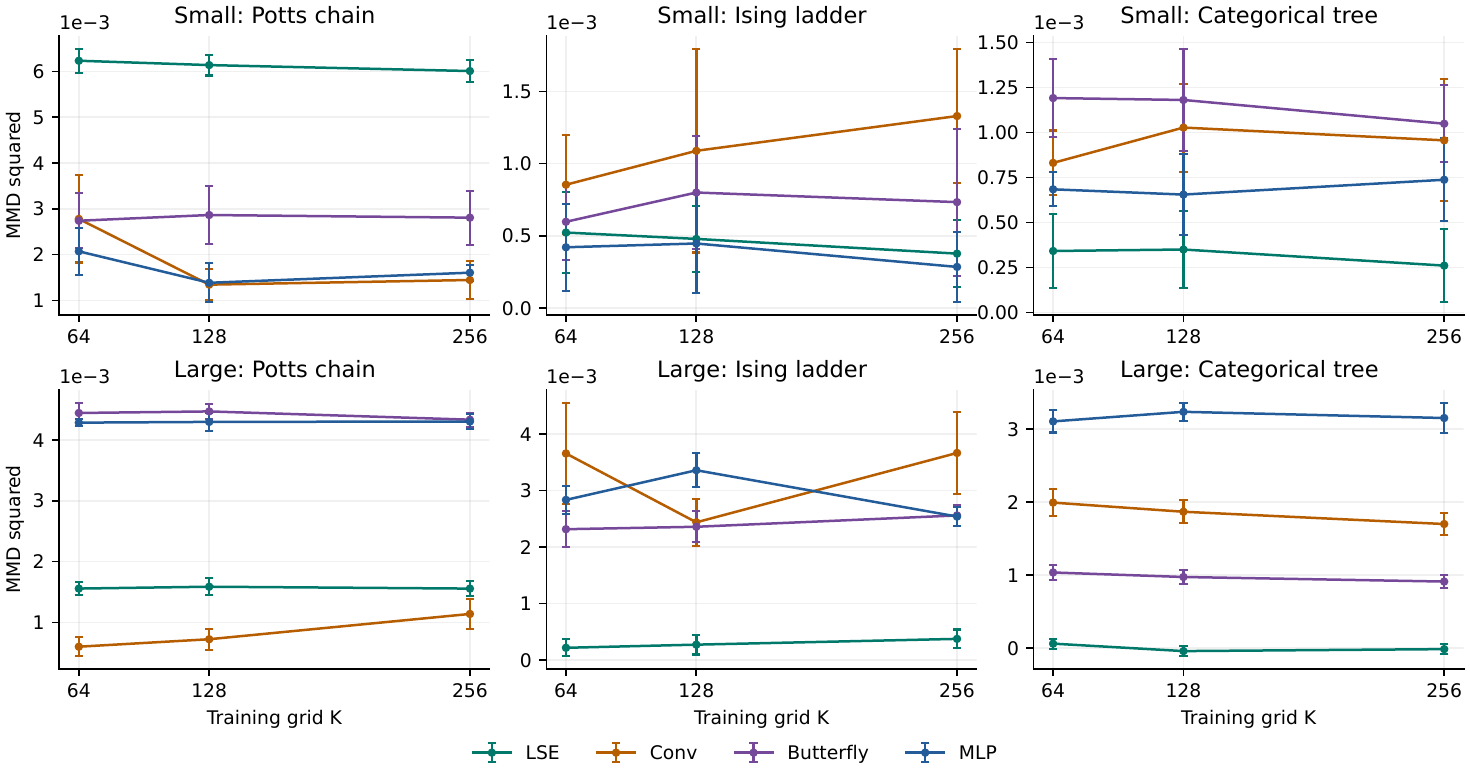}
\caption{Unbiased squared Hamming MMD versus the training-grid size $K$, with $n=8192$ and $N=256$. Both dimensions use $\tau$-leaping. Every branch has the same refinement budget. Error bars are SE.}\label{fig:num_training}
\end{figure}

\begin{figure}[!htbp]
\centering\includegraphics[width=\linewidth]{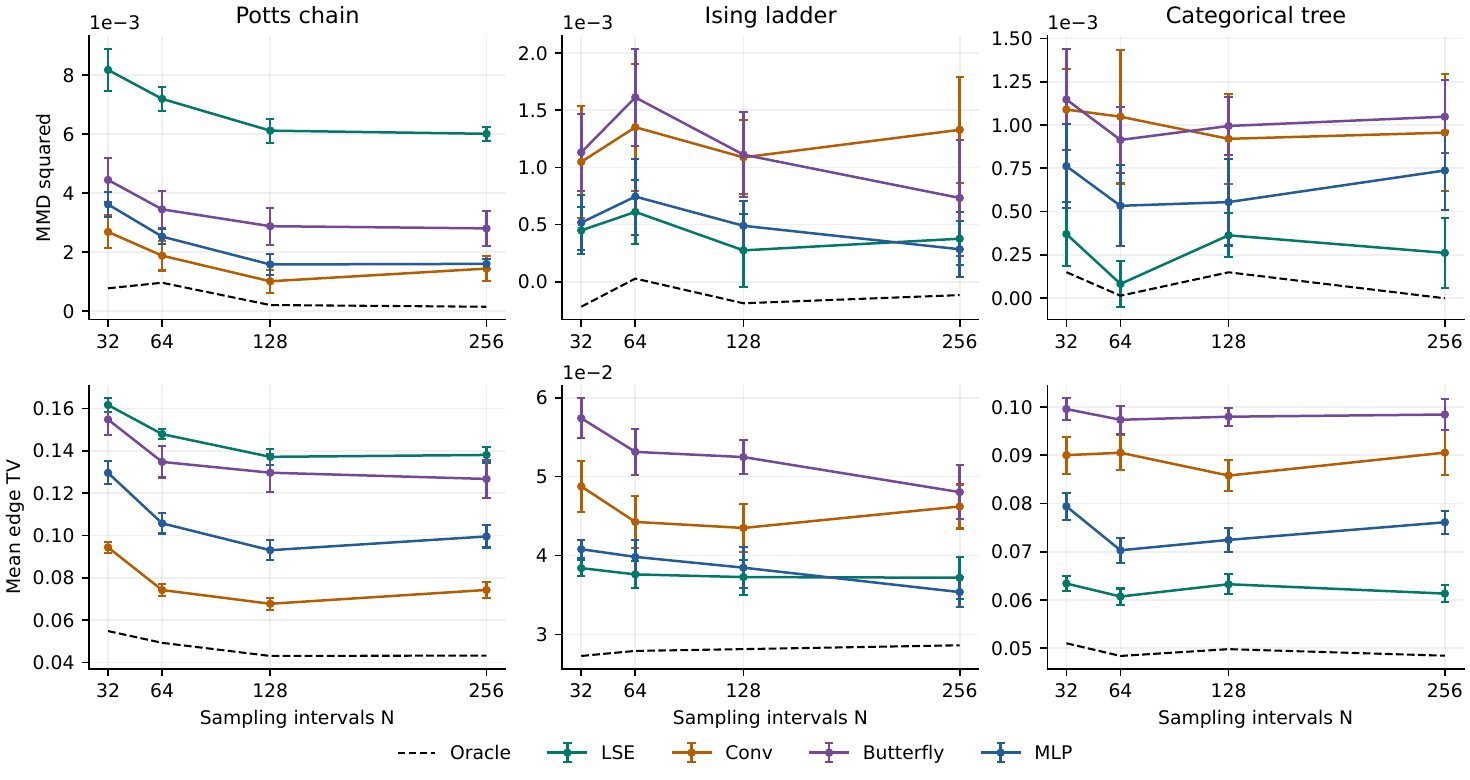}
\caption{Sampling-grid refinement for the smaller examples, with $n=8192$, $K=256$, using $\tau$-leaping. Top: squared MMD from 512 observations per distribution. Bottom: mean edge TV from 512 generated and 8192 reference observations. Error bars are SE; dashed curves use the exact score in the same $\tau$-leaping sampler.}\label{fig:num_samplinglow}
\end{figure}

\begin{figure}[!htbp]
\centering
\includegraphics[width=\linewidth]{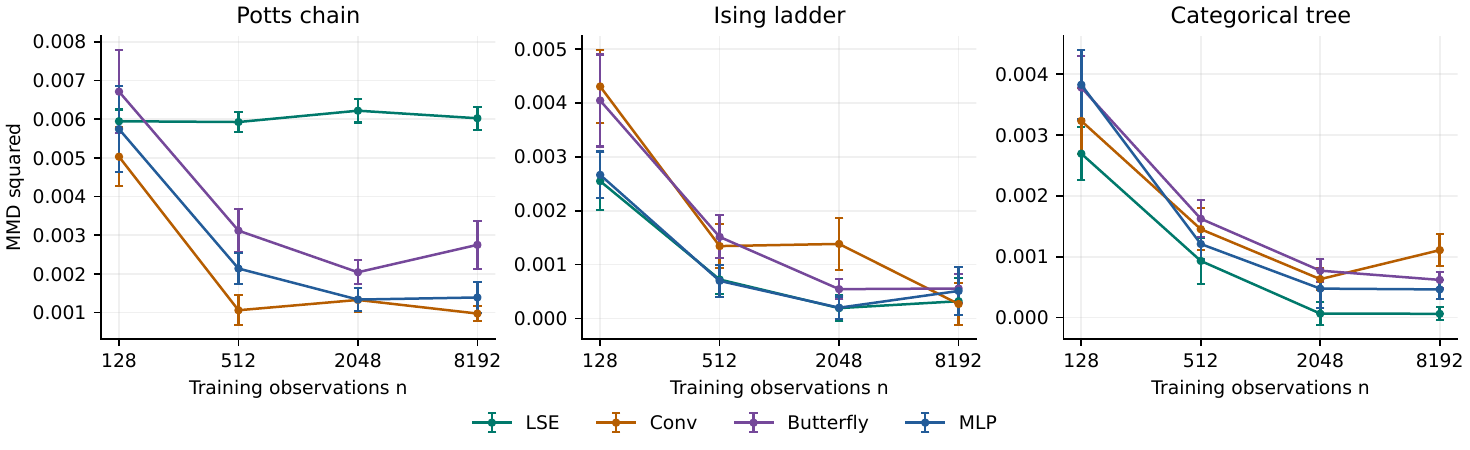}
\caption{Supplementary MMD curves for the original low-dimensional coordinate-splitting sampler, with $K=N=256$ and $n\in\{128,512,2048,8192\}$. Each MMD estimate uses 512 observations from each distribution. Error bars are SE across ten repetitions.}
\label{fig:num_splitting_n}
\end{figure}

\clearpage
\section{Preliminary lemmas}\label{app:prelim}

This appendix collects standard facts about the forward kernel and the true score that are used in \Cref{sec:problem,sec:theory}.

\begin{lemma}[Factorization of the conditional kernel]\label{lem:kernel_factor}
  Let $Q$ be given by~\eqref{eq:Q_cases}.  Then $e^{tQ}=\bigotimes_{i=1}^{\sD}e^{tQ^{\tok}}$ for every $t\ge0$, and the conditional kernel factorizes as in~\eqref{eq:qt0_factor}.
\end{lemma}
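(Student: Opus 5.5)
We prove the two claims in order: first the operator identity $e^{tQ}=\bigotimes_{i=1}^{\sD}e^{tQ^{\tok}}$, and then the entrywise factorization~\eqref{eq:qt0_factor}. Throughout we work in the product basis $e_{x^1}\otimes\cdots\otimes e_{x^\sD}$ of $\bR^{S^\sD}$.

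\emph{Step 1: Kronecker-sum form of $Q$.} Set $Q_i\defn\Id_S^{\otimes(i-1)}\otimes Q^{\tok}\otimes\Id_S^{\otimes(\sD-i)}$. By the mixed-product rule, the $(\x,\y)$ entry of a Kronecker product is the product of the factor entries. Hence $[Q_i]_{\x,\y}=Q^{\tok}(x^i,y^i)\prod_{j\neq i}\mathbf{1}\{x^j=y^j\}$. Summing over $i$ and comparing the three cases of~\eqref{eq:Q_cases} shows that $Q=\sum_i Q_i$:
\begin{itemize}
\item if $\x$ and $\y$ differ only in coordinate $i$, only $Q_i$ contributes, and it gives $Q^{\tok}(x^i,y^i)$;
\item if $\x=\y$, every $Q_i$ contributes its diagonal entry, giving $\sum_i Q^{\tok}(x^i,x^i)$;
\item if $\Ham(\x,\y)\ge2$, every summand vanishes.
\end{itemize}

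\emph{Step 2: commutation and exponentiation.} For $i\neq j$, the mixed-product rule gives
\[
Q_iQ_j=Q_jQ_i=\Id\otimes\cdots\otimes Q^{\tok}\otimes\cdots\otimes Q^{\tok}\otimes\cdots\otimes\Id,
\]
with $Q^{\tok}$ in slots $i$ and $j$. Because the $Q_i$ pairwise commute, $e^{t\sum_iQ_i}=\prod_i e^{tQ_i}$.

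Next we compute each factor. Since $Q_i^k=\Id^{\otimes(i-1)}\otimes (Q^{\tok})^k\otimes\Id^{\otimes(\sD-i)}$, summing the power series term by term gives $e^{tQ_i}=\Id^{\otimes(i-1)}\otimes e^{tQ^{\tok}}\otimes\Id^{\otimes(\sD-i)}$. Multiplying these $\sD$ factors and applying the mixed-product rule once more, each slot collects exactly one copy of $e^{tQ^{\tok}}$ and identities elsewhere. This yields $e^{tQ}=\bigotimes_{i=1}^{\sD}e^{tQ^{\tok}}$. At $t=0$ both sides equal the identity, so the identity holds for every $t\ge0$.

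\emph{Step 3: entries.} Taking the $(\x_0,\y)$ entry of the Kronecker product and using~\eqref{eq:cond_kernel} together with the definition of $q^{\tok}_{t|0}$ gives
\[
q_{t|0}(\y\mid\x_0)=\prod_{i=1}^{\sD}\bigl[e^{tQ^{\tok}}\bigr]_{x_0^i,y^i}=\prod_{i=1}^{\sD}q^{\tok}_{t|0}(y^i\mid x_0^i),
\]
which is~\eqref{eq:qt0_factor}.

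There is no genuine obstacle. The only point requiring care is the commutation step, since $e^{A+B}=e^Ae^B$ fails without it. A slightly more elegant route to the same conclusion is to show that $U_t\defn\bigotimes_i e^{tQ^{\tok}}$ satisfies $U_0=\Id$ and $\tfrac{\mathrm d}{\mathrm dt}U_t=QU_t$ (by the product rule, each derivative term reproduces one $Q_i$ factor), and then invoke uniqueness for this linear ODE.
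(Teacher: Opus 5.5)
Your proof is correct and follows essentially the same route as the paper: write $Q$ as a Kronecker sum, use that the summands commute to factor the exponential, then read off the $(\x_0,\y)$ entry. You also make explicit two steps the paper leaves implicit, namely the computation $e^{tQ_i}=\Id^{\otimes(i-1)}\otimes e^{tQ^{\tok}}\otimes\Id^{\otimes(\sD-i)}$ and its assembly into the full tensor product via the mixed-product rule.
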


\begin{proof}
In the product basis $e_{x^1}\otimes\cdots\otimes e_{x^\sD}$,
where $e_a$ is the $a$-th standard basis vector of $\bR^S$,
the rate matrix has the Kronecker-sum representation
\[
  Q
  =
  \bigoplus_{i=1}^{\sD}Q^{\tok}
  \defn
  \sum_{i=1}^{\sD}
  \underbrace{\Id_S\otimes\cdots\otimes\Id_S}_{i-1}
  \otimes Q^{\tok}\otimes
  \underbrace{\Id_S\otimes\cdots\otimes\Id_S}_{\sD-i}.
\]
Indeed, the $i$-th summand has entries
\[
  \bigl[
    \Id_S^{\otimes(i-1)}
    \otimes Q^{\tok}
    \otimes\Id_S^{\otimes(\sD-i)}
  \bigr]_{\x,\y}
  =
  Q^{\tok}(x^i,y^i)
  \prod_{j\neq i}\mathbf{1}\{x^j=y^j\}.
\]
The identity factors require all coordinates other than $i$
to agree, giving exactly the contribution of coordinate $i$
to~\eqref{eq:Q_cases}.
The summands act on different tensor factors and therefore
commute, which gives
$e^{tQ}=\bigotimes_{i=1}^{\sD}e^{tQ^{\tok}}$.
Taking the $(\x_0,\y)$ entry of this tensor product and using~\eqref{eq:cond_kernel} yields~\eqref{eq:qt0_factor}.
\end{proof}

\begin{lemma}[Idempotent collapse]\label{lem:idempotent}
  Let $\cV$ be a finite set and $Q^{\tok}\in\bR^{|\cV|\times|\cV|}$ a rate matrix on $\cV$.  If $\mathsf{R}\defn-Q^{\tok}$ is idempotent, that is $\mathsf{R}^2=\mathsf{R}$, then
  $\mathsf{R}^n=\mathsf{R}$ for all $n\ge1$ and
  \begin{equation}\label{eq:exp_idempotent}
    e^{tQ^{\tok}}
    =
    e^{-t\mathsf{R}}
    =
    \Id+(e^{-t}-1)\mathsf{R}.
  \end{equation}
\end{lemma}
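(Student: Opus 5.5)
The plan is to prove $\mathsf{R}^n=\mathsf{R}$ by induction and then sum the exponential series, using idempotence to collapse all higher powers. For $n=1$ the claim is trivial. If $\mathsf{R}^{n}=\mathsf{R}$, then $\mathsf{R}^{n+1}=\mathsf{R}^n\mathsf{R}=\mathsf{R}\mathsf{R}=\mathsf{R}$ by the hypothesis $\mathsf{R}^2=\mathsf{R}$, which closes the induction.

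Next, $Q^{\tok}=-\mathsf{R}$ by definition, so $e^{tQ^{\tok}}=e^{-t\mathsf{R}}$ immediately. The matrix exponential series converges absolutely for every finite matrix, so I may write
\[
  e^{-t\mathsf{R}}
  =
  \Id+\sum_{n\ge1}\frac{(-t)^n}{n!}\mathsf{R}^n
  =
  \Id+\Bigl(\sum_{n\ge1}\frac{(-t)^n}{n!}\Bigr)\mathsf{R}
  =
  \Id+(e^{-t}-1)\mathsf{R},
\]
where the middle equality substitutes $\mathsf{R}^n=\mathsf{R}$ termwise and the last uses the scalar series of $e^{-t}$. This gives \eqref{eq:exp_idempotent}.

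There is no real obstacle here. The only point needing a word of justification is the termwise substitution inside an infinite series, and it is harmless because the series converges absolutely in any matrix norm. Finally, I will apply this to uniform noising, where $\mathsf{R}=\Id_S-S^{-1}\bOne\bOne^\top$. Idempotence follows from $(S^{-1}\bOne\bOne^\top)^2=S^{-1}\bOne\bOne^\top$, and substituting gives $e^{tQ^{\tok}}=e^{-t}\Id_S+\frac{1-e^{-t}}{S}\bOne\bOne^\top$, as used in \Cref{sec:problem}.
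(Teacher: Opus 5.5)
Your proposal is correct and follows the paper's proof essentially verbatim: induction via $\mathsf{R}^{n+1}=\mathsf{R}^n\mathsf{R}=\mathsf{R}^2=\mathsf{R}$, then collapsing the exponential series to $\Id+(e^{-t}-1)\mathsf{R}$. Your closing application to $\mathsf{R}=\Id_S-S^{-1}\bOne\bOne^\top$ is also exactly how the paper obtains the uniform-noising kernel in the subsequent lemma.
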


\begin{proof}
  Idempotency for all $n\ge2$ follows by induction, since
  $\mathsf{R}^{n+1}
   =\mathsf{R}^n\mathsf{R}
   =\mathsf{R}^2
   =\mathsf{R}$.
  Hence
  \[
    e^{-t\mathsf{R}}
    =
    \Id+
    \sum_{n=1}^{\infty}
    \frac{(-t)^n}{n!}\mathsf{R}^n
    =
    \Id+(e^{-t}-1)\mathsf{R}.
  \]
\end{proof}

\begin{lemma}[Uniform noising kernel]\label{lem:uniform_kernel}
  Let $Q^{\tok}=\mathsf{P}-\Id_S$ with $\mathsf{P}\defn\frac1S\bOne\bOne^\top$.  Then
  \begin{equation}\label{eq:exp_uniform}
    e^{tQ^{\tok}}
    =
    e^{-t}\Id_S+(1-e^{-t})\mathsf{P},
  \end{equation}
  so that
  \begin{equation}\label{eq:tok_kernel_uniform_cases}
    q_{t|0}^{\tok}(b\mid a)
    =
    \begin{cases}
      \dfrac{1+(S-1)e^{-t}}{S},
        & \text{if } b=a,\\[6pt]
      \dfrac{1-e^{-t}}{S},
        & \text{if } b\neq a,
    \end{cases}
  \end{equation}
  and
  \begin{equation}\label{eq:qt0_hamming_product}
    q_{t|0}(\y\mid\x_0)
    =
    \biggl(
      \frac{1+(S-1)e^{-t}}{S}
    \biggr)^{\!\sD-\Ham(\y,\x_0)}
    \biggl(
      \frac{1-e^{-t}}{S}
    \biggr)^{\!\Ham(\y,\x_0)}.
  \end{equation}
\end{lemma}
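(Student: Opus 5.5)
The plan is to build on \Cref{lem:idempotent}: set $\mathsf{R}\defn\Id_S-\mathsf{P}$, so that $Q^{\tok}=-\mathsf{R}$, and check that $\mathsf{R}$ is idempotent. Since $\mathsf{P}=\frac1S\bOne\bOne^\top$ and $\bOne^\top\bOne=S$, we have $\mathsf{P}^2=\frac1{S^2}\bOne(\bOne^\top\bOne)\bOne^\top=\mathsf{P}$. It follows that $\mathsf{R}^2=\Id_S-2\mathsf{P}+\mathsf{P}^2=\mathsf{R}$. Applying \eqref{eq:exp_idempotent} then gives
\[
e^{tQ^{\tok}}=\Id_S+(e^{-t}-1)(\Id_S-\mathsf{P})=e^{-t}\Id_S+(1-e^{-t})\mathsf{P},
\]
which is \eqref{eq:exp_uniform}.

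Next, we read off \eqref{eq:tok_kernel_uniform_cases} entrywise from this formula. Every entry of $\mathsf{P}$ equals $1/S$, and $\Id_S$ contributes $e^{-t}$ only on the diagonal. So for $b=a$ the $(a,b)$ entry is $e^{-t}+(1-e^{-t})/S=[1+(S-1)e^{-t}]/S$, and for $b\neq a$ it is $(1-e^{-t})/S$.

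Finally, we invoke the factorization \eqref{eq:qt0_factor}, proved in \Cref{lem:kernel_factor}. The product over $i\in[\sD]$ contains one factor for each coordinate. Exactly $\Ham(\y,\x_0)$ of these coordinates have $y^i\neq x_0^i$ and the remaining $\sD-\Ham(\y,\x_0)$ have $y^i=x_0^i$. Grouping the factors by these two cases yields \eqref{eq:qt0_hamming_product}.

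None of these steps is a real obstacle; the only point that needs a line of verification is the idempotency computation $\mathsf{P}^2=\mathsf{P}$.
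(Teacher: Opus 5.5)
Your proposal is correct and matches the paper's proof essentially step for step. You verify $\mathsf{P}^2=\mathsf{P}$, deduce that $\mathsf{R}=\Id_S-\mathsf{P}$ is idempotent, apply \Cref{lem:idempotent}, read off the entries, and then group the factors of \eqref{eq:qt0_factor} according to whether $y^i=x_0^i$.
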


\begin{proof}
Since
\[
  \mathsf{P}^2
  =
  \frac{1}{S^2}
  \bOne(\bOne^\top\bOne)\bOne^\top
  =
  \frac{1}{S}\bOne\bOne^\top
  =
  \mathsf{P},
\]
the matrix $\mathsf{R}=-Q^{\tok}=\Id_S-\mathsf{P}$ satisfies
\[
  \mathsf{R}^2
  =
  \Id_S-2\mathsf{P}+\mathsf{P}^2
  =
  \Id_S-\mathsf{P}
  =
  \mathsf{R}.
\]
\Cref{lem:idempotent} then gives
\[
  e^{tQ^{\tok}}
  =
  \Id_S+(e^{-t}-1)(\Id_S-\mathsf{P})
  =
  e^{-t}\Id_S+(1-e^{-t})\mathsf{P},
\]
which is~\eqref{eq:exp_uniform}.  Reading off entries via $[\mathsf{P}]_{a,b}=1/S$ gives~\eqref{eq:tok_kernel_uniform_cases}.  As a check, $q_{t|0}^{\tok}(a\mid a)+(S-1)q_{t|0}^{\tok}(b\mid a)=1$, the kernel is the identity as $t\to0^+$, and it converges to $1/S$ as $t\to\infty$.  Substituting~\eqref{eq:tok_kernel_uniform_cases} into~\eqref{eq:qt0_factor} and grouping the $\sD-\Ham(\y,\x_0)$ matching coordinates and the $\Ham(\y,\x_0)$ differing ones gives~\eqref{eq:qt0_hamming_product}.  Dividing and multiplying by $\bigl(\frac{1+(S-1)e^{-t}}{S}\bigr)^{\Ham(\y,\x_0)}$ gives the form~\eqref{eq:qt0_hamming}.
\end{proof}

\begin{lemma}[Conditioning the score]\label{lem:cond_score}
  Let $0<t\le T$.  For any function $f$,
  \[
    \bE_{\x\sim q_t}
    \left[
      \sum_{\y\neq\x}
      s_t^\star(\y,\x)f(\x,\y,t)
    \right]
    =
    \bE_{\x_0\sim q^\star}
    \bE_{\x\sim q_{t|0}(\cdot\mid\x_0)}
    \left[
      \sum_{\y\neq\x}
      s_{t|0}^\star(\y,\x\mid\x_0)f(\x,\y,t)
    \right],
  \]
  where the \emph{conditional score} is
  \[
    s_{t|0}^\star(\y,\x\mid\x_0)
    =
    \frac{q_{t|0}(\y\mid\x_0)}
         {q_{t|0}(\x\mid\x_0)}.
  \]
\end{lemma}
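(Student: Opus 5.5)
The plan is to use the definition of the concrete score to write the left side as a double sum over states. Then expand the marginal $q_t$ as a mixture of conditional kernels, and observe that the factor $q_t(\x)$ in the denominator cancels. Concretely, for fixed $t\in(0,T]$, the left side equals
\[
  \sum_{\x\in\cX}q_t(\x)\sum_{\y\neq\x}\frac{q_t(\y)}{q_t(\x)}f(\x,\y,t)
  =
  \sum_{\x\in\cX}\sum_{\y\neq\x}q_t(\y)f(\x,\y,t).
\]
This step uses only that $q_t(\x)>0$ for every $\x$ when $t>0$. That positivity holds by \eqref{eq:qt0_hamming}, since $\alpha_t>0$ and the prefactor is positive for $t>0$.

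Next I would substitute $q_t(\y)=\sum_{\x_0}q^\star(\x_0)q_{t|0}(\y\mid\x_0)$. Exchanging the finite sums gives
\[
  \sum_{\x_0}q^\star(\x_0)\sum_{\x}\sum_{\y\neq\x}q_{t|0}(\y\mid\x_0)f(\x,\y,t).
\]
I then reinsert the factor $q_{t|0}(\x\mid\x_0)/q_{t|0}(\x\mid\x_0)$. This is legitimate because, by \eqref{eq:qt0_hamming}, $q_{t|0}(\x\mid\x_0)>0$ for all $\x,\x_0$ when $t>0$. The result is
\[
  \sum_{\x_0}q^\star(\x_0)\sum_{\x}q_{t|0}(\x\mid\x_0)\sum_{\y\neq\x}s_{t|0}^\star(\y,\x\mid\x_0)f(\x,\y,t),
\]
which is exactly the right side.

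All sums are over the finite set $\cX$, so no integrability or interchange issues arise, and $f$ may be arbitrary real-valued. The only point needing care is strict positivity of the conditional kernel, and this is why the lemma is restricted to $t>0$. At $t=0$ the kernel is a point mass and the conditional score is undefined. I do not expect any genuine obstacle.
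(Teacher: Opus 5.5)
Your proposal is correct and follows essentially the same argument as the paper. Both proofs cancel $q_t(\x)$, expand $q_t(\y)$ as the mixture $\sum_{\x_0}q^\star(\x_0)q_{t|0}(\y\mid\x_0)$, and reinsert $q_{t|0}(\x\mid\x_0)$ using strict positivity of the kernel for $t>0$. The paper uses that positivity implicitly, whereas you state it explicitly.
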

\begin{proof}
  We have
  \begin{align*}
    \bE_{\x\sim q_t}\Bigl[\sum_{\y\neq\x} s_t^{\star}(\y,\x)\,f(\x,\y,t)\Bigr]
    &= \sum_{\x} q_t(\x)\sum_{\y\neq\x}\frac{q_t(\y)}{q_t(\x)}\,f(\x,\y,t)
     = \sum_{\y} q_t(\y)\sum_{\x\neq\y} f(\x,\y,t)\\
    &= \sum_{\y}\sum_{\x_0} q^\star(\x_0)\,q_{t|0}(\y\mid\x_0)
       \sum_{\x\neq\y} f(\x,\y,t)\\
    &= \sum_{\x_0} q^\star(\x_0)\sum_{\x} q_{t|0}(\x\mid\x_0)
       \sum_{\y\neq\x}\frac{q_{t|0}(\y\mid\x_0)}{q_{t|0}(\x\mid\x_0)}\,
       f(\x,\y,t),
  \end{align*}
  which is the right-hand side.
\end{proof}

\begin{lemma}[Score bounds]\label{lem:score_bounds}
  For every $t>0$, $\x\in[S]^\sD$, $i\in[\sD]$, and
  $c\in[S-1]$,
  \begin{equation}\label{eq:score_bounds}
    \max\{\alpha_t,R^{-1}\}
    \le
    s_t^\star(\x\oplus_i c,\x)
    \le
    \min\{\alpha_t^{-1},R\}.
  \end{equation}
  The bounds involving $\alpha_t$ hold for every data distribution.
  The bounds involving $R$ follow from
  Assumption~\ref{ass:flip}.
\end{lemma}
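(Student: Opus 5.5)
The score ratio in \eqref{eq:score_ratio_g} involves $g_t$ at two neighbouring states, so I will work with $g_t$ and treat the $\alpha_t$-bounds and the $R$-bounds separately.

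\emph{The $\alpha_t$-bounds.} These use only that $\y\defn\x\oplus_i c$ and $\x$ differ in the single coordinate $i$. For every $\x_0$ this gives $|\Ham(\y,\x_0)-\Ham(\x,\x_0)|\le1$. Since $\alpha_t\in(0,1)$ for $t>0$, it follows that
\[
\alpha_t\,\alpha_t^{\Ham(\x,\x_0)}\le\alpha_t^{\Ham(\y,\x_0)}\le\alpha_t^{-1}\alpha_t^{\Ham(\x,\x_0)} .
\]
Averaging over $\x_0\sim q^\star$ in \eqref{eq:g_def} yields $\alpha_t g_t(\x)\le g_t(\y)\le\alpha_t^{-1}g_t(\x)$. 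Dividing by $g_t(\x)>0$ and using \eqref{eq:score_ratio_g} gives the bounds involving $\alpha_t$, with no assumption on $q^\star$.

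\emph{The $R$-bounds.} I will use the change of variables $\x_0\mapsto\x_0\oplus_i c$, which is a bijection of $[S]^\sD$. It preserves Hamming distance after shifting both arguments, since
\[
\Ham(\x\oplus_i c,\x_0\oplus_i c)=\Ham(\x,\x_0).
\]
Substituting $\x_0=\x_0'\oplus_i c$ in \eqref{eq:g_def} gives
\[
g_t(\x\oplus_i c)=\sum_{\x_0'}\alpha_t^{\Ham(\x,\x_0')}\,q^\star(\x_0'\oplus_i c).
\]
Assumption~\ref{ass:flip} says $R^{-1}q^\star(\x_0')\le q^\star(\x_0'\oplus_i c)\le Rq^\star(\x_0')$ termwise. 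Since all weights $\alpha_t^{\Ham}$ are nonnegative, summing gives $R^{-1}g_t(\x)\le g_t(\x\oplus_i c)\le Rg_t(\x)$, and dividing by $g_t(\x)$ gives the bounds involving $R$.

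Combining the two pairs of bounds gives \eqref{eq:score_bounds}. The only step that needs care is checking that the shift bijection commutes with the Hamming distance, which is immediate coordinatewise: in coordinate $i$, $x^i\oplus c=x_0'^i\oplus c$ if and only if $x^i=x_0'^i$. Beyond that there is no real obstacle.
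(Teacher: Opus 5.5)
Your proposal is correct and follows essentially the same argument as the paper: the one-coordinate Hamming comparison gives the $\alpha_t$ bounds, and the shift change of variables $\x_0\mapsto\x_0\oplus_i c$ in $g_t$ gives the $R$ bounds. The only cosmetic difference is that you take the lower $R$-bound directly from the two-sided flip-ratio assumption, whereas the paper proves only the upper bounds and obtains the lower ones by applying them to the reverse pair.
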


\begin{proof}
  Let $\y=\x\oplus_i c$, and recall $g_t$ from~\eqref{eq:g_def} and the ratio~\eqref{eq:score_ratio_g}.  Changing one coordinate changes the Hamming
  distance to any $\x_0$ by at most one.  Hence
  \[
    \alpha_t g_t(\x)
    \le
    g_t(\y)
    \le
    \alpha_t^{-1}g_t(\x).
  \]
  For the bound involving $R$, change variables in~\eqref{eq:g_def} to obtain
  \[
    g_t(\y)
    =
    \sum_{\x_0\in[S]^\sD}
    \alpha_t^{\Ham(\x,\x_0)}
    q^\star(\x_0\oplus_i c)
    \le
    Rg_t(\x).
  \]
  Applying the same upper bounds to the reverse pair gives the lower
  bounds.
\end{proof}
\section{Poisson representation}\label{app:poisson}
Let $S\ge2$, and consider the state space $[S]$ with modular addition $\oplus$ as introduced in Section~\ref{sec:reverse}. We index nonzero shifts by $c\in[S]$ with $c\neq S$.

\begin{lemma}[Poisson representation]
\label{lem:poisson_representation}
  Let $\lambda_c:[0,h]\to[0,\infty)$ be integrable for
  $c\in[S-1]$.  Let $(X_r)_{0\le r\le h}$ be the continuous-time
  Markov chain on $\mathbb Z_S$ whose rate matrix at time $r$
  satisfies
  \[
    Q_r(a,a\oplus c)=\lambda_c(r),
    \qquad
    a\in\mathbb Z_S,\quad c\in[S-1],
  \]
  with diagonal entries chosen so that every row sums to zero.
  Conditional on $X_0=a$,
  \[
    X_h
    \overset{\mathrm d}{=}
    a\oplus\sum_{c\in[S-1]}cN_c,
  \]
  where the $N_c$ are independent and
  \(
    N_c
    \sim
    \operatorname{Poisson}\left(
      \int_0^h\lambda_c(r)\,\mathrm dr
    \right).
  \)
\end{lemma}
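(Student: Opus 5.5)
Since the shift rates $\lambda_c(r)$ do not depend on the current state $a$, the chain is a random walk on the cyclic group $\mathbb Z_S$ driven by independent time-inhabitant Poisson processes. I would make this precise by constructing the chain explicitly and identifying its law with that of the Markov chain via uniqueness for the Kolmogorov forward equation.

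\emph{Step 1: Construction.} Let $(M_c(r))_{0\le r\le h}$, $c\in[S-1]$, be independent inhomogeneous Poisson counting processes with intensities $\lambda_c(r)$, so that $M_c(h)\sim\operatorname{Poisson}(\int_0^h\lambda_c)$ and the $M_c(h)$ are independent. Set $Y_r\defn a\oplus\sum_c cM_c(r)$. Since $\mathbb Z_S$ is abelian, $Y_r$ depends only on the counts and not on the order of the jumps. Thus $Y_h$ has exactly the claimed law, and it remains to show $Y\overset{\mathrm d}{=}X$.

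\emph{Step 2: Markov property and generator.} The processes $M_c$ have independent increments, so conditional on $\mathcal F_r$ the increment over $[r,r+\delta]$ is independent of the past. Hence $Y$ is Markov. Over a short interval, with probability $\lambda_c(r)\delta+o(\delta)$ exactly one jump of type $c$ occurs, moving $Y$ from $b$ to $b\oplus c$, and two or more jumps have probability $o(\delta)$. This holds at Lebesgue points of each $\lambda_c$, so for almost every $r$ the generator of $Y$ is $Q_r$.

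\emph{Step 3: Identification.} Write $p_r(b)\defn\Pr(Y_r=b)$. Then $p_r$ satisfies the forward equation $\tfrac{d}{dr}p_r=Q_r^\top p_r$ for almost every $r$, with $p_0=\delta_a$. The alternative is to verify directly the integral form $p_r=\delta_a+\int_0^rQ_u^\top p_u\,du$, by conditioning on the number of jumps in $[u,u+du]$. This form is the one to prove when the $\lambda_c$ are merely integrable.

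Since $Q_u$ is a finite matrix with integrable entries, the linear integral equation has a unique absolutely continuous solution, by Grönwall. The law of $X_r$ satisfies the same equation, so $\operatorname{Law}(X_h)=\operatorname{Law}(Y_h)$.

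A cleaner alternative is a characteristic-function argument. Apply the discrete Fourier transform on $\mathbb Z_S$ with characters $\chi_k(b)=e^{2\pi\mathrm{i}kb/S}$. Then $\phi_r(k)\defn\bE\chi_k(X_r)$ solves the scalar linear ODE
\[
  \phi_r'(k)=\phi_r(k)\sum_c\lambda_c(r)\bigl(e^{2\pi\mathrm{i}kc/S}-1\bigr),
\]
whose solution is $\chi_k(a)\prod_c\exp\bigl\{\Lambda_c(e^{2\pi\mathrm{i}kc/S}-1)\bigr\}$ with $\Lambda_c\defn\int_0^h\lambda_c$. This is exactly the characteristic function of $a\oplus\sum_c cN_c$. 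I would likely present this version, since it handles integrable $\lambda_c$ without pathwise constructions.

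\emph{Main obstacle.} The only delicate point is rigor under mere integrability of $\lambda_c$. Here "Markov chain with rate matrix $Q_r$" must be interpreted through the integral form of the Kolmogorov equation, which has a unique solution. I would state that interpretation explicitly and then apply the Fourier computation, which reduces everything to scalar linear ODEs with integrable coefficients.
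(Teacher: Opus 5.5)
Your proposal is correct, but the route you say you would present differs from the paper's. The paper works at the level of matrices. It writes $Q_r=\sum_c\lambda_c(r)(\Pi_c-\Id)$ with $\Pi_c$ the cyclic shift matrices, and uses that the $\Pi_c$ commute to identify the propagator with $\exp(\int_0^hQ_r^\top\,\mathrm dr)$. It then factors this into $\prod_c\exp\{\Lambda_c(\Pi_c^\top-\Id)\}$ and expands each factor as the Poisson mixture $\sum_n\Pr(N_c=n)(\Pi_c^\top)^n$. Finally it reads off $e_b^\top\prod_c(\Pi_c^\top)^{N_c}e_a=\mathbf 1\{b=a\oplus\sum_c cN_c\}$.

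Your Fourier argument is the diagonalized version of the same idea. The characters $\chi_k$ are common eigenvectors of all the $\Pi_c$, so the forward equation splits into $S$ scalar linear ODEs with integrable coefficients and explicit unique solutions. You then conclude by Fourier inversion on $\mathbb Z_S$; state this step explicitly, since matching $\phi_h(k)$ for $k=0,\dots,S-1$ is what determines the law.

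What your version buys is rigor under mere integrability at no cost. The paper's step "commuting generators, hence propagator equals $\exp(\int Q^\top)$" tacitly requires three checks: that $r\mapsto\exp(\int_0^rQ_u^\top\,\mathrm du)$ is absolutely continuous, that it satisfies the forward equation almost everywhere, and that it is the unique such solution. In your scalar ODEs all of this is immediate. What the paper's version buys is brevity, and it gives the kernel identity $e^{\int_0^hQ_r^\top\mathrm dr}=\bE[\prod_c(\Pi_c^\top)^{N_c}]$ directly, without passing to the dual basis.

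Your first route is also valid and gives more than is needed. It builds the chain from independent inhomogeneous Poisson processes and identifies laws by Gr\"onwall uniqueness for the integral form of the Kolmogorov equation. This yields a pathwise realization of the whole process, not just its time-$h$ marginal. The cost is verifying the Markov property and the generator at Lebesgue points. Also, in your first step, "time-inhabitant" should read "time-inhomogeneous".
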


\begin{proof}
  Write
  \[
    q_{r|0}(b\mid a)
    =
    \Pr(X_r=b\mid X_0=a).
  \]
  For $c\in[S-1]$, let
    $\Pi_c(a,b):=\mathbf 1\{b=a\oplus c\}$.
  Then
  \[
    Q_r
    =
    \sum_{c\in[S-1]}
    \lambda_c(r)(\Pi_c-\Id),
    \qquad
    \frac{\mathrm d}{\mathrm dr}q_{r|0}
    =
    Q_r^\top q_{r|0}.
  \]
  Since the matrices $\Pi_c$ commute, so do $Q_r$ and $Q_u$ for
  every $r,u\in[0,h]$.  Hence 
  \begin{align*}
    e^{\int_0^h Q_r^\top\,\mathrm dr}
    &=
    \prod_{c\in[S-1]}
    \exp\left\{
      \left(\int_0^h\lambda_c(r)\,\mathrm dr\right)
      (\Pi_c^\top-\Id)
    \right\}
    \\ &=
    \prod_{c\in[S-1]}
    \left[
      \sum_{n_c\ge0}
      \Pr(N_c=n_c)
      (\Pi_c^\top)^{n_c}
    \right]
    =
    \bE\left[
      \prod_{c\in[S-1]}
      (\Pi_c^\top)^{N_c}
    \right].
  \end{align*}
  Moreover, $\prod_{c\in[S-1]}
    (\Pi_c^\top)^{n_c}e_a
    =
    e_{\,a\oplus\sum_{c\in[S-1]}cn_c}$.
  
  \noindent Therefore
  \begin{align*}
    q_{h|0}(b\mid a)
    =
    e_b^\top
    e^{\int_0^hQ_r^\top\,\mathrm dr}e_a
    =
    \Pr\left(
      a\oplus\sum_{c\in[S-1]}cN_c=b
    \right),
  \end{align*}
  which proves the claim.
\end{proof}

\section{Proofs for efficient sampling}
\label{app:efficient_sampling}

We first bound the sampling error in terms of the integrated
risk of the estimated score. Combining this bound with
Corollary~\ref{cor:score_est_rate} proves
Theorem~\ref{thm:efficient_sampling}.

\begin{proposition}[Sampling error decomposition]
\label{prop:sampling_error}
  Suppose Assumption~\ref{ass:flip} holds with constant $R\ge1$,
  and let $S\ge2$ and $T>0$.
  Let $\widehat s\in\bTheta_R^{\mathrm{ws}}$ be an estimated
  score in the range-restricted class
  defined in~\eqref{eq:ws-restricted}, with $\sR=R$.

  For a grid $0=t_0<\cdots<t_N=T$, chosen before generating
  the sampling trajectory, set
  \[
    \Delta
    \defn
    \max_{0\le k<N}(t_{k+1}-t_k).
  \]
  Run Algorithm~\ref{alg:tau_leaping} with $\widehat s$
  on this grid, starting from
  $\z_0\sim\operatorname{Unif}([S]^\sD)$.
  Let $p_T$ be the output law in~\eqref{eq:finalaw},
  conditional on the estimated score and the chosen grid.
  Then
  \begin{equation}\label{eq:efficient_sampling}
    \mathrm{KL}(q^\star\,\|\,p_T)
    \le
    \cL(\widehat s)
    +e^{-T}\sD\log S
    +\Delta\left(
      \frac32T\sD^2R+\sD\log R
    \right),
  \end{equation}
  where $\cL$ is defined in~\eqref{eq:master_obj}.
\end{proposition}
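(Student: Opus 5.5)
We compare path measures on $[0,T]$. Let $\mathbb P$ be the law of the true reverse chain $(\overleftarrow{\x}_t)$ started from $q_T$, with rates $\overleftarrow{Q}_t(\x,\x\oplus_i c)=\frac1S s^\star_{T-t}(\x\oplus_i c,\x)$. Let $\mathbb Q$ be the law of the sampler's piecewise process, started from $\operatorname{Unif}([S]^\sD)$. On $[t_k,t_{k+1})$ this process uses the frozen rates $\frac1S\widehat s_{T-t}(\z_k\oplus_i c,\z_k)$ for the shift $c$ at coordinate $i$; these rates do not depend on the current coordinate value. By \Cref{lem:poisson_representation}, $\mathbb Q$ evaluated at the grid times has exactly the law of \Cref{alg:tau_leaping}, so its terminal marginal is $p_T$. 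Data processing gives $\mathrm{KL}(q^\star\|p_T)\le \mathrm{KL}(\mathbb P\|\mathbb Q)$. Girsanov for jump processes then gives
\[
\mathrm{KL}(\mathbb P\|\mathbb Q)=\mathrm{KL}(q_T\|\operatorname{Unif})+\int_0^T\bE_{\mathbb P}\Bigl[\tfrac1S\textstyle\sum_{i,c} s^\star_{T-t}(\y_t,\x_t)\,K\bigl(\widehat s_{T-t}(\y_t^{k},\z_{k})/s^\star_{T-t}(\y_t,\x_t)\bigr)\Bigr]\mathrm dt .
\]
Here $\x_t=\overleftarrow{\x}_t$, $\y_t=\x_t\oplus_i c$, $\z_k=\overleftarrow{\x}_{t_k}$ is the frozen state and $\y_t^k=\z_k\oplus_i c$. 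The initialization term is bounded using the fact that each coordinate of $q_{T|0}(\cdot\mid\x_0)$ is within factor $1+(S-1)e^{-T}$ of uniform. Convexity then gives $\mathrm{KL}(q_T\|\mathrm{Unif})\le \sD\log(1+(S-1)e^{-T})\le e^{-T}\sD\log S$; if this needs sharpening I would use the chi-square bound. I expect this step is routine.

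The main work is the time integral. For a fixed transition, write $a=s^\star$ at the current state and $\hat b=\widehat s$ at the frozen state. I will split
\[
aK(\hat b/a)=\bigl[\hat b-a\log \hat b+a\log a-a\bigr].
\]
The true-state error $a K(\hat b^{\rm cur}/a)$, with $\hat b^{\rm cur}=\widehat s$ evaluated at the current state, integrates exactly to $\cL(\widehat s)$, since $\overleftarrow{\x}_t\sim q_{T-t}$ and this is definition \eqref{eq:master_obj}. What remains is the freezing error: the terms $\widehat s(\text{frozen})-\widehat s(\text{current})$ and $-a[\log\widehat s(\text{frozen})-\log \widehat s(\text{current})]$. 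These vanish unless the reverse chain jumped in $[t_k,t)$. The total jump rate of $\mathbb P$ is at most $\sD(S-1)R/S\le \sD R$ by \Cref{lem:score_bounds}, so this event has probability at most $\sD R\,\Delta$.

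On that event I bound the first difference by the range $R$ of $\widehat s\in\bTheta_R^{\mathrm{ws}}$. Summed over $(i,c)$ with the factor $1/S$, this gives at most $\sD R$ per unit time. Integrating over $[0,T]$ yields $T\sD^2R^2\Delta$-type terms. The target constant $\frac32T\sD^2R$ suggests a sharper treatment, and I think that is where the martingale property of the true score comes in. Specifically, $\bE[\frac1S\sum_{i,c}\widehat s(\z_k)]$ is constant given $\z_k$, while $\frac1S\sum_c s^\star$ along $\mathbb P$ has mean $\sD(S-1)/S$ by \eqref{eq:score-mass}. For the log term I would use that $t\mapsto \log s^\star$ or $q_{T-t}$-ratios are martingale-like along the reversal. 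Concretely, $\bE_{\mathbb P}[s^\star_{T-t}(\y,\x_t)\mid\mathcal F_{t_k}]$ should be controlled by $s^\star_{T-t_k}$ up to $O(\Delta)$. This converts the cross term $a\log\widehat s(\text{frozen})$ into its value at $t_k$ plus an error at most $\frac12\sD^2R\Delta$ per unit time, using $|\log\widehat s|\le\log R\le R$.

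I expect this freezing-error step to be the main obstacle: getting linear dependence on $R$ and the exact constants $\frac32T\sD^2R$ and $\sD\log R$. For the $\sD\log R\,\Delta$ term I would first try rewriting $\int a\log a$ as the entropy decrease $\frac{\mathrm d}{\mathrm dt}\bE\log q_{T-t}$ along the true reverse process, and discretizing it on the grid. The per-step error would then be bounded by one jump's change in $\log q$, which is at most $\log R$ by \Cref{rem:flip}, times $\sD$. Summing the three pieces gives \eqref{eq:efficient_sampling}.
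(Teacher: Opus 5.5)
Your Girsanov step is fine: it gives exactly the integrand the paper obtains from the chain rule and \Cref{lem:conditional_kl_derivative}. The genuine gap is the freezing error, which you leave open.

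In your split $aK(\hat b/a)=aK(\hat b^{\mathrm{cur}}/a)+(\hat b-\hat b^{\mathrm{cur}})-a(\log\hat b-\log\hat b^{\mathrm{cur}})$, your crude bound multiplies the reverse-chain jump probability, at most $\sD R(t-t_k)$, by a per-transition difference of size $R$. The same argument applied to the logarithmic difference costs $2R\log R$ per transition. This yields a discretization term of order $T\sD^2R^2(1+\log R)\Delta$, which does not imply \eqref{eq:efficient_sampling}. The extra factor of $R$ is the price of going through the reverse jump rate.

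The sharpening you sketch is not yet an argument. The identity of \Cref{lem:score_martingale_sampling} is exact, not ``up to $O(\Delta)$''. What it buys is replacing $a_t$ by the frozen value $\omega_{t_k}=s^\star_{T-t_k}(\z_k\oplus_i c,\z_k)$ inside $\bE[a_t\log\hat b]$, because $\log\hat b$ is a function of $\z_k$. The missing step comes after that. Fix $t$ and set $r=\widehat s_{T-t}$. The change of variables $\bE_{\x\sim q_u}[s^\star_u(\x\oplus_i c,\x)f(\x\oplus_i c,\x)]=\bE_{\x\sim q_u}[f(\x,\x\ominus_i c)]$ shows that $\frac1S\sum_{i,c}\bE[\hat b-\omega_{t_k}\log\hat b]$ and $\frac1S\sum_{i,c}\bE[\hat b^{\mathrm{cur}}-a_t\log\hat b^{\mathrm{cur}}]$ equal $\bE_{q_u}[G_r]$ at $u=T-t_k$ and $u=T-t$, for the single function $G_r$ of \Cref{lem:score_risk_time}. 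Their difference is controlled by the \emph{forward} generator, whose total rate $\sD(S-1)/S$ carries no factor of $R$, times $\mathrm{osc}(G_r)\le\sD(R-R^{-1}+2\log R)$. That is where linearity in $R$ and the constant $\tfrac32T\sD^2R$ come from.

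Your plan for the $\sD\log R\,\Delta$ term does not fit your own split either. The $a\log a$ terms cancel exactly between $aK(\hat b/a)$ and $aK(\hat b^{\mathrm{cur}}/a)$, so there is no entropy integral left to discretize. In the paper that term appears because the true score is frozen as well, via $\gamma K(\theta/\gamma)=\omega K(\theta/\omega)+(\omega-\gamma)\log\theta+\mathsf h(\gamma)-\mathsf h(\omega)$. The middle term vanishes by the martingale identity, and the first is handled by \Cref{lem:score_risk_time}. The $\mathsf h$ part has expectation $\varphi(T-t)-\varphi(T-t_k)$, bounded using monotonicity of $\varphi$ and $\varphi(0)\le\sD\log R$ (\Cref{lem:true_score_term}). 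Completed as described above, your route would actually give the bound without the $\sD\log R\,\Delta$ term.

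The initialization step is also wrong as written. The map $x\mapsto\log(1+(S-1)x)$ is concave, with values $0$ and $\log S$ at $x=0,1$, so $\log(1+(S-1)e^{-T})\ge e^{-T}\log S$ — the opposite of what you need. The chi-square route, giving $\sD\log(1+(S-1)e^{-2T})$, also exceeds $e^{-T}\sD\log S$ in some regimes (e.g.\ $S=100$, $e^{-T}=0.1$). The fix is to write each token kernel as the mixture $e^{-T}\delta_{x_0^i}+(1-e^{-T})\operatorname{Unif}([S])$. Convexity of $\mathrm{KL}(\cdot\,\|\,\operatorname{Unif})$ in its first argument, together with tensorization, then gives $\mathrm{KL}(q_T\|\operatorname{Unif})\le e^{-T}\sD\log S$. \Cref{lem:uniform_initialization} proves a refined version of this.
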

The proposition applies, in particular, to the DSE and ISE estimators defined by~\eqref{eq:emp_score_loss}.

Let $(\overleftarrow{\x}_t)_{0\le t\le T}$ be the exact reverse process.  Its distribution at time $t$ is $q_{T-t}$, and its rate for the shift $(i,c)$ is $\frac1S
  s_{T-t}^\star
  \bigl(
    \overleftarrow{\x}_t\oplus_i c,
    \overleftarrow{\x}_t
  \bigr)$.
\subsection{Conditional KL divergence}

\begin{lemma}[Conditional KL derivative]
\label{lem:conditional_kl_derivative}
  Let $q_t$ and $p_t$ be the distributions of two finite-state
  time-dependent Markov chains with rate matrices $Q_t$ and
  $\widetilde Q_t$, evolving according to \eqref{eq:kolmogorov}. 
  % Suppose that their off-diagonal rates are positive on the same transitions.  Then
  Suppose that their off-diagonal rates are positive on
the same transitions. At times when both distributions
are strictly positive and differentiable,
  \[
  \begin{aligned}
    \frac{\mathrm d}{\mathrm dt}
    \mathrm{KL}(q_t\,\|\,p_t)
    \le
    \bE_{X\sim q_t}
    \left[
      % \sum_{Y\neq X}
      \sum_{\substack{Y\neq X\\Q_t(X,Y)>0}}
      Q_t(X,Y)
      K\left(
        \frac{\widetilde Q_t(X,Y)}
             {Q_t(X,Y)}
      \right)
    \right].
  \end{aligned}
  \]
\end{lemma}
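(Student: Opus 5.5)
We compute the derivative of $\mathrm{KL}(q_t\|p_t)=\sum_x q_t(x)\log\frac{q_t(x)}{p_t(x)}$ directly from the Kolmogorov equations $\dot q_t=Q_t^\top q_t$ and $\dot p_t=\widetilde Q_t^\top p_t$. Differentiating term by term at a time where both laws are positive and differentiable gives
\[
\frac{\mathrm d}{\mathrm dt}\mathrm{KL}(q_t\|p_t)
=\sum_x \dot q_t(x)\log\frac{q_t(x)}{p_t(x)}+\sum_x\dot q_t(x)-\sum_x \frac{q_t(x)}{p_t(x)}\dot p_t(x).
\]
The middle term vanishes because rows of $Q_t$ sum to zero. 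We then substitute $\dot q_t(x)=\sum_y q_t(y)Q_t(y,x)$ and $\dot p_t(x)=\sum_y p_t(y)\widetilde Q_t(y,x)$, and use the zero row sums again to write both sums over off-diagonal pairs only. Writing $r(x)=q_t(x)/p_t(x)$, the first term becomes $\sum_{x}\sum_{y\neq x}q_t(x)Q_t(x,y)\bigl[\log r(y)-\log r(x)\bigr]$. The third term becomes $-\sum_x\sum_{y\neq x}p_t(x)\widetilde Q_t(x,y)\bigl[r(y)-r(x)\bigr]$, which equals $-\sum_x q_t(x)\sum_{y\neq x}\widetilde Q_t(x,y)\bigl[r(y)/r(x)-1\bigr]$.

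Next we set $u=\widetilde Q_t(x,y)/Q_t(x,y)$ and $v=r(y)/r(x)$ on pairs with $Q_t(x,y)>0$. By the common-support hypothesis, the pairs with $Q_t=0$ also have $\widetilde Q_t=0$ and contribute nothing. The derivative then equals
\[
\bE_{X\sim q_t}\Bigl[\sum_{Y\neq X,\,Q_t(X,Y)>0}Q_t(X,Y)\bigl(\log v - u v + u\bigr)\Bigr].
\]
It therefore suffices to show the pointwise inequality $\log v-uv+u\le u-\log u-1=K(u)$ for all $u,v>0$. This is equivalent to $\log(uv)\le uv-1$, which is the elementary bound $\log w\le w-1$ with $w=uv$. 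Multiplying by $Q_t(X,Y)\ge0$, summing, and taking expectations yields the claim. Equality holds when $uv=1$, for example when $p_t=q_t$ and $\widetilde Q_t=Q_t$.

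The only delicate step is the bookkeeping that converts the $\dot p_t$ term into an expectation under $q_t$ over off-diagonal pairs with the correct orientation. Since $\dot p_t(x)$ involves rates into $x$, we must reindex the pair $(y,x)$ as $(x,y)$ before factoring out $p_t(x)r(x)=q_t(x)$. We must also check that the diagonal contributions cancel exactly via the row-sum identities. We will carry this out by writing $\sum_y p_t(y)\widetilde Q_t(y,x)r(x)$, swapping the names of $x$ and $y$, and subtracting $\sum_y\widetilde Q_t(x,y)r(x)=0$. Positivity of $q_t$ and $p_t$ ensures all logarithms and ratios are finite, so no further limiting argument is needed.
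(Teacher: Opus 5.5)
Your proposal is correct and follows essentially the same route as the paper: you differentiate via the two Kolmogorov equations to get the off-diagonal expression in $r_t=q_t/p_t$, then apply $\log w\le w-1$ with $w=\frac{r_t(y)\widetilde Q_t(x,y)}{r_t(x)Q_t(x,y)}$, which is exactly your $uv$. The paper phrases this step as splitting $\log\frac{r_t(y)}{r_t(x)}$ into $\log(uv)-\log u$ rather than as a pointwise inequality in $(u,v)$, but the two arguments are the same.
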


\begin{proof}
  Write
  \[
    r_t(x)=\frac{q_t(x)}{p_t(x)}.
  \]
  Differentiating the KL divergence and using the two Kolmogorov
  equations gives
  \[
  \begin{aligned}
    \frac{\mathrm d}{\mathrm dt}
    \mathrm{KL}(q_t\,\|\,p_t)
    =
    \sum_{x\neq y}
    p_t(x)
    \left[
      r_t(x)Q_t(x,y)
      \log\frac{r_t(y)}{r_t(x)}
      -
      \widetilde Q_t(x,y)
      \bigl(
        r_t(y)-r_t(x)
      \bigr)
    \right].
  \end{aligned}
  \]
  Transitions with $Q_t(x,y)=\widetilde Q_t(x,y)=0$ contribute zero and are omitted below. For each $x\neq y$,
  \[
  \begin{aligned}
    r_t(x)Q_t(x,y)
    \log\frac{r_t(y)}{r_t(x)}
    ={}&
    r_t(x)Q_t(x,y)
    \log
    \frac{
      r_t(y)\widetilde Q_t(x,y)
    }{
      r_t(x)Q_t(x,y)
    }
    \\
    &+
    r_t(x)Q_t(x,y)
    \log
    \frac{Q_t(x,y)}{\widetilde Q_t(x,y)}.
  \end{aligned}
  \]
  Applying $\log u\le u-1$ to the first logarithm shows that the
  corresponding summand is at most
  \[
    r_t(x)
    \left[
      \widetilde Q_t(x,y)-Q_t(x,y)
      +
      Q_t(x,y)
      \log\frac{Q_t(x,y)}{\widetilde Q_t(x,y)}
    \right].
  \]
  Multiplying by $p_t(x)$ and summing over $x\neq y$ proves the
  result.
\end{proof}

\subsection{Two identities for the true score}

Let $P_t:=e^{tQ}$ denote the forward transition matrix.  Uniform
noising is invariant under additive shifts:
\begin{equation}\label{eq:shift_invariance}
  P_t(\x\oplus_i c,\y\oplus_i c)
  =
  P_t(\x,\y).
\end{equation}

\begin{lemma}[Conditional mean of the score]
\label{lem:score_martingale_sampling}
  Let $0\le r<t\le T$.  For every $i\in[\sD]$ and
  $c\in[S-1]$,
  \[
  \begin{aligned}
    &\bE\left[
      s_{T-t}^\star
      \bigl(
        \overleftarrow{\x}_t\oplus_i c,
        \overleftarrow{\x}_t
      \bigr)
      \,\middle|\,
      \overleftarrow{\x}_r
    \right]
    =
    s_{T-r}^\star
    \bigl(
      \overleftarrow{\x}_r\oplus_i c,
      \overleftarrow{\x}_r
    \bigr).
  \end{aligned}
  \]
\end{lemma}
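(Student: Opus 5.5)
The plan is to compute the conditional expectation directly from the transition structure of the exact reverse process. Conditionally on $\overleftarrow{\x}_r=\x$, the reverse process is Markov, so $\overleftarrow{\x}_t$ has law
\[
\Pr(\overleftarrow{\x}_t=\y\mid\overleftarrow{\x}_r=\x)
=\frac{q_{T-t}(\y)\,P_{t-r}(\y,\x)}{q_{T-r}(\x)}.
\]
This is the time-reversal (Bayes) formula. It follows because the forward chain at forward time $T-t$ has law $q_{T-t}$, and after a further forward time $t-r$ it is at $\x$ with probability $P_{t-r}(\y,\x)$. I would state this identity first, justifying it by noting that $(\overleftarrow{\x}_u)$ has the law of $(\x_{T-u})$.

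Substituting $s^\star_{T-t}(\y\oplus_i c,\y)=q_{T-t}(\y\oplus_i c)/q_{T-t}(\y)$, the factor $q_{T-t}(\y)$ cancels. This leaves
\[
\bE[\,\cdot\mid\overleftarrow{\x}_r=\x]
=\frac{1}{q_{T-r}(\x)}\sum_{\y}q_{T-t}(\y\oplus_i c)\,P_{t-r}(\y,\x).
\]
Next I apply the shift invariance \eqref{eq:shift_invariance} to write $P_{t-r}(\y,\x)=P_{t-r}(\y\oplus_i c,\x\oplus_i c)$. I then change variables $\y'=\y\oplus_i c$, which is a bijection of $[S]^\sD$. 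The sum becomes $\sum_{\y'}q_{T-t}(\y')P_{t-r}(\y',\x\oplus_i c)=q_{T-r}(\x\oplus_i c)$ by the Chapman--Kolmogorov relation $q_{T-r}=P_{t-r}^\top q_{T-t}$. Dividing by $q_{T-r}(\x)$ gives $s^\star_{T-r}(\x\oplus_i c,\x)$, as claimed.

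Two details need care.

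\emph{Positivity.} $q_{T-r}(\x)>0$ must hold so that the conditioning and the score are well defined. For $r<T$ this follows from \eqref{eq:qt0_hamming}. For $t=T$ (time $0$ forward), positivity of $q^\star$ holds under Assumption~\ref{ass:flip}; without it, the terms with $q^\star(\y)=0$ carry zero conditional probability, so they drop out consistently.

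\emph{Shift invariance.} I would verify \eqref{eq:shift_invariance} from \eqref{eq:qt0_hamming}: the kernel depends only on Hamming distance, and $\Ham(\x\oplus_i c,\y\oplus_i c)=\Ham(\x,\y)$.

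The main (mild) obstacle is stating the reverse-time transition formula rigorously. I would derive it from the joint law of the forward chain at the two times rather than from the rate matrices \eqref{eq:reversal}, which avoids any ODE argument.
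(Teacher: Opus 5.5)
Your proof is correct and follows the paper's proof step for step. Both use the Bayes formula for the reverse kernel, cancel $q_{T-t}(\y)$, apply the shift invariance \eqref{eq:shift_invariance}, reindex $\y\mapsto\y\oplus_i c$, and finish with Chapman--Kolmogorov.

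One correction concerns your aside on positivity at $t=T$: full support (Assumption~\ref{ass:flip}) is genuinely needed there, and dropping the null terms does not work. If some $\y$ has $q^\star(\y)=0<q^\star(\y\oplus_i c)$, discarding it loses the positive mass $q^\star(\y\oplus_i c)P_{t-r}(\y,\x)$, so you get only ``$\le$'' rather than equality.
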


\begin{proof}
  Put $u=T-t$, $v=T-r$, and condition on
  $\overleftarrow{\x}_r=\x$.  The reverse transition probabilities
  satisfy
  \[
    \overleftarrow q_{t|r}(\z\mid\x)
    =
    \frac{
      q_u(\z)P_{v-u}(\z,\x)
    }{
      q_v(\x)
    }.
  \]
  Hence
  \begin{align*}
    &\bE\left[
      s_u^\star
      (\overleftarrow{\x}_t\oplus_i c,\overleftarrow{\x}_t)
      \,\middle|\,
      \overleftarrow{\x}_r=\x
    \right]=
    \frac1{q_v(\x)}
    \sum_{\z}
    q_u(\z\oplus_i c)
    P_{v-u}(\z,\x).
  \end{align*}
  Set $\w=\z\oplus_i c$.  By~\eqref{eq:shift_invariance}, we have $P_{v-u}(\w\ominus_i c,\x)
    =
    P_{v-u}(\w,\x\oplus_i c)$.
  Therefore the last display equals
  \[
    \frac1{q_v(\x)}
    \sum_{\w}
    q_u(\w)
    P_{v-u}(\w,\x\oplus_i c)
    =
    \frac{q_v(\x\oplus_i c)}{q_v(\x)},
  \]
  which is the required score.
\end{proof}

Define
\begin{align}\label{eq:hdef}
  \mathsf h(u):=u\log u-u+1
\end{align}
and
\begin{equation}\label{eq:phi_def}
  \varphi(t)
  :=
  \frac1S
  \bE_{\x\sim q_t}
  \left[
    \sum_{i=1}^{\sD}
    \sum_{c\in[S-1]}
    -\log
    s_t^\star(\x\oplus_i c,\x)
  \right].
\end{equation}

\begin{lemma}[True-score term]\label{lem:true_score_term}
  For every $t\ge0$,
  \[
  \begin{aligned}
    \frac1S
    \bE_{\x\sim q_t}
    \left[
      \sum_{i=1}^{\sD}
      \sum_{c\in[S-1]}
      \mathsf h
      \bigl(
        s_t^\star(\x\oplus_i c,\x)
      \bigr)
    \right]
    =
    \varphi(t).
  \end{aligned}
  \]
  Moreover, $\varphi(t)$ is nonnegative and non-increasing, and
  under Assumption~\ref{ass:flip},
  \[
    \varphi(0)
    \le
    \frac{\sD(S-1)}{S}\log R.
  \]
\end{lemma}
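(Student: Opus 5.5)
The identity follows from the relation $\mathsf h(u)=u\log u-u+1$ together with the mass identity~\eqref{eq:score-mass}. Expanding the left side gives
\[
\frac1S\bE_{\x\sim q_t}\sum_{i,c}\Bigl[s_t^\star\log s_t^\star - s_t^\star + 1\Bigr].
\]
By~\eqref{eq:score-mass}, $\bE_{q_t}[s_t^\star(\x\oplus_i c,\x)]=1$ for each $(i,c)$, so the terms $-s_t^\star+1$ vanish in expectation.

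For the first term, write $\bE_{q_t}[s_t^\star(\x\oplus_i c,\x)\log s_t^\star(\x\oplus_i c,\x)]=\sum_{\x}q_t(\x\oplus_i c)\log\frac{q_t(\x\oplus_i c)}{q_t(\x)}$. Substitute $\y=\x\oplus_i c$ and use that $\x\mapsto\x\oplus_i c$ is a bijection. This turns the term into $\sum_{\y}q_t(\y)\bigl[-\log s_t^\star(\y\ominus_i c,\y)\bigr]$, where $\y\ominus_i c=\y\oplus_i(S-c)$. As $c$ ranges over $[S-1]$, so does $S-c$. Summing over $c$ therefore reproduces $\sum_{c}\bE_{q_t}[-\log s_t^\star(\x\oplus_i c,\x)]$, which gives $\varphi(t)$. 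At $t=0$ the same argument works because full support is assumed.

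Nonnegativity is immediate, since $\mathsf h\ge0$ and $\varphi$ equals the expectation of a sum of $\mathsf h$ values.

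For the bound at $t=0$, use $-\log s_0^\star\le\log R$ from Assumption~\ref{ass:flip}. This gives $\varphi(0)\le \frac{\sD(S-1)}{S}\log R$ directly.

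Monotonicity is the main step. I would rewrite $\varphi(t)$ as a relative entropy under a shift. Using the substitution above,
\[
\bE_{q_t}\bigl[s_t^\star\log s_t^\star\bigr]=\mathrm{KL}\bigl(q_t\circ\tau_{i,c}\,\|\,q_t\bigr),
\]
where $\tau_{i,c}(\x)=\x\oplus_i c$ and $q_t\circ\tau_{i,c}$ is the law of $\x\ominus_i c$ under $\x\sim q_t$. By the shift invariance~\eqref{eq:shift_invariance} of $P_s$, the pushforward commutes with the semigroup: $(q_t\circ\tau_{i,c})P_s=(q_tP_s)\circ\tau_{i,c}=q_{t+s}\circ\tau_{i,c}$. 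The data-processing inequality under the Markov kernel $P_s$ then yields
\[
\mathrm{KL}(q_{t+s}\circ\tau\,\|\,q_{t+s})\le\mathrm{KL}(q_t\circ\tau\,\|\,q_t).
\]
Summing over $(i,c)$ shows that $\varphi$ is non-increasing.

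The main care point is the commutation step, which must respect the indexing convention of~\eqref{eq:shift_invariance}. Since the uniform kernel is symmetric and shift-invariant, this is routine.
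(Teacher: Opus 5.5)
Your proposal is correct and follows essentially the same route as the paper: the change of variables $\y=\x\oplus_i c$ together with $c\mapsto S-c$ for the identity, the flip bound for $\varphi(0)$, and monotonicity via a KL representation of each term, commuting the shift with $P_s$ using \eqref{eq:shift_invariance}, and data processing. The differences are cosmetic. You get nonnegativity from $\mathsf h\ge0$ instead of from the KL representation, and your divergence $\mathrm{KL}(q_t\circ\tau_{i,c}\,\|\,q_t)$ is the paper's $\mathrm{KL}(q_t\,\|\,(\mathsf N_{i,-c})_\#q_t)$ with $c$ replaced by $S-c$, which makes no difference after summing over $c$.
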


\begin{proof}
  Fix $i$ and $c$, and abbreviate
  \[
    b(\x)
    =
    \frac{q_t(\x\oplus_i c)}{q_t(\x)}.
  \]
  Since
  $q_t(\x)b(\x)=q_t(\x\oplus_i c)$,
  \begin{align*}
    \sum_{\x}q_t(\x)\mathsf h(b(\x))
    &=
    \sum_{\x}
    q_t(\x\oplus_i c)
    \log
    \frac{q_t(\x\oplus_i c)}{q_t(\x)}.
  \end{align*}
  Summing over $c$ and replacing each shift by its inverse gives the
  first identity.

  For monotonicity, let
  \[
    \mathsf N_{i,c}(\x)=\x\oplus_i c.
  \]
  The $(i,c)$-term in~\eqref{eq:phi_def} is
  \[
    \mathrm{KL}
    \left(
      q_t
      \,\middle\|\,
      (\mathsf N_{i,-c})_\#q_t
    \right).
  \]
  Uniform noising commutes with every shift.  Thus, for $u\ge0$,
  \[
    q_{t+u}=q_tP_u,
    \qquad
    (\mathsf N_{i,-c})_\#q_{t+u}
    =
    \bigl(
      (\mathsf N_{i,-c})_\#q_t
    \bigr)P_u.
  \]
  The data-processing inequality therefore shows that every term,
  and hence $\varphi$, is non-increasing.  It is nonnegative because
  it is a sum of KL divergences.

  Finally, Assumption~\ref{ass:flip} gives
  \[
    -\log
    s_0^\star(\x\oplus_i c,\x)
    \le
    \log R.
  \]
  Summing over $(i,c)$ proves the last bound.
\end{proof}

\subsection{Variation of the score risk}

For a positive function $r$ on directed neighboring pairs, let
$\cL_t(r)$ denote the population risk~\eqref{eq:master_obj} with
$r$ in place of the candidate score.

\begin{lemma}[Variation in the true process time]
\label{lem:score_risk_time}
  Suppose Assumption~\ref{ass:flip} holds and
  \[
    r(\x\oplus_i c,\x)\in[R^{-1},R].
  \]
  For $0\le u\le v\le T$,
  \begin{equation}\label{eq:score_risk_time}
    \cL_v(r)
    \le
    \cL_u(r)
    +
    \left(
      \frac{\sD(S-1)}{S}
    \right)^2
    \left(
      R-R^{-1}+2\log R
    \right)
    (v-u).
  \end{equation}
\end{lemma}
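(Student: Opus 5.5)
The plan is to split $\cL_t(r)$ into a part that is linear in $q_t$ with a state function independent of $t$, and a part involving only the true score. The second part is handled by \Cref{lem:true_score_term}; the first is differentiated in $t$ with the Kolmogorov equation~\eqref{eq:kolmogorov}. Expanding as in~\eqref{eq: K expand},
\[
  s_t^\star K\!\left(\frac{r}{s_t^\star}\right)
  =
  r-s_t^\star\log r+\mathsf h(s_t^\star)-1,
\]
where $\mathsf h$ is from~\eqref{eq:hdef}.

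First, rewrite the term $-s_t^\star\log r$ under $q_t$. Fix $(i,c)$. Since $q_t(\x)s_t^\star(\x\oplus_i c,\x)=q_t(\x\oplus_i c)$, substituting $\y=\x\oplus_i c$ gives
\[
  \bE_{\x\sim q_t}\bigl[s_t^\star(\x\oplus_i c,\x)\log r(\x\oplus_i c,\x)\bigr]
  =\bE_{\y\sim q_t}\bigl[\log r(\y,\y\ominus_i c)\bigr].
\]
Define the time-independent function
\[
  F(\y)\defn\frac1S\sum_{i=1}^{\sD}\sum_{c\in[S-1]}
  \bigl\{r(\y\oplus_i c,\y)-\log r(\y,\y\ominus_i c)\bigr\}.
\]
Applying \Cref{lem:true_score_term} to the $\mathsf h$ term then yields
\[
  \cL_t(r)=\bE_{q_t}[F]+\varphi(t)-\frac{\sD(S-1)}{S}.
\]
Because $\varphi$ is non-increasing (\Cref{lem:true_score_term}), we have $\varphi(v)\le\varphi(u)$. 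It therefore suffices to show
\[
  \bE_{q_v}[F]-\bE_{q_u}[F]\le\left(\frac{\sD(S-1)}{S}\right)^2\bigl(R-R^{-1}+2\log R\bigr)(v-u).
\]

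Second, differentiate. By~\eqref{eq:kolmogorov},
\[
  \frac{\mathrm d}{\mathrm dt}\bE_{q_t}[F]=\bE_{q_t}[QF],
  \qquad
  QF(\x)=\frac1S\sum_{j=1}^{\sD}\sum_{c'\in[S-1]}\bigl(F(\x\oplus_j c')-F(\x)\bigr).
\]
Each summand of $F$ lies in an interval of length at most $(R-R^{-1})+2\log R$, because $r\in[R^{-1},R]$ and $\log r\in[-\log R,\log R]$. There are $\sD(S-1)$ summands, each carrying a factor $1/S$. Hence the oscillation satisfies
\[
  \max F-\min F\le\frac{\sD(S-1)}{S}\bigl(R-R^{-1}+2\log R\bigr).
\]
The generator $QF(\x)$ is likewise a sum of $\sD(S-1)$ terms, each with factor $1/S$ and each bounded in absolute value by this oscillation. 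This gives
\[
  |QF|\le\left(\frac{\sD(S-1)}{S}\right)^2\bigl(R-R^{-1}+2\log R\bigr)
\]
uniformly in $\x$ and $t$.

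Integrating the derivative from $u$ to $v$ and adding $\varphi(v)-\varphi(u)\le0$ gives~\eqref{eq:score_risk_time}.

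The only delicate step is the first one: recognising that the term $s_t^\star\log r$ can be moved onto $q_t$ by the shift change of variables, so that all of the $t$-dependence not carried by $q_t$ collapses into $\varphi$. Once that is done, the monotonicity in \Cref{lem:true_score_term} absorbs that part, and the rest is a bounded-generator estimate. One technical point needs a short justification: differentiability of $t\mapsto\bE_{q_t}[F]$ at $u=0$. This is immediate, since $q_t=e^{tQ^\top}q^\star$ is smooth on $[0,\infty)$ and $F$ is bounded.
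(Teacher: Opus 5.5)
Your proposal is correct and is essentially the paper's own proof. The paper defines the same time-independent function $G_r$ (your $F$) and obtains $\cL_t(r)=\bE_{q_t}[G_r]+\varphi(t)-\sD(S-1)/S$ by the same shift change of variables combined with \Cref{lem:true_score_term}. It then bounds $|QG_r|$ by the oscillation of $G_r$ times the total jump rate $\sD(S-1)/S$, integrates over $[u,v]$, and drops $\varphi(v)-\varphi(u)\le 0$ by monotonicity, exactly as you do.
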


\begin{proof}
  Define
  \[
    G_r(\x)
    =
    \frac1S
    \sum_{i=1}^{\sD}
    \sum_{c\in[S-1]}
    \left[
      r(\x\oplus_i c,\x)
      -
      \log r(\x,\x\ominus_i c)
    \right].
  \]
  A change of variables in the terms weighted by the true score,
  together with Lemma~\ref{lem:true_score_term}, gives
  \begin{equation}\label{eq:risk_decomposition}
    \cL_t(r)
    =
    \bE_{\x\sim q_t}
    \bigl[
      G_r(\x)
    \bigr]
    +
    \varphi(t)
    -
    \frac{\sD(S-1)}{S}.
  \end{equation}

  For any $\x,\y\in[S]^\sD$,
  \[
    |G_r(\x)-G_r(\y)|
    \le
    \frac{\sD(S-1)}{S}
    \left(
      R-R^{-1}+2\log R
    \right).
  \]
  The total forward jump rate is
  $\sD(S-1)/S$.  Therefore
  \[
    |(QG_r)(\x)|
    \le
    \left(
      \frac{\sD(S-1)}{S}
    \right)^2
    \left(
      R-R^{-1}+2\log R
    \right).
  \]
  It follows that
  \[
  \begin{aligned}
    \left|
      \bE_{q_v}[G_r]
      -
      \bE_{q_u}[G_r]
    \right|
    &=
    \left|
      \int_u^v
      \bE_{q_t}[QG_r]
      \,\mathrm dt
    \right|
    \\
    &\le
    \left(
      \frac{\sD(S-1)}{S}
    \right)^2
    \left(
      R-R^{-1}+2\log R
    \right)
    (v-u).
  \end{aligned}
  \]
  Since $\varphi(v)\le\varphi(u)$, the result follows from
  \eqref{eq:risk_decomposition}.
\end{proof}

\subsection{Initialization}
The following result can be found in earlier work, e.g.~\cite[Theorem B.7]{ren2024discrete} and quantifies the error incurred in stopping the forward chain at time $t$. A short proof is added here for the sake of completeness.

\begin{lemma}[Initialization error]
\label{lem:uniform_initialization}
  Let $\mathsf u$ be the uniform distribution on $[S]^\sD$.  Then
  \[
    \mathrm{KL}(q_t\,\|\,\mathsf u)
    \le
    e^{-t}
    \mathrm{KL}(q^\star\,\|\,\mathsf u)
    \le
    e^{-t}\sD\log S.
  \]
\end{lemma}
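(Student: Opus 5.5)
We prove the two inequalities in turn. The second is elementary. Since $\mathsf u(\x)=S^{-\sD}$ for every $\x$, we have
\[
\mathrm{KL}(q^\star\|\mathsf u)=\sD\log S-\mathrm{H}(q^\star)\le\sD\log S,
\]
where $\mathrm H$ denotes Shannon entropy, which is nonnegative on a finite set. The substantive claim is the exponential contraction
\[
\mathrm{KL}(q_t\|\mathsf u)\le e^{-t}\,\mathrm{KL}(q^\star\|\mathsf u).
\]

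\emph{Reduction to coordinates.} The plan is to exploit the explicit form of the uniform semigroup from \Cref{lem:uniform_kernel}. Coordinatewise, $e^{tQ^{\tok}}=e^{-t}\Id_S+(1-e^{-t})\mathsf P$. Hence a single coordinate evolves by a mixture: with probability $e^{-t}$ it is kept, and otherwise it is resampled uniformly. By \Cref{lem:kernel_factor}, $P_t=\bigotimes_i(e^{-t}\Id+(1-e^{-t})\mathsf P)$. Expanding this product, we can realize $\x_t$ as follows. Draw a random set $\sm$ of kept coordinates, each coordinate included independently with probability $e^{-t}$. Keep $\x_0$ on $\sm$ and draw fresh uniform values on $\sm^\com$. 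This yields the mixture representation
\[
q_t=\sum_{\sm}e^{-t|\sm|}(1-e^{-t})^{\sD-|\sm|}\,\bigl(A_\sm\otimes\mathsf u_{\sm^\com}\bigr),
\]
with $A_\sm$ the marginal from \eqref{eq:marg-def}. This is the same pinning picture as \Cref{prop:pinning}.

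\emph{Convexity step and its flaw.} Joint convexity of KL in its first argument, with $\mathsf u$ fixed, gives
\[
\mathrm{KL}(q_t\|\mathsf u)\le\bE_\sm\,\mathrm{KL}(A_\sm\|\mathsf u_\sm).
\]
This yields only a bound by the average marginal KL, not $e^{-t}\,\mathrm{KL}(q^\star\|\mathsf u)$. One could try Han's inequality or the Shearer-type inequality $\bE_\sm\mathrm{KL}(A_\sm\|\mathsf u_\sm)\le e^{-t}\mathrm{KL}(q^\star\|\mathsf u)$, and for the uniform reference this does hold. Writing $\mathrm{KL}(A_\sm\|\mathsf u_\sm)=|\sm|\log S-\mathrm H(A_\sm)$, the entropy inequality $\bE_\sm\mathrm H(A_\sm)\ge e^{-t}\mathrm H(q^\star)$ for a random subset with inclusion probability $e^{-t}$ is Shearer's lemma in expectation form. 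Combined with $\bE|\sm|=e^{-t}\sD$, it gives exactly the claim. This route is clean, so we commit to it.

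\emph{Fallback via the semigroup.} Alternatively, one can differentiate along the semigroup and use a modified log-Sobolev inequality for the product uniform chain. For $f_t=q_t/\mathsf u$, de Bruijn's identity gives $\frac{d}{dt}\mathrm{KL}(q_t\|\mathsf u)=-\mathcal E(f_t,\log f_t)$. Each coordinate's generator is $\mathsf P-\Id$, i.e.\ independent resampling, so the entropy production is $\sum_i\bE[\mathrm{Ent}_i(f)]$, where $\mathrm{Ent}_i$ is the conditional entropy in coordinate $i$ and we use $\mathcal E_i(f,\log f)\ge\mathrm{Ent}_i(f)$. This follows from $\bE[f\log f]-\bE f\log\bE f\le\bE[f\log f]-\bE f\,\bE\log f$, which is Jensen's inequality in $\log$. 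Subadditivity (tensorization) of entropy then gives $\sum_i\bE\,\mathrm{Ent}_i(f)\ge\mathrm{Ent}(f)$, hence $\frac{d}{dt}\mathrm{KL}\le-\mathrm{KL}$, and Grönwall gives $e^{-t}$.

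\emph{Main obstacle.} The key step is verifying the constant-one entropy contraction, whether through Shearer's lemma with random subsets or through tensorization of the modified log-Sobolev constant. Everything else is bookkeeping.
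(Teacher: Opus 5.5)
Your committed route is correct and is essentially the paper's proof. The paper also writes $q_t$ as a mixture over the random set $A$ of kept coordinates, uses convexity of KL to get $\mathrm{KL}(q_t\|\mathsf u)\le\mathbb E_A\,\mathrm{KL}(q^\star_A\|\mathsf u_A)$, and then proves your Shearer-type step inline via the chain rule for KL, the fact that conditioning reduces entropy, and $\Pr(i\in A)=e^{-t}$; your semigroup fallback (Jensen to bound the entropy production from below by $\sum_i\mathbb E\,\mathrm{Ent}_i(f_t)$, tensorization for the product uniform measure, then Gr\"onwall) is also valid but not needed.
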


\begin{proof}
  Under uniform noising, each coordinate keeps its original value
  with probability $e^{-t}$ and is otherwise replaced by an
  independent uniform value.  Let $A\subseteq[\sD]$ be the random
  set of coordinates that keep their original values.  Conditional
  on $A$, the resulting distribution is
  \[
    q_A^\star\otimes\mathsf u_{A^\com}.
  \]
  By convexity of KL divergence,
  \[
    \mathrm{KL}(q_t\,\|\,\mathsf u)
    \le
    \bE_A
    \left[
      \mathrm{KL}
      \bigl(
        q_A^\star\,\|\,\mathsf u_A
      \bigr)
    \right].
  \]
  
  If $\x_0\sim q^\star$, the chain rule for entropy gives
  \begin{align*}
      \mathrm{KL}(q_A^\star\,|\,\mathsf u_A)=\sum_{i\in A} \bE_{q_A^\star}\mathrm{KL}(q_A^\star(X_i\in \cdot\, | X_j,j<i, j\in A)\|\mathsf{u}_i).
  \end{align*}
  The fact that
  conditioning reduces entropy then gives
  \begin{align*}
    \bE_A \mathrm{KL}(q_A^\star\,|\,\mathsf u_A)
    &\le \sum_{i=1}^D \Pr(i\in A) \bE_{q_A^\star}\mathrm{KL}(q_A^\star(X_i\in \cdot\, | X_j,j<i)\|\mathsf{u}_i) \\ &=e^{-t}\sum_{i=1}^D \bE_{q_A^\star}\mathrm{KL}(q_A^\star(X_i\in \cdot\, | X_j,j<i)\|\mathsf{u}_i)=e^{-t}\mathrm{KL}(q_0\|\mathsf{u}).
  \end{align*}
  As $\mathrm{KL}(q_0\|\mathsf{u})=D\log{S}+\sum q_0\log{q_0}\le D\log{S}$. This completes the proof. 
\end{proof}

% \subsection{Proof of Theorem~\ref{thm:efficient_sampling}}
\subsection{Proof of Proposition~\ref{prop:sampling_error}}
\begin{proof}
  Condition on the estimated score $\widehat s$ and the chosen grid, and write $h_k=t_{k+1}-t_k$.
  Let
  $\overleftarrow q_{t_{k+1}|t_k}(\cdot\mid\x)$ denote the exact
  reverse transition probabilities, and let
  $p_{t_{k+1}|t_k}(\cdot\mid\x)$ denote the transition probabilities
  used by Algorithm~\ref{alg:tau_leaping}.  The latter are the
  endpoint probabilities of the time-dependent rate matrices
  specified in Section~\ref{subsec:tau_leaping}; equivalently, this
  follows from Lemma~\ref{lem:poisson_representation}.

  The data-processing inequality and the chain rule for KL divergence
  give
  \begin{align}
    \mathrm{KL}(q^\star\,\|\,p_T)
    \le{}&
    \mathrm{KL}(q_T\,\|\,\mathsf u)
    \notag\\
    &+
    \sum_{k=0}^{N-1}
    \bE_{\overleftarrow{\x}_{t_k}\sim q_{T-t_k}}
    \left[
      \mathrm{KL}
      \left(
        \overleftarrow q_{t_{k+1}|t_k}
        (\cdot\mid\overleftarrow{\x}_{t_k})
        \,\middle\|\,
        p_{t_{k+1}|t_k}
        (\cdot\mid\overleftarrow{\x}_{t_k})
      \right)
    \right].
    \label{eq:sampling_chain_rule}
  \end{align}

  Fix $k$ and write $\ell=t_k$.  For
  $t\in[\ell,t_{k+1})$, $i\in[\sD]$, and $c\in[S-1]$, set
  \[
    \theta_t
    =
    \widehat s_{T-t}
    \bigl(
      \overleftarrow{\x}_{\ell}\oplus_i c,
      \overleftarrow{\x}_{\ell}
    \bigr),
  \]
  \[
    \omega_\ell
    =
    s_{T-\ell}^\star
    \bigl(
      \overleftarrow{\x}_{\ell}\oplus_i c,
      \overleftarrow{\x}_{\ell}
    \bigr),
    \qquad
    \gamma_t
    =
    s_{T-t}^\star
    \bigl(
      \overleftarrow{\x}_t\oplus_i c,
      \overleftarrow{\x}_t
    \bigr).
  \]
  Applying Lemma~\ref{lem:conditional_kl_derivative} to each
  conditional transition in~\eqref{eq:sampling_chain_rule} gives
  \begin{align}
    \mathrm{KL}(q^\star\,\|\,p_T)
    \le
    \mathrm{KL}(q_T\,\|\,\mathsf u)
    \notag\\
    &+
    \frac1S
    \sum_{k=0}^{N-1}
    \int_{t_k}^{t_{k+1}}
    \bE
    \left[
      \sum_{i=1}^{\sD}
      \sum_{c\in[S-1]}
      \gamma_t
      K\left(
        \frac{\theta_t}{\gamma_t}
      \right)
    \right]
    \,\mathrm dt.
    \label{eq:sampling_kl_integral}
  \end{align}

  The identity
  \begin{equation}\label{eq:three_term_identity}
    \gamma K(\theta/\gamma)
    =
    \omega K(\theta/\omega)
    +
    (\omega-\gamma)\log\theta
    +
    \mathsf h(\gamma)-\mathsf h(\omega)
  \end{equation}
  separates the integrand into three terms, where $h(\cdot)$ is defined as in \eqref{eq:hdef}.

  \noindent For the first term,
  \[
  \begin{aligned}
    \frac1S
    \bE
    \left[
      \sum_{i,c}
      \omega_\ell
      K\left(
        \frac{\theta_t}{\omega_\ell}
      \right)
    \right]
    =
    \cL_{T-\ell}
    \bigl(
      \widehat s_{T-t}
    \bigr).
  \end{aligned}
  \]
  Lemma~\ref{lem:score_risk_time}, applied with
  $u=T-t$ and $v=T-\ell$, gives
  \[
  \begin{aligned}
    \cL_{T-\ell}
    \bigl(
      \widehat s_{T-t}
    \bigr)
    \le{} 
    \cL_{T-t}
    \bigl(
      \widehat s_{T-t}
    \bigr)
    +
    \left(
      \frac{\sD(S-1)}{S}
    \right)^2
    \left(
      R-R^{-1}+2\log R
    \right)
    (t-\ell).
  \end{aligned}
  \]
  Summing over the intervals and integrating gives
  \begin{small}
  \begin{align}
    &\sum_{k=0}^{N-1}
    \int_{t_k}^{t_{k+1}}
    \frac1S
    \bE
    \left[
      \sum_{i,c}
\omega_{t_k}
K\left(
  \frac{\theta_t}{\omega_{t_k}}
\right)
    \right]
    \,\mathrm dt
\le
    \cL(\widehat s)
    +
    \frac12
    \left(
      \frac{\sD(S-1)}{S}
    \right)^2
    \left(
      R-R^{-1}+2\log R
    \right)
    \sum_{k=0}^{N-1}h_k^2.
    \label{eq:sampling_T1}
  \end{align}
\end{small}
  For the second term,
  $\log\theta_t$ is determined by
  $\overleftarrow{\x}_{\ell}$.  Lemma~\ref{lem:score_martingale_sampling}
  therefore gives
  \[
    \bE\left[
      (\omega_\ell-\gamma_t)\log\theta_t
      \,\middle|\,
      \overleftarrow{\x}_{\ell}
    \right]
    =
    0.
  \]
  Hence the second term has zero expectation. For the third term, Lemma~\ref{lem:true_score_term} gives
  \[
  \begin{aligned}
    \frac1S
    \bE
    \left[
      \sum_{i,c}
      \bigl(
        \mathsf h(\gamma_t)-\mathsf h(\omega_\ell)
      \bigr)
    \right]
    =
    \varphi(T-t)-\varphi(T-\ell).
  \end{aligned}
  \]
Writing $H(q)=-\sum_{\x}q(\x)\log q(\x)$, the Kolmogorov
equation gives $\frac{\mathrm d}{\mathrm dt}H(q_t)=\varphi(t)$.
Consequently,
\[
  \sum_{k=0}^{N-1}
  \int_{t_k}^{t_{k+1}}
  \bigl[\varphi(T-t)-\varphi(T-t_k)\bigr]\,\mathrm dt
  =
  H(q_T)-H(q^\star)
  -
  \sum_{k=0}^{N-1}h_k\varphi(T-t_k).
\]
This is the error from using the entropy decrease rate
at the beginning of each step.

Since $\varphi$ is non-increasing, we have
  \begin{align}
    \sum_{k=0}^{N-1}
    \int_{t_k}^{t_{k+1}}
    \bigl[
      \varphi(T-t)-\varphi(T-t_k)
    \bigr]
    \,\mathrm dt
    &\le
    \sum_{k=0}^{N-1}
    h_k
    \bigl[
      \varphi(T-t_{k+1})
      -
      \varphi(T-t_k)
    \bigr]
    \notag\\
    &\le
    \Delta
    \bigl[
      \varphi(0)-\varphi(T)
    \bigr]
    \le
    \Delta
    \frac{\sD(S-1)}{S}
    \log R.
    \label{eq:sampling_T3}
  \end{align}

  Combining~\eqref{eq:sampling_kl_integral},
  \eqref{eq:sampling_T1}, and~\eqref{eq:sampling_T3}, and applying
  Lemma~\ref{lem:uniform_initialization}, gives
  \[
  \begin{aligned}
    \mathrm{KL}(q^\star\,\|\,p_T)
    \le{}&
    \cL(\widehat s)
    +
    e^{-T}\sD\log S
    +
    \frac12
    \left(
      \frac{\sD(S-1)}{S}
    \right)^2
    \left(
      R-R^{-1}+2\log R
    \right)
    \sum_{k=0}^{N-1}h_k^2
    \\
    &+
    \Delta
    \frac{\sD(S-1)}{S}
    \log R.
  \end{aligned}
  \]
Since
\[
  \sum_{k=0}^{N-1}h_k^2\le T\Delta,
  \qquad
  \frac{\sD(S-1)}{S}\le\sD,
  \qquad
  R-R^{-1}+2\log R\le3R,
\]
the preceding bound proves~\eqref{eq:efficient_sampling}.
\end{proof}

\begin{proof}[Proof of Theorem~\ref{thm:efficient_sampling}]
  On the event in Corollary~\ref{cor:score_est_rate},
  substitute its bound on $\cL(\widehat s)$ into
  Proposition~\ref{prop:sampling_error}.
  This proves~\eqref{eq:end_to_end_sampling}.
  The event concerns only the estimated score, and the
  proposition applies to every fixed grid. The bound
  therefore holds simultaneously for every grid chosen
  before generating the sampling trajectory.

  With $T=\log(n\sD\log S)$, the initialization term
  equals $1/n$. If $\Delta\le n^{-1}$, the discretization
  term is at most
  \[
    \frac1n
    \left(\frac32T\sD^2R+\sD\log R\right).
  \]
  For fixed $d$, $\sD$, and $R$, both $T$ and $\ell_n$
  are $O(\log(nS))$. Substitution into
  \eqref{eq:end_to_end_sampling} gives the stated rate.
\end{proof}

\section{Proofs for score estimation}
\label{sec:proof_estimation}

\subsection{Auxiliary inequalities}
% \label{app:score_estimation_aux}

\begin{lemma}[Bounds for $K$]\label{lem:K_bounds}
  For every $r>0$,
  \[
    \frac{(r-1)^2}{2(r\vee1)}
    \le
    K(r)
    \le
    \frac{(r-1)^2}{2(r\wedge1)}.
  \]
\end{lemma}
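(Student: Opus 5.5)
The plan is to reduce both inequalities to the sign of an auxiliary function and then to monotonicity, using that $K(r)=r-\log r-1$ is smooth on $(0,\infty)$ with $K(1)=K'(1)=0$ and $K''(r)=r^{-2}$. Taylor's theorem with integral remainder at $r=1$ gives
\[
  K(r)=\int_1^r\frac{r-u}{u^2}\,\mathrm du .
\]
From this identity the plan is to bound the weight $u^{-2}$ between its values at the endpoints of the interval between $1$ and $r$.

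\emph{Case $r\ge1$.} For $u\in[1,r]$ we have $r^{-2}\le u^{-2}\le 1$. This gives $(r-1)^2/(2r^2)\le K(r)\le (r-1)^2/2$. The upper bound is exactly the claimed one, since $r\wedge1=1$. The lower bound is weaker than the claimed $(r-1)^2/(2r)$, so the crude endpoint bound is not enough here. Instead I will define $\phi(r)=K(r)-(r-1)^2/(2r)$, so that $\phi(1)=0$. The claim is that $\phi'(r)\ge0$ for $r\ge1$. Differentiating,
\[
  \phi'(r)=1-\frac1r-\frac12\Bigl(1-\frac1{r^2}\Bigr)=\frac{(r-1)^2}{2r^2}\ge0,
\]
which settles this case.

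\emph{Case $0<r<1$.} Now $r\vee1=1$ and $r\wedge1=r$, so the targets are $(r-1)^2/2\le K(r)\le (r-1)^2/(2r)$. Writing $K(r)=\int_r^1 (u-r)u^{-2}\,\mathrm du$ and using $u^{-2}\ge1$ on $[r,1]$ gives the lower bound $(1-r)^2/2$ at once. For the upper bound I will again use a monotonicity argument rather than the crude $u^{-2}\le r^{-2}$, which only yields $(1-r)^2/(2r^2)$. Set $\psi(r)=(r-1)^2/(2r)-K(r)$, so $\psi(1)=0$. The derivative is
\[
  \psi'(r)=\frac12\Bigl(1-\frac1{r^2}\Bigr)-1+\frac1r=-\frac{(1-r)^2}{2r^2}\le0,
\]
so $\psi$ is nonincreasing on $(0,1]$. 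Hence $\psi(r)\ge\psi(1)=0$ for $r\le1$, which is the upper bound.

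The main step to verify is the pair of derivative identities. The two nontrivial bounds, the lower bound for $r\ge1$ and the upper bound for $r<1$, are sharper than the naive Taylor endpoint bounds. Both reduce to the same expression $\pm(r-1)^2/(2r^2)$, and once that is checked the rest is routine.
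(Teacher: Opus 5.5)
Your proof is correct, but it takes a longer route than the paper. The paper integrates $K'(w)=(w-1)/w$ only once and substitutes $w=1+u(r-1)$. This gives $K(r)=(r-1)^2\int_0^1 \frac{u}{1+u(r-1)}\,\mathrm du$. Since $r\wedge1\le 1+u(r-1)\le r\vee1$ and $\int_0^1u\,\mathrm du=1/2$, both sharp bounds follow in one line, for every $r>0$ at once.

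You start instead from the second-order Taylor remainder. Its weight $u^{-2}$ carries a squared power of the moving point, so the endpoint bounds lose a factor of $r$ in exactly the two nontrivial directions. You then repair those two bounds by checking that the gap functions $\phi,\psi$ vanish at $r=1$ and have derivatives $\pm(r-1)^2/(2r^2)$. I verified both derivative computations, and the case split covers all $r>0$.

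The paper's argument is shorter and shows why the constants $r\vee1$ and $r\wedge1$ appear: they bound the single denominator $w$. Yours is a mechanical calculus check that needs no clever representation. In fact the same derivative argument handles all four inequalities, for example $\frac{\mathrm d}{\mathrm dr}\bigl[(r-1)^2/2-K(r)\bigr]=(r-1)^2/r$, so the Taylor step is not really needed. Alternatively, the first-order form $K(r)=\int_1^r (u-1)u^{-1}\,\mathrm du$ would have made your crude endpoint bounds sharp in all cases, with no monotonicity argument at all.
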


\begin{proof}
  Since
  \[
    K(r)
    =
    (r-1)-\log r
    =
    (r-1)^2
    \int_0^1
    \frac{u}{1+u(r-1)}
    \,\mathrm du,
  \]
  and
  \[
    r\wedge1
    \le
    1+u(r-1)
    \le
    r\vee1,
    \qquad u\in[0,1],
  \]
  the result follows from
  $\int_0^1u\,\mathrm du=1/2$.
\end{proof}

\begin{lemma}[Bernstein's inequality; see \cite{Boucheron2013}, Theorem 2.10]\label{lem:bernstein}
  Let $Z_1,\ldots,Z_n$ be independent and identically distributed,
  and write
  \[
    Pf=\bE[f(Z_1)],
    \qquad
    P_nf=\frac1n\sum_{k=1}^nf(Z_k).
  \]
  If $|f|\le B$ and
  $\operatorname{Var}(f(Z_1))\le\sigma^2$, then, for every $u>0$,
  \[
    \Pr\left(
      |P_nf-Pf|
      >
      \sqrt{\frac{2\sigma^2u}{n}}
      +
      \frac{2Bu}{3n}
    \right)
    \le
    2e^{-u}.
  \]
\end{lemma}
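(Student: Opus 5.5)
The bound is the classical Bernstein inequality applied to the centered summands. I would prove it by a Chernoff argument built on a sub-gamma bound for the moment generating function, then invert the resulting tail, and finally take a union bound over the two tails.

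\emph{Reduction.} Set $X_k=f(Z_k)-Pf$. These are i.i.d. with mean zero and $\operatorname{Var}(X_k)\le\sigma^2$. Since $|f|\le B$ implies $|Pf|\le B$, they satisfy $|X_k|\le b\defn 2B$. Put $v\defn n\sigma^2$. It then suffices to show
\[
\Pr\Bigl(\textstyle\sum_k X_k>\sqrt{2vu}+bu/3\Bigr)\le e^{-u},
\]
together with the same bound for $-X_k$. Dividing by $n$ turns the threshold into $\sqrt{2\sigma^2u/n}+2Bu/(3n)$, and summing the two one-sided bounds gives the factor $2$.

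\emph{MGF bound.} Fix $0<\lambda<3/b$. Expanding $e^{\lambda X}=1+\lambda X+\sum_{k\ge2}\lambda^kX^k/k!$ and using $\bE X=0$ and $\bE|X|^k\le\sigma^2b^{k-2}$ for $k\ge2$, I would use $k!\ge 2\cdot3^{k-2}$ to sum a geometric series:
\[
\bE e^{\lambda X}\le 1+\frac{\lambda^2\sigma^2}{2}\sum_{k\ge2}\Bigl(\frac{\lambda b}{3}\Bigr)^{k-2}=1+\frac{\lambda^2\sigma^2}{2(1-\lambda b/3)}\le\exp\Bigl(\frac{\lambda^2\sigma^2}{2(1-\lambda b/3)}\Bigr).
\]
By independence, $\log\bE e^{\lambda\sum_k X_k}\le \dfrac{\lambda^2v}{2(1-c\lambda)}$ with $c=b/3$. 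This is the sub-gamma condition.

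\emph{Chernoff and inversion.} Markov's inequality gives
\[
\Pr\bigl(\textstyle\sum_k X_k>t\bigr)\le\exp\Bigl(-\sup_{0<\lambda<1/c}\bigl[\lambda t-\tfrac{\lambda^2v}{2(1-c\lambda)}\bigr]\Bigr).
\]
Rather than computing this supremum exactly, I would plug in $t=\sqrt{2vu}+cu$ together with the choice
\[
\lambda=\frac{\sqrt{2u/v}}{1+c\sqrt{2u/v}},
\]
which satisfies $\lambda<1/c$. A direct algebraic check should show the exponent is at least $u$. The cleanest way to organize that check is through the identity $\sup_\lambda\bigl[\lambda t-\tfrac{\lambda^2v}{2(1-c\lambda)}\bigr]=\tfrac{v}{c^2}\,h_1\!\bigl(\tfrac{ct}{v}\bigr)$ with $h_1(x)=1+x-\sqrt{1+2x}$. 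Its inverse is $h_1^{-1}(y)=y+\sqrt{2y}$, and substituting $y=c^2u/v$ gives $t=\sqrt{2vu}+cu$ exactly. This inversion is the only step needing care: confirming the closed form of the Legendre transform and of its inverse. Everything else is routine, and the lemma can also be taken directly from the cited reference.
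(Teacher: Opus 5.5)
Your proposal is correct and follows the paper's proof, which applies the cited one-sided Bernstein inequality to $f-Pf$ and $-(f-Pf)$ and takes a union bound; the only difference is that you rederive the one-sided bound yourself from the sub-gamma moment generating function estimate with $b=2B$, $c=b/3$, $v=n\sigma^2$, and this gives exactly the constant $2Bu/(3n)$ in the statement. Your explicit choice $\lambda=a/(1+ca)$ with $a=\sqrt{2u/v}$ makes the Chernoff exponent equal to $u$ exactly, and the one case you do not cover, $\sigma^2=0$, is trivial because then $P_nf=Pf$ almost surely.
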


\begin{proof}
  Apply the one-sided Bernstein inequality to $f-Pf$ and
  $-(f-Pf)$, and take a union bound.
\end{proof}

\subsection{DSE and ISE}
\label{app:dse_ise_regular}

\begin{proof}[Proof of Proposition~\ref{prop:dse_ise_estimation}]
  We first compare the two losses.  Fix
  $\x_0\in[S]^\sD$, $t>0$, $i\in[\sD]$, and $c\in[S-1]$.
  Changing variables from $\x$ to
  $\z=\x\ominus_i c$ gives
  \begin{align*}
    &\bE_{\x\sim q_{t|0}(\cdot\mid\x_0)}
    \left[
      \log s_t(\x,\x\ominus_i c)
    \right]
    \\
    &\qquad=
    \sum_{\z\in[S]^\sD}
    q_{t|0}(\z\oplus_i c\mid\x_0)
    \log s_t(\z\oplus_i c,\z)
    \\
    &\qquad=
    \bE_{\z\sim q_{t|0}(\cdot\mid\x_0)}
    \left[
      s_{t|0}^\star
      (\z\oplus_i c,\z\mid\x_0)
      \log s_t(\z\oplus_i c,\z)
    \right].
  \end{align*}
  The remaining terms in DSE and ISE are identical.  Hence
  \[
    \ell_t^{\mathrm{DSE}}(s;\x_0)
    =
    \ell_t^{\mathrm{ISE}}(s;\x_0).
  \]
  It is therefore enough to verify the bounds for DSE.

  For fixed $t$ and $\x_0$, write
  \[
    d_{\btheta,t}(\x_0)
    =
    \ell_t^{\mathrm{DSE}}(s^{\btheta};\x_0)
    -
    \ell_t^{\mathrm{DSE}}(s^\star;\x_0).
  \]
  For each $(\x,i,c)$, abbreviate
  \[
    a
    =
    s_t^{\btheta}(\x\oplus_i c,\x),
    \qquad
    b
    =
    s_t^\star(\x\oplus_i c,\x),
    \qquad
    r
    =
    s_{t|0}^\star(\x\oplus_i c,\x\mid\x_0).
  \]
  Then
  \begin{equation}\label{eq:dse_loss_difference}
    d_{\btheta,t}(\x_0)
    =
    \bE_{\x\sim q_{t|0}(\cdot\mid\x_0)}
    \left[
      \frac1S
      \sum_{i=1}^{\sD}
      \sum_{c\in[S-1]}
      \left\{
        a-b-r\log\frac{a}{b}
      \right\}
    \right].
  \end{equation}

  \paragraph{Mean.}
  By~\eqref{eq:obj_division}, $\bE_{\x_0\sim q^\star}
    \bigl[
      d_{\btheta,t}(\x_0)
    \bigr]
    =
    \cL_t(s^{\btheta})$. Since $\xi\sim\operatorname{Unif}(0,T]$,
  \[
  \begin{aligned}
    \bE\bigl[
      \Delta\ell_{\btheta}(\x_0,\xi)
    \bigr]
    =
    T
    \bE_{\xi}
    \bE_{\x_0}
    \bigl[
      d_{\btheta,\xi}(\x_0)
    \bigr]
    =
    \int_0^T
    \cL_t(s^{\btheta})
    \,\mathrm dt
    =
    \cL(s^{\btheta}).
  \end{aligned}
  \]

  \paragraph{Conditional-score identity.}
  For every fixed $t,\x_0,i,c$,
  \begin{equation}\label{eq:conditional_score_mass}
    \bE_{\x\sim q_{t|0}(\cdot\mid\x_0)}
    \left[
      s_{t|0}^\star
      (\x\oplus_i c,\x\mid\x_0)
    \right]
    =
    1.
  \end{equation}
  Indeed,
  \[
  \begin{aligned}
    \sum_{\x\in[S]^\sD}
    q_{t|0}(\x\mid\x_0)
    \frac{
      q_{t|0}(\x\oplus_i c\mid\x_0)
    }{
      q_{t|0}(\x\mid\x_0)
    }
    =
    \sum_{\x\in[S]^\sD}
    q_{t|0}(\x\oplus_i c\mid\x_0)
    =
    1.
  \end{aligned}
  \]

  \paragraph{Uniform bound.}
  By Lemma~\ref{lem:score_bounds} and the range restriction,
  $a,b\in[R^{-1},R]$.  Hence
  \[
    |a-b|\le R,
    \qquad
    \left|\log\frac ab\right|
    \le
    2\log R
    \le
    2R.
  \]
  Using~\eqref{eq:conditional_score_mass} in
  \eqref{eq:dse_loss_difference},
  \[
    |d_{\btheta,t}(\x_0)|
    \le
    3\sD R.
  \]
  Therefore
  \[
    \left|
      \Delta\ell_{\btheta}(\x_0,\xi)
    \right|
    \le
    3T\sD R.
  \]

  \paragraph{Lipschitz bound.}
  Let $\btheta,\btheta'\in\bTheta_R^{\mathrm{ws}}$, and write
  $a'=s_t^{\btheta'}(\x\oplus_i c,\x)$.  Since
  $\log$ is $R$-Lipschitz on $[R^{-1},R]$,
  \[
    |\log a-\log a'|
    \le
    R|a-a'|.
  \]
  It follows from~\eqref{eq:conditional_score_mass} that
  \begin{align*}
    &
    \left|
      \Delta\ell_{\btheta}(\x_0,\xi)
      -
      \Delta\ell_{\btheta'}(\x_0,\xi)
    \right|
    \le
    2T\sD R
    \sup_{\substack{t\in(0,T],\;\x\in[S]^\sD\\
                    i\in[\sD],\;c\in[S-1]}}
    \left|
      s_t^{\btheta}(\x\oplus_i c,\x)
      -
      s_t^{\btheta'}(\x\oplus_i c,\x)
    \right|.
  \end{align*}

  \paragraph{Second moment.}
  The unweighted sum in~\eqref{eq:dse_loss_difference} has total $\le D\sD $. By~\eqref{eq:conditional_score_mass}, the same is true after
  weighting by $r$.  Cauchy--Schwarz therefore gives
  \begin{align}
    d_{\btheta,t}(\x_0)^2
    \le{}&
    2\sD\,
    \bE_{\x\sim q_{t|0}(\cdot\mid\x_0)}
    \left[
      \frac1S
      \sum_{i=1}^{\sD}
      \sum_{c\in[S-1]}
      \left\{
        (a-b)^2
        +
        r\log^2\frac ab
      \right\}
    \right].
    \label{eq:dse_second_moment_step}
  \end{align}
  Averaging over $\x_0$ and applying
  Lemma~\ref{lem:cond_score},
  \begin{align*}
    \bE_{\x_0}
    \bigl[
      d_{\btheta,t}(\x_0)^2
    \bigr]
    \le{}&
    2\sD\,
    \bE_{\x\sim q_t}
    \left[
      \frac1S
      \sum_{i=1}^{\sD}
      \sum_{c\in[S-1]}
      \left\{
        (a-b)^2
        +
        b\log^2\frac ab
      \right\}
    \right].
  \end{align*}

  By Lemma~\ref{lem:K_bounds}, we have $(a-b)^2
    \le
    2R\,bK(a/b)$.
  Moreover,
    $\left|\log\frac ab\right|
    \le
    R|a-b|$,
  and hence $b\log^2\frac ab
    \le
    2R^4\,bK(a/b)$. Since $R\ge1$, we further have 
    $\bE_{\x_0}
    \bigl[
      d_{\btheta,t}(\x_0)^2
    \bigr]
    \le
    8\sD R^4\,
    \cL_t(s^{\btheta})$. Combining the above observations, we finally have
  \begin{align*}
    \bE\left[
      \Delta\ell_{\btheta}(\x_0,\xi)^2
    \right]
    &=
    T
    \int_0^T
    \bE_{\x_0}
    \bigl[
      d_{\btheta,t}(\x_0)^2
    \bigr]
    \,\mathrm dt
    \le
    8T\sD R^4
    \cL(s^{\btheta}).
  \end{align*}

  \noindent Thus Assumption~\ref{ass:training_loss} holds with $B_\ell=3T\sD R$, $V_\ell=8T\sD R^4$, and $L_\ell=2T\sD R$.
\end{proof}

\subsection{Proof of the oracle inequality}
\label{app:oracle_inequality}

\begin{proof}[Proof of Theorem~\ref{thm:score_est}]
  Let
  \[
    Z_k=(\x_0^{(k)},\xi_k),
    \qquad
    Pf=\bE[f(Z_1)],
    \qquad
    P_nf=\frac1n\sum_{k=1}^nf(Z_k).
  \]
  Write $A:=B_\ell+V_\ell$ and 
    $\delta_n
    :=
    \frac12
    \wedge
    \frac{A}{nL_\ell}$. Let $n\ge 2$. Without loss of generality, suppose $A\ge L_\ell$. Then
  $\delta_n^{-1}\le n$. Apply Lemma~\ref{lem:covering} with
  $\sA=S\vee T$.  After replacing a center outside
  $\bTheta_R^{\mathrm{ws}}$ by a point of the class in the same
  covering ball, there are
  $\btheta_1,\ldots,\btheta_M\in\bTheta_R^{\mathrm{ws}}$ such that $\log M
    \le
    C\Lambda_n$ for some $C>0$ where $\Lambda_n$ is defined in \eqref{eq:est_entropy}, and for every $\btheta\in\bTheta_R^{\mathrm{ws}}$, there is an
  index $j(\btheta)$ satisfying
  \[
    \sup_{\substack{t\in(0,T],\;\x\in[S]^\sD\\
                    i\in[\sD],\;c\in[S-1]}}
    \left|
      s_t^{\btheta}(\x\oplus_i c,\x)
      -
      s_t^{\btheta_{j(\btheta)}}(\x\oplus_i c,\x)
    \right|
    \le
    2\delta_n.
  \]
  Assumption~\ref{ass:training_loss} then gives
  \begin{equation}\label{eq:loss_cover_error}
    \left|
      \Delta\ell_{\btheta}
      -
      \Delta\ell_{\btheta_{j(\btheta)}}
    \right|
    \le
    e_n,
    \qquad
    e_n
    =
    2L_\ell\delta_n
    \le
    \frac{2A}{n}.
  \end{equation}
For each representative, set
    $p_j
    :=
    P\Delta\ell_{\btheta_j}
    :=
    \cL(s^{\btheta_j})$. By Assumption~\ref{ass:training_loss}, we have 
    $\left|
      \Delta\ell_{\btheta_j}
    \right|
    \le
    B_\ell$ and $P\left[
      \Delta\ell_{\btheta_j}^2
    \right]
    \le
    V_\ell p_j$.
  Bernstein's inequality and a union bound over the $M$
  representatives show that, with probability at least $1-e^{-x}$,
  \begin{equation}\label{eq:finite_cover_deviation}
    \left|
      P_n\Delta\ell_{\btheta_j}
      -
      p_j
    \right|
    \le
    \frac14p_j
    +
    C
    \frac{
      A(\Lambda_n+x)
    }{
      n
    }
  \end{equation}
  simultaneously for all $j\le M$.

  Work on this event.  Let $\widehat\btheta$ be the empirical
  minimizer and fix any
  $\btheta^\circ\in\bTheta_R^{\mathrm{ws}}$.  Write
  \[
    \widehat\jmath=j(\widehat\btheta),
    \qquad
    \jmath^\circ=j(\btheta^\circ).
  \]
  Since subtracting
  $T\ell_{\xi_k}(s^\star;\x_0^{(k)})$ does not change the empirical
  minimizer, we have $P_n\Delta\ell_{\widehat\btheta}
    \le
    P_n\Delta\ell_{\btheta^\circ}$.
  Using~\eqref{eq:loss_cover_error} and
  \eqref{eq:finite_cover_deviation},
  \begin{align*}
    P\Delta\ell_{\widehat\btheta}
    \le
    P\Delta\ell_{\btheta_{\widehat\jmath}}
    +
    e_n
    &\le
    \frac43
    P_n\Delta\ell_{\btheta_{\widehat\jmath}}
    +
    C
    \frac{
      A(\Lambda_n+x)
    }{
      n
    }
    +
    e_n
    \\
    &\le
    \frac43
    P_n\Delta\ell_{\btheta_{\jmath^\circ}}
    +
    C
    \frac{
      A(\Lambda_n+x)
    }{
      n
    }
    +
    3e_n
    \\
    &\le
    \frac53
    P\Delta\ell_{\btheta_{\jmath^\circ}}
    +
    C
    \frac{
      A(\Lambda_n+x)
    }{
      n
    }
    +
    3e_n
    \\
    &\le
    \frac53
    P\Delta\ell_{\btheta^\circ}
    +
    C
    \frac{
      A(\Lambda_n+x)
    }{
      n
    }
    +
    5e_n.
  \end{align*}
  Since
  $e_n\le2A/n$ and $\Lambda_n+x\ge1$, enlarging $C$ gives
  \[
    P\Delta\ell_{\widehat\btheta}
    \le
    2
    P\Delta\ell_{\btheta^\circ}
    +
    C
    \frac{
      A(\Lambda_n+x)
    }{
      n
    }.
  \]
  By Assumption~\ref{ass:training_loss}, $P\Delta\ell_{\btheta}
    =
    \cL(s^{\btheta})$. 
  Taking the infimum over $\btheta^\circ$ proves the result.
\end{proof}

\subsection{Proof of the score-estimation rate}
\label{app:score_estimation_rate}

\begin{proof}[Proof of Corollary~\ref{cor:score_est_rate}]
  Proposition~\ref{prop:dse_ise_estimation} gives $B_\ell+V_\ell
    \le
    12T\sD R^4$ and $B_\ell+V_\ell
    \ge
    L_\ell$.
  Hence Theorem~\ref{thm:score_est} applies to both DSE and ISE. Choose the search class in Theorem~\ref{thm:bias} with
  $\varepsilon=1/n$.  Then
  \[
    \inf_{\btheta\in\bTheta_R^{\mathrm{ws}}}
    \cL(s^{\btheta})
    \le
    \frac{1\wedge T}{n}.
  \]

  \noindent The network bounds in Theorem~\ref{thm:bias}
  imply $\log\!\bigl(
      8n(S\vee T)\sB\Lw\sL\sW
    \bigr)
    \le
    C_d\sD^d\ell_n^2$ and
  \[
    \Sws\sL
    \le
    C_d\sD^{2d+1}\ell_n^2
    \left[
      \sD^dS^d
      +
      \sD^{3d+2}\ell_n
      +
      \ell_n^4
    \right].
  \]
  Consequently, \eqref{eq:est_entropy} gives
  \[
    \Lambda_n
    \le
    C_d\sD^{3d+1}\ell_n^4
    \left[
      \sD^dS^d
      +
      \sD^{3d+2}\ell_n
      +
      \ell_n^4
    \right].
  \]
  Substituting these two bounds into the oracle inequality gives
  \eqref{eq:score_est_rate}. Finally, for any $\delta\in(0,1)$, take
  $x=1\vee\log(1/\delta)$.  When $d$, $\sD$, $R$, and $T$ are
  fixed, $\ell_n$ is logarithmic in $n$ and $S$.  Since $S\ge2$,
  \eqref{eq:score_est_rate} therefore gives
  \[
    \cL(\widehat s)
    =_{\log} O_{\mathbb P}\!\left(
      \frac{S^d}{n}
    \right),
  \]
  which proves the final assertion.
\end{proof}

\section{Auxiliary neural-network results}\label{app:nn_auxiliary}

\subsection{Elementary ReLU constructions}\label{app:nn_elementary}
In this Section, we will show that neural networks can approximate certain elementary mathematical operations such as multiplication, one-hot encoding, exponentiation, etc. 

\begin{lemma}[Approximation of products; Lemma~8 of
\cite{kohlerlanger}]
\label{lem:mult}
  Let $\sA\ge1$, $r\ge2$, and $K$ be a positive integer.
  Set
  \(
    \Delta_K^{(r)}
    :=
    K+\lceil2r\log_2\sA\rceil+5r
  \)
  and
  \begin{equation}\label{eq:wp}
    \wp_{r,\sA}
    :=\,r\,
    \bigl(K+2r\log_2(8\sA)\bigr).
  \end{equation}
  There exists $ \btheta_\times^{(r)}
    \in
    \bTheta_{r,1}(L,W,S,B)$ with 
  \[
    L\lesssim  \lceil\log_2 r\rceil\,\Delta_K^{(r)}, \qquad W\lesssim r, \qquad S \lesssim \wp_{r,\sA}, \qquad B\lesssim 4^{2r}A^{2r} 
  \]
 such that
  \[
    \sup_{\x\in[-\sA,\sA]^r}
    \left|
      \sN_\rho(\x\mid\btheta_\times^{(r)})
      -
      \prod_{i=1}^r x_i
    \right|
    \le
    2^{-K}.
  \]
\end{lemma}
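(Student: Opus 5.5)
This is the standard Yarotsky / Kohler--Langer construction, and I would prove it in two stages: first approximate a single product of two bounded numbers, then combine $r$ factors with a binary tree. Throughout, the quantities $\Delta_K^{(r)}$ and $\wp_{r,\sA}$ should match the depth and sparsity it produces.

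\emph{Step 1: the squaring network.} Let $g(x)=2\rho(x)-4\rho(x-1/2)+2\rho(x-1)$ be the tent map on $[0,1]$, and let $g_s$ denote its $s$-fold composition. Then
\[
f_m(x)=x-\sum_{s=1}^m g_s(x)/4^{s}
\]
is the piecewise linear interpolant of $x^2$ at the dyadic points $k2^{-m}$. It satisfies $|f_m(x)-x^2|\le 4^{-m-1}$ on $[0,1]$. Since $f_m$ is an affine-plus-composition construction, it uses depth $O(m)$, width $O(1)$ and $O(m)$ nonzero weights, all bounded by $4$.

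\emph{Step 2: the binary product.} For $x,y\in[-a,a]$, rescale to $[0,1]$ and use
\[
xy = 2a^2\Bigl[\Bigl(\tfrac{x+y}{2a}\Bigr)^2-\Bigl(\tfrac{x}{2a}\Bigr)^2-\Bigl(\tfrac{y}{2a}\Bigr)^2\Bigr],
\]
with $|u|$ written as $\rho(u)+\rho(-u)$. Each square is replaced by $f_m$, so the error is $O(a^2 4^{-m})$. Choosing $m\approx K'+\log_2 a$ makes this error at most $2^{-K'}$, with depth and sparsity $O(K'+\log_2 a)$.

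\emph{Step 3: the tree.} Pad the $r$ inputs to $2^{\lceil\log_2 r\rceil}$ factors, setting the extra factors to $1$, and multiply them pairwise over $\lceil\log_2 r\rceil$ levels. Inputs that are idle at some level are carried forward by identity channels $\rho(u)-\rho(-u)$, so width stays $\lesssim r$.

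The main obstacle is the error and range bookkeeping across the levels. At level $j$ the intermediate products are bounded by $\sA^{2^j}+1\le(2\sA)^{r}$. An error $\eta$ in one factor gets amplified by at most $(2\sA)^r$ at the next level. Unrolling, the total error is at most $\lceil\log_2 r\rceil\,(2\sA)^{r}\,2^{-K'}$. So I take $K'=K+\lceil 2r\log_2\sA\rceil+5r$, which is exactly $\Delta_K^{(r)}$, to get total error $\le 2^{-K}$. Each of the $r-1$ binary products is evaluated on the range $a\le(2\sA)^r$, so it costs $O(K'+r\log_2 \sA)$ parameters. This gives sparsity $\lesssim r(K+2r\log_2(8\sA))=\wp_{r,\sA}$ and depth $\lesssim\lceil\log_2 r\rceil\,\Delta_K^{(r)}$. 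The weight magnitudes come only from the rescaling constants $2a^2$ and $1/(2a)$ with $a\le(2\sA)^r$, which gives $B\lesssim 4^{2r}\sA^{2r}$.

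Since the statement is quoted from Lemma~8 of \cite{kohlerlanger}, I could alternatively cite it directly. The plan above reconstructs it, with constants absorbed into $\lesssim$.
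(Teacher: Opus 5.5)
Your route differs from the paper's. The paper does not rebuild the multiplier. It takes the existence statement of Lemma~8 of \cite{kohlerlanger} (depth $(R+1)\lceil\log_2 r\rceil$, width $18r$, error $4^{4r+1}\sA^{4r}r\,4^{-R}$), sets $R=\Delta_K^{(r)}-1$, and checks that the error is at most $2^{-K}$. It then audits only what that lemma does not record. Sparsity comes from counting the $2^{q-1}+\dots+1$ two-input multipliers in the binary tree, each of depth $R$ and fixed width. Magnitude comes from the $16(a')^2$ weight bound of the two-input multiplier, together with the fact that all intermediate inputs lie in $[-4^r\sA^r,4^r\sA^r]$. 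You instead rebuild everything from the tent-map squaring network and polarization. That makes the sparsity and magnitude accounting self-evident, since every weight is explicit and the only large one is $2a^2$. The cost is that you must redo the error propagation through the tree, which the paper simply inherits.

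That error propagation is the one step that does not hold as written. You say an error is amplified by at most $(2\sA)^r$ per level, and then conclude a total of $\lceil\log_2 r\rceil(2\sA)^r2^{-K'}$. But a uniform per-level factor $(2\sA)^r$ compounds over $q=\lceil\log_2 r\rceil$ levels to about $(2\sA)^{r(q-1)}$, and $\Delta_K^{(r)}$ does not cover that.

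Use the level-dependent bound instead. Let $e_j$ bound the error of the level-$j$ outputs; their exact values are products of $2^j$ factors, hence bounded by $\sA^{2^j}$. The estimate $|\widehat{\mathrm{mult}}(\hat u,\hat v)-uv|\le 2^{-K'}+|\hat u|\,|\hat v-v|+|v|\,|\hat u-u|$ then gives $e_{j+1}\le 2^{-K'}+3\sA^{2^j}e_j$, as long as $e_j\le 1$; the resulting bound maintains this inductively. Unrolling,
\[
e_q\le q\,3^{q}\sA^{2+4+\dots+2^{q-1}}2^{-K'}\le q\,3^q\sA^{2r}2^{-K'},
\]
since $2^q<2r$. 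With $K'=\Delta_K^{(r)}$, the term $\lceil 2r\log_2\sA\rceil$ cancels $\sA^{2r}$, and $2^{-5r}$ absorbs $q\,3^q$ for $r\ge2$. This is exactly the slack that $\Delta_K^{(r)}$ provides. A finer count, using that the sibling subtrees along a root-to-leaf path have disjoint leaves, recovers a factor $\sA^{r}$, which is presumably what you intended.

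With this repair your depth, width, sparsity and magnitude counts all go through. One small correction: padding to $2^q$ leaves uses $2^q-1\le 2r-3$ multipliers, not $r-1$, which does not affect the bounds.
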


\begin{proof}
We will use the same network construction as in \cite[Lemma 8]{kohlerlanger}, which in turn proceeds by constructing networks to approximate $2$-fold products and stitching them together following standard series/parallel concatenations techniques from \cite[Section B.1.1]{nakada2020adaptive}. However, their result does not track the sparsity and the bound on the largest weight, which we will provide a short proof of. First, with $d=r$, $a=\sA$, and
$R=\Delta_K^{(r)}-1$, \cite[Lemma 8]{kohlerlanger} implies that there exists a ReLU network of depth
$(R+1)\,\lceil\log_2 r\rceil$ and width $18r$ whose approximation error for the $d$-fold product function on
$[-\sA,\sA]^r$ is at most $4^{4r+1}\,\sA^{4r}\,r\cdot 4^{-R}$.
Taking base-$2$ logarithms, this is at most $2^{-K}$ provided
$R\ge (1/2)[K+(8r+2)+4r\log_2\sA+\log_2 r]$. As $K\ge (K+1)/2$ and $5r-(1/2)\ge 4r+1+(1/2)\log_2 r$ for $r\ge 2$, by choice of $R$ we have
$$R=(K-(1/2))+\lceil 2r \log_2 A\rceil + (5r-(1/2))\ge (K/2)+2r \log_2 A + 4r+1+(1/2)\log_2 r.$$
This establishes the required approximation error. 

Let us bound the sparsity of the above network. Write $q:=\left\lceil\log_2 r\right\rceil$. As discussed above, 
the proof of \cite[Lemma 8]{kohlerlanger} constructs the $r$-fold
product through a binary tree of two-input multiplication networks. At the first
multiplication level there are $2^{q-1}$ copies of the binary
multiplier, at the second level there are $2^{q-2}$ copies, and this continues until only one copy remains. Each two-input multiplier has $R$ hidden layers and fixed width $18$,
and hence has sparsity $\lesssim R$. So the first multiplication layer has a total sparsity $\lesssim R 2^q$. Then by concatenating the networks across each multiplication level, we get an overall sparsity of 
\[
    \bigl|\theta_\times^{(r)}\bigr|_0
    \lesssim R
    \sum_{\ell=1}^{q} 2^{q-\ell}
    =
     R(2^q-1) \lesssim r(K+2r \log_2 (8A)).
\]
This proves \eqref{eq:wp}.

Finally we bound the largest coefficient in the network. The explicit two-input multiplier construction of \cite[Lemma 6]{kohlerlanger},
when operated on $[-a',a']^2$, has parameter magnitude bounded by
$16(a')^2$, with largest parameter coming from the output layer. In the proof of \cite[Lemma 8]{kohlerlanger}, all intermediate inputs to
the binary multipliers are contained in $[-4^r A^r,\,4^r A^r]$. Therefore the parameter magnitudes are at most $16(4^r A^r)^2=4^{2r+2}A^{2r}$. As all network concatenation constructions only duplicate existing parameters and change
their signs, they do not increase their largest weight.
\end{proof}

\begin{lemma}[One-hot encoding]\label{lem:onehot}
  Let $S\ge 2$ be an integer and write $[S]= \{1, \ldots, S\}$. Then there exists
  $\btheta_{\mathrm{oh}}\in\bTheta_{1,S}\bigl(1,\;S{+}1,\;5S,\;S\bigr)$
  with $\sN_\rho(a\mid\btheta_{\mathrm{oh}})=\e_a$ for all
  $a\in[S]$.
\end{lemma}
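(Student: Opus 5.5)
The plan is an explicit one-hidden-layer construction. Each output coordinate gets a ``tent'' built from hidden ReLU units, and the tent peaks exactly at the integer $a$. Since the input $a$ ranges only over the integers $1,\dots,S$, we only need exact values at those points. For $k\in[S]$, the function
\[
  \tau_k(a)=\rho(a-k+1)-2\rho(a-k)+\rho(a-k-1)
\]
equals $\mathbf 1\{a=k\}$ at every integer $a$. The task is then to realize all $S$ tents with a single hidden layer of width at most $S+1$, while keeping the sparsity at most $5S$ and the magnitude at most $S$.

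The hidden layer uses the $S+1$ units $u_j(a)=\rho(a-j)$ for $j=0,1,\dots,S$. This is one first-layer weight (equal to $1$) and one bias (equal to $-j$, so $|{-j}|\le S$) per unit, for at most $2(S+1)$ nonzero first-layer parameters. The unit $u_{S+1}$ would be needed for $\tau_S$, but it vanishes on $[S]$ and can be dropped. On the integer inputs, $\tau_k=u_{k-1}-2u_k+u_{k+1}$ for $k<S$ and $\tau_S=u_{S-1}-2u_S$. The output layer is therefore linear with zero bias, with at most three nonzero entries per row in $\{1,-2\}$, giving at most $3S$ nonzero output weights.

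The main obstacle is the bookkeeping against the stated budget. The total count $2(S+1)+3S$ exceeds $5S$ by two, so I will trim it. Since $a\ge1$, the unit $u_0(a)=a$ coincides with $u_1(a)+1$ on the inputs. I will therefore replace $u_0$ by the output bias vector $\e_1$ and add $+1$ to the coefficient of $u_1$ in $\tau_1$. This drops the unit $u_0$ at a cost of one output bias, leaving at most $2S+3S+1-1=5S$ nonzero parameters. Equivalently, $u_0$ can be dropped by noting that $\rho(a)-\rho(a-1)=1$ for $a\ge1$. The largest magnitude is $\max(S,2)\le S$ for $S\ge2$, the width is at most $S+1$, and the depth is $1$. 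Finally, I will check the identity $\sN_\rho(a\mid\btheta_{\mathrm{oh}})=\e_a$ directly at each integer $a\in[S]$.
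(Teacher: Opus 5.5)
Your construction is correct and is the same as the paper's: the ramps $\rho(a-j)$, $j=0,\dots,S$, combined by second differences in the output layer, with $\rho(a-S-1)$ omitted for the last coordinate. The trimming via an output bias is valid but unnecessary. Your count $2(S+1)+3S$ is loose by exactly two, because the bias of $u_0$ is $0$ and the row for $\tau_S$ has only two entries, so the untrimmed network already has sparsity $(S+1)+S+(3S-1)=5S$, which is how the paper counts it.
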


\begin{proof}
We construct a neural network with one hidden layer. It  computes the $S{+}1$ ramp functions 
\[
  r_j(\cdot)\;:=\;\sigma(\cdot-j),\qquad j=0,1,\ldots,S, 
\]
The output layer is formed by the second order difference factor 
\[
  [\sN_\rho(\cdot\mid\btheta_{\mathrm{oh}})]_b
  \;=\;r_{b-1}(\cdot)-2r_b(\cdot)+r_{b+1}(\cdot),
\]
for $b\in [S]$. For $b=S$, the term $r_{S+1}$ is omitted, since $\sigma(x-S-1)=0$ for all $x\le S$.

\noindent Let us verify that the above construction exactly represents one-hot encoding. Fix $a\in[S]$ and $b\in[S]$.  Since $\sigma(a-j)=\max(a-j,0)$,
\[
  (r_{b-1}-2r_b+r_{b+1})(a)
  \;=\;
  \begin{cases}
    (a-b+1)-2(a-b)+(a-b-1)=0, & a\ge b+1,\\[2pt]
    1-0+0=1, & a=b,\\[2pt]
    0-0+0=0, & a\le b-1,
  \end{cases}
\]
so the output is exactly $\e_a$.

\noindent Finally let us bound the parameters of the above network. Clearly it has $1$ hidden layer and a width of $S+1$. The hidden layer has $S{+}1$ weight entries and $S$ bias
entries (the bias of $r_0$ is zero). The output layer has
three entries per row for $b\le S{-}1$ and two for $b=S$,
totaling $3S{-}1$. Therefore the sparsity is given by 
$(S{+}1)+S+(3S{-}1)=5S$. The maximum weight is $S$ which comes from the bias term in $r_S(\cdot)$.
\end{proof}

\begin{lemma}[Approximation of $e^{-x}$; Lemma~F.12 of
\cite{suzuki023diffusion}]
\label{lemma: exp}
  Let $0<\varepsilon<1$.  There exists
  $\btheta_{\exp}\in\bTheta_{1,1}(\sL,\sW,\sS,\sB)$ with
  \[
    \sL\lesssim \log^2(\varepsilon^{-1}),
    \qquad
    \sW\lesssim \log(\varepsilon^{-1}),
    \qquad
    \sS\lesssim \log^2(\varepsilon^{-1}),
    \qquad
    \sB\lesssim
    \exp\left(C\log^2(\varepsilon)\right),
  \]
  such that
  \[
    \left|
      \sN_\rho(x'\mid\btheta_{\exp})-e^{-x}
    \right|
    \le
    \varepsilon+|x'-x|
  \]
  for all $x,x'>0$.
\end{lemma}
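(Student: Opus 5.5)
This is Lemma~F.12 of \cite{suzuki023diffusion}, so the plan is to cite it and fill in only what the citation leaves implicit. The one addition is the stability term $|x'-x|$. I would construct $\btheta_{\exp}$ so that its realisation $\sN_\rho(\cdot\mid\btheta_{\exp})$ is an $\varepsilon$-uniform approximation of $x\mapsto e^{-x}$ on $[0,\infty)$ and is also $1$-Lipschitz there. The claimed inequality then follows from the triangle inequality:
\[
|\sN_\rho(x')-e^{-x}|\le|\sN_\rho(x')-\sN_\rho(x)|+|\sN_\rho(x)-e^{-x}|\le|x'-x|+\varepsilon .
\]

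First I truncate the domain. Set $x_{\max}=\log(2/\varepsilon)$, so that $e^{-x}\le\varepsilon/2$ for $x\ge x_{\max}$. The clipping map $x\mapsto\min(\rho(x),x_{\max})=\rho(x)-\rho(x-x_{\max})$ is exactly realised by a depth-one ReLU layer, is $1$-Lipschitz, and sends $(0,\infty)$ onto $[0,x_{\max}]$.

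On $[0,x_{\max}]$ I approximate $e^{-x}$ by its Taylor polynomial of degree $m\asymp\log(1/\varepsilon)$ around suitable centres. An alternative is to write $e^{-x}=(e^{-x/M})^{M}$ with $M\asymp x_{\max}$, expand the inner factor by Taylor, and raise to the power by repeated squaring. Each monomial or product is implemented with the multiplication network of \Cref{lem:mult}, using $K\asymp\log(1/\varepsilon)$ bits of accuracy on inputs bounded by $\sA\asymp\log(1/\varepsilon)$. Summing $O(\log(1/\varepsilon))$ such blocks, each of depth and sparsity $O(\log(1/\varepsilon))$, gives
\[
\sL\lesssim\log^2(1/\varepsilon),\qquad \sW\lesssim\log(1/\varepsilon),\qquad \sS\lesssim\log^2(1/\varepsilon).
\]
The weights are controlled by the bound $4^{2r}\sA^{2r}$ from \Cref{lem:mult} with $r\lesssim\log(1/\varepsilon)$, together with Taylor coefficients of size at most $x_{\max}^m$. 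Both are at most $\exp(C\log^2\varepsilon)$, and this is exactly the stated magnitude bound $\sB$.

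The main obstacle is the Lipschitz control in the final step. A polynomial-based ReLU approximant need not be $1$-Lipschitz even when it is uniformly close to a $1$-Lipschitz function. I would handle this by recomposing the approximant with a known exact piecewise-linear $1$-Lipschitz interpolant. Concretely, I would output the piecewise-linear interpolant $\Pi$ of $e^{-x}$ on a grid of mesh $\asymp\sqrt{\varepsilon}$ over $[0,x_{\max}]$. Interpolation error for a function with second derivative bounded by $1$ is $\le\varepsilon/2$ at this mesh, and every slope of $\Pi$ lies in $[-1,0]$ because it is a chord slope of $e^{-x}$. That interpolant has too many breakpoints, namely $\asymp\varepsilon^{-1/2}\log(1/\varepsilon)$, so to respect the polylog budget I would instead realise $\Pi$ on dyadic scales in the manner of Suzuki's construction. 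If that fails, I would fall back to accepting the Lipschitz constant $1+o(1)$ produced by Suzuki's construction, which on inspection is precisely what F.12 asserts, and simply cite it, absorbing constants into $\lesssim$.
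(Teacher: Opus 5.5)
Citing F.12 is all the paper does, but the argument you add for the $|x'-x|$ term has a genuine gap, exactly at the step you flag as the main obstacle. Routing the triangle inequality through $\sN_\rho(x)$ obliges you to show that the realised network is $1$-Lipschitz on $(0,\infty)$, and you never do. The chord interpolant with mesh $\asymp\sqrt{\varepsilon}$ breaks the sparsity budget, as you note yourself. The dyadic realisation is left unspecified. The fallback does not give the statement either: a network with Lipschitz constant $1+o(1)$ only yields $\varepsilon+(1+o(1))|x'-x|$. The lemma's inequality is exact, with coefficient $1$ on $|x'-x|$ and no $\lesssim$ into which an extra factor could be absorbed. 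F.12 asserts that inequality, not a Lipschitz bound on the network.

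The missing idea is to take $e^{-x'}$, not $\sN_\rho(x)$, as the intermediate point:
\[
\bigl|\sN_\rho(x'\mid\btheta_{\exp})-e^{-x}\bigr|
\le
\bigl|\sN_\rho(x'\mid\btheta_{\exp})-e^{-x'}\bigr|
+\bigl|e^{-x'}-e^{-x}\bigr|
\le
\varepsilon+|x'-x|.
\]
The last step uses only $|\mathrm{d}e^{-u}/\mathrm{d}u|=e^{-u}\le1$ for $u\ge0$. So all you need is $\sup_{x'>0}|\sN_\rho(x'\mid\btheta_{\exp})-e^{-x'}|\le\varepsilon$, and your truncation already provides it. Take $\varepsilon/2$-accuracy on $[0,x_{\max}]$ with $x_{\max}=\log(2/\varepsilon)$. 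For every $x'>x_{\max}$ the clipped network returns its value at $x_{\max}$, and since $e^{-x_{\max}},e^{-x'}\in[0,\varepsilon/2]$ the error there is at most $\varepsilon$. This is the same mechanism behind the $|x'-x|/\varepsilon^2$ term in \Cref{lemma: reciprocal}, and your Lipschitz paragraph can simply be dropped.

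Separately, if you rebuild the approximant instead of citing it, your parameter count does not close as written. The $r$-input product network of \Cref{lem:mult} with $r\asymp\log(1/\varepsilon)$ has width of order $r$ and sparsity of order $r\bigl(K+2r\log_2(8\sA)\bigr)\gtrsim\log^2(1/\varepsilon)$, not $O(\log(1/\varepsilon))$. Building the $\asymp\log(1/\varepsilon)$ monomials separately therefore costs width $\gtrsim\log^2(1/\varepsilon)$ and sparsity $\gtrsim\log^3(1/\varepsilon)$. Computing the Taylor terms incrementally does fit the budget: set $y_k=y_{k-1}\cdot(x/k)$ and use one binary multiplier per degree on range $\sA\asymp1/\varepsilon$. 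The factorial weights keep the accumulated multiplication error of each $y_k$ below $2^{-K}e^{x_{\max}}$.
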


\begin{lemma}[Approximation of the reciprocal; Lemma~F.7 of
\cite{suzuki023diffusion}]
\label{lemma: reciprocal}
  Let $0<\varepsilon<1$.  There exists
  $\btheta_{\mathrm{rec}}\in\bTheta_{1,1}(\sL,\sW,\sS,\sB)$ with
  \[
    \sL\lesssim \log^2(\epsilon^{-1}),
    \qquad
    \sW\lesssim \log^3(\varepsilon^{-1}),
    \qquad
    \sS\lesssim \log^4(\varepsilon^{-1}),
    \qquad
    \sB\lesssim \varepsilon^{-2},
  \]
  such that
  \[
    \left|
      \sN_\rho(x'\mid\btheta_{\mathrm{rec}})
      -
      \frac1x
    \right|
    \le
    \varepsilon+\frac{|x'-x|}{\varepsilon^2}
  \]
  for all $x\in[\varepsilon,\varepsilon^{-1}]$ and
  $x'\in\bR$.
\end{lemma}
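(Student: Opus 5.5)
The plan is to reduce to a bounded, well-conditioned approximation problem by clipping. The error then splits into a Lipschitz term, which produces $|x'-x|/\varepsilon^2$, and a uniform approximation term on $[\varepsilon,\varepsilon^{-1}]$, which produces $\varepsilon$.

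\emph{Clipping.} The first layer computes
\[
  \bar x=\varepsilon+\rho(x'-\varepsilon)-\rho(x'-\varepsilon^{-1}).
\]
This is the projection of $x'$ onto $[\varepsilon,\varepsilon^{-1}]$. It is exact, $1$-Lipschitz, and has weights of size at most $\varepsilon^{-1}$. For $x\in[\varepsilon,\varepsilon^{-1}]$ we have $|\bar x-x|\le|x'-x|$. Since $1/u$ is $\varepsilon^{-2}$-Lipschitz on $[\varepsilon,\infty)$,
\[
  |1/\bar x-1/x|\le|x'-x|/\varepsilon^2 .
\]
It therefore suffices to build a network $\phi$ with $\sup_{u\in[\varepsilon,\varepsilon^{-1}]}|\phi(u)-1/u|\le\varepsilon$.

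\emph{Uniform approximation on $[\varepsilon,\varepsilon^{-1}]$.} I would cover the interval by the $J\lesssim\log(\varepsilon^{-1})$ dyadic blocks $I_j=[2^j,2^{j+1}]$ with $2^j\in[\varepsilon/2,\varepsilon^{-1}]$. On $I_j$, write $u=2^j y$ with $y\in[1,2]$, so that
\[
  \frac1u=2^{-j-1}\sum_{m\ge0}(1-y/2)^m ,
\]
a geometric series with ratio at most $1/2$. Truncating at $M\asymp\log(\varepsilon^{-1})$ terms gives error below $\varepsilon^{3}$, which stays below $\varepsilon$ even after multiplication by the largest prefactor $2^{-j}\le 2/\varepsilon$.

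The powers $(1-y/2)^m$, $m\le M$, are produced by a chain of the binary multipliers of \Cref{lem:mult}. Each multiplier has accuracy $2^{-K}$ with $K\asymp\log(\varepsilon^{-1})$ and depth $\lesssim K$, and the inputs stay in $[-1,1]$, so errors do not blow up. To keep depth at $\log^2$, I would compute all powers by repeated squaring combined with parallel products. Then $L\lesssim \log M\cdot K\lesssim\log^2(\varepsilon^{-1})$, and the width is $\lesssim M$ per block.

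The blocks are glued with a ReLU partition of unity $\{\chi_j\}$: trapezoids subordinate to slightly enlarged blocks, with $\sum_j\chi_j\equiv1$ on $[\varepsilon,\varepsilon^{-1}]$. Each $\chi_j$ is exact with two hidden layers. The products $\chi_j\cdot\phi_j$ are taken by one further multiplier of \Cref{lem:mult}, with the range bounded by $\varepsilon^{-1}$. Finally the $J$ pieces are summed.

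\emph{Counting.} There are $J\lesssim\log$ blocks. Each block uses $M\lesssim\log$ multipliers of sparsity $\lesssim\log$, on top of the shared scaling by $2^{-j}$. Running the blocks in parallel gives width $\lesssim JM\cdot O(1)$, or $\log^3$ once the multiplier width and the parallel squaring tree are accounted for. The sparsity is $\lesssim JMK\lesssim\log^3$, which is within the claimed $\log^4$. The weights are bounded by $\max(\varepsilon^{-1},2^{-j},\text{multiplier weights}\lesssim(\varepsilon^{-1})^2)$, which gives $\sB\lesssim\varepsilon^{-2}$.

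The main obstacle I expect is the error bookkeeping through the multiplier chain and the gluing step. Multiplier errors on the unit-range powers are amplified by the prefactor $2^{-j-1}\le\varepsilon^{-1}$ and by the final product with $\chi_j$. Hence $K$ must be taken of order $3\log_2(\varepsilon^{-1})$ rather than $\log_2(\varepsilon^{-1})$. I would also verify that overlapping blocks, where two $\chi_j$ are nonzero, still produce a convex combination of accurate approximants, so the uniform error remains at most $\varepsilon$.
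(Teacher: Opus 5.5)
The paper gives no proof of this lemma; it imports it from Lemma~F.7 of \cite{suzuki023diffusion}. Your self-contained construction is therefore a genuine alternative, and its first step is exactly right. Projecting $x'$ onto $[\varepsilon,\varepsilon^{-1}]$ and using that $1/u$ is $\varepsilon^{-2}$-Lipschitz there produces the $|x'-x|/\varepsilon^2$ term. It also reduces the claim to $\sup_{u\in[\varepsilon,\varepsilon^{-1}]}|\phi(u)-1/u|\le\varepsilon$. The dyadic blocks, the truncated geometric series and the complexity count are fine as well; a sequential Horner chain of $M$ multipliers already has depth $\lesssim MK\lesssim\log^2(\varepsilon^{-1})$.

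\textbf{The gap is in the gluing step.} Each block network $\phi_j$ is evaluated at \emph{every} $u\in[\varepsilon,\varepsilon^{-1}]$, not only on $I_j$. For $u$ well above $I_j$, the quantity $1-y/2=1-u/2^{j+1}$ can be of order $-\varepsilon^{-2}$. This has three consequences:
\begin{itemize}
\item the inputs of the multiplier chain do not stay in $[-1,1]$;
\item \Cref{lem:mult} gives no control on $\phi_j(u)$, and even the exact truncated series is then astronomically large;
\item the product $\chi_j\cdot\phi_j$ is \emph{not} taken with range $\varepsilon^{-1}$, as you claim.
\end{itemize}
A ReLU multiplier fed $(0,z)$ with $z$ outside its design box carries no guarantee of returning approximately $0$. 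Your ``convex combination'' conclusion needs the estimate $\bigl|\sum_j\widehat{\chi_j\phi_j}-1/u\bigr|\le\max_{j:\chi_j(u)>0}|\phi_j(u)-1/u|+J2^{-K}$. That estimate is unjustified for the summands with $\chi_j(u)=0$.

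\textbf{The repair is cheap.} Before evaluating $\phi_j$, clip $u$ exactly onto $\operatorname{supp}\chi_j$ using $\mathsf P_{[a,b]}$ from \eqref{eq:proj}; this costs one extra layer and $O(1)$ parameters per block. Then:
\begin{itemize}
\item every multiplier sees only in-range inputs;
\item each off-block summand is within $2^{-K}$ of $0$;
\item on overlaps you do get a convex combination of accurate approximants;
\item the total error is at most $\varepsilon$ once $K\gtrsim\log_2(JM/\varepsilon^{2})$.
\end{itemize}
The enlarged supports must also keep the ratio away from $1$. For instance, hats on $[2^{j-1},2^{j+1}]$ with $y=u/2^j$ give $1-y/2\in[0,3/4]$, which is harmless.

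\textbf{A smaller point on $\sB$.} As stated, \Cref{lem:mult} only gives $\sB\lesssim A^4$ for a binary product at range $A$, i.e.\ $\varepsilon^{-4}$ at $A\asymp\varepsilon^{-1}$. You can fix this in either of two ways:
\begin{itemize}
\item use the sharper bound $16A^2$ from its proof; or
\item multiply $\chi_j$ against the normalized series $\sum_{m<M}(1-y/2)^m\in[1,4]$ and put the factor $2^{-j-1}\le\varepsilon^{-1}$ into the final linear layer. Then all multipliers have range $O(1)$ and $\sB\lesssim\varepsilon^{-1}$.
\end{itemize}
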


\subsection{Stability and covering numbers}
\label{app:nn_covering}

In this Section, we derive metric entropy bounds for the class of weight sharing networks as in \cref{def:wsnn}. Similar results exist for fully connected networks (see e.g.,~\cite[Lemma 19]{kohlerlanger}) and sparse networks (see e.g.,~\cite[Lemma 5]{schmidt2020nonparametric}). An entropy bound for a different variant of weight sharing networks can also be found in (see e.g.~\cite[Lemma 27]{kwon2025nonparametric}).

\begin{lemma}[Lipschitz realization]\label{lem:lipschitz}
Fix an architecture with $\sL$ hidden layers and
$\max_{0\le \ell\le L+1} n_\ell\le\sW$, and let $\sA\ge1$. For any two parameter vectors $\btheta,\tilde\btheta$ with $|\btheta|_\infty,|\tilde\btheta|_\infty\le\sB$ and any
$\x\in[-\sA,\sA]^{n_0}$,
\[
  \left\|
    \sN_\rho(\x\mid\btheta)
    -
    \sN_\rho(\x\mid\tilde\btheta)
  \right\|_\infty
  \le
  \|\btheta-\tilde\btheta\|_\infty\,
  \sA\Lambda,
  \quad \mbox{where} \quad 
  \Lambda
  :=
  (\sL+1)(\sB\vee1)^\sL(\sW+1)^{\sL+1}.
\]
\end{lemma}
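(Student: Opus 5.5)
The plan is a layer-by-layer telescoping argument, following the standard perturbation bound for feedforward networks (as in \citet{schmidt2020nonparametric} for sparse networks). Write $\x^{(0)}=\x$ and $\x^{(\ell)}=\rho(\W_\ell\x^{(\ell-1)}+\bb_\ell)$ for $\ell\le\sL$. Define $\tilde\x^{(\ell)}$ in the same way with $(\tilde\W_\ell,\tilde\bb_\ell)$. Set $\delta\defn\|\btheta-\tilde\btheta\|_\infty$.

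First, control the size of the hidden activations. Since every row of $\W_\ell$ has at most $\sW$ entries bounded by $\sB$, and $\rho$ is $1$-Lipschitz with $\rho(0)=0$, we get
\[
\|\x^{(\ell)}\|_\infty\le \sW\sB\|\x^{(\ell-1)}\|_\infty+\sB\le(\sB\vee1)(\sW+1)\,(\|\x^{(\ell-1)}\|_\infty\vee1).
\]
By induction, $\|\x^{(\ell)}\|_\infty\le \sA\,(\sB\vee1)^\ell(\sW+1)^\ell$ for $\ell=0,\dots,\sL$, using $\sA\ge1$. The same bound holds for $\tilde\x^{(\ell)}$.

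Second, propagate the error. Using the $1$-Lipschitz property of $\rho$,
\[
\|\x^{(\ell)}-\tilde\x^{(\ell)}\|_\infty\le\|\W_\ell(\x^{(\ell-1)}-\tilde\x^{(\ell-1)})\|_\infty+\|(\W_\ell-\tilde\W_\ell)\tilde\x^{(\ell-1)}\|_\infty+\|\bb_\ell-\tilde\bb_\ell\|_\infty .
\]
The first term is at most $\sW\sB\,e_{\ell-1}$, where $e_\ell\defn\|\x^{(\ell)}-\tilde\x^{(\ell)}\|_\infty$. The second and third together are at most $\delta(\sW\|\tilde\x^{(\ell-1)}\|_\infty+1)\le\delta\,\sA(\sB\vee1)^{\ell-1}(\sW+1)^{\ell}$. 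This gives the recursion
\[
e_\ell\le(\sB\vee1)(\sW+1)\,e_{\ell-1}+\delta\sA(\sB\vee1)^{\ell-1}(\sW+1)^{\ell},\qquad e_0=0.
\]
Unrolling it, each of the $\ell$ contributions is bounded by $\delta\sA(\sB\vee1)^{\ell-1}(\sW+1)^{\ell}$, so $e_\ell\le \ell\,\delta\sA(\sB\vee1)^{\ell-1}(\sW+1)^\ell$.

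Applying the same step to the affine output layer $\mathbf A_{\sL+1}$, which has no ReLU, gives
\[
\|\sN_\rho(\x\mid\btheta)-\sN_\rho(\x\mid\tilde\btheta)\|_\infty\le(\sL+1)\,\delta\sA(\sB\vee1)^{\sL}(\sW+1)^{\sL+1},
\]
which is the claim.

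The only delicate point is bookkeeping the exponents so that they match $\Lambda$ exactly. I would absorb the "$+1$" from the biases into the factor $(\sW+1)$ and use $\sA\ge1$ to replace $\|\cdot\|\vee1$ by the stated product. No earlier result is needed beyond the definition of sparse ReLU networks (\Cref{def:sparsenet}).
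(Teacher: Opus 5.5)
Your proposal is correct and follows essentially the same route as the paper: the same forward bound $\|\x^{(\ell)}\|_\infty\le\sA(\sB\vee1)^\ell(\sW+1)^\ell$, followed by propagating single-layer parameter perturbations evaluated at the $\tilde\btheta$-activations. The only difference is cosmetic: the paper writes this as a telescoping sum over hybrid networks (layers $1,\dots,k$ from $\tilde\btheta$, the rest from $\btheta$) with outer Lipschitz factor $(\sW\sB)^{\sL-1-k}$, whereas you unroll the equivalent recursion $e_\ell\le(\sB\vee1)(\sW+1)e_{\ell-1}+\delta\sA(\sB\vee1)^{\ell-1}(\sW+1)^{\ell}$, and both arrive at exactly $\sA\Lambda$.
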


\begin{proof}
Write
$\sN_\rho(\x\mid\btheta)
=\W_{\sL+1}\,\phi_\sL\circ\cdots\circ\phi_1(\x)+\bb_{\sL+1}$
with $\phi_\ell(\cdot)=\sigma(\W_\ell\,\cdot+\bb_\ell)$ for
$\ell=1,\ldots,\sL$, where $\btheta$ collects all the weight matrices and biases in vectorized form.  Define the forward map associated with the parameter vector $\btheta$ as 
$\mathcal{A}_k:=\phi_k\circ\cdots\circ\phi_1$ with
$\mathcal{A}_0(\x)=\x$. Consider a different parameter vector $\tilde\btheta$ and let the above maps corresponding to $\tilde\btheta$ be defined as $\tilde\phi_\ell$ and 
$\tilde{\mathcal{A}}_k$. Define 
$\epsilon:=\|\btheta-\tilde\btheta\|_\infty$. We will complete the proof in three steps.

\smallskip\noindent
\emph{Step~1 (Forward activation bound).}
We prove by induction that
$\|\mathcal{A}_k(\x)\|_\infty \le \sA\,(\sB\vee 1)^{k}(\sW+1)^{k}$
for all $k=0,\ldots,\sL$ and $\x\in[-\sA,\sA]^{n_0}$.

\smallskip\noindent
\emph{Base case ($k=0$).}
$\|\mathcal{A}_0(\x)\|_\infty=\|\x\|_\infty\le \sA
=\sA\,(\sB\vee 1)^0(\sW+1)^0$.

\smallskip\noindent
\emph{Inductive step.}
Suppose $\|\mathcal{A}_{k-1}(\x)\|_\infty\le
\sA\,(\sB\vee 1)^{k-1}(\sW+1)^{k-1}$.  Since $\sigma$ satisfies
$|\sigma(t)|\le|t|$ for all~$t$,
\begin{align*}
  \|\mathcal{A}_k(\x)\|_\infty
  &= \|\sigma(\W_k\mathcal{A}_{k-1}(\x)+\bb_k)\|_\infty
  \le \|\W_k\mathcal{A}_{k-1}(\x)\|_\infty + \|\bb_k\|_\infty.
\end{align*}
For any matrix $M\in\R^{m\times n}$ and vector $z\in\R^n$,
$\|Mz\|_\infty\le n\|M\|_\infty\|z\|_\infty$ (each row dot
product is a sum of $n$ terms).
Since $\W_k\in\R^{n_k\times n_{k-1}}$ with $n_{k-1}\le\sW$,
\[
  \|\W_k\mathcal{A}_{k-1}(\x)\|_\infty
  \;\le\; n_{k-1}\|\W_k\|_\infty\|\mathcal{A}_{k-1}(\x)\|_\infty
  \;\le\; \sW\sB\cdot\sA\,(\sB\vee 1)^{k-1}(\sW+1)^{k-1}.
\]
Since $\sA\ge1$ and $(\sB\vee 1)^{k-1}(\sW+1)^{k-1}\ge1$, the
bias satisfies
$\|\bb_k\|_\infty\le\sB\le
\sA\,\sB\,(\sB\vee 1)^{k-1}(\sW+1)^{k-1}$.  Adding the two and
using $\sW\sB+\sB=\sB(\sW+1)\le(\sB\vee 1)(\sW+1)$,
\[
  \|\mathcal{A}_k(\x)\|_\infty
  \;\le\; \sA\,(\sB\vee 1)(\sW+1)\cdot(\sB\vee 1)^{k-1}(\sW+1)^{k-1}
  \;=\; \sA\,(\sB\vee 1)^k(\sW+1)^k.
\]

\smallskip\noindent
\emph{Step~2 (Layer Lipschitz constant).}
Since $\sigma$ is $1$-Lipschitz componentwise, each hidden layer
satisfies, for all $z,z'$,
\[
  \|\phi_\ell(z)-\phi_\ell(z')\|_\infty
  \;\le\;\|\W_\ell(z-z')\|_\infty
  \;\le\; n_{\ell-1}\|\W_\ell\|_\infty\|z-z'\|_\infty
  \;\le\;\sW\sB\,\|z-z'\|_\infty,
\]
so any composition of $j$ consecutive hidden layers
$\phi_{\ell+j}\circ\cdots\circ\phi_{\ell+1}$ is Lipschitz with
constant $(\sW\sB)^{j}$.

\smallskip\noindent
\emph{Step~3 (Telescoping).}
Define hybrid maps $g^{(0)}:=\mathcal{A}_{\sL}$ (all hidden
layers use~$\btheta$), and for $k=1,\dots,\sL$,
$g^{(k)}:=\phi_{\sL}\circ\cdots\circ\phi_{k+1}\circ
\tilde\phi_k\circ\cdots\circ\tilde\phi_1$
(layers $1,\dots,k$ use~$\tilde\btheta$ while layers
$k{+}1,\dots,\sL$ use~$\btheta$).
Then $g^{(\sL)}=\tilde{\mathcal{A}}_{\sL}$ and
\[
  \sN_\rho(\x\mid\btheta)-\sN_\rho(\x\mid\tilde\btheta)
  \;=\;
  \W_{\sL+1}\, g^{(0)}(\x)+\bb_{\sL+1}
  -\tilde\W_{\sL+1}\, g^{(\sL)}(\x)-\tilde\bb_{\sL+1}.
\]
Adding and subtracting $\W_{\sL+1}\,g^{(\sL)}(\x)$:
\begin{align}
  &\;\;\;\;\;\|\sN_\rho(\x\mid\btheta)-\sN_\rho(\x\mid\tilde\btheta)\|_{\infty}\notag \\ 
  &\le\;
  \|\W_{\sL+1}\bigl(g^{(0)}(\x)-g^{(\sL)}(\x)\bigr)\|_{\infty}
  \;+\;
  \|(\W_{\sL+1}-\tilde\W_{\sL+1})g^{(\sL)}(\x)\|_{\infty}
  \;+\;
  \|\bb_{\sL+1}-\tilde\bb_{\sL+1}\|_{\infty} \notag\\
  &\le\;
  \sW\sB\,\|g^{(0)}(\x)-g^{(\sL)}(\x)\|_\infty
  \;+\;
  \epsilon\bigl(\sW\|g^{(\sL)}(\x)\|_\infty+1\bigr).
  \label{eq:outer}
\end{align}
For the first term, telescope:
$g^{(0)}(\x)-g^{(\sL)}(\x)
=\sum_{k=0}^{\sL-1}\bigl[g^{(k)}(\x)-g^{(k+1)}(\x)\bigr]$.
The maps $g^{(k)}$ and $g^{(k+1)}$ agree on layers
$1,\dots,k$ (both use~$\tilde\btheta$) and on layers
$k{+}2,\dots,\sL$ (both use~$\btheta$). They differ only at
layer $k{+}1$. Using Step~2 for the
$\sL{-}k{-}1$ common outer layers, we get:
\begin{equation}\label{eq:inner}
  \|g^{(k)}(\x)-g^{(k+1)}(\x)\|_\infty
  \;\le\; (\sW\sB)^{\sL-1-k}\,
    \|\phi_{k+1}\circ \tilde{\mathcal{A}}_k(\x)-\tilde\phi_{k+1}\circ \tilde{\mathcal{A}}_k(\x)\|_\infty.
\end{equation}
Now, for any $\x\in [-A,A]^{n_0}$ and using step 1, we get:
\begin{align*}
  &\;\;\;\;\;\|\phi_{k+1}\circ \tilde{\mathcal{A}}_k(\x)-\tilde\phi_{k+1}\circ \tilde{\mathcal{A}}_k(\x)\|_\infty \\ 
  &\le \|(\W_{k+1}-\tilde\W_{k+1})\tilde{\mathcal{A}}_k(\x)
    +(\bb_{k+1}-\tilde\bb_{k+1})\|_\infty \le\; \epsilon\,(\sW A (B\vee 1)^k (W+1)^k + 1).
\end{align*}

Substituting the above observation into~\eqref{eq:inner}, summing over
$k=0,\dots,\sL-1$, multiplying by~$\sW\sB$
from~\eqref{eq:outer}, and adding the output-layer contribution
(where $\|g^{(\sL)}(\x)\|_\infty
\le\sA\,(\sB\vee 1)^{\sL}(\sW+1)^{\sL}$ by Step~1), we get:
\begin{align*}
  &\;\;\;\;\;\|\sN_\rho(\x\mid\btheta)
    -\sN_\rho(\x\mid\tilde\btheta)\|_{\infty} \\
  &\quad\le\;
  \epsilon
  \left[
    \sum_{k=0}^{\sL-1}
    (\sW\sB)^{\sL-k}
    \bigl(\sW\sA\,(\sB\vee 1)^k(\sW+1)^k+1\bigr)
    \;+\;
    \sW\sA\,(\sB\vee 1)^{\sL}(\sW+1)^{\sL}+1
  \right] \\
  &\quad\le\;
  \epsilon\,\sA
  \sum_{k=0}^{\sL}
  (\sW\sB)^{\sL-k}(\sB\vee 1)^k(\sW+1)^{k+1},
\end{align*}
where the second inequality uses $A(B\vee 1)^k (W+1)^k\ge 1$ for $0\le k\le L$. Since $\sW\sB\le(\sB\vee 1)(\sW+1)$, each summand is bounded by
$(\sB\vee 1)^{\sL}(\sW+1)^{\sL+1}$. Summing $\sL+1$ such terms gives
\[
  \bigl|\sN_\rho(\x\mid\btheta)
    -\sN_\rho(\x\mid\tilde\btheta)\bigr|
  \;\le\;
  \epsilon\cdot\sA\,(\sL{+}1)\,(\sB\vee 1)^{\sL}(\sW+1)^{\sL+1}
  \;=\;
  \epsilon\,\sA\,\Lambda.
  \qedhere
\]
\end{proof}

\begin{lemma}[Covering number of weight-sharing networks] \label{lem:covering}
  Let $\sA, \sB, \sW, \sL\ge1$ and $0<\delta<1$. For
  the class $\bTheta^{\mathrm{ws}} \equiv \bTheta^{\mathrm{ws}}(\sL,\sW,\sB,\Sws,\omega)$ of
  \Cref{def:wsnn} with Lipschitz constant $\Lw\ge 1$ and inputs in $[-\sA,\sA]^{n_0}$, the following entropy bound holds: 
  \begin{equation}\label{eq:covering-ws}
    \log\mathcal{N}\!\bigl(\delta,\;\bTheta^{\mathrm{ws}},\;
      \|\cdot\|_{L^\infty([-\sA,\sA]^{n_0})}\bigr)
    \;\le\;
    \Sws\Bigl[(\sL{+}1)\log\bigl((\sW{+}1)(\sB{\vee}1)\bigr)
    +\log\bigl(4\sA\,\sB\,\Lw\,(\sL{+}1)/\delta\bigr)\Bigr].
  \end{equation}
\end{lemma}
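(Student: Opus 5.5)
We will prove the covering bound in \eqref{eq:covering-ws} by transporting a grid on the trainable parameter box $[-\sB,\sB]^{\Sws}$ through the sharing map $\omega$ and then through the realization map $\btheta\mapsto\sN_\rho(\cdot\mid\btheta)$. Composing these two maps reduces the problem to covering a cube of dimension $\Sws$ in $\|\cdot\|_\infty$, and only the effective number of parameters enters the exponent.

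The first step is to establish the Lipschitz property of the composed map. Take $\bv,\bv'\in[-\sB,\sB]^{\Sws}$. The images $\omega(\bv)$ and $\omega(\bv')$ have sup-norm at most $\sB$ by \Cref{def:wsnn}, so \Cref{lem:lipschitz} applies to them. Combining it with the Lipschitz constant of $\omega$ gives
\[
  \sup_{\x\in[-\sA,\sA]^{n_0}}
  \bigl\|\sN_\rho(\x\mid\omega(\bv))-\sN_\rho(\x\mid\omega(\bv'))\bigr\|_\infty
  \le \Lw\,\sA\,\Lambda\,\|\bv-\bv'\|_\infty,
  \qquad
  \Lambda=(\sL+1)(\sB\vee1)^{\sL}(\sW+1)^{\sL+1}.
\]

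The second step is to build the grid. Set $\eta:=\delta/(\Lw\sA\Lambda)$ and place a grid of spacing $2\eta$ on $[-\sB,\sB]$ in each coordinate. This needs at most $\lceil\sB/\eta\rceil\le 2\sB/\eta$ points per coordinate, using $\eta<1\le\sB$; if $\sB<\eta$, a single point suffices. Every $\bv$ then lies within $\eta$ of some grid point in sup-norm, so by the first step the realized networks at the grid points form a $\delta$-cover of $\bTheta^{\mathrm{ws}}$ in $L^\infty([-\sA,\sA]^{n_0})$. The log-cardinality of this cover is
\[
  \Sws\log\frac{2\sB\Lw\sA(\sL+1)(\sB\vee1)^{\sL}(\sW+1)^{\sL+1}}{\delta}.
\]

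The last step is bookkeeping to match the stated form. Split the logarithm into $\log\bigl((\sB\vee1)^{\sL}(\sW+1)^{\sL+1}\bigr)$ and $\log\bigl(2\sA\sB\Lw(\sL+1)/\delta\bigr)$. The first piece is at most $(\sL+1)\log\bigl((\sW+1)(\sB\vee1)\bigr)$. The second piece is at most $\log\bigl(4\sA\sB\Lw(\sL+1)/\delta\bigr)$, and the slack factor $2$ covers the ceiling rounding.

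The only delicate point is confirming that the grid points stay inside the admissible box, so that \Cref{lem:lipschitz} applies with magnitude $\sB$. To ensure this, we take grid points at the cell midpoints $-\sB+(2k-1)\eta$, clipped to lie in $[-\sB,\sB]$. Beyond that, the argument is routine. If a proper cover is needed, meaning centers inside the range-restricted class, we replace each center with a class member in its ball at the cost of doubling the radius; this is exactly what the proof of \Cref{thm:score_est} already does.
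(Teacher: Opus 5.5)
Your proposal is correct and follows the paper's proof essentially step for step: the composed Lipschitz bound from \Cref{lem:lipschitz} together with the Lipschitz constant of $\omega$, a uniform grid on $[-\sB,\sB]^{\Sws}$ whose spacing is tied to $\delta/(\sA\Lambda\Lw)$, and a per-coordinate count absorbed using $\sB/\eta\ge1$. The only cosmetic difference is that you use cell midpoints clipped into the box, whereas the paper rounds to $[-\sB,\sB]\cap\eta\mathbb{Z}$; this gives you the slightly better constant $2$ in place of $4$ inside the logarithm.
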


\begin{proof}
By \Cref{def:wsnn}, every function in the class $\bTheta^{\mathrm{ws}}$ is
$\sN_\rho(\cdot\mid\omega(\bv))$ for some
$\bv\in[-\sB,\sB]^{\Sws}$, and every $\omega(\bv)$ satisfies
$\|\omega(\bv)\|_\infty\le\sB$. The proof now proceeds by packing the
$\Sws$-dimensional domain of $\omega(\cdot)$ and propagating that bound to the function class $\bTheta^{\mathrm{ws}}$ using \cref{lem:lipschitz}. We break it down into three steps.

\smallskip\noindent
\emph{Step~1 (Composed Lipschitz bound).}\;
Let $\bv,\tilde\bv\in[-\sB,\sB]^{\Sws}$.  Since $\omega$ is
$\Lw$-Lipschitz,
$\|\omega(\bv)-\omega(\tilde\bv)\|_\infty
\le\Lw\|\bv-\tilde\bv\|_\infty$.  Applying \Cref{lem:lipschitz}
with $\btheta=\omega(\bv)$ and
$\tilde\btheta=\omega(\tilde\bv)$, where
$\Lambda=(\sL{+}1)(\sB\vee1)^{\sL}(\sW{+}1)^{\sL+1}$ is the
constant of that lemma:
\begin{equation}\label{eq:composed-ws}
  \bigl\|\sN_\rho(\cdot\mid\omega(\bv))
    -\sN_\rho(\cdot\mid\omega(\tilde\bv))
    \bigr\|_{L^\infty([-\sA,\sA]^{n_0})}
  \;\le\;
  \sA\,\Lambda\,\Lw\,\|\bv-\tilde\bv\|_\infty.
\end{equation}

\smallskip\noindent
\emph{Step~2 (Grid construction).}\;
Define the grid spacing
\[
  \eta\;:=\;\frac{\delta}{\sA\,\Lambda\,\Lw},
\]
and the uniform grid
$G:=\bigl([-\sB,\sB]\cap\eta\,\mathbb{Z}\bigr)^{\Sws}$.  For any
$\bv\in[-\sB,\sB]^{\Sws}$, by rounding each coordinate to the nearest grid point, we get a $\tilde\bv\in G$ with
$\|\bv-\tilde\bv\|_\infty\le\eta$.  By~\eqref{eq:composed-ws},
\[
  \bigl\|\sN_\rho(\cdot\mid\omega(\bv))
    -\sN_\rho(\cdot\mid\omega(\tilde\bv))\bigr\|_\infty
  \;\le\;
  \sA\,\Lambda\,\Lw\cdot\eta
  \;=\;\delta.
\]
Therefore $\mathcal{C}:=\{\sN_\rho(\cdot\mid\omega(\tilde\bv)):
\tilde\bv\in G\}$ is a $\delta$-covering of
$\bTheta^{\mathrm{ws}}$ in the
$L^\infty([-\sA,\sA]^{n_0})$-norm.

\smallskip\noindent
\emph{Step~3 (Cardinality bound).}\;
In each coordinate, the grid has
$\bigl|[-\sB,\sB]\cap\eta\,\mathbb{Z}\bigr|
=2\lfloor\sB/\eta\rfloor+1\le 2\sB/\eta+1$ points.
Since $\sA\ge1$, $\sB\ge1$, $\Lw\ge1$, $\Lambda\ge1$, and
$\delta<1$,
\[
  \frac{2\sB}{\eta}
  \;=\;\frac{2\sA\,\sB\,\Lw\,\Lambda}{\delta}
  \;\ge\;2\;\ge\;1,
\]
so $2\sB/\eta+1\le2\cdot2\sB/\eta=4\sB/\eta$.  Hence
\[
  \log\mathcal{N}\bigl(\delta,\bTheta^{\mathrm{ws}},
    \|\cdot\|_\infty\bigr)
  \;\le\;\log|G|
  \;\le\;
  \Sws\,\log\!\left(
    \frac{4\sA\,\sB\,\Lw\,\Lambda}{\delta}\right).
\]
Substituting
$\Lambda=(\sL{+}1)(\sB\vee1)^{\sL}(\sW{+}1)^{\sL+1}$, we get:
\begin{align*}
  \log\!\left(\frac{4\sA\sB\Lw\Lambda}{\delta}\right)
  &=\log\!\left(\frac{4\sA\sB\Lw(\sL{+}1)}{\delta}\right)
    +\sL\log(\sB\vee1)+(\sL{+}1)\log(\sW{+}1)\\[2pt]
  &\le(\sL{+}1)\log\bigl((\sW{+}1)(\sB\vee1)\bigr)
    +\log\!\left(\frac{4\sA\sB\Lw(\sL{+}1)}{\delta}\right),
\end{align*}
which gives~\eqref{eq:covering-ws}.
\end{proof}

\section{Proof of Theorem \ref{thm:score} --- Score approximation}
\label{app:score_approximation}

\noindent We begin with some notation. For $0\le a<b$, define 
\begin{align}\label{eq:proj}
  \mathsf P_{[a,b]}(u)
  :=
  a+\rho(u-a)-\rho(u-b),
  \qquad
  \mathsf P_B(u)
  :=
  \mathsf P_{[0,B]}(u)
  =
  \rho(u)-\rho(u-B).
\end{align}
These are the exact ReLU projections onto $[a,b]$ and $[0,B]$.
They use one hidden layer of width two and at most seven nonzero
parameters whereas $\mathsf P_B$ uses at most five. If $v\in[a,b]$, then the following bound holds which will be used in the sequel. 
\begin{align}\label{eq:projerr}
  |\mathsf P_{[a,b]}(u)-v|
  \le
  |u-v|.
\end{align}

\subsection{Exact computation of the marginals}
\label{app:exact_elimination}

For the formal construction, fix
$\sm\subseteq[\sD]$ with $|\sm|=k>d$ and write
\[
  \sm^\com=[\sD]\setminus\sm,
  \qquad
  r=|\sm^\com|=\sD-k.
\]
The decomposition~\eqref{eq:Am_exp} can be written as
\begin{equation}\label{eq:marg_split}
  A_\sm(\x_\sm)
  =
  \left(
    \prod_{\substack{\sm'\in\Fd\\ \sm'\subseteq\sm}}
    \psi_{\sm'}(\x_{\sm'})
  \right)
  M_\sm(\x_\sm),
  \qquad
  M_\sm(\x_\sm)
  :=
  \sum_{\z\in[S]^{\sm^\com}}
  \prod_{\substack{\sm'\in\Fd\\ \sm'\not\subseteq\sm}}
  \psi_{\sm'}
  \bigl(
    \x_{\sm'\cap\sm},
    \z_{\sm'\cap\sm^\com}
  \bigr).
\end{equation}
We now write the sum defining $M_\sm$ as a sequence of
one-coordinate sums.

\paragraph{Grouping tables by their free coordinates.}
For a table $\psi_{\sm'}$ appearing in~\eqref{eq:marg_split}, let
\[
  U=\sm'\cap\sm^\com
\]
be the coordinates of that table that are still being summed.
For every
$U\subseteq\sm^\com$ with $1\le |U|\le d$, define
\begin{equation}\label{eq:aggregate}
  \sH_U(\z_U;\x_\sm)
  :=
  \prod_{\substack{\sm'\in\Fd,\,
                   \sm'\cap\sm^\com=U}}
  \psi_{\sm'}
  \bigl(
    \x_{\sm'\setminus U},
    \z_U
  \bigr).
\end{equation}
This product contains
\begin{align}\label{eq:totgp}
  \nu_{|U|}
  :=
  \binom{k}{d-|U|}
\end{align}
tables.  When $|U|\le d-1$, it depends on the fixed coordinates
$\x_\sm$.  When $|U|=d$, it is simply the table
$\psi_U(\z_U)$ and is independent of $\x_\sm$.

Let
\[
  \Lambda
  =
  \left\{
    U\subseteq\sm^\com:
    1\le |U|\le d
  \right\}.
\]
Then
\begin{equation}\label{eq:M_sumprod}
  M_\sm(\x_\sm)
  =
  \sum_{\z\in[S]^{\sm^\com}}
  \prod_{U\in\Lambda}
  \sH_U(\z_U;\x_\sm).
\end{equation}
Each table appearing in~\eqref{eq:marg_split} occurs in exactly one
of the products $\sH_U$.

\paragraph{Successive one-coordinate sums.}
Enumerate $\sm^\com=\{j_1<\cdots<j_r\}$ and set
$V_p=\{j_1,\dots,j_p\}$, with $V_0=\emptyset$.  Assign each set
$U\in\Lambda$ to the step at which its largest coordinate is summed,
\begin{equation*}
  \Lambda_p=\{U\in\Lambda:\max U=j_p\}
  =\{U\in\Lambda: j_p\in U\subseteq V_p\},
  \qquad p=1,\dots,r.
\end{equation*}
Every set $U$ has a unique maximum, so
$\Lambda=\bigsqcup_{p=1}^{r}\Lambda_p$.  At step $p$, define the
input-independent product
\begin{equation}\label{eq:alpha_p}
  \alpha_p(b,\z_{V_{p-1}})
  :=
  \prod_{\substack{U\in\Lambda_p\\ |U|=d}}
    \psi_U\bigl(b,\z_{U\setminus\{j_p\}}\bigr),
  \qquad b\in[S],
\end{equation}
and the input-dependent product
\begin{equation}\label{eq:beta_p}
  \beta_p(b,\z_{V_{p-1}};\x_\sm)
  :=
  \prod_{\substack{U\in\Lambda_p\\ |U|\le d-1}}
    \sH_U\bigl(b,\z_{U\setminus\{j_p\}};\x_\sm\bigr),
  \qquad b\in[S].
\end{equation}
The set $\Lambda_p$ contains $a_p:=\binom{p-1}{d-1}$ 
sets of size $d$ and $b_p:=\sum_{\ell=1}^{d-1}\binom{p-1}{\ell-1}$ sets of size at most $d-1$.
Thus $\alpha_p$ is a product of $a_p$ entries from the
$\psi_U$-tables, whereas $\beta_p$ is a product of $b_p$ quantities
$\sH_U$.  The identities $\sum_{p=1}^{r}a_p=\binom{r}{d}$ and $\sum_{p=1}^{r}b_p=\sum_{\ell=1}^{d-1}\binom{r}{\ell}$
confirm that every set $U$ enters exactly once.  Moreover,
\begin{equation}\label{eq:ab_cover}
  \alpha_p\bigl(z^{j_p},\z_{V_{p-1}}\bigr)\,
  \beta_p\bigl(z^{j_p},\z_{V_{p-1}};\x_\sm\bigr)
  =\prod_{U\in\Lambda_p}\sH_U(\z_U;\x_\sm).
\end{equation}
Define partial sums $h_p:[S]^{V_p}\to\bR$ by $h_r := 1$ and, for
$p=r,\dots,1$,
\begin{equation}\label{eq:elim_rec}
  h_{p-1}(\z_{V_{p-1}};\x_\sm)
  :=
  \sum_{b\in[S]}
  \underbrace{\alpha_p(b,\z_{V_{p-1}})}_{\x\text{-free}}
  \;\cdot\;
  \underbrace{\beta_p(b,\z_{V_{p-1}};\x_\sm)\,
    h_p(\z_{V_{p-1}},b;\x_\sm)}_{\x\text{-dependent}}.
\end{equation}
The following proposition relates the above definition with $M_m(\cdot)$ in \eqref{eq:marg_split}.
\begin{proposition}\label{prop:elim_correct}
  Consider the sequence $h_p(\cdot)$ defined recursively in \eqref{eq:elim_rec}. Then for all $\sm\subseteq [D]$ and $x_\sm\in [S]^{|\sm|}$, we have $h_0(\,;\x_\sm)=M_\sm(\x_\sm)$.
\end{proposition}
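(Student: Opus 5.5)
We prove the claim by downward induction on $p$, with the explicit formula
\[
  h_p(\z_{V_p};\x_\sm)
  =
  \sum_{\z'\in[S]^{\{j_{p+1},\dots,j_r\}}}
  \prod_{q=p+1}^{r}\prod_{U\in\Lambda_q}
  \sH_U\bigl((\z_{V_p},\z')_U;\x_\sm\bigr)
\]
as the induction hypothesis. Here $(\z_{V_p},\z')$ denotes the full assignment on $\sm^\com$ obtained by concatenating the fixed prefix with the summed suffix. For $p=r$ the sum is over the single empty assignment and the product is empty, so the right side equals $1=h_r$. For $p=0$ the sum runs over all of $[S]^{\sm^\com}$, and the product runs over $\bigsqcup_{q=1}^r\Lambda_q=\Lambda$. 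By \eqref{eq:M_sumprod} this is exactly $M_\sm(\x_\sm)$, which is the statement of the proposition.

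For the inductive step, assume the formula for $h_p$ and substitute it into \eqref{eq:elim_rec} at the argument $(\z_{V_{p-1}},b)$. By \eqref{eq:ab_cover} with $z^{j_p}=b$, the factor $\alpha_p\beta_p$ equals $\prod_{U\in\Lambda_p}\sH_U(\z_U;\x_\sm)$. This step relies on $\Lambda_p=\{U\in\Lambda:j_p\in U\subseteq V_p\}$: every $U\in\Lambda_p$ depends only on coordinates in $V_p$, so the factor depends only on $(\z_{V_{p-1}},b)$ and not on the suffix $\z'$. We can therefore move it inside the sum over $\z'\in[S]^{\{j_{p+1},\dots,j_r\}}$. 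Merging the outer sum over $b=z^{j_p}$ with the inner sum gives a single sum over $[S]^{\{j_p,\dots,j_r\}}$ of $\prod_{q=p}^{r}\prod_{U\in\Lambda_q}\sH_U$. This is the formula at level $p-1$.

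The main point to verify is \eqref{eq:ab_cover}. The sets $U\in\Lambda_p$ with $|U|=d$ satisfy $\sm'=U$, so $\sH_U=\psi_U$ and these factors form $\alpha_p$. The sets with $|U|\le d-1$ form $\beta_p$. Together they cover $\Lambda_p$. We also need the partition $\Lambda=\bigsqcup_p\Lambda_p$ (each $U$ has a unique maximum) and that each table in \eqref{eq:marg_split} lies in exactly one $\sH_U$, namely the one with $U=\sm'\cap\sm^\com$, which is nonempty because $\sm'\not\subseteq\sm$. Both are already recorded in the text, so the only remaining work is bookkeeping.
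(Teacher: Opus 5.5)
Your proposal is correct and follows the paper's proof essentially verbatim. Both use downward induction on $p$ with the same explicit sum-product formula for $h_p$, carry out the inductive step through \eqref{eq:ab_cover} and \eqref{eq:elim_rec}, and conclude at $p=0$ from $\Lambda=\bigsqcup_{q}\Lambda_q$ and \eqref{eq:M_sumprod}. Your remark that $\prod_{U\in\Lambda_p}\sH_U$ depends only on $\z_{V_p}$, and so can be moved inside the suffix sum, makes explicit a step that the paper uses without comment.
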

\begin{proof}
  We prove by downward induction that, for
  $p=r,r-1,\ldots,0$,
  \[
  h_p(\z_{V_p};\x_\sm)
  =
  \sum_{z^{j_{p+1}},\ldots,z^{j_r}\in[S]}
  \prod_{q=p+1}^{r}
  \prod_{U\in\Lambda_q}
  \sH_U(\z_U;\x_\sm),
  \]
  where the sum and product are interpreted as $1$ when $p=r$.
  For $p=r$, the right-hand side is the empty product, which equals
  $1=h_r$.  Suppose the identity holds at level $p$. Using \eqref{eq:ab_cover} and \eqref{eq:elim_rec}, we get:
  \begin{align*}
    h_{p-1}(\z_{V_{p-1}};\x_\sm)
    &=
    \sum_{z^{j_p}\in[S]}
    \prod_{U\in\Lambda_p}
    \sH_U(\z_U;\x_\sm)\,
    h_p(\z_{V_{p-1}},z^{j_p};\x_\sm)
    \\
    &=
    \sum_{z^{j_p},\ldots,z^{j_r}\in[S]}
    \prod_{q=p}^{r}
    \prod_{U\in\Lambda_q}
    \sH_U(\z_U;\x_\sm).
  \end{align*}
  This is the required identity at level $p-1$.  Taking $p=0$ and using
  $\Lambda=\bigsqcup_{q=1}^r\Lambda_q$,~\eqref{eq:M_sumprod} gives
  \[
    h_0(\,;\x_\sm)
    =
    \sum_{\z\in[S]^{\sm^\com}}
    \prod_{U\in\Lambda}
    \sH_U(\z_U;\x_\sm)
    =
    M_\sm(\x_\sm)
  \]
\end{proof}

Next define 
\[
  u_p(b,\z_{V_{p-1}};\x_\sm)
  :=
  \beta_p(b,\z_{V_{p-1}};\x_\sm)\,
  h_p(\z_{V_{p-1}},b;\x_\sm).
\]
Then~\eqref{eq:elim_rec} becomes
\begin{equation}\label{eq:matrix_form}
  h_{p-1}
  =
  \bC_pu_p,
\end{equation}
where
$\bC_p\in\bR^{S^{p-1}\times S^p}$ is block diagonal.  For each
$\z_{V_{p-1}}\in[S]^{V_{p-1}}$, its corresponding block is
\[
  \bC_p :=
  \begin{bmatrix}
    \mathbf{a}_{\z_{V_{p-1}}} & & \\[-2pt]
    & \ddots &
  \end{bmatrix},
  \qquad
  \mathbf{a}_{\z_{V_{p-1}}}
  =\bigl[\alpha_p(1,\z_{V_{p-1}}),\,\ldots,\,
         \alpha_p(S,\z_{V_{p-1}})\bigr]
  \in\bR^{1\times S}.
\]
Thus $\bC_p$ performs the sum over $b\in[S]$.  It has $S^p$
nonzero positions, and every nonzero entry is a product of entries
from the tables $\psi_U$ with $|U|=d$.

For fixed $\sm$, the matrices $\bC_p$ do not depend on the input
$\x_\sm$.  Their entries are obtained from the
$\binom{r}{d}S^d\le \sD^dS^d$ stored table entries, while the
quantities inside $u_p$
depend on $\x_\sm$.  The following subsections approximate these two
parts separately.

\subsection{Networks used in one summation step}
\label{app:marginal_factors}

\begin{lemma}[Evaluation of stored tables]\label{lem:slices}
  Let $\psi_{\sm'}$ be as in~\eqref{eq:qMRF}, and let
  $A_\sm:[S]^{|\sm|}\to[0,1]$ be the partial marginals appearing
  in the pinning decomposition~\eqref{eq:pinning}.  Write $N_{D,d}:=\binom{\sD}{d}$.  There exists
  a ReLU network
  $\btheta_{\mathrm{table}}\in
  \bTheta_{\sD,\,W_{\mathrm{out}}}(\sL^\se,\sW^\se,\sS^\se,\sB^\se)$
  with
  \[
    \sL^\se=3,
    \qquad
    \sW^\se\le 2d\,\sD^d\,S^d,
    \qquad
    \sS^\se\le(2^d+3d+4)\,\sD^d\,S^d,
    \qquad
    \sB^\se\le\max(S,\,d),
  \]
  and $W_{\mathrm{out}}=\sum_{j=1}^{d}\binom{\sD}{j}
  +N_{D,d}\sum_{\ell=0}^{d-1}\binom{d}{\ell}S^\ell$, that produces
  two families of exact outputs for every $\x\in[S]^{\sD}$:
  \begin{enumerate}[label=(\roman*),itemsep=3pt]
    \item \emph{Stored marginals}: $A_\sm(\x_\sm)$ for each
      $\sm\subseteq[\sD]$ with $1\le|\sm|\le d$; these are the
      scalars entering the pinning decomposition~\eqref{eq:pinning}.
    \item \emph{Evaluations of the $\psi_{\sm'}$-tables}:
      $\psi_{\sm'}(\x_{\sm'\setminus U},\,x_U)$ for each
      $\sm'\in\Fd$, each $U\subsetneq \sm'$ with
      $|U|=\ell\in\{0,\ldots,d{-}1\}$, and each
      $x_U\in[S]^\ell$.  The values for $\ell=0$ enter the first
      product in~\eqref{eq:marg_split}; the values for $\ell\ge1$
      enter the grouped products $\sH_U$ in~\eqref{eq:aggregate}.
  \end{enumerate}
\end{lemma}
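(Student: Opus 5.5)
The network will be a lookup-table construction in three stages. A first hidden layer one-hot encodes every input coordinate, an AND-type layer detects joint configurations on each index set of size at most $d$, and a linear output layer multiplies those indicators by the stored table values. The target is an exact representation, not an approximation: the domain $[S]^\sD$ is finite, every quantity to be produced is a table lookup, and lookups can be realized exactly with indicator functions built from ReLUs.

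\emph{Step 1 (one-hot layer).} Apply \Cref{lem:onehot} to each coordinate $x^i$ in parallel. This gives the indicators $\mathbf 1\{x^i=a\}$ for $i\in[\sD]$ and $a\in[S]$, using width $\sD(S+1)$ and weights at most $S$. The lemma already realizes $\e_a$ as a linear combination of ramps, so I will absorb its output affine map into the next layer; this keeps the hidden depth small.

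\emph{Step 2 (configuration indicators).} For each set $V\subseteq[\sD]$ with $|V|=j\le d$ and each $\mathbf y\in[S]^V$, form
\[
\mathbf 1\{\x_V=\mathbf y\}=\rho\Bigl(\sum_{i\in V}\mathbf 1\{x^i=y^i\}-(j-1)\Bigr).
\]
This is exact because the sum of $j$ binary indicators equals $j$ only when all of them are one. The operation costs one hidden unit per pair $(V,\mathbf y)$, the bias $j-1\le d$, and $j$ incoming weights. To reduce the count, I will only form indicators for $V\subseteq\sm'$ with $\sm'\in\Fd$. This requires about $\sum_{\sm'}\sum_{U}S^{|U|}\le 2^d\binom{\sD}{d}S^d$ units, which explains the $2^d$ term in the sparsity. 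It also means the width is $\lesssim d\,\sD^dS^d$ after accounting for the parallel one-hot units.

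\emph{Step 3 (output layer).} For (i), the output is $A_\sm(\x_\sm)=\sum_{\mathbf y}A_\sm(\mathbf y)\mathbf 1\{\x_\sm=\mathbf y\}$, which is a linear combination with coefficients in $[0,1]$. For (ii), fix $\sm'$, $U\subsetneq\sm'$ and $x_U$, and write $V=\sm'\setminus U$. The required value is
\[
\psi_{\sm'}(\x_V,x_U)=\sum_{\mathbf y\in[S]^V}\psi_{\sm'}(\mathbf y,x_U)\,\mathbf 1\{\x_V=\mathbf y\}.
\]
This is again linear in the indicators of Step 2, with coefficients in $[0,1]$. When $V=\emptyset$ the value is a constant, which the output bias supplies. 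Counting the outputs gives exactly $W_{\mathrm{out}}$: there are $\sum_{j\le d}\binom{\sD}{j}$ stored marginals, and $N_{D,d}\sum_{\ell}\binom d\ell S^\ell$ evaluations, one for each choice of $\sm'$, of $U$ with $|U|=\ell$, and of $x_U$. If the depth needs to be exactly $3$, I will insert an identity ReLU layer; this is valid because all intermediate quantities are nonnegative.

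\emph{Step 4 (main obstacle: bookkeeping).} The part that needs the most care is verifying the explicit sparsity constant $(2^d+3d+4)\sD^dS^d$ and the magnitude bound $\max(S,d)$. I plan to do this by counting layer by layer.
\begin{itemize}
\item The one-hot layer contributes $5S\sD\le 5\sD^dS^d$.
\item The indicator layer contributes at most $(d+1)$ parameters per unit, over $\lesssim 2^d\sD^dS^d$ units.
\item The output layer contributes roughly one weight per (output, configuration) pair. For (ii) this is $\sum_\ell\binom{d}{\ell}S^\ell\cdot S^{d-\ell}\le 2^dS^d$ per $\sm'$.
\end{itemize}
I expect the grouping to need some adjustment to land on the exact constant. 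If the count overshoots, I will share the indicator units between (i) and (ii), using the fact that every $V$ with $|V|\le d$ lies in some $\sm'$. The magnitude bound is immediate: the weights are $\pm1$, $\pm2$, or table values in $[0,1]$, and the biases are at most $\max(S,d)$.
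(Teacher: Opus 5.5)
Your proposal is correct and is essentially the paper's construction. Both use one-hot encoding by \Cref{lem:onehot}, ReLU AND-units over index sets of size at most $d$, and a linear output layer whose coefficients are the table entries; the paper differs only cosmetically. It keeps the one-hot output as a genuine hidden layer (exactly your inserted identity layer, giving $\sL^\se=3$), uses bias $j-\tfrac12$ so indicators equal $\tfrac12$ and output coefficients are at most $2$, and shares indicator units over all $V$ from the start. With that sharing your bookkeeping closes as in the paper, since $6\sD S+\sum_{j=2}^{d}(j+1)\binom{\sD}{j}S^j+\sum_{j=1}^{d}\binom{\sD}{j}S^j+(2^d-1)\binom{\sD}{d}S^d\le(2^d+3d+4)\sD^dS^d$ via $2\sD S\le\sD^dS^d$ for $d\ge2$ (and $d=1$ is checked directly); also your $V=\emptyset$ case never occurs because $U\subsetneq\sm'$.
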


\begin{proof}
By~\eqref{eq:qMRF}, $\psi_{\sm'}\in[0,1]$.  The network has three stages.

\smallskip\noindent
\emph{Stage~1: one-hot encoding (2 layers).}\quad
Apply \Cref{lem:onehot} independently to each coordinate
$x^i\in[S]$, producing the binary vector
\begin{equation}\label{eq:onehot}
  \onehot\in\{0,1\}^{\sD S},
  \qquad o_{(i,a)}:=\mathbf{1}\{a=x^i\},
  \qquad i\in[\sD],\ a\in[S].
\end{equation}
The first layer holds the $\sD$ independent ramp blocks of
\Cref{lem:onehot} (width $S{+}1$ each). The second layer holds
their outputs.  Since these outputs lie in $\{0,1\}\subseteq
\R_{\ge0}$, the ReLU acts as the identity on them, so the
one-hot output layer serves as a hidden layer of the composite
network.  Each block has sparsity $5S$ and maximum absolute weight $\le S$.

\begin{center}
\renewcommand{\arraystretch}{1.2}\small
\begin{tabular}{@{}lr@{}}
  \toprule
  Depth $\sL$ & $2$ \\
  Width $\sW$ & $\sD(S+1)$ \\
  Sparsity $\sS$ & $5\sD S$ \\
  Magnitude $\sB$ & $\le S$ \\
  \bottomrule
\end{tabular}
\end{center}

\smallskip\noindent
\emph{Stage~2: configuration indicators (1 layer).}\quad
For $d\ge 2$, $j\in\{2,\ldots,d\}$, 
$J\in\binom{[\sD]}{j}$ with $J=\{J_1<\cdots<J_j\}$, and each
$\mathbf{a}=(a_1,\ldots ,a_j)\in[S]^j$, we construct neurons that compute
\begin{equation}\label{eq:and}
  h^{(j)}_{J,\mathbf{a}}
  :=\sigma\!\Bigl(\sum_{r=1}^{j}o_{(J_r,a_r)}
    -(j-\tfrac{1}{2})\Bigr)
  \;=\;
  \begin{cases}
    \tfrac{1}{2}, & \mathbf{a}=\x_{J},\\[2pt]
    0, & \text{otherwise.}
  \end{cases}
\end{equation}
The quantity before applying ReLU equals $\tfrac{1}{2}$ when all
$j$ one-hot
inputs from~\eqref{eq:onehot} are~$1$ (every coordinate
matches), and is $\le-\tfrac{1}{2}$ when at least $1$ one-hot input is $0$. Each corresponding row of the weight matrix has exactly $j$ ones in the
columns indexed by $\{(J_r,a_r):r\in[j]\}$ and zeros
elsewhere, while the bias is $-(j-\tfrac{1}{2})$. An additional $\sD S$ neurons pass $\onehot(\cdot,\cdot)$ through unchanged via
identity weights, see e.g.,~\cite[Lemma F.2]{suzuki023diffusion}.

 Counting nonzero entries by the number $j$ of coordinates, there are
$\binom{\sD}{j}S^j$ neurons, each contributing $j$ weight entries
and $1$ bias entry, for $(j{+}1)\binom{\sD}{j}S^j$ nonzeros.
The $\sD S$ neurons copying $\onehot(\cdot,\cdot)$ contribute $\sD S$ weights.

\begin{center}
\renewcommand{\arraystretch}{1.2}\small
\begin{tabular}{@{}lr@{}}
  \toprule
  Depth $\sL$ & $1$ \\
  Width $\sW$ & $\sum_{j=2}^{d}\binom{\sD}{j}S^j+\sD S$ \\
  Sparsity $\sS$ & $\sum_{j=2}^{d}(j{+}1)\binom{\sD}{j}S^j+\sD S$ \\
  Magnitude $\sB$ & $\le \max\{1,d-\tfrac{1}{2}\}$ \\
  \bottomrule
\end{tabular}
\end{center}

For $d=1$, the first sum in each count is empty, and this stage
only copies the $\sD S$ one-hot values, i.e., a network $\Theta_{\sD,\sD S}(1,\sD,\sD,1)$.

\smallskip\noindent
\emph{Stage~3: table evaluation (1 layer,
output).} Note that for a table $f:[S]^j\to\R$ and a $j$-element
subset $J\subseteq[\sD]$, the exact evaluation
\begin{equation}\label{eq:extract}
  f(\x_{J})
  \;=\;
  \sum_{\mathbf{a}\in[S]^j}
    f(\mathbf{a})\;\cdot\;\mathbf{1}\{\mathbf{a}=\x_{J}\}
\end{equation}
is computed by one neuron, where
$\mathbf{1}\{\mathbf{a}=\x_{J}\}$ is $o_{(J_1,a)}$
from~\eqref{eq:onehot} when $j=1$ (equals $1$), and
$2\,h^{(j)}_{J,\mathbf{a}}$ from~\eqref{eq:and} when
$j\ge2$.

\smallskip\noindent
\emph{Output~(i): stored marginals.}\;
For each $\sm\subseteq[\sD]$ with $|\sm|=j\in\{1,\ldots,d\}$,
apply~\eqref{eq:extract} with $f=A_\sm$ and $J=\sm$ to
produce $A_\sm(\x_\sm)$.  Since each $A_\sm(\mathbf{a})\le1$, the
coefficients in the weighted sum are at most~$2$ (arising from the $2\,h^{(j)}_{J,\mathbf{a}}$ term). There are
$\sum_{j=1}^{d}\binom{\sD}{j}$ such scalars, and each reads
$S^j$ indicator values, giving sparsity
$\sum_{j=1}^{d}\binom{\sD}{j}S^j\le d\,\sD^d S^d$. Note that for (i), we need output dimension $\sum_{j=1}^d \binom{D}{j}$.

\smallskip\noindent
\emph{Output~(ii): evaluations of the $\psi_{\sm'}$-tables.}\;
For each $\sm'\subseteq\sD$, $|\sm'|=d$, and each $U\subsetneq \sm'$ with
  $|U|=\ell\in\{0,\ldots,d{-}1\}$, set $J:=\sm'\setminus U$.  For each $x_U\in[S]^\ell$,
  apply~\eqref{eq:extract} with $f(\mathbf{a})=
\psi_{\sm'}(\mathbf{a},x_U)$ and $J=\sm'\setminus U$ to produce
$\psi_{\sm'}(\x_{J},x_U)$.  The coefficients are at most~$2$.
There are $N_{\sD,d} \sum_{\ell=0}^{d-1}\binom{d}{\ell}S^\ell$ such
neurons, and the total sparsity is
$N_{\sD,d}S^d(2^d{-}1)\le 2^d \sD^d S^d$
(each neuron reads $S^{d-\ell}$ indicators,
$\sum_\ell\binom{d}{\ell}S^\ell\cdot S^{d-\ell}
=S^d\sum_\ell\binom{d}{\ell}=S^d(2^d{-}1)$). Note that for (ii), we need output dimension of $\binom{D}{d}\sum_{\ell=0}^{d-1} \binom{d}{\ell}S^{\ell}$.

\smallskip\noindent
The weight matrix of Stage~3 has one row per output neuron.
The coefficients are entries of the $A_\sm$ tables and the
$\psi_{\sm'}$ tables.  The bias is zero throughout.  Since all
coefficients and indicators are non-negative, the output is
non-negative and ReLU at the preceding hidden layers acts as
the identity.

\begin{center}
\renewcommand{\arraystretch}{1.2}\small
\begin{tabular}{@{}lr@{}}
  \toprule
  Depth $\sL$ & $1$ (output layer) \\
  Output~(i) neurons
    & $\sum_{j=1}^{d}\binom{\sD}{j}$;
      \quad sparsity $\le d\,\sD^d S^d$ \\
  Output~(ii) neurons
    & $N_{\sD,d}\sum_{\ell=0}^{d-1}\binom{d}{\ell}S^\ell$;
      \quad sparsity $\le 2^d \sD^d S^d$ \\
  Magnitude $\sB$ & $\le 2$ \\
  \bottomrule
\end{tabular}
\end{center}

\noindent
Let us now combine the networks across the three steps above into one network. In the following, we will track the parameters of the said network.

\smallskip\noindent
\emph{Depth}: Stage~1 contributes two hidden layers, stage~2 contributes one while stage~3 is the output layer. Therefore $\sL^\se=3$.

\smallskip\noindent
\emph{Width}: The three hidden layers have widths
$\sD(S{+}1)$, $\sD S$ (one-hot ramps and indicators), and
$\sum_{j=2}^{d}\binom{\sD}{j}S^j+\sD S$ (configuration
indicators).  If $d=1$, their maximum is at most $2\sD S$.
If $d\ge2$, the last width is the largest, and since each summand
is at most $\sD^d S^d$ and there are at most $d$ terms, it is at most
$d \sD^d S^d$. Thus $\sW^\se\le 2d \sD^d S^d$ for every $d\ge1$.

\smallskip\noindent
\emph{Sparsity}: The sparsity from stage 1 (one-hot encoding) is $5DS$. For $d\ge2$, the $j=d$ configuration-indicator term from stage 2 contributes sparsity of 
$(d{+}1)N_{\sD,d}S^d$. For $j\le d{-}1$, the sparsity contribution is 
$\sum_{j=2}^{d-1}(j{+}1)\binom{\sD}{j}S^j
\le d\,\sD^d S^d\sum_{j=2}^{d-1}S^{j-d}\le d\,\sD^d S^d$. The copying indicator term contributes $DS$ sparsity.  As $2\sD S\le \sD^d S^d$, combining with the
stage~3 sparsity bound, we get a total of  $(2^d{+}d+3d+4)\sD^d S^d$. If $d=1$, stages~1--3
use at most $5\sD S+\sD S+2\sD S=8\sD S$, which is bounded
by $(2^1+3+4)N_1S=9\sD S$. 

\smallskip\noindent
\emph{Magnitude}: We get largest weight of $S$ from the one-hot biases,
$\max\{d{-}(1/2),1\}$ from the configuration-indicator biases, and $2$ from
the table-evaluation coefficients. Therefore the stated bound $\max(S,d)$ bounds the maximum.
\end{proof}

\begin{lemma}[Grouped products]\label{lem:aggregation}
  Let $K\in\mathbb{Z}_+$ and $\psi_{\sm'}$ as in \eqref{eq:qMRF}. Fix $\sm\subseteq[\sD]$ with $|\sm|=k\ge d{+}1$ and write
  $\sm^\com=[\sD]\setminus\sm$, so
  $|\sm^\com|=\sD-k$.  Denote by
  $\nu_\ell=\binom{k}{d-\ell}$ (see \eqref{eq:totgp}) for the number of factors in
  $\sH_U$ of~\eqref{eq:aggregate} when $|U|=\ell$, and write
  $\nu_{\max}=\max_{1\le\ell\le d-1}\nu_\ell$ and
  $n_\sH:=\sum_{\ell=1}^{d-1}\binom{\sD-k}{\ell}S^\ell$. Suppose the partially evaluated table values
  $\psi_{\sm'}(\x_{\sm'\setminus U},\z_U)$ for all
  $\sm'\in\Fd$ with $U=\sm'\cap\sm^\com$,
  $1\le|U|\le d{-}1$, and $\z_U\in[S]^{|U|}$ are provided as
  network inputs.  Then there exists a
  ReLU network producing $\widehat\sH_U(\z_U;\x_\sm)$ for every
  set $U\subseteq\sm^\com$ with $1\le|U|\le d{-}1$ and
  every $\z_U\in[S]^{|U|}$ (hence an output dimension of $n_\sH$).  Its outputs satisfy
  $\widehat\sH_U(\z_U;\x_\sm)\in[0,1]$, and its parameters obey
  \[
    \sL_\sH\lesssim \lceil\log_2\nu_{\max}\rceil\,
      \Delta_K^{(\nu_{\max})}+1,
    \quad
    \sW_\sH\lesssim \,\nu_{\max}\,n_\sH,
    \quad
    \sS_\sH\lesssim n_\sH\bigl(\wp_{\nu_{\max},1}+5\bigr),
    \quad
    \sB_\sH\lesssim 4^{2\nu_{\max}},
  \]
where $\wp_{\nu_{\max},1}$ is defined in \eqref{eq:wp} and
  \begin{equation}\label{eq:agg-err}
    \sup_{\substack{U\subseteq\sm^\com,\;1\le|U|\le d-1\\
      \z_U\in[S]^{|U|},\;\x_\sm\in[S]^{\sD}}}
      \bigl|\sH_U(\z_U;\x_\sm)-\widehat\sH_U(\z_U;\x_\sm)\bigr|
    \;\le\;2^{-K}.
  \end{equation}
Here $\Delta_K^{(r)}=K+5r$ comes from \Cref{lem:mult} and
  $\wp_{\nu_{\max},1}$ from~\eqref{eq:wp}, both at $\sA=1$.
\end{lemma}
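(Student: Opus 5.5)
The construction runs one product network per output, all in parallel, applied directly to the provided table evaluations. Fix a set $U\subseteq\sm^\com$ with $1\le|U|\le d-1$ and a value $\z_U\in[S]^{|U|}$. By \eqref{eq:aggregate}, $\sH_U(\z_U;\x_\sm)$ is the product of the $\nu_{|U|}=\binom{k}{d-|U|}$ numbers $\psi_{\sm'}(\x_{\sm'\setminus U},\z_U)$, indexed by the sets $\sm'=U\cup J$ with $J\subseteq\sm$ and $|J|=d-|U|$. Each of these numbers is an input coordinate by hypothesis and lies in $[0,1]$.

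\textbf{Equalizing arity.} The factor counts $\nu_\ell$ differ across $\ell$, so I would pad every product to exactly $\nu_{\max}$ factors by appending the constant $1$, supplied through a bias. This does not change the product and lets one architecture serve every output. If $\nu_{\max}=1$, no multiplication is needed and the block below reduces to a copy. Otherwise $\nu_{\max}\ge2$, and I apply \Cref{lem:mult} with $r=\nu_{\max}$ and $\sA=1$. This gives a network of depth $\lesssim\lceil\log_2\nu_{\max}\rceil\Delta_K^{(\nu_{\max})}$, width $\lesssim\nu_{\max}$, sparsity $\lesssim\wp_{\nu_{\max},1}$ and weights $\lesssim4^{2\nu_{\max}}$, with uniform error at most $2^{-K}$ on $[-1,1]^{\nu_{\max}}$. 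The inputs lie in $[0,1]\subseteq[-1,1]$, so the error bound applies at every input.

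\textbf{Clipping the output.} I follow the product network with the projection $\mathsf P_1=\mathsf P_{[0,1]}$ from \eqref{eq:proj}. This adds one hidden layer and at most $5$ nonzero parameters per output. Since the true value $\sH_U$ lies in $[0,1]$, \eqref{eq:projerr} shows the projection does not increase the error, so \eqref{eq:agg-err} holds. It also forces $\widehat\sH_U\in[0,1]$.

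\textbf{Parallelization and bookkeeping.} The $n_\sH$ blocks are placed side by side and read disjoint input selections. Width and sparsity therefore add, giving $\sW_\sH\lesssim\nu_{\max}n_\sH$ and $\sS_\sH\lesssim n_\sH(\wp_{\nu_{\max},1}+5)$. Depth and maximal weight are those of a single block, giving depth $+1$ for the projection layer and $\sB_\sH\lesssim4^{2\nu_{\max}}$. The only points needing care are the padding by the constant $1$ and checking that the parallel composition does not increase the weight bound. Both are routine: padding costs at most $\nu_{\max}$ extra bias entries per block, which is absorbed into $\wp$.
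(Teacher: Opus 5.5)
Your proposal is correct and matches the paper's proof. Both place one copy of the \Cref{lem:mult} product network at $\sA=1$ at each output position and follow it with the projection $\mathsf P_1$; width and sparsity add over the $n_\sH$ positions, while depth and magnitude come from a single block. The only difference is minor: you pad every product to $\nu_{\max}$ factors with constant ones absorbed into first-layer biases, whereas the paper uses $\nu_{|U|}$-input networks and pads them to a common depth. Both give the stated bounds, and your version avoids the identity layers that depth padding implicitly requires.
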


\begin{proof}
If $d=1$ or $k=\sD$, the collection in \eqref{eq:agg-err} and so the assertion
is immediate.  Hence assume $d\ge2$ and $k\le\sD-1$.  Each grouped product
$\sH_U(\z_U;\x_\sm)$ is a $\nu_{|U|}$-fold product of partially
evaluated table values in $[0,1]$ by~\eqref{eq:aggregate}.  We compute
all $n_\sH$ entries in parallel.

\smallskip\noindent
\emph{Construction.}\;
Fix $\ell\in\{1,\ldots,d{-}1\}$, a set
$U\subseteq\sm^\com$ with $|U|=\ell$, and a point
$\z_U\in[S]^\ell$.  By~\eqref{eq:aggregate}, the entry to be
approximated is
\[
  \sH_U(\z_U;\x_\sm)
  \;=\!\!\prod_{\substack{\sm'\in\Fd\\\sm'\cap\sm^\com=U}}\!\!
    \psi_{\sm'}(\x_{\sm'\setminus U},\z_U)
  \;=\;\prod_{i=1}^{\nu_\ell}z_i,
\]
where $z_1,\ldots,z_{\nu_\ell}\in[0,1]$ are the $\nu_\ell$
partially evaluated table values at the given $(U,\z_U,\x)$.

\smallskip\noindent
Apply \Cref{lem:mult} with $r=\nu_\ell$ inputs in
$[0,1]$. This yields a single
product network with some parameters $\btheta_\times^{(\nu_\ell)}$, which produces an
approximation $\widetilde\sH_U(\z_U;\x_\sm)$ satisfying
\begin{equation}\label{eq:agg-pos-err}
  \bigl|\sH_U(\z_U;\x_\sm)-\widetilde\sH_U(\z_U;\x_\sm)\bigr|
  \;\le\;2^{-K}.
\end{equation}
One copy of this product network is placed at each of the
$\binom{\sD-k}{\ell}S^\ell$ positions (ranging over all sets $U$ of free coordinates
with $|U|=\ell$ and all $\z_U\in[S]^\ell$).

\smallskip\noindent
Each preliminary output is followed by the exact projection (see \eqref{eq:proj})
\[
  \widehat\sH_U=\mathsf P_1(\widetilde\sH_U).
\]
Thus $\widehat\sH_U\in[0,1]$.  Since $\sH_U\in[0,1]$, the
projection does not increase the error in~\eqref{eq:agg-pos-err}.

\smallskip\noindent
\emph{Depth.}\;
At $\sA=1$, the depth constant of \Cref{lem:mult} is
$\Delta_K^{(n)}=K+\lceil2n\log_2 1\rceil+5n=K+5n$, so the
product network with $\nu_\ell$ inputs has depth
$\lceil\log_2\nu_\ell\rceil\,\Delta_K^{(\nu_\ell)}$.
Different values of $\ell$ yield different numbers of inputs, and
the depth is increasing in that number.  All are padded to the common
depth
\[
  \sL_\sH\;\lesssim\;\lceil\log_2\nu_{\max}\rceil\,
    \Delta_K^{(\nu_{\max})}+1
\]
so that every grouped product is available at the same network
layer.  The added hidden layer accounts for the projection
$\mathsf P_1$ (see \eqref{eq:proj}).

\smallskip\noindent
\emph{Width.}\;
At the widest internal layer, each position contributes at
most $\lesssim \nu_\ell$ neurons. Bounding $\nu_\ell\le\nu_{\max}$
and summing over all $n_\sH$ positions, we get $
  \sW_\sH\;\lesssim \,\nu_{\max}\,n_\sH$.

\smallskip\noindent
\emph{Sparsity.}\;
Each copy of the $\nu_\ell$-input product network has sparsity at
most $\wp_{\nu_\ell,1}\le\wp_{\nu_{\max},1}$ by~\eqref{eq:wp}
at $\sA=1$. The projection uses five additional nonzero parameters at each
position.  Summing over all $n_\sH$ positions, we get $\sS_\sH\;\lesssim \;n_\sH \bigl(\wp_{\nu_{\max},1}+5\bigr)$.

\smallskip\noindent
\emph{Magnitude.}\;
By \Cref{lem:mult} at $\sA=1$, each product network satisfies
$|\btheta_\times^{(\nu_\ell)}|_\infty
\lesssim 4^{2\nu_\ell}\le 4^{2\nu_{\max}}$, giving
$\sB_\sH\lesssim 4^{2\nu_{\max}}$.

\smallskip\noindent
\emph{Error.}\;
At each position $(U,\z_U)$, the per-entry
bound~\eqref{eq:agg-pos-err} remains $2^{-K}$ after projection,
uniformly over the number of inputs.  Taking the supremum over all
$U\subseteq\sm^\com$ with
$1\le|U|\le d{-}1$, all $\z_U\in[S]^{|U|}$, and all
$\x\in[S]^{\sD}$ gives~\eqref{eq:agg-err}.
\end{proof}

\begin{lemma}[Products with at most $d$ free coordinates]
\label{lem:betaB}
    Let $K\in\mathbb{Z}_+$.  Fix $\sm\subseteq[\sD]$ with
    $|\sm|=k$ and $1\le\sD-k\le d$.  If $d=1$, then
    $\beta_p\equiv1$ and set $\widehat\beta_p\equiv1$ exactly. Suppose $d\ge2$, and recall  $b_p=\sum_{\ell=1}^{d-1} \binom{p-1}{\ell-1}$, with
    $b_{\max}=b_{\sD-k}\le 2^{d-1}$. Suppose the approximations $\widehat\sH_U(\z_U;\x_\sm)$ for
    every $U\subseteq\sm^\com$ with
    $1\le|U|\le d{-}1$ are provided as network inputs and satisfy
    \[
    |\sH_U(\z_U;\x_\sm)-\widehat\sH_U(\z_U;\x_\sm)|\le\varepsilon_\sH
    \qquad\text{and}\qquad
    \widehat\sH_U(\z_U;\x_\sm)\in[0,1]
    \]
    for all $U$, $\z_U$, $\x$, where $\sH_U$ is as in
    \eqref{eq:aggregate}.  Then there exists a ReLU network
    producing $\widehat\beta_p(b,\z_{V_{p-1}};\x_\sm)$ for every
    $p=1,\ldots,\sD-k$ and every $(b,\z_{V_{p-1}})\in[S]^p$, with
    \[
    \sL_{\beta B}=\lceil\log_2 b_{\max}\rceil\,
      \Delta_K^{(b_{\max})}+1,
    \,
    \sW_{\beta B}\le 18\,b_{\max}\cdot 2S^d,
    \,
    \sS_{\beta B}\le\bigl(\wp_{b_{\max},1}+5\bigr)\cdot 2S^d,
    \,
    \sB_{\beta B}\le 4^{2b_{\max}+2},
    \]
    every output belongs to $[0,1]$, and
    \begin{equation}\label{eq:betaB-err}
    \sup_{\substack{1\le p\le \sD-k,\\(b,\z_{V_{p-1}})\in[S]^p,\, 
      \x\in[S]^{\sD}}}
    |\beta_p(b,\z_{V_{p-1}};\x_\sm)
      -\widehat\beta_p(b,\z_{V_{p-1}};\x_\sm)|
    \;\le\;
    b_{\max}\,\varepsilon_\sH
    \;+\;2^{-K}.
    \end{equation}    
    Here $\Delta_K^{(r)}=K+5r$ comes from \Cref{lem:mult} and
    $\wp_{b_{\max},1}$ from~\eqref{eq:wp}, both at $\sA=1$. Here we use the convention
    $\wp_{1,1}=1$.
\end{lemma}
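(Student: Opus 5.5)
The plan mirrors the proof of \Cref{lem:aggregation}, with the grouped products $\widehat\sH_U$ playing the role of the inputs in place of raw table values. For $d=1$ every $U\in\Lambda_p$ has $|U|=d=1$, so the index set in \eqref{eq:beta_p} is empty and $\beta_p\equiv1$; there is nothing to prove. Assume $d\ge2$. Fix $p\in\{1,\dots,\sD-k\}$ and $(b,\z_{V_{p-1}})\in[S]^p$. By \eqref{eq:beta_p}, $\beta_p(b,\z_{V_{p-1}};\x_\sm)=\prod_{U\in\Lambda_p,|U|\le d-1}\sH_U(b,\z_{U\setminus\{j_p\}};\x_\sm)$ is a product of $b_p\le b_{\max}$ factors. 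Each factor lies in $[0,1]$ and is supplied by an input coordinate $\widehat\sH_U$. The monotonicity $b_p\le b_{\sD-k}$ in $p$ is clear from the formula, and $b_{\sD-k}\le\sum_{\ell}\binom{d-1}{\ell-1}\le2^{d-1}$ because $\sD-k\le d$.

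\emph{Construction.} At each position $(p,b,\z_{V_{p-1}})$ we select the $b_p$ relevant input coordinates. This is a fixed linear read-off with no parameters beyond the 0/1 selection, and it can be absorbed into the first layer of the product network. We then apply \Cref{lem:mult} with $r=b_p$ and $\sA=1$ (when $b_p=1$, we use the identity, consistent with $\wp_{1,1}=1$). Finally we apply the projection $\mathsf P_1$ from \eqref{eq:proj}, which forces the output into $[0,1]$. Networks with different $b_p$ are padded to the common depth $\lceil\log_2 b_{\max}\rceil\Delta_K^{(b_{\max})}$ using identity layers, since all signals are nonnegative; the projection adds one further layer. Next we count positions. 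Because $\sD-k\le d$, we have $\sum_{p\le\sD-k}S^p\le 2S^{\sD-k}\le2S^d$. Multiplying this count by the per-copy width $18b_{\max}$ from the construction of Kohler--Langer, and by the per-copy sparsity $\wp_{b_{\max},1}+5$, gives the stated width and sparsity. The magnitude bound $4^{2b_{\max}+2}$ is the bound $16(4^{r})^2$ recorded in the proof of \Cref{lem:mult} at $\sA=1$; the projection weights are at most $1$.

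\emph{Error.} Write $\beta=\prod_{i}z_i$ and $\widehat\beta=\mathsf P_1(\widetilde\beta)$, where the $\widetilde\beta$ is the product network evaluated at $(\widehat z_i)$. The triangle inequality gives $|\beta-\widetilde\beta|\le|\prod z_i-\prod\widehat z_i|+|\prod\widehat z_i-\widetilde\beta|$. The second term is at most $2^{-K}$ by \Cref{lem:mult}, which applies because the inputs $\widehat z_i$ lie in $[0,1]\subseteq[-1,1]$. For the first term, the telescoping identity $\prod z_i-\prod\widehat z_i=\sum_i(\prod_{j<i}z_j)(z_i-\widehat z_i)(\prod_{j>i}\widehat z_j)$, with all factors in $[0,1]$, gives a bound of $b_p\varepsilon_\sH\le b_{\max}\varepsilon_\sH$. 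Since $\beta\in[0,1]$, \eqref{eq:projerr} shows that the projection does not increase the error, and \eqref{eq:betaB-err} follows.

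The only point needing care is the bookkeeping: padding products with different arities to a common depth without inflating the magnitude or sparsity beyond the stated constants, and making sure the selection of inputs adds no stored parameters. The error analysis itself is routine.
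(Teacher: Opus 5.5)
Your proposal is correct and follows essentially the same route as the paper. Both arguments place a \Cref{lem:mult} product network at each of the $\sum_p S^p\le 2S^d$ positions, using the identity when $b_p=1$, pad these to a common depth, and apply a final $\mathsf P_1$ projection; the error bound in both comes from the telescoping identity plus the triangle inequality. Your extra remarks are harmless refinements of details the paper leaves implicit: absorbing the 0/1 input selection into the first layer, reading the magnitude $4^{2b_{\max}+2}$ directly off the proof of \Cref{lem:mult}, and using the mirrored telescoping identity.
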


\begin{proof}
If $d=1$, the product in~\eqref{eq:beta_p} is empty and equals
one, so the stated exact construction applies.  Hence assume $d\ge2$.
All $\sD-k$ steps run in parallel.  The total number of output
positions is $\sum_{p=1}^{\sD-k}S^p\le 2S^{\sD-k}\le 2S^d$ since
$\sD-k\le d$.

\smallskip\noindent
\emph{Construction.}\;
Fix a step $p\in\{1,\ldots,\sD-k\}$ and a position
$(b,\z_{V_{p-1}})\in[S]^p$.  By definition,
\[
  \beta_p(b,\z_{V_{p-1}};\x_\sm)
  \;=\!\!\prod_{\substack{U\in\Lambda_p\\|U|\le d-1}}\!\!
    \sH_U(b,\z_{U\setminus\{j_p\}};\x_\sm)
  \;=\;\prod_{i=1}^{b_p}w_i,
\]
a $b_p$-fold product, where $w_1,\ldots,w_{b_p}$ are the exact
grouped-product values at this position.  The network replaces each
$w_i$ with the corresponding approximate value
$\hat w_i=\widehat\sH_{U_i}(b,\z_{U_i\setminus\{j_p\}};\x_\sm)$,
satisfying $w_i,\hat w_i\in[0,1]$ and
$|w_i-\hat w_i|\le\varepsilon_\sH$.

\smallskip\noindent
If $b_p=1$, the product is a single factor: the network copies the value exactly,
$\widehat\beta_p=\hat w_1$ (one identity weight per
position).  If $b_p\ge2$, apply \Cref{lem:mult} with $r=b_p$
inputs in $[-1,1]$. This yields one product network
$\btheta_\times^{(b_p)}$ which takes the $b_p$ approximate
values $\{\hat{w}_i\}_{i=1}^{b_p}$ as direct inputs and produces $\widetilde\beta_p$
satisfying
\begin{equation}\label{eq:prod-err}
  \bigl|\textstyle\prod_{i=1}^{b_p}\hat w_i
    -\widetilde\beta_p\bigr|
  \;\le\;2^{-K}.
\end{equation}
One copy of this product network is placed at each of the $S^p$
positions.
Set $\widehat\beta_p=\mathsf P_1(\widetilde\beta_p)$ when
$b_p\ge2$.  Then $\widehat\beta_p\in[0,1]$, and the projection
does not increase the error because the exact product of the
$\hat w_i$ belongs to $[0,1]$.

\smallskip\noindent
\emph{Depth.} The depth constant of \Cref{lem:mult} is
$\Delta_K^{(r)}=K+5r$, so the
product network with $b_p\ge2$ inputs has depth
$\lesssim \lceil\log_2 b_p\rceil\,\Delta_K^{(b_p)}$. All of them are
padded to the common depth:
\[
  \sL_{\beta B}\lesssim \lceil\log_2 b_{\max}\rceil\,
    \Delta_K^{(b_{\max})}+1.
\]
The added hidden layer accounts for the projection $\mathsf P_1$ (see \eqref{eq:proj}). The padding also covers the
steps with $b_p=1$.

\smallskip\noindent
\emph{Width.}\;
At the widest internal layer, each position contributes at
most $\lesssim b_p\le b_{\max}$ neurons. Summing over all
$\sum_{p=1}^{\sD-k} S^p\le 2S^d$ positions:
\[
  \sW_{\beta B}\;\lesssim \; b_{\max}S^d.
\]

\smallskip\noindent
\emph{Sparsity.}\;
Each copy of the $b_p$-input product network has sparsity at most
$\lesssim \wp_{b_p,1}\le\wp_{b_{\max},1}$ by~\eqref{eq:wp} at $\sA=1$. The
$\wp_{1,1}=1$ case covers the identity weight when $b_p=1$.
The projection contributes at most five more nonzero parameters per position. Summing over all positions, we get: 
\[
  \sS_{\beta B}\;\lesssim 
  \bigl(\wp_{b_{\max},1}+5\bigr) S^d.
\]

\smallskip\noindent
\emph{Magnitude.}\;
By \Cref{lem:mult} at $\sA=1$, each product network satisfies
$|\btheta_\times^{(b_p)}|_\infty
\lesssim 4^{2b_p}\le 4^{2b_{\max}}$ while the weights for the projection are bounded by~$1$.  Hence
$\sB_{\beta B}\lesssim 4^{2b_{\max}}$.

\smallskip\noindent
\emph{Error.}\;
Fix a step $p$ and a position.  Since $\beta_p\in[0,1]$,
replacing $\widetilde\beta_p$ by its projection $\hat{\beta}_p$ onto $[0,1]$
cannot increase the distance to $\beta_p$.
The triangle inequality therefore gives
\[
  |\beta_p-\widehat\beta_p|
  \;\le\;
  \underbrace{\bigl|\textstyle\prod_{i=1}^{b_p}w_i
    -\prod_{i=1}^{b_p}\hat w_i\bigr|}_{\text{(I): input error}}
  \;+\;
  \underbrace{\bigl|\textstyle\prod_{i=1}^{b_p}\hat w_i
    -\widetilde\beta_p\bigr|}_{\text{(II): product error}}.
\]
Term~(II) is at most $2^{-K}$ by~\eqref{eq:prod-err} when
$b_p\ge2$, and equals $0$ when $b_p=1$.  For term~(I), we use the telescoping representation 
\[
  \prod_{i=1}^{b_p}w_i-\prod_{i=1}^{b_p}\hat w_i
  \;=\;\sum_{i=1}^{b_p}
    \Bigl(\prod_{j<i}\hat w_j\Bigr)(w_i-\hat w_i)
    \Bigl(\prod_{j>i}w_j\Bigr).
\]
Since $w_j,\hat w_j\in[0,1]$, each term is at most
$\varepsilon_\sH$.  Thus
\[
  \text{(I)}
  \;\le\;b_p\varepsilon_\sH
  \;\le\;b_{\max}\varepsilon_\sH.
\]
Combining (I) and (II) and taking the
supremum over all $p$,
$(b,\z_{V_{p-1}})$, and $\x$ gives~\eqref{eq:betaB-err}.
\end{proof}

\begin{lemma}[Products with more than $d$ free coordinates]
\label{lem:betaC}
Let $K\in\mathbb{Z}_+$.  Fix $\sm\subseteq[\sD]$ with
  $|\sm|=k$ and $\sD-k\ge d{+}1$.
  If $d=1$, then $\beta_p\equiv1$ and set
  $\widehat\beta_p\equiv1$ exactly, with no additional network.
  Suppose $d\ge2$.  Denote $b_p$ as in~\eqref{eq:beta_p}, with
  $b_{\max}=b_{\sD-k}=\sum_{j=0}^{d-2}\binom{\sD-k-1}{j}$. Suppose the approximations $\widehat\sH_U(\z_U;\x_\sm)$
  for every set $U\subseteq\sm^\com$ with $1\le|U|\le d-1$ are provided as network inputs, satisfying
  \[
    |\sH_U(\z_U;\x_\sm)-\widehat\sH_U(\z_U;\x_\sm)|
    \le\varepsilon_\sH
    \qquad\text{and}\qquad
    \widehat\sH_U(\z_U;\x_\sm)\in[0,1]
  \]
  for all $U$, $\z_U$, $\x$, where $\sH_U$ is as in
  \eqref{eq:aggregate}.  Then there exists a weight-sharing ReLU
  network as in \Cref{def:wsnn}, with sharing map
  $\omega_{\beta C}$, producing
  $\widehat\beta_p(b,\z_{V_{p-1}};\x_\sm)$ for every
  $p=1,\ldots,\sD-k$ and every $(b,\z_{V_{p-1}})\in[S]^p$, with
  \[
    \sL_{\beta C} \lesssim \lceil\log_2 b_{\max}\rceil\,
      \Delta_K^{(b_{\max})}+1,
    \;
    \sW_{\beta C}\lesssim \,b_{\max}\cdot S^{\sD-k},
    \;
    {\Sws}_{\beta C}\lesssim (\sD-k)\bigl(\wp_{b_{\max},1}+5\bigr),
    \;
    \sB_{\beta C}\lesssim 4^{2b_{\max}},
  \]
  every output belongs to $[0,1]$,
  the sharing map has Lipschitz constant $\Lw^{\beta C}=1$,
  and
  \begin{equation}\label{eq:betaC-err}
    \sup_{\substack{1\le p\le \sD-k,\\(b,\z_{V_{p-1}})\in[S]^p,\,
      \x\in[S]^{\sD}}}
    |\beta_p(b,\z_{V_{p-1}};\x_\sm)
      -\widehat\beta_p(b,\z_{V_{p-1}};\x_\sm)|
    \;\le\;
    b_{\max}\,\varepsilon_\sH
    \;+\;2^{-K}.
  \end{equation}
Here $\Delta_K^{(r)}=K+5r$ comes from \Cref{lem:mult} and
  $\wp_{b_{\max},1}$ from~\eqref{eq:wp}, both at $\sA=1$. By convention, we again use $\wp_{1,1}=1$.
\end{lemma}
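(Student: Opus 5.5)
The plan is to follow the construction of \Cref{lem:betaB} position by position, and to replace the parallel copies of the product network by a single shared copy per elimination step. That is the only way to keep ${\Sws}_{\beta C}$ independent of the exponentially many positions $S^p$.

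\emph{Case $d=1$.} Here $\Lambda_p$ contains no set of size at most $d-1$. So $\beta_p$ is an empty product, equal to one, and nothing needs to be built. From now on take $d\ge2$.

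\emph{Construction.} Fix a step $p\in\{1,\dots,\sD-k\}$. The number of factors $b_p$ in \eqref{eq:beta_p} depends only on $p$, not on the position $(b,\z_{V_{p-1}})\in[S]^p$. For every position, $\beta_p$ is the product of $b_p$ grouped values $w_i=\sH_{U_i}(b,\z_{U_i\setminus\{j_p\}};\x_\sm)$, with the sets $U_1,\dots,U_{b_p}\in\Lambda_p$ listed in a fixed order.
\begin{itemize}
\item If $b_p=1$, the value $\hat w_1$ is copied by an identity weight.
\item If $b_p\ge2$, take one product network $\btheta^{(b_p)}_\times$ from \Cref{lem:mult} with $r=b_p$ inputs in $[-1,1]$ (so $\sA=1$), followed by the projection $\mathsf P_1$ from \eqref{eq:proj}.
\end{itemize}
The free vector $\bv$ contains, for each $p$, one copy of the parameters of $\btheta^{(b_p)}_\times$ together with the five projection parameters. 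The map $\omega_{\beta C}$ places that block at each of the $S^p$ positions of step $p$. The input edges route the $\hat w_i=\widehat\sH_{U_i}(\cdot)$ from the given inputs to the appropriate copy. These routing edges are fixed $0/1$ connections, so they are inserted as fixed entries rather than trainable ones. Each block is padded with identity layers (two ReLU neurons per channel, values in $[0,1]$, so no loss) up to the common depth $\lceil\log_2 b_{\max}\rceil\Delta_K^{(b_{\max})}+1$.

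\emph{Parameter bookkeeping.} Since $\omega_{\beta C}$ only copies coordinates and inserts fixed entries, $\Lw^{\beta C}=1$, as in the remark following \Cref{def:wsnn}. The trainable count is $\sum_p(\wp_{b_p,1}+5)\le(\sD-k)(\wp_{b_{\max},1}+5)$, using the monotonicity $b_p\le b_{\max}=b_{\sD-k}$ and $\wp_{b_p,1}\le\wp_{b_{\max},1}$. The width at the widest layer is at most $\sum_p b_p S^p\lesssim b_{\max}S^{\sD-k}$, since $\sum_{p\le r}S^p\le 2S^r$. The magnitude bound $4^{2b_{\max}}$ comes directly from \Cref{lem:mult}, because the projection and routing weights are bounded by one.

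\emph{Error.} This step copies \Cref{lem:betaB} verbatim. Telescoping over factors in $[0,1]$ gives $|\prod w_i-\prod\hat w_i|\le b_p\varepsilon_\sH$. The product network contributes at most $2^{-K}$. Projecting onto $[0,1]$ does not increase the distance to $\beta_p\in[0,1]$, and it also guarantees that every output lies in $[0,1]$. Taking the supremum over $p$, positions and $\x$ yields \eqref{eq:betaC-err}.

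\emph{Main obstacle.} The delicate point is to check that the routing of inputs to the copies really is parameter-free. The position-dependent part, meaning which $\widehat\sH_U$ feeds which slot, must sit entirely in fixed wiring, so that the shared block is identical across positions. It also needs to be confirmed that the padding identity layers count as fixed entries and not as trainable parameters. If either point fails, the count of trainable parameters picks up a factor $S^p$.
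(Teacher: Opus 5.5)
Your proposal is correct and uses essentially the same construction as the paper. For each step $p$ there is one product network from \Cref{lem:mult} followed by the projection $\mathsf P_1$, and $\omega_{\beta C}$ replicates it across the $S^p$ assignments; the telescoping error bound and the count $\sum_p(\wp_{b_p,1}+5)\le(\sD-k)(\wp_{b_{\max},1}+5)$ are also the same. The only differences are cosmetic. The paper stores the identity weight for steps with $b_p=1$ as a single shared coordinate $v_{\mathbf 1}=1$, and it leaves the routing implicit in where $\omega_{\beta C}$ places the shared first-layer weights. You instead insert the routing and padding as fixed entries, which the weight-sharing class permits and which keeps $\Lw^{\beta C}=1$.
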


\begin{proof}
If $d=1$, the product in~\eqref{eq:beta_p} is empty and equals
one, so the stated exact construction applies.  Hence assume $d\ge2$.
All $\sD-k$ steps run in parallel.  The total number of
assignments of the free coordinates across all steps is again $\sum_{p=1}^{\sD-k}S^p\le 2S^{\sD-k}$.

\smallskip\noindent
\emph{Construction.}\;
Fix a step $p\in\{1,\ldots,\sD-k\}$ and an assignment
$(b,\z_{V_{p-1}})\in[S]^p$.  By definition,
\[
  \beta_p(b,\z_{V_{p-1}};\x_\sm)
  \;=\!\!\prod_{\substack{U\in\Lambda_p\\|U|\le d-1}}\!\!
    \sH_U(b,\z_{U\setminus\{j_p\}};\x_\sm)
  \;=\;\prod_{i=1}^{b_p}w_i,
\]
a $b_p$-fold product, where $w_1,\ldots,w_{b_p}$ are the exact
grouped-product values at this assignment.  The network replaces
each $w_i$ with the corresponding approximate value
$\hat w_i=\widehat\sH_{U_i}(b,\z_{U_i\setminus\{j_p\}};\x_\sm)$,
satisfying $w_i,\hat w_i\in[0,1]$ and
$|w_i-\hat w_i|\le\varepsilon_\sH$.

\smallskip\noindent
If $b_p=1$, the network copies the single value exactly,
$\widehat\beta_p=\hat w_1$ (one identity weight per assignment).  If
$b_p\ge2$, apply \Cref{lem:mult} with $r=b_p$ inputs in
$[0,1]$. This yields one product network $\btheta_\times^{(b_p)}$ which 
takes the $b_p$ approximate values as direct inputs and
produces $\widetilde\beta_p$ satisfying
\begin{equation}\label{eq:prodC-err}
  \bigl|\textstyle\prod_{i=1}^{b_p}\hat w_i
    -\widetilde\beta_p\bigr|
  \;\le\;2^{-K}.
\end{equation}
One copy of this product network is placed at each of the $S^p$
assignments at step $p$.  Set
$\widehat\beta_p=\mathsf P_1(\widetilde\beta_p)$ when $b_p\ge2$.
Then $\widehat\beta_p\in[0,1]$, and the projection does not increase
the error because $\prod_i\hat w_i\in[0,1]$.

\smallskip\noindent
\emph{Sharing map.}\;
The parameter vector $\bv$ stores, for each step $p$ with
$b_p\ge2$, the internal weights of the product network
$\btheta_\times^{(b_p)}$, including the fixed scaling factors,
breakpoints, and biases that make the ReLU network
approximate a $b_p$-fold product. Note that each of the $S^p$ assignments share the same weights and there are at most $\lesssim\wp_{b_p,1}$
coordinates by~\eqref{eq:wp}. For each such step, the parameter
vector $\bv$ also stores the at most five nonzero parameters of the projection 
$\mathsf P_1$ (see \eqref{eq:proj}). A single additional coordinate
$v_{\mathbf 1}=1$ is used by all steps with $b_p=1$. The sharing map
$\omega_{\beta C}$ replicates the product and projection network coordinates across
all $S^p$ assignments at step $p$, given by 
\begin{equation}\label{eq:omega-betaC}
  [\omega_{\beta C}(\bv)]_{\text{step }p,\;
    \text{assignment }i,\;\text{entry }j}
  :=\;v_{(p,\,j)},
  \qquad i=1,\ldots,S^p,
\end{equation}
and, for steps with $b_p=1$, fills all $S^p$ identity weights
from the single coordinate $v_{\mathbf 1}=1$.  The indices $p$
(for the step) and $j$ (for the entry) determine the
coordinate of $\bv$, while the assignment index $i\in S^p$ does not. The
parameters of $\mathsf P_1$ are replicated in the same way.

\smallskip\noindent
\emph{Depth.}\; The depth constant of \Cref{lem:mult} is
$\Delta_K^{(r)}=K+5r$, so the
product network with $b_p\ge2$ inputs has depth
$\lesssim \lceil\log_2 b_p\rceil\,\Delta_K^{(b_p)}$. All are
padded to the common depth:
\[
  \sL_{\beta C}=\lceil\log_2 b_{\max}\rceil\,
    \Delta_K^{(b_{\max})}+1.
\]
The added hidden layer accounts for the projection $\mathsf P_1$ (see \eqref{eq:proj}). The padding also covers the
steps with $b_p=1$.

\smallskip\noindent
\emph{Width.}\;
At the widest internal layer, each assignment contributes
at most $\lesssim b_p\lesssim b_{\max}$ neurons.  Summing over all
$\sum_{p=1}^{\sD-k} S^p\le 2S^{\sD-k}$ assignments:
\[
  \sW_{\beta C}\;\lesssim \;\,b_{\max}\cdot S^{\sD-k}.
\]

\smallskip\noindent
\emph{Effective sparsity.}\;
At each step $p$ with $b_p\ge2$, all $S^p$ copies read the
same coordinates of $\bv$ via~\eqref{eq:omega-betaC}, at most
$\lesssim \wp_{b_p,1}+5\le\wp_{b_{\max},1}+5$ parameters by~\eqref{eq:wp}. The steps with
$b_p=1$ together read the single coordinate $v_{\mathbf 1}=1$.
Since $b_1=1$, at most $\sD-k{-}1$ steps have $b_p\ge2$, so
\[
  {\Sws}_{\beta C}
  \;\lesssim \;1+\sum_{p\,:\,b_p\ge2}
    \bigl(\wp_{b_p,1}+5\bigr)
  \;\le\;(\sD-k)\bigl(\wp_{b_{\max},1}+5\bigr).
\]

\smallskip\noindent
\emph{Magnitude.}\;
By \Cref{lem:mult} at $\sA=1$, each product network satisfies
$|\btheta_\times^{(b_p)}|_\infty
\lesssim 4^{2b_p}\le 4^{2b_{\max}}$. 
The coordinate $v_{\mathbf 1}$ equals~$1$.  Hence
$\sB_{\beta C}\lesssim 4^{2b_{\max}}$.

\smallskip\noindent
\emph{Lipschitz constant.}\;
The sharing map $\omega_{\beta C}$ reads a single coordinate
of $\bv$ at each weight position ($v_{(p,j)}$
or $v_{\mathbf 1}$) and no products of coordinates are involved.
For any $\bv,\bv'$:
\[
  \bigl|[\omega_{\beta C}(\bv)]_{p,i,j}
    -[\omega_{\beta C}(\bv')]_{p,i,j}\bigr|
  \;=\;|v_{(p,j)}-v'_{(p,j)}|
  \;\le\;\|\bv-\bv'\|_\infty,
\]
so $\Lw^{\beta C}=1$.

\smallskip\noindent
\emph{Error.}\;
Fix a step $p$ and an assignment.  Since $\beta_p\in[0,1]$,
replacing $\widetilde\beta_p$ by its projection onto $[0,1]$
cannot increase the distance to $\beta_p$.
The triangle inequality therefore
gives
\[
  |\beta_p-\widehat\beta_p|
  \;\le\;
  \underbrace{\bigl|\textstyle\prod_{i=1}^{b_p}w_i
    -\prod_{i=1}^{b_p}\hat w_i\bigr|}_{\text{(I): input error}}
  \;+\;
  \underbrace{\bigl|\textstyle\prod_{i=1}^{b_p}\hat w_i
    -\widetilde\beta_p\bigr|}_{\text{(II): product error}}.
\]
Term~(II) is at most $2^{-K}$ by~\eqref{eq:prodC-err} when
$b_p\ge2$, and equals $0$ when $b_p=1$.  For term~(I), we use the telescoping representation 
\begin{align}\label{eq:tscope}
  \prod_{i=1}^{b_p}w_i-\prod_{i=1}^{b_p}\hat w_i
  \;=\;\sum_{i=1}^{b_p}
    \Bigl(\prod_{j<i}\hat w_j\Bigr)(w_i-\hat w_i)
    \Bigl(\prod_{j>i}w_j\Bigr).
\end{align}
Since $w_j,\hat w_j\in[0,1]$, each term is at most
$\varepsilon_\sH$.  Thus
\[
  \text{(I)}
  \;\le\;b_p\varepsilon_\sH
  \;\le\;b_{\max}\varepsilon_\sH.
\]
Combining (I) and (II) and taking the supremum over all $p$,
$(b,\z_{V_{p-1}})$, and $\x$ gives~\eqref{eq:betaC-err}.
\end{proof}

\begin{lemma}[Summation matrices]\label{lem:contract}
  Let $\psi_{\sm'}$ be as in~\eqref{eq:qMRF}, so that
  $\psi_{\sm'}\in[0,1]^{S^d}$.  Fix $\sm\subseteq[\sD]$ with
  $|\sm|=k\ge d{+}1$ and write $\sm^\com=[\sD]\setminus\sm$, so
  $|\sm^\com|=\sD-k$.  
  For each summation step $p=1,\ldots,\sD-k$, let
  $\mathbf{C}_p\in\R^{S^{p-1}\times S^p}$ be the summation
  matrix of~\eqref{eq:matrix_form}, block-diagonal with $S^{p-1}$
  blocks of size $1\times S$.  By~\eqref{eq:alpha_p}, its
  nonzero entries are
  \begin{equation}\label{eq:Cp-entry}
    [\mathbf{C}_p]_{\z_{V_{p-1}},\,(b,\z_{V_{p-1}})}
    \;=\;\prod_{\substack{U\in\Lambda_p\\|U|=d}}
      \psi_U(b,\,\z_{U\setminus\{j_p\}}),
    \qquad b\in[S],\;\z_{V_{p-1}}\in[S]^{V_{p-1}},
  \end{equation}
  a product of $a_p=\binom{p-1}{d-1}$ entries of the
  $\psi_U$-tables (equal to~$1$ when $p<d$, where the product is
  empty), and all other entries zero. Then the following conclusions hold: 
  \begin{enumerate}[label=(\roman*),itemsep=4pt]
    \item $\mathbf{C}_p$ has at most $S^p$ nonzero entries,
      each lying in $[0,1]$.
    \item For $p\ge d$, the number of distinct entries of the
      $\psi_U$-tables entering $\mathbf{C}_p$ is $a_p\cdot S^d$,
      each filling $S^{p-d}$ entry positions.
    \item The sets of distinct entries used at different steps (across $p=1,2,\ldots , \sD-k$) are
      disjoint, and the total across all $\sD-k$ steps is
      \[
        \sum_{p=1}^{\sD-k}a_p\cdot S^d
        \;=\;\binom{\sD-k}{d}\,S^d
        \;\le\;\sD^dS^d.
      \]
      \item Consider a parameter vector $v$ indexed as follows: $v=\{v_{U,a_U}: U\subseteq m^c,\ |U|=d,\ a_U\in[S]^U\}$. Define a parameter sharing map $\omega(\cdot)$ whose output is the collection of
matrices \((\tilde{\mathbf C}_1(v),\ldots,\tilde{\mathbf C}_r(v))\), defined as follows: 
\begin{align}\label{eq:omega-contract}
[\tilde{\mathbf C_p}(v)]_{z,(b,z')}
:=
\mathbf 1\{z=z'\}
\prod_{U\in\Lambda_p,\, |U|=d}
P_1\!\left(v_{U,(b,z_{U\setminus\{j_p\}})}\right),
\end{align}
for \(p=1,\ldots,r\), \(z,z'\in[S]^{V_{p-1}}\), and \(b\in[S]\). Here the empty product is interpreted as one. If $v^\psi_{U,a_U}:=\psi_U(a_U)$, then \(\tilde{\mathbf C}_p(v^\psi)=\mathbf C_p\) for every \(p=1,\ldots,D-k\).
Moreover, $\omega(\cdot)$ has Lipschitz norm bounded by $L_{\omega}:= \sD^d$.
  \end{enumerate}
\end{lemma}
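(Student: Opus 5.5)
The proof is a direct combinatorial reading of the definition of $\mathbf C_p$ in~\eqref{eq:matrix_form}, followed by a coordinatewise Lipschitz estimate for the sharing map. We take the four items in order.

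\emph{Item (i).} By construction $\mathbf C_p$ is block diagonal. It has $S^{p-1}$ blocks, each a $1\times S$ row. So it has at most $S^{p-1}\cdot S=S^p$ nonzero positions. By~\eqref{eq:Cp-entry}, each entry is a product of values $\psi_U(\cdot)\in[0,1]$, with the empty product equal to $1$. Hence each entry lies in $[0,1]$.

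\emph{Item (ii).} Fix $p\ge d$. Each $U\in\Lambda_p$ with $|U|=d$ satisfies $j_p\in U\subseteq V_p$, so $U$ is determined by choosing $d-1$ coordinates from $V_{p-1}$. This gives exactly $a_p=\binom{p-1}{d-1}$ such sets. The factor $\psi_U(b,\z_{U\setminus\{j_p\}})$ depends only on the $d$ coordinates $(b,\z_{U\setminus\{j_p\}})$. As the position $(b,\z_{V_{p-1}})\in[S]^p$ varies, it therefore ranges over all $S^d$ entries of $\psi_U$. Each entry is hit once for every value of the remaining $p-d$ free coordinates, that is $S^{p-d}$ times. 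Summing over the $a_p$ tables gives $a_pS^d$ distinct entries.

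\emph{Item (iii).} Disjointness follows because the sets $U$ with $|U|=d$ are partitioned by $\max U$, via $\Lambda=\bigsqcup_p\Lambda_p$. A given table $\psi_U$ is therefore used at exactly one step. The total is then computed with the hockey-stick identity:
\[
\sum_{p=1}^{\sD-k}\binom{p-1}{d-1}=\binom{\sD-k}{d}.
\]
Finally,
\[
\binom{\sD-k}{d}S^d\le \sD^dS^d.
\]

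\emph{Item (iv).} There are two claims.

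For the exact-recovery claim, substitute $v^\psi$ into~\eqref{eq:omega-contract}. Since $\psi_U\in[0,1]$, the projection $\mathsf P_1$ acts as the identity on every input. The indicator $\mathbf 1\{z=z'\}$ reproduces the block-diagonal pattern of $\mathbf C_p$, and the product reproduces~\eqref{eq:Cp-entry}.

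For the Lipschitz bound, each output coordinate is a product of at most $a_p\le\binom{\sD}{d-1}\le\sD^d$ factors $\mathsf P_1(v_{\cdot})$. Each factor lies in $[0,1]$, and each is $1$-Lipschitz because $\mathsf P_1$ is $1$-Lipschitz. Apply the telescoping identity~\eqref{eq:tscope} to these factors. This bounds the change of any output coordinate by
\[
a_p\,\|\bv-\bv'\|_\infty\le\sD^d\|\bv-\bv'\|_\infty .
\]
Coordinates fixed by the indicator, and empty products, change by $0$. Taking the supremum over coordinates gives $\Lw\le\sD^d$.

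One caveat is that the map $\omega$ of \Cref{def:wsnn} is defined on $[-\sB,\sB]^{\Sws}$. The projection $\mathsf P_1$ keeps every factor in $[0,1]$ for any such input, which is precisely what the telescoping step needs.

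The main obstacle is the counting in (ii) and (iii). One must check that the sets in $\Lambda_p$ of size $d$ are exactly those with $\max U=j_p$, so that the binomial count and the disjointness hold. Once that bookkeeping is in place, the rest is mechanical.
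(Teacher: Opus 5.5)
Your proposal is correct and follows essentially the same argument as the paper. You use the row count for (i), the characterization $\max U=j_p$ of $\Lambda_p$ together with the $S^{p-d}$ repetition count for (ii), and the partition $\Lambda=\bigsqcup_p\Lambda_p$ with the hockey-stick identity for (iii). For (iv) you use inactivity of $\mathsf P_1$ at $v^\psi$ and the telescoping bound with $a_p\le\sD^d$. Your explicit remark that $\mathsf P_1$ keeps every factor in $[0,1]$, which the telescoping step needs, is a detail the paper leaves implicit.
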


\begin{proof}
(i) Each row $\z_{V_{p-1}}$ has $S$ structurally
nonzero positions (one per $b\in[S]$), and there are
$S^{p-1}$ rows, so $\mathbf{C}_p$ has at most $S^p$ nonzero
entries.  Each entry is a product of values in $[0,1]$ (the
empty product equals~$1$), hence all of the nonzero entries lie in $[0,1]$.

\smallskip\noindent

(ii) Fix a step $p\ge d$ and a set $U\in\Lambda_p$ with $|U|=d$. There are $a_p=\binom{p-1}{d-1}$ sets $U\in\Lambda_p$ with
$|U|=d$, corresponding to $a_p$ distinct tables. 
The entry $\psi_U(b,\z_{U\setminus\{j_p\}})$ depends on
$|\{j_p\}\cup(U\setminus\{j_p\})|=|U|=d$ of the $p$ coordinates
in $(b,\z_{V_{p-1}})$, and is independent of the remaining
$p-d$ coordinates $\z_{V_{p-1}\setminus(U\setminus\{j_p\})}$.
As $(b,\z_{V_{p-1}})$ ranges over $[S]^p$, the argument
$(b,\z_{U\setminus\{j_p\}})$ takes $S^d$ distinct values, each appearing at $S^{p-d}$ entry positions of $\mathbf{C}_p$. 

\smallskip\noindent Each contributes $S^d$
coordinates, so step $p$ draws from $a_p\cdot S^d$ distinct
coordinates of~$\bv$ in total.  For $p<d$, $a_p=0$: the
product in~\eqref{eq:Cp-entry} is empty, every entry
equals~$1$, and no coordinates are read.

\smallskip\noindent 
(iii) A table $\psi_U$ with $|U|=d$ belongs to
$\Lambda_p$ if and only if $\max U=j_p$.  Therefore $\{\Lambda_p\}+{p=1}^{D-k}$ forms a pairwise disjoint collection. Summing over $p$ yields 
\[
  \sum_{p=1}^{\sD-k}a_p\cdot S^d
  \;=\;S^d\sum_{p=1}^{\sD-k}\binom{p-1}{d-1}
  \;=\;S^d\cdot\binom{\sD-k}{d}
  \;\le\;\sD^dS^d,
\]
where the second equality is the hockey-stick identity.

\smallskip\noindent 
(iv) As every entry of each 
\(\psi_U\)-table lies in \([0,1]\), the projection \(P_1\) is inactive at
\(v^\psi\). Hence, for a nonzero entry, namely when \(z=z'\), by definition, we have 
\[
[\tilde{\mathbf C}_p(v^\psi)]_{z,(b,z)}
=
\prod_{U\in\Lambda_p,\; |U|=d}
\psi_U(b,z_{U\setminus\{j_p\}}) = [\mathbf{C}_p]_{z,(b,z)}.
\]
When
\(z\ne z'\), the factor \(\mathbf 1\{z=z'\}\) gives zero, matching the
block-diagonal structure of \(\mathbf C_p\). Therefore
\(\tilde{\mathbf C}_p(v^\psi)=\mathbf C_p\) for every \(p\).

It remains to prove the Lipschitz bound. Fix \(p,z,\), and \(b\). For $p=1,2,\ldots ,D-k$, recall that $a_p=\binom{p-1}{d-1}\le \sD^d$ is the cardinality of the set $\{U\in\Lambda_p:\, |U|=d\}$. By the same telescoping argument as in \eqref{eq:tscope}, we get:
\begin{align*}
    &\;\;\;\;\;\bigg|\prod_{U\in\Lambda_p:\, |U|=d}P_1\!\left(v_{U,(b,z_{U\setminus\{j_p\}})}\right)-\prod_{U\in\Lambda_p:\, |U|=d} P_1\!\left(v'_{U,(b,z_{U\setminus\{j_p\}})}\right)\bigg| \\
    &\le \sum_{U\in\Lambda_p:\, |U|=d} \big|P_1\!\left(v_{U,(b,z_{U\setminus\{j_p\}})}\right)- P_1\!\left(v'_{U,(b,z_{U\setminus\{j_p\}})}\right)\big| \\
    &\le \sD^d \|v-v'\|_{\infty}. 
\end{align*}
The last inequality follows from the fact that the projection map $P_1$ is $1$-Lipschitz.
\end{proof}

\subsection{Approximation of the successive sums}
\label{app:approx_elimination}

\begin{lemma}[Product before each summation]\label{lem:uprod}
Let $K\in\mathbb{Z}_+$.  Fix $\sm\subseteq[\sD]$ with
  $|\sm|=k>d$ and $\sD-k>d$.  For
  each step $p=1,\ldots,\sD-k$, suppose the approximate partial sums
  and the approximations of $\beta_p$ satisfy
  \[
    \widehat\beta_p(b,\z_{V_{p-1}};\x_\sm)\in[0,1],
    \qquad
    \widehat h_p(\z_{V_p};\x_\sm)\in[0,\sA_p],
  \]
  for all $(b,\z_{V_{p-1}})\in[S]^p$ and $\x\in[S]^{\sD}$, where $\sA_p:=S^{\sD-k-p}$. Suppose also that $\hat{h}_{D-k}=1$.

  \smallskip\noindent
  At $p=\sD-k$, set $\widehat u_{\sD-k}=\widehat\beta_{\sD-k}=\hat{\beta}_{D-k}\hat{h}_{D-k}$. For each $p=1,\ldots,\sD-k{-}1$, there exists a
  weight-sharing ReLU network as in \Cref{def:wsnn}, with
  sharing map $\omega_p^{u}$, producing
  $\widehat u_p(b,\z_{V_{p-1}};\x_\sm)\in[0,S^{\sD-k-p}]$
  at every assignment
  $(b,\z_{V_{p-1}})\in[S]^p$ with
  \begin{equation}\label{eq:uprod-err}
    \sup_{\substack{(b,\z_{V_{p-1}})\in[S]^p\\
      \x\in[S]^{\sD}}}
      |\widehat\beta_p\,\widehat h_p
        -\widehat u_p|
    \;\le\;2^{-K}
    \;=:\;\varepsilon_p^{\mathrm{mult}},
  \end{equation}
  and network parameters
  \[
    \sL_{u,p}\lesssim K+\lceil 4\log_2\sA_p\rceil+11,
    \quad
    \sW_{u,p}\lesssim \,S^p,
    \quad
    {\Sws}_{u,p}\lesssim \wp_{2,\sA_p}+5,
    \quad
    \sB_{u,p}\lesssim \,S^{4(\sD-k-p)},
  \]
where $\wp_{2,\sA_p}$ comes from~\eqref{eq:wp} at
  $\sA=\sA_p$.  The sharing map has Lipschitz constant
  $\Lw^{u}=1$.
\end{lemma}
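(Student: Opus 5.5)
The construction applies the two-input product network of \Cref{lem:mult} once at every assignment $(b,\z_{V_{p-1}})\in[S]^p$ and shares its weights across assignments, in the same way as \Cref{lem:betaC}. Fix $p\in\{1,\ldots,\sD-k-1\}$. At assignment $(b,\z_{V_{p-1}})$ the network receives two scalars, $\widehat\beta_p(b,\z_{V_{p-1}};\x_\sm)\in[0,1]$ and $\widehat h_p(\z_{V_{p-1}},b;\x_\sm)\in[0,\sA_p]$. Both lie in $[-\sA_p,\sA_p]$ because $\sA_p=S^{\sD-k-p}\ge1$.

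\textbf{Step 1 (product network).} Apply \Cref{lem:mult} with $r=2$, $\sA=\sA_p$ and the given $K$. This gives a network $\btheta_\times^{(2)}$ whose uniform error on $[-\sA_p,\sA_p]^2$ is at most $2^{-K}$. Its parameters are as follows.
\begin{itemize}
\item Depth: $\lceil\log_2 2\rceil\,\Delta_K^{(2)}=K+\lceil4\log_2\sA_p\rceil+10$.
\item Width: $O(1)$.
\item Sparsity: $\lesssim\wp_{2,\sA_p}$.
\item Magnitude: $\lesssim 4^4\sA_p^4\lesssim S^{4(\sD-k-p)}$.
\end{itemize}

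\textbf{Step 2 (projection).} Append the exact projection $\mathsf P_{[0,\sA_p]}$ from \eqref{eq:proj}, which costs one hidden layer and at most seven parameters. Define $\widehat u_p:=\mathsf P_{[0,\sA_p]}(\widetilde u_p)$. The exact product $\widehat\beta_p\widehat h_p$ lies in $[0,\sA_p]$, so \eqref{eq:projerr} shows the projection does not increase the error. This gives \eqref{eq:uprod-err} and also $\widehat u_p\in[0,S^{\sD-k-p}]$. The extra layer explains the constant $11$ in the depth bound.

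\textbf{Step 3 (sharing map and resource bookkeeping).} Let $\bv$ store one copy of the coordinates of $\btheta_\times^{(2)}$ together with the projection parameters, so $\Sws\lesssim\wp_{2,\sA_p}+5$. Define $\omega_p^u$ to copy entry $j$ of $\bv$ into entry $j$ of each of the $S^p$ parallel copies, as in \eqref{eq:omega-betaC}. Parallel placement of the copies is block-diagonal, so the total width is $O(S^p)$ and the depth is unchanged. Each weight position is filled by a single coordinate of $\bv$, so $|[\omega_p^u(\bv)-\omega_p^u(\bv')]_i|\le\|\bv-\bv'\|_\infty$ and $\Lw^u=1$. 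The magnitude bound is inherited from Step 1, since replication does not change parameter values.

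The case $p=\sD-k$ requires no network: $\widehat h_{\sD-k}=1$, so $\widehat u_{\sD-k}=\widehat\beta_{\sD-k}$ exactly.

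\textbf{Main obstacle.} The only delicate point is routing the inputs. Each copy must read $\widehat h_p$ at the index $(\z_{V_{p-1}},b)$ and $\widehat\beta_p$ at the same assignment. I would handle this with fixed $0/1$ selection weights in the first layer, folded into the fixed entries of $\omega_p^u$ exactly as in the earlier lemmas. These routing weights are independent of $\bv$, so they do not affect $\Sws$ or the Lipschitz constant $\Lw^u=1$.
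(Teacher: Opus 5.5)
Your proposal is correct and follows essentially the same route as the paper: a single two-input product network from \Cref{lem:mult} at range $\sA_p$, followed by the projection $\mathsf P_{\sA_p}$, replicated across the $S^p$ assignments by a coordinate-copying sharing map with $\Lw^u=1$, with the case $p=\sD-k$ handled trivially since $\widehat h_{\sD-k}=1$. Your explicit treatment of input routing by fixed selection weights is a detail the paper leaves implicit, and it does not change the argument or any of the stated bounds.
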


\begin{proof}
At $p=\sD-k$, $h_{\sD-k}\equiv\widehat h_{\sD-k}\equiv1$, so
$\widehat u_{\sD-k}=\widehat\beta_{\sD-k}$ with zero error and no
additional network layers. 

\smallskip\noindent
\emph{Construction.}\;
Fix $p\in\{1,\ldots,\sD-k{-}1\}$ and an assignment
$(b,\z_{V_{p-1}})\in[S]^p$.  The quantity to be
approximated is the binary product
\[
  \widehat\beta_p(b,\z_{V_{p-1}};\x_\sm)\;\cdot\;
  \widehat h_p(\z_{V_{p-1}},b;\x_\sm).
\]
Both inputs lie in $[0,\sA_p]$, where
$\sA_p=S^{\sD-k-p}$.  Apply \Cref{lem:mult} with $n=2$ and
$\sA=\sA_p$. This yields a single product network $\btheta_\times^{(2)}$ with two
inputs, that produces $\widetilde u_p$ with error at most $2^{-K}$.
Set the projection map (see \eqref{eq:proj})
\[
  \widehat u_p=\mathsf P_{\sA_p}(\widetilde u_p).
\]
The exact product belongs to $[0,\sA_p]$, so the projection does not
increase the error.  Thus $\widehat u_p\in[0,\sA_p]$ and
\eqref{eq:uprod-err} holds. One copy of this product network and projection is placed at each of
the $S^p$ assignments at step $p$.

\smallskip\noindent
\emph{Sharing map.}\;
For each step $p<\sD-k$, the parameter vector $\bv$ stores the
internal weights of the product network $\btheta_\times^{(2)}$ at
range $\sA_p$, including the fixed scaling factors, breakpoints, and
biases that make the ReLU network approximate a binary
product at that range, at most $\lesssim \wp_{2,\sA_p}$ coordinates
by~\eqref{eq:wp}.  It also stores the at most five nonzero
parameters of $\mathsf P_{\sA_p}$.  The sharing map
$\omega_p^{u}$ replicates these across all $S^p$ assignments:
\begin{equation}\label{eq:omega-uprod}
  [\omega_p^{u}(\bv)]_{\text{assignment }i,\;
    \text{entry }j}
  :=\;v_{(p,\,j)},
  \qquad i=1,\ldots,S^p.
\end{equation}
The assignment index $i$ does not affect which coordinate
of $\bv$ is read.

\smallskip\noindent
\emph{Depth.}\;
By \Cref{lem:mult} at $n=2$ and range $\sA_p$, the product network
has depth
\[
  \sL_{u,p}
  \;\lesssim \;\lceil\log_2 2\rceil\,
    \bigl(K+\lceil 2\cdot2\log_2\sA_p\rceil+5\cdot2\bigr)
  \;=\;K+\lceil 4\log_2\sA_p\rceil+11,
\]
largest at $p=1$ and smallest at $p=\sD-k{-}1$.  The added hidden
layer accounts for $\mathsf P_{\sA_p}$.

\smallskip\noindent
\emph{Width.}\;
Each assignment contributes $18\cdot2=36$ neurons (the
width of a single binary multiplier from \Cref{lem:mult}).
Step $p$ has $S^p$ assignments, giving
\[
  \sW_{u,p}\lesssim \,S^p.
\]

\smallskip\noindent
\emph{Effective sparsity.}\;
All $S^p$ copies at step $p$ read the same coordinates of
$\bv$ via~\eqref{eq:omega-uprod}, at most
$\wp_{2,\sA_p}+5$ of them by~\eqref{eq:wp} at $\sA=\sA_p$,
including the projection parameters:
\[
  {\Sws}_{u,p}\;\lesssim \;\wp_{2,\sA_p}+5.
\]
This grows only logarithmically in $S$, through
$\log_2(8\sA_p)=3+(\sD-k{-}p)\log_2 S$.

\smallskip\noindent
\emph{Magnitude.}\;
By \Cref{lem:mult} at $n=2$ and range $\sA_p$, the product network
satisfies
\[
  \bigl|\btheta_\times^{(2)}\bigr|_\infty
  \;\lesssim\;\,\sA_p^{2\cdot2}
  \;=\;S^{4(\sD-k-p)},
\]
giving $\sB_{u,p}\le 2^{12}S^{4(\sD-k-p)}$.

\smallskip\noindent
\emph{Lipschitz constant.}\;
The sharing map~\eqref{eq:omega-uprod} reads a single
coordinate $v_{(p,j)}$ at each weight position, with no
products of coordinates.  For any $\bv,\bv'$:
\[
  \bigl|[\omega_p^{u}(\bv)]_{i,j}
    -[\omega_p^{u}(\bv')]_{i,j}\bigr|
  \;=\;|v_{(p,j)}-v'_{(p,j)}|
  \;\le\;\|\bv-\bv'\|_\infty,
\]
so $\Lw^{u}=1$.

\smallskip\noindent
\emph{Error.}\;
At $p=\sD-k$, set $\widehat u_{\sD-k}=\widehat\beta_{\sD-k}$.  Since
  $\widehat h_{\sD-k}\equiv1$, this is exact, with
  $\varepsilon_{\sD-k}^{\mathrm{mult}}=0$ and no additional
  network.  At $p<\sD-k$, \Cref{lem:mult} and the nonexpansiveness
  of the projection guarantee
\[
  |\widehat\beta_p\,\widehat h_p-\widehat u_p|
  \;\le\;2^{-K}
\]
at each assignment.  Taking the supremum over all
$(b,\z_{V_{p-1}})\in[S]^p$ and $\x\in[S]^{\sD}$
gives~\eqref{eq:uprod-err}.
\end{proof}

\begin{lemma}[Error propagation through successive sums]
\label{lem:error}
  Fix $\sm\subseteq[\sD]$ with $|\sm|=k\ge d{+}1$, and set
  $r=\sD-k$.  Let $\psi_{\sm'}$ be as in~\eqref{eq:qMRF}.
  For each step $p=1,\ldots,r$, let $\widehat\beta_p\in[0,1]$
  approximate $\beta_p$ in~\eqref{eq:elim_rec}, and let
  $\widehat u_p$ approximate $\widehat\beta_p\widehat h_p$, with
  \[
    \sup_{\z_{V_p},\,\x}
      |\beta_p-\widehat\beta_p|
    \;\le\;\varepsilon_p^{\beta},
    \qquad
    \sup_{\z_{V_p},\,\x}
      |\widehat\beta_p\,\widehat h_p-\widehat u_p|
    \;\le\;\varepsilon_p^{\mathrm{mult}},
    \qquad p=1,\ldots,r.
  \]
  Define the approximate partial sums by
  \[
    \widehat h_r\equiv1,
    \qquad
    \widetilde h_{p-1}
    =\sum_{b\in[S]}\alpha_p\widehat u_p,
    \qquad
    \widehat h_{p-1}
    =\mathsf P_{S^{r-p+1}}(\widetilde h_{p-1}),
  \]
  for $p=r,\ldots,1$, and set $\widehat M_\sm=\widehat h_0$.
  Since $\widehat h_r\equiv1$, set
  $\widehat u_r=\widehat\beta_r$ and
  $\varepsilon_r^{\mathrm{mult}}=0$.
  If the summation matrix $\mathbf{C}_p$ of~\eqref{eq:matrix_form} is
  applied exactly at every step, then
  \begin{equation}\label{eq:err-closed}
    \sup_{\x}\,|M_\sm(\x)-\widehat M_\sm(\x)|
    \;\le\;
    S^r\sum_{j=1}^{r}\varepsilon_j^{\beta}
    \;+\;\sum_{j=1}^{r-1}S^j\,\varepsilon_j^{\mathrm{mult}}.
  \end{equation}
  In particular, if
  $\varepsilon_j^{\beta}\le\varepsilon^{\beta}$ for all $j$
  and $\varepsilon_j^{\mathrm{mult}}\le
  \varepsilon^{\mathrm{mult}}$ for all $j<r$, then
  \[
    \sup_{\x}\,|M_\sm-\widehat M_\sm|
    \;\le\;
    rS^r\varepsilon^{\beta}
    \;+\;2S^{r-1}\varepsilon^{\mathrm{mult}}.
  \]
\end{lemma}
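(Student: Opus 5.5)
Write $e_p:=\sup_{\z_{V_p},\x}|h_p-\widehat h_p|$ and argue by downward induction on $p$, from $p=r$ down to $p=0$. The base case is $e_r=0$, because $h_r\equiv\widehat h_r\equiv1$. The recursion will be driven by the exact identity $h_{p-1}=\mathbf C_p u_p$ from \eqref{eq:matrix_form}. Here $u_p=\beta_p h_p$, and each row of $\mathbf C_p$ has $S$ nonzero entries $\alpha_p(b,\cdot)\in[0,1]$ by \Cref{lem:contract}(i).

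\textbf{One-step bound.} Compare $\widetilde h_{p-1}=\mathbf C_p\widehat u_p$ with $h_{p-1}$. The true value $h_{p-1}$ is a sum over $S^{r-p+1}$ configurations of products of potentials in $[0,1]$, so $h_{p-1}\in[0,S^{r-p+1}]$. The projection $\mathsf P_{S^{r-p+1}}$ is therefore nonexpansive by \eqref{eq:projerr}, and $e_{p-1}\le\sup|h_{p-1}-\widetilde h_{p-1}|\le S\sup|u_p-\widehat u_p|$. Split the right-hand side as
\[
|\beta_ph_p-\widehat u_p|\le|\beta_p-\widehat\beta_p|\,h_p+\widehat\beta_p\,|h_p-\widehat h_p|+|\widehat\beta_p\widehat h_p-\widehat u_p|.
\]
Using $h_p\le S^{r-p}$ and $\widehat\beta_p\in[0,1]$, this gives
\[
e_{p-1}\le S^{r-p+1}\varepsilon_p^\beta+S e_p+S\varepsilon_p^{\mathrm{mult}}.
\]
At $p=r$ the last term vanishes, since $\varepsilon_r^{\mathrm{mult}}=0$.

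\textbf{Unrolling.} Iterate the one-step bound from $e_r=0$ down to $p=0$:
\[
e_0\le\sum_{p=1}^r S^{p-1}\bigl(S^{r-p+1}\varepsilon_p^\beta+S\varepsilon_p^{\mathrm{mult}}\bigr)=S^r\sum_{p=1}^r\varepsilon_p^\beta+\sum_{p=1}^{r-1}S^p\varepsilon_p^{\mathrm{mult}}.
\]
This is exactly \eqref{eq:err-closed}, since $\widehat M_\sm=\widehat h_0$ and $M_\sm=h_0$ by \Cref{prop:elim_correct}.

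\textbf{Uniform-error corollary.} With constant errors $\varepsilon^\beta$ and $\varepsilon^{\mathrm{mult}}$, the first sum is $rS^r\varepsilon^\beta$. The second is a geometric sum, $\sum_{j=1}^{r-1}S^j\le S^{r-1}\cdot\frac{S}{S-1}\le 2S^{r-1}$ because $S\ge2$.

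\textbf{Main obstacle.} The step needing care is the bookkeeping of index ranges. The bound $h_p\le S^{r-p}$ must be matched to the projection ranges $\mathsf P_{S^{r-p+1}}$ so that projecting $\widetilde h_{p-1}$ genuinely does not increase the error. The powers of $S$ must also be tracked exactly, so that the $\beta$-error term telescopes to the uniform factor $S^r$ rather than picking up an extra factor of $S$.
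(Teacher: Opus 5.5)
Your proof is correct and follows the paper's argument. You use the same downward recursion $e_{p-1}\le S e_p+S^{r-p+1}\varepsilon_p^{\beta}+S\varepsilon_p^{\mathrm{mult}}$, obtained from the nonexpansive projection and the $[0,1]$ entries of $\mathbf C_p$, unroll it from $e_r=0$, and then apply the geometric bound $\sum_{j=1}^{r-1}S^j\le 2S^{r-1}$. The only cosmetic difference is in how you split $\beta_ph_p-\widehat u_p$: you pair the errors with $h_p$ and $\widehat\beta_p$, so you need only the exact bound $h_p\le S^{r-p}$, whereas the paper pairs them with $\beta_p$ and $\widehat h_p$ and relies on the projection to bound $\widehat h_p$.
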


\begin{proof}
Write
$e_p:=\sup_{\z_{V_p},\x}|h_p-\widehat h_p|$, where $h_p$
is the exact partial sum in~\eqref{eq:elim_rec}, and define
\[
  E_p
  :=
  S^{r-p}\sum_{j=p+1}^{r}\varepsilon_j^{\beta}
  +\sum_{j=p+1}^{r}S^{j-p}\varepsilon_j^{\mathrm{mult}},
  \qquad p=0,1,\ldots,r,
\]
so that $E_r=0$.  Since $h_r\equiv\widehat h_r\equiv1$, we
have $e_r=0$.  Moreover, $h_p$ is a sum of at most $S^{r-p}$
products of numbers in $[0,1]$, and therefore
\[
  0\le h_p\le S^{r-p}.
\]
The projection in the definition of $\widehat h_p$ consequently
does not increase the error and ensures
$0\le\widehat h_p\le S^{r-p}$.

\smallskip\noindent
\emph{One-step bound.}\;
The exact recursion from~\eqref{eq:elim_rec} and the approximate
recursion using the summation matrix $\mathbf{C}_p$
of~\eqref{eq:matrix_form} are
\[
  h_{p-1}
  =\sum_{b\in[S]}\alpha_p\,\beta_p\,h_p,
  \qquad
  \widetilde h_{p-1}
  =\sum_{b\in[S]}\alpha_p\,\widehat u_p.
\]
Subtracting and decomposing $\beta_ph_p-\widehat u_p$ into
three pieces:
\[
  \beta_ph_p-\widehat u_p
  \;=\;
  \underbrace{\beta_p(h_p-\widehat h_p)}_{\text{partial-sum error}}
  \;+\;
  \underbrace{(\beta_p-\widehat\beta_p)\,\widehat h_p}_{
    \beta\text{ error}}
  \;+\;
  \underbrace{\widehat\beta_p\,\widehat h_p-\widehat u_p}_{
    \text{product error}}.
\]
By \Cref{lem:contract}(i), the entries $\alpha_p$ of
$\mathbf{C}_p$ lie in $[0,1]$.  Moreover,
$|\beta_p|\le1$ and $|\widehat h_p|\le S^{r-p}$.  Since
$h_{p-1}\in[0,S^{r-p+1}]$, the final projection cannot increase
$|h_{p-1}-\widetilde h_{p-1}|$.  Taking suprema and summing the
$S$ terms therefore gives
\begin{equation}\label{eq:err-rec}
  e_{p-1}
  \;\le\;
  S\,e_p
  \;+\;S^{r-p+1}\,\varepsilon_p^{\beta}
  \;+\;S\,\varepsilon_p^{\mathrm{mult}}.
\end{equation}

\smallskip\noindent
\emph{Induction.}\;
We prove by downward induction on $p=r,r{-}1,\ldots,0$ that
$e_p\le E_p$.  At $p=r$, both sides are zero.

\smallskip\noindent
\emph{Inductive step.}\;
Assume $e_p\le E_p$.  The one-step bound~\eqref{eq:err-rec}
and a direct calculation give
\[
  S\,E_p+S^{r-p+1}\,\varepsilon_p^{\beta}
    +S\,\varepsilon_p^{\mathrm{mult}}
  \;=\;
  S^{r-p+1}\sum_{j=p}^{r}\varepsilon_j^{\beta}
  +\sum_{j=p}^{r}S^{j-p+1}\varepsilon_j^{\mathrm{mult}}
  \;=\;E_{p-1},
\]
so $e_{p-1}\le E_{p-1}$. At $p=0$, the induction gives
\[
  e_0
  \;\le\;E_0
  \;=\;S^r\sum_{j=1}^{r}\varepsilon_j^{\beta}
  +\sum_{j=1}^{r}S^j\varepsilon_j^{\mathrm{mult}}
  \;=\;S^r\sum_{j=1}^{r}\varepsilon_j^{\beta}
  +\sum_{j=1}^{r-1}S^j\varepsilon_j^{\mathrm{mult}},
\]
using $\varepsilon_r^{\mathrm{mult}}=0$.  By
\Cref{prop:elim_correct}, $h_0=M_\sm$ and
$\widehat h_0=\widehat M_\sm$, which
gives~\eqref{eq:err-closed}.

\noindent
When uniform bounds are available on $\varepsilon_j^{\beta}$ and $\varepsilon^{\mathrm{mult}}_j$, we have 
$$S^r\sum_{j=1}^r \varepsilon_j^{\beta}+\sum_{j=1}^{r-1}S^j\varepsilon_j^{\mathrm{mult}}\le rS^r\varepsilon^{\beta}+\varepsilon^{\mathrm{mult}}\frac{S^r-S}{S-1}.$$
Substituting the above into
\eqref{eq:err-closed} and using $S^r-S\le 2S^{r-1}(S-1)$ for $S\ge 2$ gives the stated uniform bound.
\end{proof}

\subsection{Approximation of $g_t$}
\label{app:approx_g}
\begin{lemma}[Approximation of $\alpha_t$]\label{lem:NN-alpha}
  Let $S\ge2$, $0<\varepsilon<1$, and
  $\alpha_t=\frac{1-e^{-t}}{1+(S-1)e^{-t}}$.  There exists a
  network parameter
  $\btheta_{\alpha}\in\bTheta_{1,1}(\sL,\sW,\sS,\sB)$ with
  \[
    \sL=O\bigl(\log^2(S/\varepsilon)\bigr),
    \qquad
    \sW=O\bigl(\log^3(S/\varepsilon)\bigr),
    \qquad
    \sS=O\bigl(\log^4(S/\varepsilon)\bigr),
    \qquad
    \sB=e^{O(\log^2(S/\varepsilon))},
  \]
  such that
  \[
    \bigl|\sN_\rho(t'\mid\btheta_{\alpha})-\alpha_t\bigr|
    \;\le\;
    \varepsilon+\frac{17\,S^3}{\varepsilon^{2}}\,|t'-t|,
    \qquad\text{for all }t,t'\in\bR_{>0}.
  \]
  Moreover, $\sN_\rho(t'\mid\btheta_{\alpha})\in[0,1]$ for every $t'\in\bR_{>0}$.
\end{lemma}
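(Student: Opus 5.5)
The plan is to write $\alpha_t$ as a composition of the two elementary networks already available, the exponential network of \Cref{lemma: exp} and the reciprocal network of \Cref{lemma: reciprocal}. Set $y=e^{-t}\in(0,1)$. Then
\[
\alpha_t=\frac{1-y}{1+(S-1)y}=\frac{1}{S-1}\Bigl(\frac{S}{1+(S-1)y}-1\Bigr).
\]
So it suffices to approximate $y$, form the affine map $u=1+(S-1)y\in[1,S]$, take a reciprocal, and apply a final affine map. This decomposition is preferable to multiplying $(1-y)$ by $1/(1+(S-1)y)$ directly, because it avoids a product network and keeps all the error propagation linear.

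The steps are as follows.
\begin{enumerate}
\item Apply \Cref{lemma: exp} with accuracy $\varepsilon_1=\varepsilon/(4S^2)$. This gives $\hat y$ with $|\hat y-e^{-t}|\le\varepsilon_1+|t'-t|$.
\item Clip $\hat y$ to $[0,1]$ with the exact projection $\mathsf P_{[0,1]}$ from \eqref{eq:proj}, which cannot increase the error by \eqref{eq:projerr}. Then set $\hat u=1+(S-1)\hat y\in[1,S]$, so that $|\hat u-u|\le S(\varepsilon_1+|t'-t|)$.
\item Apply \Cref{lemma: reciprocal} with $\varepsilon_2=\min\{\varepsilon/(4S),1/S\}$. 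This choice guarantees $u\in[1,S]\subseteq[\varepsilon_2,\varepsilon_2^{-1}]$, and yields
\[
|\widehat{1/u}-1/u|\le\varepsilon_2+|\hat u-u|/\varepsilon_2^2 .
\]
\item Output $\mathsf P_{[0,1]}\bigl((S\,\widehat{1/u}-1)/(S-1)\bigr)$. The final projection is harmless because $\alpha_t\in[0,1)$, and it gives the stated range property $\sN_\rho(t'\mid\btheta_\alpha)\in[0,1]$ for every $t'>0$.
\end{enumerate}

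For the error accounting, the last affine map multiplies errors by $S/(S-1)\le2$. The total error is therefore at most
\[
2\varepsilon_2+2S\bigl(\varepsilon_1+|t'-t|\bigr)\varepsilon_2^{-2}.
\]
With $\varepsilon_2\asymp\varepsilon/S$, the constant part is $\le\varepsilon/2+2S\varepsilon_1\cdot16S^2/\varepsilon^2$. That term needs $\varepsilon_1\lesssim\varepsilon^3/S^3$, not $\varepsilon/S^2$, so I will take $\varepsilon_1=\varepsilon^3/(64S^3)$. The time-Lipschitz part is then $2S\cdot16S^2\varepsilon^{-2}|t'-t|=32S^3\varepsilon^{-2}|t'-t|$.

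Matching the claimed constant $17$ will require tightening the slack. One option is to use $S/(S-1)$ rather than the crude bound $2$. Another is to choose $\varepsilon_2=\varepsilon/(2S)$ together with the sharper bound $|\hat u-u|\le(S-1)|\hat y-y|$. I will tune these choices so that $S\cdot(S-1)\cdot(1/(S-1))\cdot4S^2\cdot(\text{slack})\le17S^3$; this bookkeeping is the one fiddly step.

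The size bounds are bookkeeping under series composition of the networks. The exponential network contributes depth and sparsity of order $\log^2(1/\varepsilon_1)=O(\log^2(S/\varepsilon))$ and weight magnitude $e^{O(\log^2(S/\varepsilon))}$. The reciprocal network contributes depth $O(\log^2(S/\varepsilon))$, width $O(\log^3(S/\varepsilon))$, sparsity $O(\log^4(S/\varepsilon))$ and magnitude $O(S^2/\varepsilon^2)$. The projections and affine maps add $O(1)$ layers and parameters, with weights at most $S$. The concatenation itself is standard, for instance identity padding as in \cite[Lemma F.2]{suzuki023diffusion}. Summing these contributions gives the stated $\sL,\sW,\sS,\sB$.

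The main obstacle is verifying the hypotheses of \Cref{lemma: reciprocal}. Its guarantee holds only for $x\in[\varepsilon_2,\varepsilon_2^{-1}]$, and it is the true argument $u$ that must lie there. This is why I choose $\varepsilon_2\le1/S$, and why the projection in step 2 is placed before forming $\hat u$: it keeps the network input bounded, so the Lipschitz-type error term applies globally in $t'$.
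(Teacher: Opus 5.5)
Your proof is correct, and it differs from the paper's in one genuine respect. The paper writes $\alpha_t=(1-u)\cdot\frac{1}{1+(S-1)u}$ with $u=e^{-t}$. It carries the $1-\widehat u$ branch alongside the reciprocal branch, projects both onto $[0,1]$, and multiplies them with the binary product network of \Cref{lem:mult}. The error is then $|1/v-\mathsf P_1(\widehat w)|+|y-\mathsf P_1(\widehat y)|$ plus the $2^{-K}$ product error, and the two branch errors together give the coefficient $16S^3/\varepsilon^2+1\le17S^3/\varepsilon^2$.

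Your identity $\alpha_t=\frac{1}{S-1}\bigl(\frac{S}{1+(S-1)y}-1\bigr)$ makes $\alpha_t$ an affine function of the reciprocal. The product network, its precision parameter, and the second branch therefore all disappear, and the error propagates linearly.

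The constant bookkeeping you left open does close. Take $|\widehat u-u|\le(S-1)\delta$ with $\delta=\varepsilon_1+|t'-t|$, and use the exact factor $S/(S-1)$. The total error is then at most $\frac{S}{S-1}\varepsilon_2+S\varepsilon_2^{-2}\delta$. With $\varepsilon_2=\varepsilon/(4S)$ and $\varepsilon_1=\varepsilon^3/(64S^3)$, the time coefficient is $16S^3/\varepsilon^2\le17S^3/\varepsilon^2$. The constant part is $\frac{\varepsilon}{4(S-1)}+\frac{16S^3\varepsilon_1}{\varepsilon^2}\le\frac{\varepsilon}{2}$. Your $\varepsilon_2=\varepsilon/(2S)$ variant works as well and even gives coefficient $4S^3/\varepsilon^2$.

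The size bounds are unchanged in order, since both constructions are dominated by the exponential and reciprocal networks. The paper's template is more generic, handling a ratio of two arbitrary approximated quantities. Yours exploits the specific linear-fractional form of $\alpha_t$ and is slightly leaner.

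One small correction: the projection in your step 2 is harmless but unnecessary, and your stated reason for it is off. \Cref{lemma: reciprocal} already allows an arbitrary network input $x'\in\bR$; only the true argument $u\in[1,S]$ must lie in $[\varepsilon_2,\varepsilon_2^{-1}]$, as the paper notes.
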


\begin{proof}
Write $u=e^{-t}$, $y=1-u$, and $v=1+(S-1)u$, so that
$\alpha_t=y\cdot(1/v)$.  For every $t>0$ we have $u\in(0,1)$,
$y\in(0,1)$, and $v\in(1,S)$, hence $1/v\in(1/S,1)$.  Fix
\[
  \varepsilon_1:=\frac{\varepsilon^3}{256\,S^3},
  \qquad
  \varepsilon_2:=\frac{\varepsilon}{4S},
  \qquad
  K:=\Bigl\lceil\log_2\frac{8}{\varepsilon}\Bigr\rceil.
\]

\smallskip\noindent
\emph{Construction.}\;
The network is a composition of five steps. Apply \Cref{lemma: exp} at accuracy $\varepsilon_1$ to the
input $t'>0$, producing $\widehat u$ with
\[
  |\widehat u-u|\;\le\;\varepsilon_1+|t'-t| =: \delta.
\]
One linear layer forms the two branches
$\widehat y=1-\widehat u$ and $\widehat v=1+(S-1)\widehat u$,
with $|\widehat y-y|\le\delta$ and
$|\widehat v-v|\le(S-1)\delta$.

\smallskip\noindent
Apply \Cref{lemma: reciprocal} at accuracy $\varepsilon_2$ to the
input $\widehat v$, producing $\widehat w$ (intended to approximate $1/v$). The true value
satisfies $v\in(1,S)\subseteq[\varepsilon_2,\varepsilon_2^{-1}]$,
since $\varepsilon_2\le1/S$ and $\varepsilon_2^{-1}=4S/\varepsilon\ge S$.
The input $\widehat v$ may fall anywhere in $\bR$, which the
lemma permits.  Hence
\[
  \bigl|\widehat w-\tfrac1v\bigr|
  \;\le\;\varepsilon_2+\frac{|\widehat v-v|}{\varepsilon_2^{2}}
  \;\le\;\frac{\varepsilon}{4}
  +\frac{16\,S^2(S-1)}{\varepsilon^{2}}\,\delta.
\]
The branch $\widehat y$ passes through this step
unchanged (by the standard positive/negative part representation; e.g.~\cite[Section B.1.1]{nakada2020adaptive}).

\smallskip\noindent
Next we project both branches onto $[0,1]$ by $\mathsf P_1$ (see \eqref{eq:proj}). The true values $y$ and $1/v$ lie in $[0,1]$, and projection onto an interval containing the true value does not increase the
distance to it, so the projected branches carry the same error
bounds.  Both projected values lie in $[0,1]$.  Apply
\Cref{lem:mult} with $r=2$ and $\sA=1$. This yields the product network
$\btheta_\times^{(2)}$ which produces a scalar $N$ with
\[
  \bigl|N-\mathsf P_1(\widehat y)\,
    \mathsf P_1(\widehat w)\bigr|\;\le\;2^{-K}.
\]

\smallskip\noindent
The final layer applies $\mathsf P_1$ to $N$.  Thus the network output belongs to $[0,1]$.

\smallskip\noindent
\emph{Error.}\;
Since $|y|\le1$, $|\hat{y}-y|\le\delta$, and $|\mathsf P_1(\widehat w)|\le1$,
\[
  \bigl|y\cdot\tfrac1v-\mathsf P_1(\widehat y)\,
    \mathsf P_1(\widehat w)\bigr|
  \;\le\;
  \bigl|\tfrac1v-\mathsf P_1(\widehat w)\bigr|
  +\bigl|y-\mathsf P_1(\widehat y)\bigr|
  \;\le\;
  \frac{\varepsilon}{4}
  +\Bigl(\frac{16\,S^3}{\varepsilon^{2}}+1\Bigr)\delta
  \;\le\;
  \frac{\varepsilon}{4}
  +\frac{17\,S^3}{\varepsilon^{2}}\,\delta.
\]

\noindent Adding the product error and substituting
$\delta=\varepsilon_1+|t'-t|$, with
\[
  \frac{17S^3}{\varepsilon^2}\,\varepsilon_1
  =
  \frac{17\varepsilon}{256}
  \le
  \frac{\varepsilon}{8},
  \qquad
  2^{-K}\le\frac{\varepsilon}{8},
\]
gives
\[
  |N-\alpha_t|
  \le
  \frac{\varepsilon}{2}
  +
  \frac{17S^3}{\varepsilon^2}|t'-t|.
\]
Since $\alpha_t\in[0,1]$ and $\mathsf P_1$ is the projection onto
$[0,1]$, we have $|\mathsf P_1(N)-\alpha_t|
  \le
  |N-\alpha_t|$. 
Therefore the claimed error bound holds, and the network output
belongs to $[0,1]$.

\smallskip\noindent
\emph{Parameter accounting.}\;
The steps run in sequence, so depths add and the width is the
maximum over steps. Therefore the depth is given by
$$O(\log^2(1/\varepsilon_1))+O(\log^2(1/\varepsilon_2))
+(K+10)+O(1)=O(\log^2(S/\varepsilon)).$$ For width, the reciprocal
step dominates with
$O(\log^3(1/\varepsilon_2))=O(\log^3(S/\varepsilon))$ neurons, while the
exponential step has width $O(\log(S/\varepsilon))$, the
product network, the projection, and the identity steps have width $O(1)$. The sparsity is given by adding up the sparsities from the above networks, i.e., 
$O(\log^2(1/\varepsilon_1))+O(\log^4(1/\varepsilon_2))
+O(\log^2(S/\varepsilon))+O(\wp_{2,1})=O(\log^4(S/\varepsilon))$. Using the fact that
$\wp_{2,1}=O(K)$ from~\eqref{eq:wp}, the conclusion follows. Finally the largest absolute weight magnitude comes from the exponential map approximation step and is given by $e^{O(\log^2(1/\varepsilon_1))}=e^{O(\log^2(S/\varepsilon))}$.
\end{proof}

\begin{lemma}[Coefficients for the level averages]
\label{lem:level_coefficients}
  For $a\in[0,1]$ and $k=0,\ldots,\sD$, let
  \[
    p_k(a):=\binom{\sD}{k}(1-a)^k a^{\sD-k}.
  \]
  Let $0<\delta\le1$ and set
    $K_\delta
    :=
    \left\lceil
      \log_2\frac{\sD(\sD+3)}{\delta}
    \right\rceil$.
  There exists a ReLU network
  $\btheta_{\mathrm{coef}}\in
  \bTheta_{1,\sD+1}(\sL^{\mathrm{coef}},
    \sW^{\mathrm{coef}},\sS^{\mathrm{coef}},
    \sB^{\mathrm{coef}})$ whose outputs
  $\widehat p_0(a),\ldots,\widehat p_\sD(a)$ belong to $[0,1]$
  and satisfy
  \begin{equation}\label{eq:level-coeff-err}
    \sum_{k=0}^{\sD}
    |p_k(a)-\widehat p_k(a)|
    \le
    \delta,
    \qquad a\in[0,1].
  \end{equation}
  The network parameters satisfy
  \[
    \sL^{\mathrm{coef}}
    =O(\sD(K_\delta+12)),
    \qquad
    \sW^{\mathrm{coef}}
    =O(\sD),
 \qquad
    \sS^{\mathrm{coef}}
    =
    O(\sD^2(K_\delta+1)),
    \qquad
    \sB^{\mathrm{coef}}
    =O(1).
  \]
  Moreover, for
  $a,a'\in[0,1]$,
  \begin{equation}\label{eq:level-coeff-stability}
    \sum_{k=0}^{\sD}
    |p_k(a)-\widehat p_k(a')|
    \le
    2\sD|a-a'|+\delta.
  \end{equation}
\end{lemma}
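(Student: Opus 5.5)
The plan is to compute the two powers $(1-a)^j$ and $a^{j}$ for $j=0,\ldots,\sD$ by a chain of binary multiplications, and then form each $p_k$ as a fixed constant times one further binary product. Set $\widehat P_0=\widehat Q_0=1$, $\widehat P_1=1-a$ and $\widehat Q_1=a$, which are exact affine maps of the input. For $j\ge1$, define
\[
  \widehat P_{j+1}=\mathsf P_1\bigl(\times(\widehat P_j,1-a)\bigr),
  \qquad
  \widehat Q_{j+1}=\mathsf P_1\bigl(\times(\widehat Q_j,a)\bigr),
\]
where $\times$ denotes the two-input multiplier of \Cref{lem:mult} with $r=2$, $\sA=1$ and accuracy $2^{-K_\delta}$. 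The input $a$ and the value $1-a$ are carried forward through the layers by identity pass-through, which is valid since both are nonnegative. Finally put
\[
  \widehat p_k=\mathsf P_1\Bigl(\tbinom{\sD}{k}\,\times(\widehat P_k,\widehat Q_{\sD-k})\Bigr)
\]
for each $k=0,\ldots,\sD$.

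\emph{Error.} The map $\mathsf P_1$ is nonexpansive toward $[0,1]$, and all factors lie in $[0,1]$. An induction therefore gives $|\widehat P_j-(1-a)^j|\le j2^{-K_\delta}$, and likewise $|\widehat Q_j-a^j|\le j2^{-K_\delta}$. It follows that $|\times(\widehat P_k,\widehat Q_{\sD-k})-(1-a)^ka^{\sD-k}|\le(\sD+1)2^{-K_\delta}$. Multiplying by $\binom{\sD}{k}$ and summing over $k$ costs a factor of up to $2^{\sD}$, which $K_\delta$ does not absorb.

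This step is the main obstacle. I would fix it by not multiplying by the binomial coefficient. Instead, I would use the de Casteljau/Pascal recursion for Bernstein polynomials,
\[
  p^{(j+1)}_k=(1-a)\,p^{(j)}_{k-1}+a\,p^{(j)}_k,
  \qquad p^{(0)}_0=1.
\]
This builds level $j+1$ from level $j$ using $2(j+1)$ binary multiplications with inputs in $[0,1]$, followed by a projection onto $[0,1]$. Let $E_j=\sum_k|p^{(j)}_k-\widehat p^{(j)}_k|$. Since $(1-a)+a=1$, the recursion is $\ell^1$-nonexpansive, so $E_{j+1}\le E_j+2(j+2)2^{-K_\delta}$. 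Summing gives $E_\sD\le\sD(\sD+3)2^{-K_\delta}\le\delta$, which matches the choice of $K_\delta$ exactly. This confirms the recursion is the intended route.

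\emph{Stability.} To prove \eqref{eq:level-coeff-stability}, I would split
\[
  \sum_k|p_k(a)-\widehat p_k(a')|\le\sum_k|p_k(a)-p_k(a')|+\sum_k|p_k(a')-\widehat p_k(a')|.
\]
The second sum is at most $\delta$ by \eqref{eq:level-coeff-err}. For the first, apply the same nonexpansive recursion to the exact Bernstein vectors at $a$ and $a'$. Each level adds at most $|(1-a)-(1-a')|+|a-a'|=2|a-a'|$ in $\ell^1$, because the level-$j$ vector has unit $\ell^1$ mass. Over $\sD$ levels this gives $2\sD|a-a'|$.

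\emph{Parameters.} There are $\sD$ levels, each of depth $K_\delta+O(1)$ from \Cref{lem:mult} with $r=2$, $\sA=1$, plus one projection layer; this gives $\sL^{\mathrm{coef}}=O(\sD(K_\delta+12))$. Each level has width $O(j)=O(\sD)$. The sparsity is $\sum_j O(j(K_\delta+1))=O(\sD^2(K_\delta+1))$, using $\wp_{2,1}=O(K_\delta)$. The multiplier weights are $O(1)$ at $\sA=1$, so $\sB^{\mathrm{coef}}=O(1)$.
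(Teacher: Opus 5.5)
Your final argument is the same as the paper's proof: the Pascal/de Casteljau recursion $p^{(j+1)}_k=(1-a)\,p^{(j)}_{k-1}+a\,p^{(j)}_k$, built from two-input multipliers of \Cref{lem:mult} at $\sA=1$ with a projection $\mathsf P_1$ at each level, an $\ell^1$-nonexpansive error recursion that sums to $\sD(\sD+3)2^{-K_\delta}\le\delta$, and a per-level $2|a-a'|$ stability increment, and it is correct. You were right to drop the power-chain attempt because of the $\binom{\sD}{k}$ amplification, and it does not affect the argument.
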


\begin{proof}
\emph{Construction.}\;
Set $p_0^{(0)}=1$ and $p_j^{(0)}=0$ for $j\ne0$.  For
$m=1,\ldots,\sD$, define
\begin{equation}\label{eq:level-coeff-rec}
  p_j^{(m)}(a)
  :=
  a\,p_j^{(m-1)}(a)
  +(1-a)\,p_{j-1}^{(m-1)}(a),
  \qquad j=0,\ldots,m,
\end{equation}
where terms corresponding to indices outside $\{0,\ldots,m-1\}$ are set to $0$. Induction
on $m$ gives
\[
  p_j^{(m)}(a)=\binom{m}{j}(1-a)^j a^{m-j},
\]
so the last level gives the required coefficients.

\noindent Initialize $\hat{p}_0^{(0)}=1$ and $\hat{p}_j^{(0)}=0$ for $j\neq 0$. The network forms $1-a$ exactly.  At level $m$, it applies
\Cref{lem:mult} with two inputs, $\sA=1$, and precision
$K_\delta$ to each of the two products on the right-hand side
of~\eqref{eq:level-coeff-rec}, with $\hat{p}_j^{(m-1)}(\cdot)$, $\hat{p}_{j-1}^{(m-1)}(\cdot)$ replacing $p_j^{(m-1)}$ and $p_{j-1}^{(m-1)}(\cdot)$ respectively. It adds the two approximate
products and applies $\mathsf P_1$.  All $m+1$ coordinates at a
fixed level are computed in parallel; the levels themselves are
computed successively.

\smallskip\noindent
\emph{Error.}\;
Write $\widehat p^{(m)}$ for the vector produced at level $m$ and put
\[
  e_m
  =
  \sum_{j=0}^{m}
  |p_j^{(m)}(a)-\widehat p_j^{(m)}(a)|.
\]
The linear map in~\eqref{eq:level-coeff-rec} does not increase the
$\ell^1$-norm: each input coordinate enters with total coefficient
$a+(1-a)=1$.  Each output coordinate uses two product networks,
each with error at most $2^{-K_\delta}$.  The exact update of a
vector in $[0,1]^{m}$ lies in $[0,1]^{m+1}$, so the final projection
does not increase its error.  Hence
\[
  e_m
  \le
  e_{m-1}+2(m+1)2^{-K_\delta}.
\]
Since $e_0=0$,
\[
  e_\sD
  \le
  \sD(\sD+3)2^{-K_\delta}
  \le
  \delta,
\]
which proves~\eqref{eq:level-coeff-err}.

For stability, applying~\eqref{eq:level-coeff-rec} at $a$ and
$a'$ and differencing the two, we get: 
\begin{align*}
    p_j^{(m)}(a)-p_j^{(m)}(a')&=a(p_j^{(m-1)}(a)-p_j^{(m-1)}(a'))+(1-a)(p_{j-1}^{(m-1)}(a)-p_{j-1}^{(m-1)}(a')) \\ &+(a-a')(p_{j}^{(m-1)}(a')-p_{j-1}^{(m-1)}(a')).
\end{align*}
Therefore by summing over $j$, the following holds:
\[
  \sum_{j=0}^{m}
  |p_j^{(m)}(a)-p_j^{(m)}(a')|
  \le
  \sum_{j=0}^{m-1}
  |p_j^{(m-1)}(a)-p_j^{(m-1)}(a')|
  +2|a-a'|.
\]
Starting from level zero yields
$\sum_j|p_j(a)-p_j(a')|\le2\sD|a-a'|$.  Combining this with
\eqref{eq:level-coeff-err} proves
\eqref{eq:level-coeff-stability}.

\smallskip\noindent
\emph{Parameter accounting.}\;
At level $m$ there are $2(m+1)$ binary product networks, so the
total number of copies is
\[
  2\sum_{m=1}^{\sD}(m+1)=\sD(\sD+3).
\]
By~\eqref{eq:wp}, each copy has depth $O(K_\delta+10)$, width $O(1)$,
sparsity $O(K_\delta+1)$, and magnitude at most $O(1)$.  The exact
sum and projection add at most two layers, two neurons per output,
and $O(1)$ nonzero parameters per output.  Propagating $a$ and
$1-a$ through the successive levels costs
$O(\sD(K_\delta+1))$ additional nonzero parameters.  Adding these
contributions gives the stated depth, width, sparsity, and magnitude
bounds.
\end{proof}

\begin{lemma}[Approximation of $g_t$]\label{lem:main2}
  Let $\psi_{\sm'}$ be as in~\eqref{eq:qMRF} and fix arbitrary $T>0$.
  For $t\in(0,T]$ denote
  $\alpha_t$ and
  $\mu_t$ as in \eqref{eq:score_uniform}.  Let $K$ be a positive
  integer and set
  \[
    \kappa_K
    =
    K+\sD\log_2(2S)+\log_2(e\sD).
  \]
  There exists a weight-sharing ReLU network as in
  \Cref{def:wsnn}, with sharing map $\omega$ and a single
  parameter vector $\bv\in\R^{\Sws}$, taking the input
  $(\x,t)\in[S]^{\sD}\times(0,T]$ and computing an
  approximation $g_t^{\mathrm{NN}}(\x)\in[0,1]$ of $g_t(\x)$,
  such that
  \begin{equation}\label{eq:main-err}
    \sup_{t\in(0,T]}\;\sup_{\x\in[S]^{\sD}}
    |g_t(\x)-g_t^{\mathrm{NN}}(\x)|
    \;\le\;
    C_d\sD^{d}\bigl(S^{\sD-d-1}+1\bigr)2^{-K}.
  \end{equation}
  The network parameters satisfy
\begin{equation}\label{eq:main-sws}
\begin{aligned}
  \Sws
  &\le
  C_d\left[
    \sD^dS^d
    +\sD^{3d+2}\kappa_K
    +\kappa_K^4
  \right],
  \qquad
  \sL\le C_d\sD^{2d+1}\kappa_K^2,
  \\
  \sW
  &\le
  C_d\sD^{2d+1}
  \left[
    S^d+(S+1)^{\sD-d-1}+\kappa_K^3
  \right],
  \qquad
  \sB\le \exp\!\left(C_d\sD^d\kappa_K^2\right),
  \qquad
  \Lw\le\sD^{d}.
\end{aligned}
\end{equation}
  Here and below, $C_d$ depends only on $d$ (not on $T$).  In particular, for
  every $0<\delta\le1$, by choosing 
  \begin{align}\label{eq:parach}
    K\equiv K_{\delta}
    :=
    \left\lceil
      \log_2\!\left(
        \frac{C_d\sD^{d-1}(S^{\sD-d-1}+1)}{\delta}
      \right)
    \right\rceil \;\; \mbox{and} \;\; \kappa\equiv \kappa_K:=C_d\log(e\sD S^\sD/\delta)
  \end{align}
  we get an uniform bound of atmost $\delta$ in \eqref{eq:main-err}.
\end{lemma}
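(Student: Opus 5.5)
We plan to build $g_t^{\mathrm{NN}}$ directly from the level form \eqref{eq:pinning_levels}: $g_t(\x)=\sum_{k=0}^{\sD}p_k(\alpha_t)\bar A_k(\x)$, where $p_k$ is as in \Cref{lem:level_coefficients}. The network has three parallel components, which are then combined.
\begin{enumerate}[label=(\alph*)]
\item A time branch. It applies \Cref{lem:NN-alpha} to produce $\widehat\alpha\in[0,1]$ from $t$, then feeds $\widehat\alpha$ into \Cref{lem:level_coefficients} to obtain coefficients $\widehat p_k\in[0,1]$. The input is exact ($t'=t$), so \eqref{eq:level-coeff-stability} gives $\sum_k|p_k(\alpha_t)-\widehat p_k|\le 2\sD\varepsilon_\alpha+\delta$.
\item A target branch. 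It produces approximations $\widehat A_\sm(\x_\sm)\in[0,1]$ for every $\sm\subseteq[\sD]$. We then average these within each level to obtain $\widehat{\bar A}_k$, which is a linear layer with weights $\binom{\sD}{k}^{-1}$.
\item A final bilinear step. It uses $\sD+1$ copies of the binary multiplier of \Cref{lem:mult} with $\sA=1$, computing $\widehat p_k\cdot\widehat{\bar A}_k$. The products are summed and passed through $\mathsf P_1$.
\end{enumerate}
Since $\bar A_k\in[0,1]$ and $\sum_k p_k=1$, the total error is at most
\[
\sum_k|p_k-\widehat p_k|+\max_k|\bar A_k-\widehat{\bar A}_k|+(\sD+1)2^{-K}.
\]
So it suffices to make each marginal error at most $C_d\sD^d(S^{\sD-d-1}+1)2^{-K}$. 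The time pieces are run at accuracy $2^{-K}$, and they cost $O(\kappa_K^2)$ depth and $O(\kappa_K^4)$ parameters through \Cref{lem:NN-alpha}, which accounts for the $\kappa_K^4$ term in $\Sws$.

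For the marginals, the case $|\sm|\le d$ is read off exactly from \Cref{lem:slices}, which also supplies all partially evaluated $\psi$-values. For $|\sm|=k>d$ we use \eqref{eq:marg_split}, $A_\sm=(\prod_{\sm'\subseteq\sm}\psi_{\sm'})\,M_\sm$, where the prefactor is a $\binom{k}{d}$-fold product handled by \Cref{lem:mult} at $\sA=1$. To compute $M_\sm$ we proceed in four steps.
\begin{enumerate}[label=(\roman*)]
\item Compute $\widehat\sH_U$ by \Cref{lem:aggregation}, which has error $2^{-K}$.
\item Compute $\widehat\beta_p$ by \Cref{lem:betaB}, or by \Cref{lem:betaC} when $\sD-k>d$; this gives error $\le(b_{\max}+1)2^{-K}\le C_d\sD^{d}2^{-K}$.
\item Compute $\widehat u_p$ by \Cref{lem:uprod}.
\item Apply the summation matrices $\mathbf C_p$ through the sharing map of \Cref{lem:contract}(iv), taking $\bv=v^\psi$ so that the matrices are exact and $\Lw\le\sD^d$.
\end{enumerate}
\Cref{lem:error} then gives
\[
|M_\sm-\widehat M_\sm|\le rS^r C_d\sD^d2^{-K}+2S^{r-1}2^{-K},\qquad r=\sD-k.
\]
This error is largest at $k=d+1$, where $r=\sD-d-1$. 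There the bound is of order $\sD^{d+1}S^{\sD-d-1}2^{-K}$, which we reconcile with \eqref{eq:main-err} by absorbing the extra factor of $\sD$: run all multipliers at precision $K+\log_2\sD$. The resulting change in $\kappa_K$ only alters constants. Finally, project onto $[0,1]$.

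\textbf{Main obstacle.} The hard part is the parameter accounting, in particular showing that $\Sws$ stays at $\sD^dS^d$ rather than growing like $2^{\sD}\sD^dS^d$ across all $2^{\sD}$ pinned sets. The plan is to have every $\sm$ draw its $\mathbf C_p$ entries from one global table vector indexed by $(U,a_U)$ with $|U|=d$, which has $\binom{\sD}{d}S^d$ entries in total. Since the map in \eqref{eq:omega-contract} depends only on $U$, different $\sm$ reuse the same coordinates, and the Lipschitz constant remains $\sD^d$. Likewise, the multiplier and projection parameters for a given step $p$ and range $\sA_p$ are shared across all $\sm$ with the same $r$, so \Cref{lem:betaC,lem:uprod} contribute only $\sum_{r\le\sD}\sum_p(\wp+5)=O(\sD^2\cdot\sD^{d}\kappa_K)$ shared coordinates; with the product blocks this gives the $\sD^{3d+2}\kappa_K$ term. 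Depth is obtained by summing over the $r$ steps, each of depth $\lesssim\sD^{d}\kappa_K$, and over the product blocks, giving $\sL\lesssim\sD^{2d+1}\kappa_K^2$. Width comes from running the $S^{r}\le S^{\sD-d-1}$ assignments for all $\sm$ in parallel; the stated $(S+1)^{\sD-d-1}$ factor follows from the binomial sum $\sum_k\binom{\sD}{k}S^{\sD-k}$. Magnitude is dominated by \Cref{lem:NN-alpha} and by the $S^{4r}$ weights of \Cref{lem:uprod}, both of which are at most $\exp(C_d\sD^d\kappa_K^2)$. The last remaining check is that padding the parallel branches to a common depth with identity layers does not alter the $\Sws$ count, since those identity weights are fixed entries inserted by $\omega$.
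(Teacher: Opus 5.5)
Your proposal is correct and follows essentially the same route as the paper's proof. Both use exact table evaluation via \Cref{lem:slices}, grouped products and $\beta_p$ via \Cref{lem:aggregation,lem:betaB,lem:betaC}, the shared summation matrices of \Cref{lem:contract} with error control from \Cref{lem:error}, and the prefactor product and level averages. Both then combine these with the time branch of \Cref{lem:NN-alpha,lem:level_coefficients} through $\sD+1$ binary multipliers, and both share all product-network parameters across pinned sets.

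Two small remarks. First, since $b_{\max}\le d\sD^{d-2}$, the paper obtains $|M_\sm-\widehat M_\sm|\le C_d\sD^{d-1}S^{\sD-k}2^{-K}$ directly, so your precision bump to $K+\log_2\sD$ is harmless but unnecessary. Second, the width factor needs the $(d+1)$-subset counting $\sum_{r\le\sD-d-1}\binom{\sD}{r}S^r\le\binom{\sD}{d+1}(S+1)^{\sD-d-1}$; the plain binomial theorem only gives $(S+1)^{\sD}$, which does not yield this bound when $S+1>\sD$.
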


\begin{proof}[Proof of \Cref{lem:main2}]
Throughout this proof, we will use $C_d$ to denote constants depending on $d$ that change from one line to another. Grouping the terms in~\eqref{eq:pinning} by $|\sm|$, write
\[
  g_t(\x)=\sum_{k=0}^{\sD}\mu_t^k\,\alpha_t^{\sD-k}
  \sum_{\substack{\sm\subseteq[\sD]\\|\sm|=k}}A_\sm(\x_\sm).
\]

The coefficients $\mu_t^k\alpha_t^{\sD-k}$ are the only
quantities in this expansion that depend on $t$.  The partial
marginals $A_\sm$ (see \eqref{eq:marg-def}) are built from the $\psi_{\sm'}$-tables alone,
so the construction splits into two subnetworks --- one
depending only on $\x$, producing $\widehat A_\sm(\x_\sm)$
for every set $\sm$ and their level (i.e.,~$\{\sm:|\sm|=k\}$) averages, and one
depending only on $t$, producing the coefficients.  When
$|\sm|\le d$, we construct neural networks to represent
$\widehat A_\sm=A_\sm$ exactly, by using \cref{lem:slices}, part (i).  For $|\sm|\ge d{+}1$, recall
from~\eqref{eq:marg_split} that
\[
  A_\sm(\x_\sm)
  =\underbrace{\prod_{\substack{\sm'\subseteq\sm\\|\sm'|=d}}
    \psi_{\sm'}(\x_{\sm'})}_{\Pi_\sm}
  \;\cdot\;
  \underbrace{\sum_{\z\in[S]^{\sD-k}}\prod_{\substack{\sm'\in\Fd\\
    \sm'\not\subseteq\sm}}
    \psi_{\sm'}(\x_{\sm'\cap\sm},\z_{\sm'\cap\sm^\com})}_{M_\sm(\x_\sm)},
\]
where $k=|\sm|$.  The sum $M_\sm$ is evaluated by the successive
one-coordinate sums in~\eqref{eq:elim_rec},
\[
  h_{p-1}(\z_{V_{p-1}};\x_\sm)
  =\sum_{b\in[S]}
    \alpha_p(b,\z_{V_{p-1}})\cdot
    \beta_p(b,\z_{V_{p-1}};\x_\sm)\cdot
    h_p(\z_{V_{p-1}},b;\x_\sm),
  \qquad h_{\sD-k}\equiv1,
\]
with $h_0=M_\sm$ by \Cref{prop:elim_correct}. With the above notation in mind, we will complete the rest of the proof in six steps. Throughout the proof, $\Sws$ counts the distinct coordinates of $\bv$.
The ordinary ReLU network of \Cref{lem:slices} is embedded using
the identity map on its parameters.  Whenever identical product
networks occur at several sets or assignments, their parameters
are stored once and placed at all of those positions by the sharing
map.  The proof records both the number of computational positions,
which affects width, and the number of distinct stored parameters.

\bigskip\noindent
\emph{(Stage 0) Table evaluation.}\;
Apply \Cref{lem:slices}.  Given $\x\in[S]^{\sD}$, the network
produces the stored marginals $A_\sm(\x_\sm)$ for
$1\le|\sm|\le d$, the values $\psi_{\sm'}(\x_{\sm'})$ for each
$\sm'\in\Fd$, and the partially evaluated values
$\psi_{\sm'}(\x_{\sm'\setminus U},\z_U)$ for
$1\le|U|\le d-1$. Set $\widehat A_\emptyset=A_\emptyset=1$.  The stored marginals
give $\widehat A_\sm=A_\sm$ for every $\sm$ with
$1\le|\sm|\le d$, with zero error.  The values
$\psi_{\sm'}(\x_{\sm'})$ enter the first product in
\eqref{eq:marg_split} at Stage~4, and the partially evaluated
values are the inputs to Stage~1.

\smallskip\noindent
For $\sm=[\sD]$, the target is
$A_{[\sD]}(\x)=\prod_{\sm'\in\Fd}\psi_{\sm'}(\x_{\sm'})$, a product of
$\binom{\sD}{d}$ values in $[0,1]$.  Apply \Cref{lem:mult} with
$r=\binom{\sD}{d}$ and $\sA=1$, and then apply $\mathsf P_1$,
to produce $\widehat A_{[\sD]}\in[0,1]$ with error at most
$2^{-K}$.  Its parameters are included with the Stage~4 product
networks below.

\begin{center}
\renewcommand{\arraystretch}{1.25}\small
\begin{tabular}{@{}l@{\hskip 50pt}r@{}}
  \toprule
  Depth $\sL$ & $\lesssim 1$ \\
  Width $\sW$ & $\lesssim d\sD^dS^d$ \\
  Effective sparsity $\Sws$
    & $\lesssim (2^d+3d+4)\sD^d S^d$ \\
  &
  {\small
  \shortstack[r]{
    ($\psi_{\sm'}$-tables, marginal tables, and evaluation\\
    constants, all with $\omega=\mathrm{id}$)
  }}
  \\
  Magnitude $\sB$ & $\lesssim \max(S,d)$ \\
  Lipschitz $\Lw$ & $\lesssim 1$ \\
  \bottomrule
\end{tabular}
\end{center}
\bigskip\noindent
\emph{(Stage 1) Grouped products.}\;
If $d=1$, this stage is empty.  Suppose $d\ge2$.  For each set $\sm$ with $\sm\le d$, $\hat{A}_m(\cdot)$s have been constructed already in stage 0 and they are simply carried forward. For 
$\sm$ with $d+1\le|\sm|\le\sD-1$, the partially evaluated table
values from Stage~0 are multiplied into approximations
$\widehat\sH_U(\z_U;\x_\sm)\in[0,1]$ by
\Cref{lem:aggregation}.  By~\eqref{eq:aggregate}, each grouped
product contains $\nu_{|U|}=\binom{k}{d-|U|}$ inputs in $[0,1]$.
\Cref{lem:mult} at $\sA=1$ gives the per-entry error
\[
  \varepsilon_\sH
  =\sup_{U,\z_U,\x}|\sH_U-\widehat\sH_U|
  \;\le\;2^{-K}.
\]
For one set $\sm$, the total number of entries is
$n_\sH=\sum_{\ell=1}^{d-1}\binom{\sD-k}{\ell}S^\ell
\le d\sD^{d-1}S^{d-1}$, and
$\nu_{\max}\le\sD^{d-1}$.

\begin{center}
\renewcommand{\arraystretch}{1.25}\small
\begin{tabular}{@{}l@{\hskip 50pt}r@{}}
  \toprule
  Depth $\sL$
    & $\lesssim \lceil\log_2\nu_{\max}\rceil\,
      \Delta_K^{(\nu_{\max})}+1$ \\
  Width $\sW$ & $\lesssim \,\nu_{\max}\,n_\sH$ \\
  Sparsity $\sS$ for one $\sm$
    & $\lesssim n_\sH(\wp_{\nu_{\max},1}+5)$ \\
  & {\small (ordinary ReLU network of \Cref{lem:aggregation})} \\
  Magnitude $\sB$ & $\lesssim 4^{2\nu_{\max}}$ \\
  Lipschitz after shared placement $\Lw$ & $\lesssim 1$ \\
  \bottomrule
\end{tabular}
\end{center}

\bigskip\noindent
\emph{(Stage 2) Approximation of $\beta_p$.}\;
If $d=1$, the product in~\eqref{eq:beta_p} is empty, so
$\widehat\beta_p=\beta_p=1$ exactly and this stage uses no
network.  Suppose $d\ge2$.  At each summation step $p$, the
input-dependent quantity
$\beta_p$ from~\eqref{eq:beta_p} is a product of $b_p$
approximations $\widehat\sH_U\in[0,1]$.  In both ranges below,
\[
  b_{\max}
  \le
  \sum_{j=0}^{d-2}\sD^j
  \le
  d\sD^{d-2}.
\]

\smallskip\noindent
When $1\le\sD-k\le d$, apply \Cref{lem:betaB}.  The total
number of assignments across all $\sD-k$ steps is
$\sum_pS^p\le 2S^d$.  The error at each assignment
satisfies
\[
  |\beta_p-\widehat\beta_p|
  \;\le\;b_{\max}\varepsilon_\sH+2^{-K}
  \;\le\;(d\sD^{d-2}+1)2^{-K}
  \;=:\;\varepsilon^\beta.
\]
The result is an ordinary ReLU network for one set $\sm$.  In the
global weight-sharing network, equal product-network and projection
parameters are placed at every occurrence having the same value of
$p$.

\smallskip\noindent
When $\sD-k\ge d{+}1$, apply the error bound from \Cref{lem:betaC}. The assignments now total
$\sum_pS^p\lesssim S^{\sD-k}$.  The sharing map
$\omega_{\beta C}$ of~\eqref{eq:omega-betaC} replicates each
product network and projection across all $S^p$ assignments at
step $p$.  Steps with one input read the single stored coordinate
$v_{\mathbf 1}$.

\begin{center}
\renewcommand{\arraystretch}{1.25}\small
\begin{tabular}{@{}l@{\hskip 25pt}cc@{}}
  \toprule
  & $1\le\sD-k\le d$
  & $\sD-k\ge d+1$ \\
  \midrule
  Depth $\sL$
    & $\lesssim \lceil\log_2 b_{\max}\rceil\,\Delta_K^{(b_{\max})}+1$
    & same \\
  Width $\sW$
    & $\lesssim b_{\max}\cdot S^d$
    & $\lesssim b_{\max}\cdot S^{\sD-k}$ \\
  Sparsity for one $\sm$
    & $\lesssim (\wp_{b_{\max},1}+5)\cdot S^d$
    & {\small not used as the global count} \\
  Effective sparsity $\Sws$
    & {\small shared globally by $p$}
    & $\lesssim (\sD-k)(\wp_{b_{\max},1}+5)$ \\
  Magnitude $\sB$
    & $\lesssim 4^{2b_{\max}}$
    & same \\
  Lipschitz $\Lw$ & $\lesssim 1$ & $\lesssim 1$ \\
  \bottomrule
\end{tabular}
\end{center}

\bigskip\noindent
\emph{(Stage 3) Recursive summation.}\; As in stage 1, we will only consider the case where 
For each set $\sm$ with $d+1\le|\sm|\le\sD-1$, put
$r=\sD-k$ and run the $r$ successive sums in
\eqref{eq:matrix_form}.  At each step $p$, the summation matrix
$\mathbf{C}_p$ from \Cref{lem:contract} serves as the
weight matrix of a linear layer.  Row $\z_{V_{p-1}}$ holds the
$S$ weights
$\alpha_p(1,\z_{V_{p-1}}),\ldots,\alpha_p(S,\z_{V_{p-1}})$
from~\eqref{eq:alpha_p}.  The layer computes
\[
  \widetilde h_{p-1}(\z_{V_{p-1}};\x_\sm)
  =\sum_{b=1}^{S}
    [\mathbf{C}_p]_{\z_{V_{p-1}},(b,\z_{V_{p-1}})}
    \cdot\widehat u_p(b,\z_{V_{p-1}};\x_\sm).
\]
This summation layer is exact and introduces no approximation
error.  It is followed by the componentwise projection
\[
  \widehat h_{p-1}
  =
  \mathsf P_{S^{r-p+1}}(\widetilde h_{p-1}).
\]
The projection keeps the approximate partial sum in the range of
the exact one and does not increase its error.  The entries of
$\mathbf C_p$ lie in $[0,1]$ by \Cref{lem:contract}(i).  By
\Cref{lem:contract}(ii)--(iv), the sharing
map~\eqref{eq:omega-contract} fills its weight entries from
$a_pS^d$ entries of the $\psi_U$-tables already stored in
$\bv$.  Each coordinate fills $S^{p-d}$ entries when $p>d$.
The Lipschitz constant is $\Lw^{\mathbf{C}}\le a_r\le\sD^{d}$.

\smallskip\noindent
The product $\widehat u_p$ fed into each summation layer is
formed as follows.  At the top step $p=\sD-k$, the partial sum is
$h_{\sD-k}\equiv1$, so we set $\widehat u_{\sD-k}=\widehat\beta_{\sD-k}$.
This is exact, so $\varepsilon_{\sD-k}^{\mathrm{mult}}=0$.  At each
remaining step $p=\sD{-}k{-}1,\ldots,1$, the network computes the
binary product $\widehat u_p\approx\widehat\beta_p\cdot
\widehat h_p$ by \Cref{lem:mult} at $r=2$ and range
$\sA_p=S^{\sD-k-p}$, followed by
$\mathsf P_{S^{\sD-k-p}}$.  When $\sD-k>d$, apply
\Cref{lem:uprod}.  The
sharing map $\omega_p^u$ of~\eqref{eq:omega-uprod} replicates
the binary product network and projection across the $S^p$
assignments.  When $1\le\sD-k\le d$, use the same ordinary ReLU
construction at the at most $\lesssim S^{d-1}$ assignments.  In the global
network, copies with the same range share their parameters.  In
both cases the per-step product error is
$\varepsilon_p^{\mathrm{mult}}=2^{-K}$. Stage~2 gives $\widehat\beta_p\in[0,1]$.  Starting from $\widehat h_{\sD-k}=1$, the projection after each summation gives $\widehat h_p\in[0,S^{\sD-k-p}]$.  Thus every binary product is computed on the stated range of \cref{lem:uprod}. No small-error assumption is needed.

\smallskip\noindent
After $\sD-k$ steps, \Cref{lem:error} with the three-term
decomposition
\[
  \beta_ph_p-\widehat u_p
  =\beta_p(h_p-\widehat h_p)
  +(\beta_p-\widehat\beta_p)\widehat h_p
  +(\widehat\beta_p\widehat h_p-\widehat u_p)
\]
and the closed form~\eqref{eq:err-closed} gives
\[
  \sup_\x|M_\sm-\widehat M_\sm|
  \;\le\;(\sD-k)S^{\sD-k}\varepsilon^\beta
    +2S^{\sD-k-1}2^{-K}.
\]
For $d\ge2$, Stage~2 gives
$\varepsilon^\beta\le(d\sD^{d-2}+1)2^{-K}$; for $d=1$,
$\varepsilon^\beta=0$.  Consequently, for every $d\ge1$,
\begin{equation}\label{eq:M-approx-err}
  \sup_\x|M_\sm-\widehat M_\sm|
  \le
  C_d\sD^{d-1}S^{\sD-k}2^{-K},
  \qquad
  \widehat M_\sm\in[0,S^{\sD-k}].
\end{equation}

\begin{center}
\renewcommand{\arraystretch}{1.25}\small
\begin{tabular}{@{}l@{\hskip 20pt}cc@{}}
  \toprule
  & $1\le r\le d$
  & $r\ge d+1$ \\
  \midrule
  Depth $\sL$ (per step $p$)
    & $\lesssim K+\lceil 4\log_2\sA_p\rceil+14$
    & same \\
  Width $\sW$ for one $\sm$
    & $\lesssim S^d$
    & $\lesssim S^r$ \\
  \multicolumn{3}{@{}l}{Effective sparsity $\Sws$:} \\
  \quad summation matrices
    & $\lesssim\binom{r}{d}S^d\le\sD^dS^d$
    & same \\
  & \multicolumn{2}{c}{\small (entries of the $\psi_U$-tables,
    already counted in Stage~0)} \\
  \quad products before sums
    & $\lesssim (r-1)(\wp_{2,\sA_1}+5)$
    & same \\
  \quad partial-sum projections
    & $\lesssim r$
    & same \\
  Magnitude $\sB$
    & $\lesssim S^{4r}$ & same \\
  Summation-matrix Lipschitz $\Lw^{\mathbf{C}}$
    & $\lesssim \sD^{d-1}$
    & same \\
  Product network Lipschitz $\Lw^{u}$
    & $\lesssim 1$
    & $\lesssim 1$ \\
  \bottomrule
\end{tabular}
\end{center}

\bigskip\noindent
\emph{(Stage 4) Partial marginals and level averages.}\; For each set $\sm$ with $d+1\le|\sm|\le\sD-1$, the first product $\Pi_\sm=\prod_{\substack{\sm'\in\Fd,\, \sm'\subseteq\sm}} \psi_{\sm'}(\x_{\sm'})$ is a product of $\binom{k}{d}$ values from Stage~0, each in $[0,1]$. Apply \Cref{lem:mult} at $\sA=1$ with error $2^{-K}$ and then the projection $\mathsf P_1$ (see \eqref{eq:proj}). Let us call the result $\widehat\Pi_\sm\in[0,1]$. Next apply \Cref{lem:mult} with two inputs, range $S^{\sD-k}$, and error $2^{-K}$ to $\widehat\Pi_\sm\widehat M_\sm$, and project the result onto $[0,1]$.  Call this final value $\widehat A_\sm$.  Using \eqref{eq:M-approx-err},
\[
  |A_\sm-\widehat A_\sm|
  \;\le\;|M_\sm-\widehat M_\sm|
    +S^{\sD-k}2^{-K}
    +2^{-K}.
\]
Together with the exact values for $|\sm|\le d$, the value for $\sm=[\sD]$ constructed at Stage~0, and the worst case $k=d+1$, this gives
\begin{equation}\label{eq:res-err}
  \max_{\sm\subseteq[\sD]}\ \sup_{\x\in[S]^\sD}
  |A_\sm(\x_\sm)-\widehat A_\sm(\x_\sm)|
  \le
  C_d\sD^{d-1}\bigl(S^{\sD-d-1}+1\bigr)2^{-K}
  =:\eta,
\end{equation}
and every $\widehat A_\sm$ belongs to $[0,1]$.

For $k=0,\ldots,\sD$, form the level averages
\[
  \overline A_k(\x)
  :=
  \binom{\sD}{k}^{-1}
  \sum_{|\sm|=k}A_\sm(\x_\sm),
  \qquad
  \widehat{\overline A}_k(\x)
  :=
  \binom{\sD}{k}^{-1}
  \sum_{|\sm|=k}\widehat A_\sm(\x_\sm).
\]
Both averages lie in $[0,1]$, and~\eqref{eq:res-err} gives
\begin{equation}\label{eq:level-average-err}
  |\overline A_k(\x)-\widehat{\overline A}_k(\x)|
  \le
  \eta.
\end{equation}
The averaging layer uses the single coefficient $\binom{\sD}{k}^{-1}$ at every position in level $k$.  Its sharing map therefore needs only one stored coordinate per level, or $\sD+1$ coordinates in total, and is $1$-Lipschitz.

\begin{center}
\renewcommand{\arraystretch}{1.25}\small
\begin{tabular}{@{}l@{\hskip 50pt}r@{}}
  \toprule
  Depth $\sL$ for one $\sm$
    & $\lesssim \lceil\log_2\binom{k}{d}\rceil
      (K+5\binom{k}{d})
      +K+\lceil4(\sD-k)\log_2S\rceil+14$ \\
  Effective sparsity $\Sws$
    & $\lesssim \wp_{\binom{k}{d},1}+\wp_{2,S^{\sD-k}}+10$ \\
  & {\small (one shared parameter set for each value of $k$)} \\
  Magnitude $\sB$
    & $\lesssim\max\{4^{2\binom{k}{d}+2},\,4^6S^{4(\sD-k)}\}$ \\
  Lipschitz $\Lw$ & $\lesssim 1$ \\
  \bottomrule
\end{tabular}
\end{center}

\bigskip\noindent
\emph{(Stage 5) Time-dependent coefficients and final sum.}\; The scalar input $t\in(0,T]$ is the input to the network of \Cref{lem:NN-alpha} at accuracy
\[
  \varepsilon_\alpha=\frac{2^{-K}}{8\sD}.
\]
Evaluating its guarantee at $t'=t$ gives an output $\widehat\alpha$ with $|\widehat\alpha-\alpha_t|\le\varepsilon_\alpha$ for every $t>0$, and $\widehat\alpha\in[0,1]$. For $a\in[0,1]$, let
\[
  p_k(a)=\binom{\sD}{k}(1-a)^k a^{\sD-k}.
\]
Using $\mu_t=1-\alpha_t$, the pinning decomposition becomes
\begin{equation}\label{eq:g-level-average}
  g_t(\x)
  =
  \sum_{k=0}^{\sD}p_k(\alpha_t)\overline A_k(\x).
\end{equation}
The coefficients are nonnegative and sum to one.  Apply \Cref{lem:level_coefficients} to $\widehat\alpha$ with $\delta=2^{-K}/4$.  It produces $\widehat p_0,\ldots,\widehat p_\sD\in[0,1]$ satisfying \begin{equation}\label{eq:coefficient-total-error}
  \sum_{k=0}^{\sD}
  |p_k(\alpha_t)-\widehat p_k|
  \le
  2\sD\varepsilon_\alpha+\frac{2^{-K}}4
  \le
  \frac{2^{-K}}2.
\end{equation}

Set
\[
  K_f
  =
  K+\left\lceil\log_2(4(\sD+1))\right\rceil.
\]
For each $k$, apply \Cref{lem:mult} with two inputs, $\sA=1$, and precision $K_f$ to $\widehat p_k\widehat{\overline A}_k$, sum the $\sD+1$ outputs exactly and apply $\mathsf P_1$. This defines $g_t^{\mathrm{NN}}(\x)\in[0,1]$.

\smallskip\noindent
\emph{Error.}\;
Since projection cannot increase the distance to
$g_t(\x)\in[0,1]$, \eqref{eq:g-level-average} gives
\[
  |g_t(\x)-g_t^{\mathrm{NN}}(\x)|
  \;\le\;
  \sum_{k=0}^{\sD}p_k(\alpha_t)
    |\overline A_k-\widehat{\overline A}_k|
  +\sum_{k=0}^{\sD}
    |p_k(\alpha_t)-\widehat p_k|
    |\widehat{\overline A}_k|
  +(\sD+1)2^{-K_f}.
\]
The first sum is at most $\eta$ by \eqref{eq:level-average-err} and $\sum_kp_k(\alpha_t)=1$. The second is at most $\lesssim 2^{-K}$ by \eqref{eq:coefficient-total-error}, because every $\widehat{\overline A}_k$ belongs to $[0,1]$.  The last term is at most $\lesssim 2^{-K}$.  Combining these bounds with \eqref{eq:res-err} proves~\eqref{eq:main-err}, uniformly over $t\in(0,T]$.

\begin{center}
\begin{tabular}{@{}l@{\hskip 50pt}r@{}}
  \toprule
  Depth $\sL$
    & $\lesssim \{\sD(K+\log\sD)+(K+\log(\sD S))^2\}$ \\
  Width $\sW$
    & $\lesssim \{\sD+(K+\log(\sD S))^3\}$ \\
  Effective sparsity $\Sws$
    & $\lesssim \{\sD^2(K+\log\sD)
      +(K+\log(\sD S))^4\}$ \\
  &
  \begin{tabular}{@{}r@{}}
    {\small (The networks of \Cref{lem:NN-alpha,lem:level_coefficients}}\\
    {\small and one shared final product network)}
  \end{tabular}
  \\
  Magnitude $\sB$
    & $\lesssim \exp\{O(K+\log(\sD S))^2\}$ \\
  Lipschitz $\Lw$ & $\lesssim 1$ \\
  \bottomrule
\end{tabular}
\end{center}
\bigskip\noindent
\emph{Summary of parameter accounting.}\;
Every product network in the construction has at most
\[
  r
  \le
  \max\left\{
    \binom{\sD}{d},\ \sD^{d-1},\ d\sD^{d-2},\ 2
  \right\}
  \lesssim \sD^d
\]
inputs and range $\sA\le S^\sD$.  Its precision parameter is at most $K+C\log\sD$.  Therefore~\eqref{eq:wp} and the definition of $\kappa_K$ give
\begin{equation}\label{eq:wp-unif}
  \wp_{r,\sA}
  \le
  C_d\sD^{3d+1}\kappa_K
\end{equation}
for every product network used above.  The bound also covers the constant number of parameters added by each projection.

\smallskip\noindent
\emph{Depth and width.}\;
The stages run in sequence.  The input counts satisfy $\nu_{\max}\le\sD^{d-1}$, $b_{\max}\le d\sD^{d-2}$ when $d\ge2$, and $\binom{k}{d}\le\sD^d$.  There are at most $\sD$ successive summation steps.  Combining the depth bounds in the six stages and the two time-dependent networks gives
\[
  \sL
  \le
  C_d\sD^{2d+1}\kappa_K^2.
\]
For $d=1$, Stages~1--2 are empty, and the same bound follows from the remaining stages.

\noindent To count simultaneous computational positions, note that
\[
  \sum_{r=0}^{\sD-d-1}\binom{\sD}{r}S^r
  \le
  \binom{\sD}{d+1}(S+1)^{\sD-d-1}
  \le
  \sD^{d+1}(S+1)^{\sD-d-1}.
\]
Indeed, for each set $\sm$ with $|\sm|\ge d+1$, choose a $(d+1)$-element subset of $\sm$. Note that summing over those choices may count a set more than once and yields the middle expression.  Each position uses at most $C_d\sD^d$ neurons at a product layer.  The table-evaluation and time-dependent subnetworks obey the same final bound, so
\[
  \sW
  \le 
  C_d\sD^{2d+1}
  \left[(S+1)^{\sD-d-1}+\kappa_K^3\right].
\]

\smallskip\noindent
The six stages run in sequence.  The parameter vector $\bv$ holds two kinds of coordinates.  Data-dependent entries (the $\psi_{\sm'}$-tables, marginal tables, and evaluation constants) are placed with $\omega=\mathrm{id}$.  Product-network internals are shared across every occurrence having the same number of inputs, range, and precision.  The indices needed to distinguish these networks are $k$, $|U|$, $p$, and $r-p$. Each of the networks have at most $\sD$ possible values. The combinations actually used above give at most $\le C_d\sD$ distinct product-network parameter sets.

\begin{center}
\renewcommand{\arraystretch}{1.3}\small
\begin{tabular}{@{}llr@{}}
  \toprule
  \emph{Stage} & \emph{Source}
    & \emph{Effective sparsity $\Sws$} \\
  \midrule
  0. Table evaluation
    & $\psi$ + marginal tables,
    & $\lesssim (2^d+3d+4)\binom{\sD}{d}S^d$ \\
  & evaluation constants ($\omega=\mathrm{id}$)
    & \\[3pt]
  1. Grouped products
    & shared by $(k,|U|)$
    & $\lesssim d\sD(\wp_{\max\{2,\sD^{d-1}\},1}+5)$ \\[3pt]
  2. $\beta_p$ ($r\le d$)
    & shared by $p$
    & $\lesssim \sD(\wp_{\max\{2,d\sD^{d-2}\},1}+5)$ \\[3pt]
  2. $\beta_p$ ($r>d$)
    & shared by $p$
    & $\lesssim \sD(\wp_{\max\{2,d\sD^{d-2}\},1}+5)$ \\[3pt]
  3. Summation matrices
    & entries of the $\psi_U$-tables
    & $\lesssim \sD^dS^d$ \\[3pt]
  3. Products before sums ($r\le d$)
    & shared by the range $S^{r-p}$
    & $\lesssim \sD(\wp_{2,S^\sD}+5)$ \\[3pt]
  3. Products before sums ($r>d$)
    & shared by the range $S^{r-p}$
    & $\lesssim \sD(\wp_{2,S^\sD}+5)$ \\[3pt]
  3. Partial-sum projections
    & shared by their upper endpoint
    & $\lesssim 5\sD$ \\[3pt]
  4. Partial marginals
    & products shared by $k$ or $r$
    & $\lesssim \sD(\wp_{\sD^d,1}+\wp_{2,S^\sD}+10)$ \\
  \quad level averages
    & one coefficient per level
    & $\lesssim \sD$ \\[3pt]
  5. Coefficients + final sum
    & \Cref{lem:NN-alpha,lem:level_coefficients}
    & $\lesssim \sD^2(K+\log\sD)+\kappa_K^4$ \\[3pt]
  \midrule
  \multicolumn{2}{@{}l}{\emph{Total}}
    & \multicolumn{1}{r@{}}{$\lesssim C_d(\sD^dS^d
      +\sD^{3d+2}\kappa_K+\kappa_K^4)$} \\
  \bottomrule
\end{tabular}
\end{center}

\smallskip\noindent
Every row in the above summary counts distinct parameters across all sets $\sm$, not the number of physical weight positions.  In particular, the ordinary networks in \Cref{lem:aggregation,lem:betaB} retain their stated local sparsity, while the global weight-sharing network stores one copy of equal product-network parameters and places it at all matching positions.  Using~\eqref{eq:wp-unif} in the table gives \eqref{eq:main-sws}.  When $d=1$, the rows for grouped products and $\beta_p$ are absent. The ordinary table-evaluation network from Stage~0 is embedded with the identity map, so all of its nonzero parameters are counted exactly as in \Cref{lem:slices}.  Separately, the summation matrices of~\eqref{eq:omega-contract}, across all sets $\sm$, use at most $\lesssim \binom{\sD}{d}S^d\le\sD^dS^d$ table coordinates.  The first products in Stage~4 take the evaluations produced in Stage~0 as network inputs and therefore require no additional table-coordinate block.  Altogether, these data-dependent contributions remain bounded by $C_d\sD^dS^d$.

\smallskip\noindent
\emph{Magnitude and Lipschitz constant.}\; The largest weight entries come from the exponential subnetwork inside \Cref{lem:NN-alpha}, of size $\exp\{O(K+\log(\sD S))^2\}$ at accuracy $\varepsilon_\alpha$.  The product-network bound in \Cref{lem:mult}, with at most $C_d\sD^d$ inputs and range at most $S^\sD$, is bounded by $\exp(O(\sD^d\kappa_K^2))$.  The summation-matrix entries and all coefficient outputs lie in $[0,1]$, and every projection endpoint is at most $S^\sD$.  Hence $\sB\le\exp(O(\sD^d\kappa_K^2))$.

\noindent The assembled sharing map acts on disjoint blocks of weight positions.  The blocks are identity placements, replication of the product networks, the constant $v_{\mathbf 1}$, and the products of projected table entries of~\eqref{eq:omega-contract}.  Its Lipschitz constant is therefore the maximum of the block constants. By \Cref{lem:contract}(iv), this maximum is at most $\lesssim \sD^{d}$.

\smallskip\noindent
Finally by plugging in the choices for $K$ and $\kappa$ from \eqref{eq:parach} into \eqref{eq:main-err}, the uniform bound on the right hand side of \eqref{eq:parach} simplifies to $\delta$.
\end{proof}

\subsection{Proof of Theorem~\ref{thm:score}}
\label{app:proof_score}

\begin{corollary}[Mass bounds]\label{cor:mass_bounds}
  Let $t>0$ and set $\rho_t := \min\{R,\alpha_t^{-1}\}$.  Then for every   $\x\in[S]^\sD$,
  \begin{equation}\label{eq:mass_bounds}
    \bigl(1+(S-1)\rho_t\bigr)^{-\sD}
    \;\le\; q_t(\x) \;\le\;
    \bigl(1+(S-1)\rho_t^{-1}\bigr)^{-\sD}.
  \end{equation}
  In particular $1/q_t(\x)\le(S\rho_t)^{\sD}\le(SR)^{\sD}$.
\end{corollary}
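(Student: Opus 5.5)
The bounds come from the $R$-Lipschitz property of $\log q_t$ in Hamming distance, combined with normalization. We then split the mass according to Hamming distance from a fixed state.

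\emph{Step 1.} Fix $t>0$ and $\x\in[S]^\sD$. By \Cref{lem:score_bounds}, every neighbouring ratio satisfies
\[
\rho_t^{-1}\le s_t^\star(\x\oplus_i c,\x)=q_t(\x\oplus_i c)/q_t(\x)\le\rho_t .
\]
This uses both the $\alpha_t$ bound and the $R$ bound, so $\rho_t=\min\{R,\alpha_t^{-1}\}$ is the right constant. For any $\y$, change the differing coordinates of $\x$ one at a time along a path of length $\Ham(\x,\y)$, as in \Cref{rem:flip}. Each single-coordinate change is of the form $\oplus_i c$ for some $c\in[S-1]$, so
\[
\rho_t^{-\Ham(\x,\y)}\le \frac{q_t(\y)}{q_t(\x)}\le \rho_t^{\Ham(\x,\y)}.
\]

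\emph{Step 2.} Use $\sum_\y q_t(\y)=1$ and group the states $\y$ by $j=\Ham(\x,\y)$. There are $\binom{\sD}{j}(S-1)^j$ states at distance $j$. The upper ratio bound gives
\[
1\le q_t(\x)\sum_{j}\binom{\sD}{j}(S-1)^j\rho_t^{j}=q_t(\x)\bigl(1+(S-1)\rho_t\bigr)^{\sD},
\]
which is the lower bound. The lower ratio bound gives
\[
1\ge q_t(\x)\bigl(1+(S-1)\rho_t^{-1}\bigr)^{\sD},
\]
which is the upper bound. Both steps are the binomial theorem.

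\emph{Step 3.} Since $\rho_t\ge1$, we have $1+(S-1)\rho_t\le S\rho_t$. Hence $1/q_t(\x)\le (S\rho_t)^\sD\le (SR)^\sD$.

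The only point that needs care is Step 1: one must check that the path argument uses exactly the single-coordinate shifts covered by \Cref{lem:score_bounds}, which is immediate. There is no real obstacle beyond that.
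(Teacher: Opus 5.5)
Your proof is correct and follows essentially the same route as the paper. The paper writes $q_t(\x)=\bigl(\sum_{\mathbf c}s_t^\star(\x\oplus\mathbf c,\x)\bigr)^{-1}$ via the bijection $\mathbf c\mapsto\x\oplus\mathbf c$, telescopes \Cref{lem:score_bounds} along single-coordinate flips, and applies the binomial theorem, which is exactly your normalization-plus-Hamming-shell argument (the positivity $q_t(\x)>0$ for $t>0$, needed for the ratios, is immediate from \eqref{eq:qt0_hamming}).
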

\begin{proof}
  The transition probabilities in~\eqref{eq:qt0_hamming} are   strictly positive for $t>0$, so $q_t(\x)>0$ for every  $\x\in[S]^\sD$.  For $\mathbf{c}\in[S]^\sD$, write
  \[
    \x\oplus\mathbf{c}
    =
    (x^1\oplus c^1,\dots,x^\sD\oplus c^\sD).
  \]
  For fixed $\x$, the map   $\mathbf{c}\mapsto\x\oplus\mathbf{c}$ is a bijection of   $[S]^\sD$, so
  \begin{equation*}
    q_t(\x)
    =\frac{q_t(\x)}{\sum_{\mathbf{c}\in[S]^\sD}q_t(\x\oplus\mathbf{c})}
    =\frac{1}{\sum_{\mathbf{c}\in[S]^\sD}s_t^{\star}(\x\oplus\mathbf{c},\x)}.
  \end{equation*}
  A shift of $h$ coordinates is a composition of $h$   flips, so \Cref{lem:score_bounds} telescopes to
  \[
    \rho_t^{-h}
    \le
    s_t^{\star}(\x\oplus\mathbf{c},\x)
    \le
    \rho_t^{h}.
  \]
  There are $\binom{\sD}{h}(S-1)^h$ shifts of $h$  coordinates, and the binomial theorem gives
  \begin{equation*}
    \bigl(1+(S-1)\rho_t^{-1}\bigr)^{\sD}
    \;\le\;
    \sum_{\mathbf{c}\in[S]^\sD}s_t^{\star}(\x\oplus\mathbf{c},\x)
    \;\le\;
    \bigl(1+(S-1)\rho_t\bigr)^{\sD}.
  \end{equation*}
  Inverting gives~\eqref{eq:mass_bounds}.  The last claim uses   $1+(S-1)\rho_t\le S\rho_t$, which holds since $\rho_t\ge1$.
\end{proof}

\begin{proof}[Proof of \Cref{thm:score}]
By~\eqref{eq:score_uniform} and~\eqref{eq:g_def},
\[
  s_t^{\star}(\x\oplus_i c,\x)
  \;=\;\frac{g_t(\x\oplus_i c)}{g_t(\x)},
  \qquad i\in[\sD],\ c\in[S-1].
\]
The network computes the numerators and denominator by copies of the network in \Cref{lem:main2}, approximates the reciprocal of the denominator using \Cref{lemma: reciprocal}, and then multiplies.

\smallskip\noindent
\emph{Bounds on $g_t$.}\; A direct computation gives $q_t(\x)=\bigl(\tfrac{1+(S-1)e^{-t}}{S}\bigr)^{\sD}g_t(\x)$, and $\tfrac{1+(S-1)e^{-t}}{S}\le1$.  Hence, by \Cref{cor:mass_bounds},
\[
  g_t(\x)\;\ge\;q_t(\x)   \;\ge\;(SR)^{-\sD}.
\]
Set $m:=(SR)^{-\sD}$.  Also, $g_t(\x)\le1$ since $\alpha_t\le1$ in~\eqref{eq:g_def}.  In particular $1/g_t(\x)\in[1,m^{-1}]$, uniformly over $t>0$ and $\x$. Choose
\[
  \varepsilon_r=\frac{m\varepsilon}{8},
  \qquad
  \varepsilon_g=\frac{m^2\varepsilon^3}{512},
  \qquad
  \varepsilon_\times=\frac{\varepsilon}{4}.
\]

\smallskip\noindent
\emph{Construction.}\;
For each pair $(i,c)$, the shifted input $\x\oplus_i c$ is an exact function of $\x$: coordinate $i$ satisfies $x^i\oplus c=x^i+c-S\,\mathbf 1\{x^i\ge S-c+1\}$, and on integer inputs $\mathbf 1\{x^i\ge S-c+1\} =\sigma\bigl(x^i-(S-c)\bigr)-\sigma\bigl(x^i-(S-c)-1\bigr)$, so one exact ReLU layer produces, in parallel, the point $\x$ and the $(S-1)\sD$ shifted points $\x\oplus_i c$, using two neurons for each shifted coordinate and one for each copied one.  On each of these $(S-1)\sD+1$ points, together with the input $t$, the network runs one copy of the network of \Cref{lem:main2} at accuracy $\varepsilon_g$.  The copies are identical, so the assembled sharing map replicates the coordinates of \Cref{lem:main2} across all of them, and their effective sparsity is counted once.  Each output satisfies the error bound of \Cref{lem:main2}, since that bound is uniform over the input domain: the copy at $\x\oplus_i c$ produces $g_t^{\mathrm{NN}}(\x\oplus_i c)$ with error at most $\varepsilon_g$, and the copy at $\x$ produces $g_t^{\mathrm{NN}}(\x)$.

\smallskip\noindent
The denominator branch applies \Cref{lemma: reciprocal} at accuracy $\varepsilon_r$ to the input $g_t^{\mathrm{NN}}(\x)$.  The true value satisfies $g_t(\x)\in[m,1]
\subseteq[\varepsilon_r,\varepsilon_r^{-1}]$, since $\varepsilon_r\le m$. Note that the input $g_t^{\mathrm{NN}}(\x)$ may fall anywhere in $\R$, which the lemma permits.  The output is projected by $\mathsf P_{[1,m^{-1}]}$; the true value $1/g_t(\x)$ lies in this interval, so the projection does not increase the error.  Call the result $\widehat w$, so that
\[
  \bigl|\widehat w-\tfrac{1}{g_t(\x)}\bigr|
  \;\le\;\varepsilon_r
  +\frac{\varepsilon_g}{\varepsilon_r^{2}}.
\]

\smallskip\noindent
For each pair $(i,c)$, one copy of the product network $\btheta_\times^{(2)}$ of \Cref{lem:mult} at $r=2$ and range $\sA=m^{-1}$ multiplies $g_t^{\mathrm{NN}}(\x\oplus_i c)$ and $\widehat w$. Observe that both inputs lie in $[-\sA,\sA]$ since $g_t^{\mathrm{NN}}(\x\oplus_i c)\in[0,1]$ and $\widehat w\in[1,m^{-1}]$.  Choose the product-network precision so that its error is at most $\varepsilon_\times$.  The output is projected by $\mathsf P_{[R^{-1},R]}$; the true score lies in this interval by \Cref{lem:score_bounds} under Assumption~\ref{ass:flip}, so the projection does not increase the error and the outputs are bounded as claimed.

\smallskip\noindent
\emph{Error.}\;
Fix $t$, $\x$, $(i,c)$ and abbreviate $g_f=g_t(\x\oplus_i c)$ and $g=g_t(\x)$.  Then
\[
  \bigl|s_t^{\star}-s_t^{\mathrm{NN}}\bigr|
  \;\le\;
  |g_f|\,\Bigl|\tfrac1g-\widehat w\Bigr|
  \;+\;\widehat w\,\bigl|g_f-g_t^{\mathrm{NN}}(\x\oplus_i c)\bigr|
  \;+\;\varepsilon_\times
  \;\le\;
  \Bigl(\varepsilon_r
    +\frac{\varepsilon_g}{\varepsilon_r^{2}}\Bigr)
  \;+\;m^{-1}\varepsilon_g
  \;+\;\varepsilon_\times,
\]
using $|g_f|\le1$ and $\widehat w\le m^{-1}$.  By the choices of the three internal accuracies,
\[
  \frac{\varepsilon_g}{\varepsilon_r^{2}}
  =\frac{\varepsilon}{8},
  \qquad
  m^{-1}\varepsilon_g
  =\frac{m\varepsilon^3}{512}
  \le\frac{\varepsilon}{512},
  \qquad
  \varepsilon_r\le\frac{\varepsilon}{8}.
\]
Together with $\varepsilon_\times=\varepsilon/4$, these estimates show that the score error is at most $\varepsilon$, uniformly over $t\in(0,T]$, $\x$, $i$, and $c$.

\smallskip\noindent
\emph{Parameters.}\;
Let
\[
  \ell_\varepsilon
  :=
  \log\!\left(
    \frac{e\sD(SR)^{3\sD}}{\varepsilon}
  \right),
\]
as in the theorem statement.  Since $m=(SR)^{-\sD}$,
\[
  \log\!\left(\frac{e\sD S^\sD}{\varepsilon_g}\right)
  =
  \log\!\left(
    \frac{512e\sD S^\sD(SR)^{2\sD}}{\varepsilon^3}
  \right)
  \le
  C\ell_\varepsilon
\]
for some constant $C>0$. Thus all $(S-1)\sD+1$ copies of the network in \Cref{lem:main2} share one parameter vector and contribute
\[
  C_d\left[
    \sD^dS^d+\sD^{3d+2}\ell_\varepsilon
    +\ell_\varepsilon^4
  \right]
\]
distinct coordinates, for a intrinsic dimension $d$ dependent constant $C_d$. The shifted-input layer and the range projections use $O(\sD S)$ further coordinates.  The reciprocal network contributes $O(\ell_\varepsilon^4)$ coordinates because $\log(1/\varepsilon_r)\le C\ell_\varepsilon$.  All final binary product networks share one parameter vector. By~\eqref{eq:wp}, their contribution is $O(\ell_\varepsilon)$.  These terms give the stated bound for $\Sws$.

\noindent Depths add along the longest path, giving
\[
  \sL\le C_d\sD^{2d+1}\ell_\varepsilon^2.
\]
The $(S-1)\sD+1\le\sD S$ copies of the $g_t$ network operate in parallel.  Adding the reciprocal and final product widths gives
\[
  \sW
  \le
  C_d\sD^{2d+2}S
  \left[(S+1)^{\sD-d-1}+\ell_\varepsilon^3\right].
\]
The largest parameter magnitude among the $g_t$, reciprocal, and final product networks is at most
\[
  \sB
  \le
  \exp\!\left(O(\sD^d\ell_\varepsilon^2)\right).
\]
Finally, the new placements are identity maps or direct replications, each with Lipschitz constant one.  Hence the assembled sharing map retains the bound $\Lw\le\sD^{d}$ from \Cref{lem:main2}. The selected values $\varepsilon_g$, $\varepsilon_r$, and $\varepsilon_\times$ give the required uniform error and the parameter bounds above.  The final range projection gives outputs in $[R^{-1},R]$, so the constructed network belongs to $\bTheta_R^{\mathrm{ws}}$.
\end{proof}

\subsection{Proof of Theorem~\ref{thm:bias}}
\label{app:proof_bias}
\begin{proof}[Proof of \Cref{thm:bias}]
  Given $\eta>0$, set
  \[
    \eta
    :=
    \left(
      \frac{\varepsilon}
      {8(1\vee T)\sD R^2\log(2R)}
    \right)^{1/2}.
  \]
  Since $0<\varepsilon\le1$, we have $0<\eta\le1$.   Apply \Cref{thm:score} with uniform score accuracy $\eta$.   This gives a network   $s^{\mathrm{NN}}\in\bTheta_R^{\mathrm{ws}}$ satisfying
  \[
    \left|
      s_t^{\mathrm{NN}}(\x\oplus_i c,\x)
      -
      s_t^\star(\x\oplus_i c,\x)
    \right|
    \le
    \eta
  \]
  for every $t\in(0,T]$, $\x\in[S]^\sD$,
  $i\in[\sD]$, and $c\in[S-1]$.

  Fix one such quadruple and write
  \[
    a=s_t^{\mathrm{NN}}(\x\oplus_i c,\x),
    \qquad
    b=s_t^\star(\x\oplus_i c,\x).
  \]
  Both $a$ and $b$ lie in $[R^{-1},R]$, and hence   $a/b\in[R^{-2},R^2]$. Recall the map $K(u)=u-\log{u}-1$.  For every   $v\in[R^{-2},R^2]$,
  \[
    K(v)
    \le
    8\log(2R)(v-1)^2.
  \]
  Indeed, for $v\ge1$,
  \[
    K(v)
    =
    \int_1^v\frac{w-1}{w}\,\mathrm dw
    \le
    \frac12(v-1)^2.
  \]
  For $1/2\le v<1$,
  \[
    K(v)
    =
    \int_v^1\frac{1-w}{w}\,\mathrm dw
    \le
    (1-v)^2.
  \]
  Finally, for $R^{-2}\le v<1/2$,
  \[
    K(v)\le-\log v\le2\log R
    \le
    8\log R\,(1-v)^2.
  \]

 \noindent Since $b\ge R^{-1}$, $\left(\frac ab-1\right)^2
    =
    \frac{(a-b)^2}{b^2}
    \le
    R^2\eta^2$. Therefore
  \[
    bK(a/b)
    \le
    8R^2\log(2R)\,\eta^2\,b.
  \]
  Substituting this inequality into the population loss and using   \eqref{eq:score-mass},
  \[
    \cL_t(s^{\mathrm{NN}})
    \le
    8\sD R^2\log(2R)\,\eta^2
    =
    \frac{\varepsilon}{1\vee T}
  \]
  for every $t\in(0,T]$.  Consequently,
  \[
    \cL(s^{\mathrm{NN}})
    =
    \int_0^T
    \cL_t(s^{\mathrm{NN}})\,\mathrm dt
    \le
    \frac{T}{1\vee T}\,\varepsilon
    \le
    \varepsilon.
  \]
  It remains to express the network parameters in terms of   $\varepsilon$.  The logarithmic factor in \Cref{thm:score} at   accuracy $\eta$ is
  \[
    \ell_\eta
    =
    \log\left(
      e\sD(SR)^{3\sD}
      \sqrt{
        \frac{
          8(1\vee T)\sD R^2\log(2R)
        }{
          \varepsilon
        }
      }
    \right).
  \]
  Since $R\ge1$, $S\ge2$, $\sD\ge2d+2$, and   $0<\varepsilon\le1$, this is bounded by
  \[
    C_d
    \log\!\left(
      \frac{e\sD(SR)^{3\sD}(1\vee T)}{\varepsilon}
    \right)
    =
    C_d\ell_{\varepsilon,T}.
  \]
  Substituting this estimate for $\ell_\eta$ into the parameter bounds of \Cref{thm:score} gives exactly the stated bounds in terms of $\ell_{\varepsilon,T}$.
\end{proof}
\end{document}